\documentclass[10pt,reqno]{amsart}
\usepackage[T1]{fontenc}
\usepackage{lmodern}
\usepackage{microtype}
\usepackage{eucal,fullpage,  amsmath,amsthm,amssymb,mathrsfs, stmaryrd,color,enumerate,accents}
\usepackage[all]{xy}
\usepackage{url}

\numberwithin{equation}{section}

\usepackage{mymacros}

\usepackage{tikz-cd}

\usepackage{relsize}
\usepackage[bbgreekl]{mathbbol}
\usepackage{amsfonts}
\usepackage[dvipsnames]{xcolor} % gives OliveGreen, etc.
\usepackage[pdfusetitle, colorlinks, pagebackref, hyperindex]{hyperref}

\hypersetup{
  linkcolor=RoyalBlue,        % brighter than Black
  citecolor=RawSienna,    % brighter than OliveGreen
  urlcolor=RoyalBlue, 
}

\newcommand{\cO}{{\mathcal{O}}}
\newcommand{\cL}{{\mathcal{L}}}

\newcommand{\cA}{{\mathcal{A}}}

\newcommand{\cD}{{\mathcal{D}}}

\newcommand{\cE}{\mathcal{E}}
\newcommand{\cF}{\mathcal{F}}

\newcommand{\cC}{\mathcal{C}}

\newcommand{\catinf}{\mathcal{C}\mathrm{at}_\infty}

\newcommand{\A}{\mathbf{A}}
\newcommand{\G}{\mathbf{G}}
\newcommand{\ZZ}{\mathbf{Z}}

\newcommand{\C}{\mathbf{C}}
\newcommand{\FF}{\mathbf{F}}
\newcommand{\QQ}{\mathbf{Q}}

\newcommand{\spec}{{\mathrm{Spec}}}

\newcommand{\Fun}{\mathrm{Fun}}
\newcommand{\Maps}{\mathrm{Maps}}
\newcommand{\End}{\mathrm{End}}

\newcommand{\Shv}{\mathrm{Shv}}

\newcommand{\Qcoh}{\mathrm{QCoh}}

\newcommand{\Sym}{\mathrm{Sym}}

\newcommand{\Alg}{\mathrm{Alg}}

\newcommand{\Gr}{\mathrm{Gr}}

\renewcommand{\L}{\mathrm{L}}

\newcommand{\Pic}{\mathrm{Pic}}

\newcommand{\id}{\mathrm{id}}
\newcommand{\ev}{\mathrm{ev}}

\newcommand{\fil}{\mathrm{Fil}}

\newcommand{\gr}{\mathrm{gr}}

\newcommand{\Perf}{\mathrm{Perf}}
\newcommand{\into}{\hookrightarrow}
\newcommand{\prl}{\mathrm{Pr}^L}

\renewcommand{\inf}{\mathrm{inf}}

\newcommand{\st}{\mathrm{st}}

\newcommand{\Tot}{\mathrm{Tot}}

\newcommand{\tensor}{\otimes}
\newcommand{\cofib}{\mathrm{cofib}}
\newcommand{\cn}{\mathrm{cn}}
\newcommand{\ccn}{\mathrm{ccn}}

\newcommand{\res}{\mathrm{Res}}
\newcommand{\pij}{(\pi_!)^\wedge_J}
\newcommand{\splt}{\mathrm{Split}}

\DeclareSymbolFontAlphabet{\mathbb}{AMSb} %to ensure that the meaning of \mathbb does not change
\DeclareSymbolFontAlphabet{\mathbbl}{bbold}
\newcommand{\Prism}{{\mathlarger{\mathbbl{\Delta}}}}

\newcommand{\comment}[1]{}

\newcommand{\cosimp}[3]{\xymatrix@1{#1 \ar@<.4ex>[r] \ar@<-.4ex>[r] & {\ }#2 \ar@<0.8ex>[r] \ar[r] \ar@<-.8ex>[r] & {\ } #3 \ar@<1.2ex>[r] \ar@<.4ex>[r] \ar@<-.4ex>[r] \ar@<-1.2ex>[r] & \cdots }}
\newcommand{\simp}[3]{%
  \xymatrix@1{%
    \cdots \ar@<1.2ex>[r] \ar@<.4ex>[r] \ar@<-.4ex>[r] \ar@<-1.2ex>[r] &
    {\ }#3 \ar@<0.8ex>[r] \ar[r] \ar@<-.8ex>[r] &
    {\ }#2 \ar@<.4ex>[r] \ar@<-.4ex>[r] &
    #1
  }%
}
\newcommand{\simpII}[2]{%
  \xymatrix@1{%
    \cdots \ar@<0.8ex>[r] \ar[r] \ar@<-0.8ex>[r] &
    {\ }#2 \ar@<0.4ex>[r] \ar@<-0.4ex>[r] &
    #1
  }%
}

\begin{document}
\newtheorem{theorem}{Theorem}[section]
\newtheorem*{theorem*}{Theorem}
\newtheorem*{condition*}{Condition}
\newtheorem*{definition*}{Definition}
\newtheorem{proposition}[theorem]{Proposition}
\newtheorem{lemma}[theorem]{Lemma}
\newtheorem{corollary}[theorem]{Corollary}
\newtheorem{claim}[theorem]{Claim}

\theoremstyle{definition}
\newtheorem{definition}[theorem]{Definition}
\newtheorem{question}[theorem]{Question}
\newtheorem{goal}[theorem]{Goal}
\newtheorem{remark}[theorem]{Remark}
\newtheorem{guess}[theorem]{Guess}
\newtheorem{example}[theorem]{Example}
\newtheorem{condition}[theorem]{Condition}
\newtheorem{warning}[theorem]{Warning}
\newtheorem{caution}[theorem]{Caution}
\newtheorem{notation}[theorem]{Notation}
\newtheorem{construction}[theorem]{Construction}
\newtheorem{recollection}[theorem]{Recollection}
\newtheorem{variant}[theorem]{Variant}

\title{Derived algebras on formal stacks and prismatic gauges}
\begin{abstract}
This paper studies how the theory of derived algebras (in the sense of Bhatt-Mathew and Raksit) interacts with formal derived geometry, specifically the formal derived stacks which show up in the theory of prismatization. As an application we prove some classification theorems for derived algebras in quasi-coherent sheaves on a certain class of filtered \emph{formal} stacks, which includes those whose quasi-coherent sheaves are prismatic gauges over a perfectoid ring. Along the way, among other things, we study the behavior of derived algebras along schematic quasi-affine morphisms in derived geometry, and for example, classify derived algebras on the source as precisely those derived algebras on the target which receive a map from the pushforward of the structure sheaf of the source. We also indicate how to extend some of our results to (formal) classifying stacks of diagonalizable group schemes.

As an aside, we also show some classification theorems even for quasi-coherent sheaves on formal stacks which (to our knowledge) weren't available in the literature on derived geometry previously.
These results are motivated by forthcoming work of the author \cite{Sah25Nyg} and \cite{Sah25Syn} but are hoped to be generally useful.
\end{abstract}
\author{Shubhankar Sahai}
\address{Department of Mathematics, University of California San Diego, La Jolla, CA 92093, USA}
\email{ssahai@ucsd.edu}
\subjclass[2020]{Primary $14\text{A}30$; Secondary $14\text{F}08$, $14\text{B}20$,$14\text{F}30$, $18\text{N}60$, $18\text{C}15$ }
\keywords{derived algebraic geometry, formal stacks, filtrations and gradings, derived completions, Barr-Beck-Lurie, non-connective $\mathbf{E}_\infty$-rings}
\maketitle
%\noindent\hfill\textit{Draft version---not for distribution}\par
\tableofcontents
\section{Introduction}

The paper grew out of an attempt to understand the interaction of derived algebras of Bhatt-Mathew and Raksit \cite{Rak20}, which serve as a robust notion of non-connective animated (equivalently simplicial commutative) rings in many settings, with derived completions and derived formal geometry. As a consequence, in this note we develop some basic results on non-connective formal geometry which seem to not be in the literature.

Another motivation was to extend the classification, due to Simpson \cite{Simpson1997HodgeFiltrationNonabelian}, of the filtered derived category over a discrete ring as quasi-coherent sheaves on $\agm$, to derived algebras in the completed setting, following the modern techniques adopted by Moulinos \cite{Moulinos2021GeometryOfFiltrations}. As a consequence of this second motivation, we use the techniques developed in this note to classify derived algebras in quasi-coherent sheaves on a certain class of filtered formal stacks which live over $\bgm$ and factor over the canonical projection $\agm\to \bgm.$

The overarching motivation for proving these results stems from forthcoming work of the author \cite{Sah25Nyg} and \cite{Sah25Syn}, and are already used in \cite{Sah25DeRhamAffineness}. 

While we explain these works at some length in \S~\ref{sec: forthcoming work}, we are hopeful that the results presented herein will be more widely applicable. As an example, some of our results in \S~\ref{sec: jcomp setting} and \S~\ref{sec: dalg along qaff} and already seem useful in developing a Tannakian framework (in the sense of \cite{Lurie2011DAGVIII}) in the formal derived setting, which is the subject of ongoing investigations \cite{MundingerSahai_inprep} (for example see Remark \ref{remark: definition of formal stack}).

We will begin with some extended background in \S~\ref{sec: background} to contextualize the technical nature of our results. An explanation of our results themselves is in \S~\ref{sec: results}.

\subsection{Background.}\label{sec: background}
The notion of animated or simplicial commutative rings, introduced systematically in the pioneering work of Quillen \cite{Quillen1970HomologyCommutativeRings} and \cite{Quillen1970OnCohomologyCommutativeRings}, is now an essential tool in (derived) algebraic geometry. Indeed, the theory is now quite mature and standard; for example, see \cite{BraggOlsson2021RepresentabilityCohomologyFiniteFlat} for some beautiful recent applications to questions of representability of flat cohomology.

The difference between an animated ring and a general connective $\einf$-ring is that the latter, in positive or mixed characteristic, will not be equivalent to a simplicial commutative ring. In other words, the multiplication on a general connective $\einf$-ring is not \emph{strict} (for a precise modern explanation of this notion of strictness, see \cite[\S~25.1.5]{Lur18}).

While animated rings are sufficient for many purposes, in (even classical) algebraic geometry one has to often work with \emph{coconnective} $\einf$-rings which are not animated rings, but admit a notion of strictification in the sense that they can be modeled (along with the multiplication) by cosimplicial commutative rings.

\begin{example}\label{eg: intro k3}
  Let $k$ be a perfect field of positive characteristic and let $X/k$ be an ordinary $K3$-surface i.e. the Frobenius endomorphism $F:H^2(X,\cO_X)\to H^2(X,\cO_X)$ is an isomorphism. 

  The coconnective $\einf$-algebra given by the derived ring of global sections $\Gamma(X,\cO_X)$ then can be computed by a Cech complex giving rise to a cosimplicial model for its multiplication. 

  This cosimplicial ring is not formal in the sense that it is not isomorphic to the graded ring $H^0(X,\cO_X)\oplus H^2(X,\cO_X)[-2]$ as the following argument we learned from S. Petrov\footnote{All mistakes are, of course, our own.} shows. The non-formality can be checked at the level of $\einf$-rings and one notes that the power operation $P^0\colon H^i(X,\cO_X)\to H^i(X,\cO_X)$ of \cite[Remark 2.27]{LurieDAGXIII}, which corresponds to the Frobenius endomorphism of $\Gamma(X,\cO_X)$ by \cite[Proposition 1.6]{petrov_de_rham}, is not zero for $i=2$. 
\end{example}

As Example \ref{eg: intro k3} shows, even in classical algebraic geometry one has to work with non-formal cosimplicial rings. This theory has a very different feel from the theory of animated rings. For example if $A$ is an animated ring, then $A$ can be thought of as a pro-nilpotent thickening of $\pi_0(A)$. There is no natural substitute for this deformation theoretic interpretation in the cosimplicial world (at least to the author's knowledge). Another aspect, already implicit in Example \ref{eg: intro k3}, is that cosimplicial commutative rings carry a Frobenius, while animated rings do not \cite[Proposition 11.6]{WittVectorGrassmannian}.

The theory of derived rings invented by Bhatt-Mathew, Raksit \cite{Rak20} and Brantner-Mathew \cite{BrantnerMathewActa_toappear} provides a robust theory of `nonconnective' animated rings, gluing both the simplicial and cosimplicial degrees, in quite general contexts akin to commutative algebra objects in $\icats$; for example one has a notion of derived algebras in quasi-coherent sheaves on (classical) algebraic stacks \cite[Appendix A.2]{KubrakPrikhodko2024PadicHodgeArtinStacks}. We briefly recall the theory now.

\begin{recollection}[Classical monadicity of rings]\label{rec: classical monad intro}
    Let $\Mod_{\ZZ}^\heartsuit$ denote the abelian category of discrete $\ZZ$-modules and let $\mathrm{Rings}$ be the $1$-category of discrete rings. Then the forgetful functor $U\colon \mathrm{Rings}\to  \Mod_{\ZZ}^\heartsuit$ admits a left adjoint $\Sym_{\ZZ}(-):=\bigoplus_{i\geq 0} \Sym^i_{\ZZ}(-).$ 
    The composite $$U\circ \Sym_{\ZZ} \colon \Mod_{\ZZ}^\heartsuit\to \Mod_{\ZZ}^\heartsuit$$ is a monad wherein the multiplication is induced by the natural map $\Sym^i_{\ZZ}(\Sym_{\ZZ}^j(M))\to \Sym_{\ZZ}^{i.j}(M)$, where $M\in \Mod_{\ZZ}^\heartsuit.$
    It is not too hard to show that this adjunction is monadic by Barr-Beck, whence it induces an equivalence of $1$-categories 
    $$\mathrm{Rings}\simeq \Mod_{U\circ \Sym_{\ZZ}^*}(\Mod_{\ZZ}^\heartsuit).$$

\end{recollection}

The monadicity theorem above admits a robust generalization to the animated setting with \emph{almost the same proof}, but with Barr-Beck replaced by Barr-Beck-Lurie.

\begin{recollection}[Monadicity for animated rings]\label{rec: animated monadicity intro}
    Now let $\aring$ be the $\icat$ of animated (or simplicial commutative) rings and $(\Mod_\ZZ)_{\geq 0}$ the $\icat$ of connective $\ZZ$-modules. There is an obvious conservative and limit preserving forgetful functor $U\colon \aring \to (\Mod_\ZZ)_{\geq 0}$. By the adjoint functor theorem for presentable $\icats$, there exists a left adjoint $\L\Sym_{\ZZ}\colon (\Mod_\ZZ)_{\geq 0}\to \aring$ and can be described explicitly as $$\L\Sym_{\ZZ}=\bigoplus_{i\geq 0} \L\Sym^n_{\ZZ}(-),$$ where $\L\Sym^n_{\ZZ}$, the summand in the $n$th degree on the right hand side, is the \emph{animation} of the classical $\Sym_{\ZZ}^n$ functor of Recollection \ref{rec: classical monad intro}.
    It again follows by a simple application of the $\infty$-categorical Barr-Beck (a.k.a Barr-Beck-Lurie) that the forgetful functor $U$ is monadic and exhibits an equivalence of $\icats$

    $$\aring\simeq \Mod_{\L\Sym_{\ZZ}}((\Mod_\ZZ)_{\geq 0})$$ where by abuse of notation we identify $\L\Sym_\ZZ$ with the composite $U\circ \L\Sym_{\ZZ}.$
\end{recollection}

Note that $\L\Sym_{\ZZ}$ in Recollection \ref{rec: animated monadicity intro} uses the functors $\L\Sym_{\ZZ}$ on the connective part of $\Mod_\ZZ$, by animating (or left Kan extending) the values of $\Sym_\ZZ$ on finite free (hence discrete) $\ZZ$-modules. If one wants to incorporate the notion of global sections as in Example \ref{eg: intro k3} as well as animated rings into an ambient category of possible non-connective animated rings, then $\L\Sym_{\ZZ}$ as described above doesn't work.

However the idea going back to Illusie \cite[\S~I.4]{Illusie1971CotangentComplexI} is that if one can appropriately define a monad $$\widetilde{\L\Sym_{\ZZ}}\colon \Mod_\ZZ\to \Mod_\ZZ,$$ whose values extend those of $\L\Sym_\ZZ$ on the connective part and the right Kan extension of $\Sym_\ZZ$ on the coconnective part, then one would have a robust notion of non-connective animated rings.

The endofunctors $\L\Sym^n_\ZZ$ for $n>0$ were extended on $\Mod_\ZZ$ by using a notion of \emph{right-left} extensions of functors by Brantner-Mathew \cite[\S~3]{BrantnerMathewActa_toappear}, and the monad $\L\Sym_\ZZ$ was extended as a \emph{filtered monad} to the full category by Raksit in \cite[\S~4]{Rak20}. 

Thereafter, one defines the $\infty$-category of \emph{derived rings} as $$\dalg_{\ZZ}:=\Mod_{\L\Sym_{\ZZ}}(\Mod_\ZZ)$$ where, by abuse of notation, we also denote the extension $\widetilde{\L\Sym_{\ZZ}}$ of $\L\Sym_\ZZ$ to the unbounded category by the same notation.

This theory has several deep applications and we refer the reader to \cite{Antieau2025FiltrationsCohomologyI}, \cite{MM24}, \cite{MondalReinecke2026UnipotentHomotopySchemes}, \cite{petrov_de_rham} and \cite{Rak20} among others.

In \cite[Definition 4.2.1]{Rak20}, the author defines a notion of \emph{derived algebraic contexts} which, in the $p$-local case, are $\Mod_{\ZZ_p}$-linear objects in $\Pr^L$, the $\infty$-category of presentable $\infty$-categories, satisfying some additional conditions like being symmetric monoidal, having a right complete $t$-structure which is compatible with filtered colimits, the connective part being compactly generated by a subcategory of the heart, etc and shows that you can derive the filtered monad $\L\Sym$ on a derived algebraic context and you get a good working theory of non-connective animated rings in that context.

\begin{definition}\label{def: definition of pcomp in prl}
Let $\Mod_{\ZZ_p}^\pcomp$ denote the $\icat$ of $p$-complete $\ZZ_p$- modules. An $\icat$ $\cC\in \Pr^L$ is called $p$-complete if it admits a structure of a $\Mod_{\ZZ_p}^\pcomp$-module in $\Pr^L.$\footnote{This terse definition is a consequence of the idempotence of $\Mod_{\ZZ_p}^\pcomp$ as a $\Mod_{\ZZ_p}$-algebra in $\Mod_{\Mod_{\ZZ_p}}(\Pr^L$). In other words, membership in $(\Pr^L)^\pcomp$ is a property not a structure. See \cite[Appendix A]{AntieauKrauseNikolaus2023PrismaticDeltaRings}.} We write $(\Pr^L)^\pcomp$ for the $\infty$-category of $p$-complete presentable $\icats$.
\end{definition}

We now give an example of how $p$-complete $\icats$ arise in nature. 

\begin{example}[$p$-complete $\icats$ arise as categories of quasi-coherent sheaves on stacks.]\label{eg: pcomp cats as qcoh on stacks}
Let $X$ be a stack i.e. an fpqc sheaf of spaces on $\aring.$ We will say a stack is a $p$-adic formal stack if the structure morphism $X\to \spec(\ZZ)$ factors over $\spf(\ZZ_p)\to \spec(\ZZ).$ This induces a canonical action of $\Qcoh(\spf(\ZZ_p))=\Mod_{\ZZ_p}^{\pcomp}$ on $\Qcoh(X)$, whence $\Qcoh(X)\in (\Pr^L)^\pcomp.$ 
\end{example}

One of our underlying goals in this paper is to understand how the the theory of derived rings interacts with objects in $(\Pr^L)^\pcomp$ which arise as above. 

However one hits an obstruction even when the formal stack is affine, as shown by the example of  $\spf(\ZZ_p)$, the final object in the $\icat$ of $p$-adic formal stacks.
Indeed, $\Mod_{\ZZ_p}^\pcomp=\Qcoh(\spf(\ZZ_p))$, the unit in $(\Pr^L)^\pcomp$ is not a derived algebraic context as its $t$-structure is not compatible with filtered colimits in the sense of \cite[Definition 1.3.5.20]{HA} i.e. the coconnective part is not stable under filtered colimits.

\begin{example}\label{eg: failure of pcomp being context}
The $t$-structure on $\Mod_{\ZZ_p}^\pcomp$ is pulled back along the inclusion $\Mod_{\ZZ_p}^\pcomp\into \Mod_{\ZZ_p}$ i.e. a module is (co)connective if it is as a $\ZZ_p$-module. We will show that colimits of coconnective modules can become non-trivially connective.
Recall that colimits in $\Mod_{\ZZ_p}^\pcomp$ are computed in $\Mod_{\ZZ_p}$ and completed. By considering the cofiber 
$$\ZZ_p\to \QQ_p\to \QQ_p/\ZZ_p$$ we see that $(\colim_n \ZZ/p^n)^{\wedge}_p=(\QQ_p/\ZZ_p)^{\wedge}_p=\ZZ_p[1]$ which is not coconnective.\footnote{One can try to repair this by working with the \emph{torsion} $t$-structure. We do not know if this gives an equivalent notion.}
\end{example}

While Example \ref{eg: failure of pcomp being context} shows that it is not possible to intrinsically derive the $\L\Sym$ monad on $\Mod_{\ZZ_p}^{\pcomp}$ in the sense of Raksit, there are many instances in nature where $p$-complete derived algebras show up. 

\begin{example}
    Let $X/\FF_p$ be a smooth affine scheme. Then its crystalline cohomology $\Gamma_{\mathrm{cris}}(X/\ZZ_p)$ is a $p$-complete derived algebra in the sense that its underlying module lives in $\Mod_{\ZZ_p}^{\pcomp}$. The fact that it is a derived algebra can be seen because the cohomology can be computed using $p$-complete cosimplicial rings given by Cech-Alexander complex. See \cite[\S~2]{BhattDeJong2011CrystallineDeRham}\footnote{Note that by classical deformation theory, one may lift such a smooth affine scheme all the way to $\ZZ_p$, whence one may also recover the derived algebra structure by contemplating the $p$-complete de Rham complex of the lift, for example by the main result of \emph{loc.cit}.}.
\end{example}

Similar structures show up routinely in the theory of prismatic cohomology \cite{BS19}, which can be seen as a mixed characteristic lift of crystalline cohomology. Moreover, in the the theory of prismatization \cite{Drin20}, \cite{APC}, \cite{APCII} and \cite{Bhatt22} one has to contemplate not just individual $p$-complete derived algebras but in fact entire $p$-complete (in the sense of Definiton \ref{def: definition of pcomp in prl})  categories of coefficients. 

\begin{example}
    Let $X/\ZZ_p$ be a smooth $p$-adic formal scheme. Then there is a functorial assignment of a $p$-adic formal stack $X\mapsto X^\Prism$ so that $$\Gamma(X^\Prism, \cO_{X^\Prism})\simeq \Gamma_{\Prism}(X)$$ where the right hand side is the absolute prismatic cohomology of $X.$  In this setting, $\Qcoh(X^\Prism)\in (\Pr^L)^\pcomp$  and can be regarded as a category of coefficients for prismatic cohomology in the following sense: If $f\colon Y\to X$  is a smooth proper morphism then $f^\Prism_*\cO_{Y^\Prism}\in \Perf(X^\Prism)$ carries information about the absolute prismatic cohomology of $Y$ and, by derived base change, of the fibers of $f$. 
    
    The functor $f^\Prism_*$ carries derived algebras in quasi-coherent sheaves on $Y$ to derived algebras on $X$. In particular $f^\Prism_*\cO_{Y^\Prism}$ is a derived algebra on $X.$ We will show in \cite{Sah25Syn} that it is in fact fruitful to take this refined multiplicative structure on $f^\Prism_*\cO_{Y^\Prism}$ seriously.

    The $p$-adic formal stacks above are often formal not just in the $p$-adic direction, but also the \emph{prismatic direction}.\footnote{A perhaps simplistic viewpoint of the prismatic theory which is heuristically implicit in this note is the following. A prism is a pair $(A,I)$ of a (discrete) ring $A$ and an ideal $I$ with $I\in \Pic(A)$. One requires that $A$ be derived $(p,I)$-complete and treats the $I$-direction as a formal parameter, whence $A$ may be treated as a pro-nilpotent thickening of the $p$-complete ring $A/I$. Further one requires that $A$ carry a derived lift of the Frobenius on $A/p$, encapsulated in the notion of a $\delta$-ring.
The theory then provides a deformation of the (derived) de Rham cohomology functor $\dR_{-/(A/I)}$ to a commutative algebra $\Prism_{-/A}$ over $A$ along with a Frobenius coming from the $\delta$-structure. It is therefore this $I$-direction we treat as formal.} Indeed this happens when the structure morphism $X\to \spf(\ZZ_p)$ factors over $\spf(\cO_C)$, the ring of integers in a completed algebraic closure of $C$ of $\QQ_p$. This adds some complexity to their categories of quasi-coherent sheaves which can now be complete for two parameters (in this case the kernel of the canonical surjection $A_\inf(\cO_C)\to \cO_C$, where the source is Fontaine's famous period ring) and not just $p$-complete.
\end{example}

Motivated by the examples above, our  main goal now is to explain how the theory of derived algebras interacts with the formal completions in derived algebraic geometry.

\subsection{Results.}\label{sec: results}

.

To make the idea precise we fix some notation.

\begin{notation}\label{notation in intro.}
    For this subsection and the next, fix an animated ring $A$ and a finitely generated ideal $J\subset \pi_0(A).$ Let $\Mod_A^\jcomp$ be the $\icat$ of $J$-complete $A$-modules and let $\dalg_A$ denote the $\infty$-category of derived $A$-algebras, defined as $(\dalg_\ZZ)_{A/}.$
\end{notation}

In the sequel we will study how the notion of derived algebras interacts with $\icats$ of quasi-coherent sheaves on geometric objects over $A$ which are $J$-complete.

\subsubsection{$J$-complete derived algebras.}
Our first results correspond to the notion of $J$-complete derived algebras. See  \S~\ref{sec: jcomp setting} for more on the following thorem.

\begin{theorem}
    There is a localizing subcategory $\dalg_A^\jcomp\into \dalg_A$ of $J$-complete derived algebras with the following properties
    \begin{enumerate}
    \item $\dalg_A^\jcomp$ is a presentably symmetric monoidal $\icat$.
        \item The left adjoint is completion at the level of underlying modules.
        \item The forgetful functor $\dalg_A^\jcomp\to \Mod_A^\jcomp$ is monadic with left adjoint $(\L\Sym_A)\cj$, the $J$-adic completion of the monad $\L\Sym_A.$
        \item The forgetful functor $\dalg_A^\jcomp\to \Calg_A^\jcomp$ is symmetric monoidal, conservative and preserves all (co)limits.
        \item  When one right Kan extends $\dalg(-)$ on all prestacks, then $\dalg(\spf(A))=\dalg_A^\jcomp$.
        \item The assignment $B\mapsto \dalg^\jcomp_B$ on $A$-algebras is local for the $J$-completely flat topology.
    \end{enumerate}
\end{theorem}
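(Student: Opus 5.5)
I plan to define $\dalg_A^\jcomp$ as the full subcategory of $\dalg_A$ spanned by derived algebras whose underlying $A$-module is $J$-complete, and to bootstrap every item from the corresponding statement for $\Calg_A$ and $\Mod_A$. The basic input is that completion $(-)\cj\colon \Mod_A\to\Mod_A^\jcomp$ is a symmetric monoidal localization: its kernel, the $J$-local equivalences, is a $\otimes$-ideal. From this I will deduce that $\L\Sym_A$ carries $J$-equivalences to $J$-equivalences. Each graded piece $\L\Sym^n_A$ is built from $M^{\otimes n}$ via homotopy orbits, and in the coconnective direction via the right--left extension of Brantner--Mathew. So I will check the claim on the filtration of Raksit's filtered monad, using that $J$-equivalences are detected modulo $J$, i.e.\ after $-\otimes_A A/\!\!/J$ with the Koszul complex on a generating set. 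Once this holds, $(\L\Sym_A)\cj := (-)\cj\circ \L\Sym_A$ restricted to $\Mod_A^\jcomp$ inherits a monad structure, by the standard result that a monad compatible with a localization descends. This gives (3), and the left adjoint in (2) is then $B\mapsto B\cj$ with its induced derived structure.

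With the descended monad in hand, the remaining points are formal. For (2) I will identify $\dalg_A^\jcomp$ with $\Mod_{(\L\Sym_A)\cj}(\Mod_A^\jcomp)$ by Barr--Beck--Lurie. Conservativity is clear. For the geometric realization condition, colimits in the subcategory are completed colimits in $\dalg_A$, and the forgetful functor to $\Mod_A$ preserves sifted colimits. Hence the subcategory is an accessible localization, so it is presentable and the inclusion preserves limits. For (1) and (4) I will use that the derived tensor product in $\dalg_A$ lifts the $\Calg_A$ one, and that completed tensor products are computed by completing. The forgetful functor to $\Calg_A^\jcomp$ then commutes with completion and with limits and sifted colimits, and it is monoidal, hence preserves coproducts. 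It is conservative because it is conservative on underlying modules.

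For (5), I will compute $\dalg(\spf A)=\lim_n \dalg_{A_n}$, where $A_n$ runs over the Koszul quotients $A/\!\!/(f_1^n,\dots,f_r^n)$, which are animated rings. The comparison functor sends $B$ to $(B\otimes_A A_n)_n$, with right adjoint the limit. I will check that the unit is an equivalence on $J$-complete $B$, since $B\simeq\lim_n B\otimes_A A_n$. The counit should be an equivalence on compatible systems by the module-level result $\Mod_A^\jcomp\simeq\lim_n\Mod_{A_n}$ (Lurie, SAG). Since the forgetful functors to modules are conservative and commute with both the limits and the base changes, the algebra statement follows from the module one.

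For (6), I will reduce to descent for $\Mod^\jcomp$ along $J$-completely faithfully flat maps, combined with the fact that totalizations in $\dalg$ are computed on underlying modules. The main obstacle I expect is the first step, namely showing that $\L\Sym^n$ preserves $J$-equivalences on non-connective modules. The right--left extension is not obviously exact with respect to filtered colimits, so I would try first to reduce, via the Koszul and torsion description, to checking that $\L\Sym^n(N)\otimes_A A/\!\!/J=0$ for $J$-local $N$. For this I would use that $\L\Sym^n$ of a $J$-local module is again a module over the localization, by a monoidality or linearity argument on the filtration.
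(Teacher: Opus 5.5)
As written, your proof of (2) and (3) rests on a step you leave open: that $\L\Sym_A$ carries $J$-equivalences to $J$-equivalences. You need it both to descend the monad to $\Mod_A^\jcomp$ and to give $B^\wedge_J$ a derived structure. The attack you sketch does not settle it. Integrally, $\L\Sym^n_A(M)$ is not built from $(M^{\otimes n})_{h\Sigma_n}$; that describes the free $\mathbf{E}_\infty$-algebra. Passing from ``$J$-local inputs have $J$-local outputs'' to ``$J$-equivalences are preserved'' would also need a filtration of $\L\Sym^n$ along cofiber sequences of non-connective modules, which you do not justify.

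The missing idea is that none of this is needed, because completion can be built directly inside $\dalg_A$. The Koszul quotients $A_n=A/\!\!/(f_1^n,\ldots,f_r^n)$ are animated $A$-algebras. So $R\otimes_A A_n$ is a pushout in $\dalg_A$ whose underlying module is the module tensor product, and limits in $\dalg_A$ are computed on modules. Hence $R\mapsto\lim_n R\otimes_A A_n$ is an endofunctor of $\dalg_A$ lifting module-level completion. Checking the criterion of HTT 5.2.7.4 on underlying modules shows it is left adjoint to the inclusion; this is Construction~\ref{cons: completion def}, Lemma~\ref{lem: adjoint is completion} and Corollary~\ref{cor: dalgj is right adjointable}. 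For (3), no compatibility of the monad with $J$-equivalences is then needed. For $M\in\Mod_A^\jcomp$ and $R\in\dalg_A^\jcomp$,
\[
\Maps_{\Mod_A^\jcomp}(M,R)\simeq\Maps_{\dalg_A}(\L\Sym_A M,R)\simeq\Maps_{\dalg_A^\jcomp}\bigl((\L\Sym_A M)^\wedge_J,R\bigr).
\]
Barr--Beck--Lurie applies because geometric realizations in $\dalg_A^\jcomp$ are completed realizations in $\dalg_A$, which are computed on modules.

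If you want to keep your route, the obstacle is in fact formal. $J$-equivalences are exactly the maps inverted by $-\otimes_A K$ for the animated ring $K=A/\!\!/(f_1,\ldots,f_r)$. By adjunction, $\L\Sym_A(M)\otimes_A K\simeq\L\Sym_K(M\otimes_A K)$ naturally in $M$. So $\L\Sym_A$ preserves $J$-equivalences with no analysis of the right--left extension. You would still need the standard identification of algebras over the descended monad with the full subcategory of $\dalg_A$.

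Your treatment of (1) and (4) matches the paper's, and your plan for (6) is reasonable. For (6) the paper instead applies ordinary flat descent to each $B/\!\!/(x_1^n,\ldots,x_r^n)\to C/\!\!/(x_1^n,\ldots,x_r^n)$ and passes to the limit over $n$.

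In (5), your reduction from $\dalg$ to modules is the paper's, but the module-level input is where the real work lies. That input is $\Mod_A^\jcomp\simeq\lim_n\Mod_{A_n}$ for the Koszul tower of an animated ring. The paper does not treat it as available in this generality and proves it (Theorem~\ref{thm: mod on spf}). It shows the limit functor is conservative via $(\lim_n M_n)\otimes_A A_k\simeq M_k$, using the pro-isomorphism $\{A_n\otimes_A A_k\}_n\simeq\{A_k\}$. You also need $\spf A\simeq\colim_n\spec A_n$ on animated rings (Lemma~\ref{lem: luries lemma on spf}) to write $\dalg(\spf A)=\lim_n\dalg_{A_n}$ in the first place.
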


To prove this we also give a proof of a folklore theorem which seems to not be available in the literature. Note that there is no completeness conditions on $A$, although the statement of course depends only on the $J$-completion of $A.$

\begin{theorem}\label{thm: intro spfA is modjcomp}
    There is a canonical equivalence of $\icats$ $\Qcoh(\spf(A))\simeq \Mod_A^\jcomp.$
\end{theorem}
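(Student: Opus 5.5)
We will present $\spf(A)$ as a filtered colimit of affines and compute $\Qcoh$ as the corresponding limit. Choose generators $f_1,\dots,f_r$ of $J$ and lifts to $\pi_0$-classes. Let $A_n := A/\!\!/(f_1^n,\dots,f_r^n)$ denote the iterated Koszul quotient, regarded as an animated ring, for instance by derived base change along $\ZZ[x_1,\dots,x_r]\to\ZZ$ with $x_i\mapsto f_i^n$. The first step is to check that the functor $\spf(A)$, defined as the subfunctor of $\spec(A)$ on animated rings $R$ with $\pi_0 A\to\pi_0R$ sending $J$ to a nilpotent ideal, is the colimit of the $\spec(A_n)$ in fpqc stacks, or at least in prestacks. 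Points of $\spec(A_n)$ are points of $\spec A$ together with nullhomotopies of the $f_i^n$. A point of $\spf A$ admits such data for $n\gg0$, and the space of such data becomes contractible in the colimit over $n$, because the transition maps multiply the nullhomotopies by $f_i^{m-n}$. This is the standard argument (as in Lurie's SAG treatment of formal spectra) that $\colim_n \spec(A_n)\simeq\spf(A)$.

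Since $\Qcoh$ is right Kan extended from affines, it sends colimits of prestacks to limits. This gives
$$\Qcoh(\spf A)\simeq \lim_n \Mod_{A_n},$$
with transition maps given by base change. The second step identifies this limit with $\Mod_A^\jcomp$. There is a comparison functor $\Phi\colon\Mod_A^\jcomp\to\lim_n\Mod_{A_n}$, $M\mapsto(A_n\otimes_A M)_n$, with right adjoint $\Psi\colon(M_n)\mapsto\lim_n M_n$, the limit being computed in $\Mod_A$.

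The third step is to prove that the unit and counit of this adjunction are equivalences.

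For the unit, we need $M\simeq\lim_n A_n\otimes_A M$ for $J$-complete $M$. This is the standard fact that the pro-systems $\{A_n\}$ and $\{A/\!\!/(f_1^n)\otimes\cdots\}$ compute derived $J$-completion; it reduces to one generator, where the pro-system of Koszul complexes $A/\!\!/f^n$ gives $\mathrm{fib}(M\to M[1/f])$ after duality. The limit of $A/\!\!/f^n\otimes M$ is $\lim(M\xleftarrow{f}M\leftarrow\cdots)$ up to cofiber, which is $M^\wedge_f$. Note that $\lim_n M_n$ is automatically $J$-complete, being a limit of $f_i$-nilpotent, hence complete, modules, so $\Psi$ lands in $\Mod_A^\jcomp$.

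For the counit, we must show $A_m\otimes_A\lim_n M_n\simeq M_m$ for a compatible system. This is the main obstacle, because tensoring does not commute with limits in general. The key point is that $A_m$ is a perfect $A$-module, so $A_m\otimes_A-$ commutes with limits. We then get $\lim_n A_m\otimes_A M_n$, and must identify this with $M_m$ using $M_n\simeq A_n\otimes_{A_{n'}}M_{n'}$. The pro-system $\{A_m\otimes_A A_n\}_n$ is pro-isomorphic to the constant system $A_m$, because $A_m\otimes_A A_n\simeq A_m\otimes(\text{Koszul on }f^n\text{ over }A_m)$ and $f^n$ acts nilpotently (by $0$ up to homotopy for $n\ge 2m$) on $A_m$. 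Hence $A_m\otimes_A A_n$ splits as $A_m\oplus A_m[1]$-type pieces, and the transition maps kill the extra pieces pro-wise. Then
$$\lim_n A_m\otimes_A M_n\simeq\lim_n (A_m\otimes_A A_n)\otimes_{A_n}M_n,$$
and the pro-isomorphism gives $M_m$. I expect the care needed here, with multiple generators and honest homotopy coherence for pro-isomorphisms, to be the technical heart. I would handle it by reducing to the universal case $\ZZ[x_1,\dots,x_r]$ and using that everything is linear over the finite Koszul objects.

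Finally, $\Phi$ and $\Psi$ being inverse equivalences gives $\Qcoh(\spf A)\simeq\Mod_A^\jcomp$. This equivalence is canonical and symmetric monoidal for the completed tensor product, and it depends only on $A^\wedge_J$, since $A_n\simeq (A^\wedge_J)_n$.
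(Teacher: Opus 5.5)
Your proposal is correct and follows essentially the same route as the paper. Both arguments present $\mathrm{Spf}(A)$ as the colimit of the Koszul quotients $A_n$ via the nullhomotopy-tower argument. Both identify $\mathrm{QCoh}(\mathrm{Spf}(A))$ with $\lim_n \mathrm{Mod}_{A_n}$ and use the base-change/limit adjunction. Both then prove the key identity $(\lim_n M_n)\otimes_A A_k\simeq M_k$ from two facts: $A_k$ is perfect, and the tower $\{A_n\otimes_A A_k\}_n$ is pro-isomorphic to the constant system $A_k$.

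The only difference is cosmetic. The paper uses this identity to show the right adjoint is conservative, and combines that with full faithfulness of the left adjoint. You read the same identity directly as the counit being an equivalence.
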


See Remark \ref{remark: mod on spf ref} for some literature concerning the above theorem as far as the author is aware.

\subsubsection{Graded derived algebras and comonads.}

In \S~\ref{sec: graded dalg and comonads} we prove several comonadicity theorems for graded derived algebras over $A$, in both the $J$-complete and noncomplete setting. The results are generically of the following form (see Corollary \ref{cor: continuous comonad dalg}).

\begin{theorem}\label{thm: cts monadicity}
    Let $\Gr\dalg_A^\jcomp$ be the $\icat$ of graded derived algebras such that each graded piece is $J$-complete and let $\dalg_A^\jcomp$ be the $\icat$ described in the previous subsection. Then there's a `continuous' comonadic adjunction $$\pij\colon \Gr\dalg_A^\jcomp\rightleftarrows \dalg_A^\jcomp\colon \pi^*$$ and the `continuous' comonad $\pij\circ \pi^*$ can be identified with a `continuous' action of the completed bicommutative Hopf algebra $(-)\ctensor_A A[\ZZ].$
\end{theorem}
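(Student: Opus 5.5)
We first identify the functors. Write $\pi\colon \bgm\to \spec(A)$, or $\pi\colon \bgm\times\spf(A)\to\spf(A)$ in the complete case. Then $\pi^*$ is the functor sending a $J$-complete derived algebra $B$ to the graded derived algebra $B$ concentrated in weight $0$. Its left adjoint $\pij$ sends a graded derived algebra $\bigoplus_n B_n$ to the $J$-completion $(\bigoplus_n B_n)\cj$ of the underlying algebra. This left adjoint is obtained by completing the non-complete left adjoint $\pi_!$, the ``forget the grading'' functor. That $\pi_!$ is left adjoint to $\pi^*$ (weight zero inclusion) on derived algebras follows because both functors commute with the forgetful functors to modules. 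Moreover, $\pi_!$ is computed on underlying modules as the direct sum. Since $\L\Sym$ of a graded module is graded compatibly, it preserves sifted colimits and commutes with $\L\Sym$ on free objects, so the adjunction lifts to derived algebras.

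For the complete case we compose with the localization $\dalg_A\to\dalg_A^\jcomp$ from the theorem on $J$-complete derived algebras, item (2). We use that completion of the underlying module computes the left adjoint, and that $\Gr\dalg_A^\jcomp$ is the corresponding reflective subcategory of graded objects. Hence $\pij=(-)\cj\circ\pi_!$ restricted to weightwise complete objects.

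To prove comonadicity we apply the dual Barr--Beck--Lurie theorem to $\pij$. We must show that $\pij$ is conservative and preserves $\pij$-split totalizations.

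\textbf{Conservativity.} A map of graded weightwise $J$-complete objects is detected by its weights. Each weight $B_n$ is a retract of $(\bigoplus_m B_m)\cj$: the projection to $B_n$ is an idempotent of the sum, and completion commutes with retracts. So conservativity follows from the conservativity of the forgetful functor to $\Mod_A^\jcomp$.

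\textbf{Totalizations.} Since the forgetful functor $\dalg_A^\jcomp\to\Mod_A^\jcomp$ preserves limits (item (4) of the earlier theorem, with limits computed in $\Mod_A^\jcomp$), it suffices to work on modules. There, a $\pij$-split cosimplicial object becomes split after completed sum. Projecting onto each weight via the retractions above gives split, hence limit, diagrams in each weight. So the totalization in $\Gr$ is weightwise and completion commutes with it. This is the step I expect to be the main obstacle. Infinite direct sums do not commute with totalizations in general, and completion is only a left adjoint. The key point is that splitness is preserved by any functor, so the completed sum of the weightwise limit agrees with the limit of the completed sums.

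Finally, we identify the comonad. Its value is $\pij\pi^*B=B$ regarded in weight zero, which is not the right description, so we should work on the other side. We identify instead the comonad on $\dalg_A^\jcomp$ whose coalgebras are graded objects. Here the correct formula is $C\mapsto (C\tensor_A A[\ZZ])\cj=C\ctensor_A A[\ZZ]$, whose structure maps come from the comultiplication $t\mapsto t\otimes t$ of $A[\ZZ]=\cO(\G_m)$. We check this on the module level, using the standard fact that $\Qcoh(B\G_m)$ is comodules over $A[\ZZ]$. We then complete, using that $\ctensor$ is the symmetric monoidal structure on $\Mod_A^\jcomp$ and that completion is symmetric monoidal. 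Multiplicativity then follows because the forgetful functor to $\Calg_A^\jcomp$ is symmetric monoidal, so the coaction is a map of derived algebras.
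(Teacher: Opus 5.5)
\textbf{Gap: the right adjoint is misidentified.} Your Barr--Beck verifications essentially match the paper's proof of Proposition~\ref{prop: cts monad mod}. Conservativity holds because each weight is a retract of $(\bigoplus_m B_m)\cj$. $\pij$-split totalizations are handled weightwise, via retracts of split cosimplicial objects and the fact that any functor preserves split diagrams. The reduction from derived algebras to modules goes through forgetful functors that are conservative and limit-preserving.

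The problem is the adjunction itself: the right adjoint of ``forget the grading'' is not weight-zero insertion. We have $\Maps_{\Mod_A}(\bigoplus_n N_n, M)\simeq \prod_n \Maps_{\Mod_A}(N_n,M)$. This equals $\Maps_{\Gr\Mod_A}(N,\pi^*M)$ exactly when $\pi^*M$ is the constant graded object with $M$ in every weight; for algebras this is $B\mapsto B[t^{\pm 1}]$. Weight-zero insertion $\ins^0$ is instead right (and left) adjoint to $\ev^0$.

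Geometrically, $\pij$ is pullback along the cover $q\colon \spf(A)\to \bgm\times\spf(A)$, and $\pi^*$ is $q_*$. It is not pullback along the projection $\bgm\times\spf(A)\to\spf(A)$, whose pushforward takes the weight-zero part. So:
\begin{itemize}
\item The adjunction you write down does not exist. Commuting with the forgetful functors would not produce an adjunction in any case.
\item Your comonad computation breaks down, as you noticed. With your $\pi^*$ one gets $\pij\pi^*B=B$, the identity comonad, whose coalgebras are ungraded.
\item Your last paragraph then asserts the correct formula by appealing to ``the comonad whose coalgebras are graded objects'', which presupposes the conclusion.
\end{itemize}

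\textbf{Repair, following the paper.}
\begin{enumerate}
\item Take $\pi^*$ to be the constant functor. Observe that $\pi^*M$ is weightwise $J$-complete whenever $M$ is.
\item Obtain the completed adjunction from $\Maps_{\Mod_A^\jcomp}((\pi_!N)\cj,M)\simeq \Maps_{\Mod_A}(\pi_!N,M)\simeq \Maps_{\Gr\Mod_A}(N,\pi^*M)$.
\item Compute $\pi_!\pi^*M\simeq\bigoplus_{\ZZ}M\simeq M\otimes_A A[\ZZ]$. The comonad structure comes from the projection formula $\pi_!(\pi^*M\otimes 1)\simeq M\otimes_A \pi_!(1)$, where $\pi_!(1)=A[\ZZ]$ is a Hopf algebra (Moulinos's argument).
\item Complete to obtain $(-)\ctensor_A A[\ZZ]$, using that completion is symmetric monoidal.
\end{enumerate}
Your Barr--Beck steps survive unchanged, since they only involve $\pij$.

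\textbf{A smaller point.} For non-discrete $A$, $\Gr\Mod_A$ is not a derived algebraic context. So lifting $\pi_!\dashv\pi^*$ to $\Gr\dalg_A\rightleftarrows\dalg_A$ needs more than ``$\L\Sym$ of a graded module is graded compatibly''. The paper gets this lift from $\pi_!$ being a morphism of contexts over $\ZZ$, then passes to under-categories of $A$ (Construction~\ref{cons: lifting pi! to animated}).
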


Here the functor $\pij\colon \Gr\Mod_A^\jcomp\to \Mod_A^\jcomp$ is a completed version of the compactly supported pushforward of local systems of spectra which shows up in the theory of parametrised spectra \cite{AndoBlumbergGepner2018ParametrizedThom}. 

In Remarks \ref{remark: remark on diagonalizable coactions} and \ref{remark: continuous coaction diagonalizable schemes} we explain how to generalize mutatis mutandis the result above to the case of a continuous coaction by the bicommutative Hopf algebra $\ZZ[M]$ where $M$ is an arbitrary abelian group.

\subsubsection{Derived algebras on geometric stacks.}

In this section we study the geometric theory of derived algebras. We give a sampling of the results proved in this section.

One of the first results we prove is the following

\begin{theorem}\label{main thm spfbgm}
     For $A$ as in Notation \ref{notation in intro.}, there is a canonical equivalence of $\icats$ $$\dalg(B\G_m\times \spf(A))\simeq \Gr\dalg^\jcomp_A$$ where $\spf(A)$ is taken in the $J$-adic direction. 
\end{theorem}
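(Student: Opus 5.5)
The plan is to compute $\dalg(B\G_m\times \spf(A))$ by descent along the atlas $\spf(A)\to B\G_m\times\spf(A)$, and then to identify the resulting limit with comodules over the completed group algebra. By definition $\dalg(-)$ is right Kan extended from affines. I would first check that it satisfies descent for the relevant flat covers; this follows from the $J$-completely flat locality of $B\mapsto \dalg^\jcomp_B$ in item (6) of the first theorem, together with the identification $\dalg(\spf(B))\simeq \dalg_B^\jcomp$ in item (5). The Čech nerve of $\spf(A)\to B\G_m\times \spf(A)$ has $n$-th term $\G_m^{n}\times\spf(A)=\spf(A[\ZZ^{n}])$, with $J$-completion taken in the base direction. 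This gives
$$\dalg(B\G_m\times\spf(A))\simeq \lim_{[n]\in\Delta}\dalg^\jcomp_{A[\ZZ^{ n}]},$$
a cosimplicial diagram of $J$-complete derived algebras whose coface maps are base changes along the completed Hopf structure maps of $A[\ZZ]$.

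Next I would apply Lurie's comonadic Barr--Beck (via the cosimplicial descent formalism of \cite[\S~4.7.5]{HA}) to this limit. The projection to the zeroth term, $\dalg(B\G_m\times\spf(A))\to\dalg^\jcomp_A$, is then comonadic. Its comonad is computed as $B\mapsto B\ctensor_A A[\ZZ]$, regarded as a derived algebra via the coaction. The hypotheses to verify are: base change along $A\to A[\ZZ]$ (completed) preserves the relevant totalizations of split cosimplicial objects, and the Beck--Chevalley condition holds. Both reduce to the same statements on underlying modules, because the forgetful functor $\dalg^\jcomp\to\Mod^\jcomp$ is conservative and preserves limits and colimits. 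At the module level they follow from Theorem~\ref{thm: intro spfA is modjcomp} and the fact that $A[\ZZ]$ is free, so completed tensoring with it preserves limits of $J$-complete modules (a countable product of $J$-complete modules is $J$-complete).

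On the other side, Theorem~\ref{thm: cts monadicity} already identifies $\Gr\dalg_A^\jcomp$ as comodules for the same continuous comonad $(-)\ctensor_A A[\ZZ]$ on $\dalg_A^\jcomp$, via the adjunction $\pij\dashv\pi^*$. The remaining step is to check that the two comonads on $\dalg^\jcomp_A$ agree as comonads and not merely as endofunctors. For this I would observe that both are obtained from the same coalgebra structure on $A[\ZZ]$. The descent comonad is $p^*p_*$ restricted to derived algebras, where $p\colon\spf(A)\to B\G_m\times\spf(A)$. On quasi-coherent sheaves one has $p_*\cO\simeq \bigoplus_n \cO(n)$, completed, which is exactly $\pij$ composed with the grading. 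A comparison map of comonads can be produced from the functor $\Gr\dalg_A^\jcomp\to\dalg(B\G_m\times\spf(A))$ sending a graded algebra to the associated $\G_m$-equivariant object. Once that map exists, it is an equivalence because it is one on underlying modules.

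The main obstacle I expect is the module-level input: proving $\Qcoh(B\G_m\times\spf(A))\simeq \Gr\Mod_A^\jcomp$, with graded pieces completed individually rather than the direct sum being completed. I would handle it with the same Čech argument. The key point is that comodules over $A[\ZZ]$ in $\Mod_A^\jcomp$ with the completed tensor product split into weight pieces, by the argument of Remarks~\ref{remark: remark on diagonalizable coactions}--\ref{remark: continuous coaction diagonalizable schemes}, and the equivalence is then promoted to derived algebras as above.
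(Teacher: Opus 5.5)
Your proposal follows essentially the paper's proof of Proposition \ref{prop: spf graded dalg bgm}. There, $\dalg(B\G_m\times\spf(A))$ is written as $\Tot(\dalg^\jcomp_{A[\ZZ]^{\ctensor \bullet}})$ via Theorem \ref{flat descent} and Corollary \ref{cor: dalg on spf}, then coaugmented by $\pij\colon \Gr\dalg^\jcomp_A\to\dalg^\jcomp_A$ using the cobar complex of the comonad in Corollary \ref{cor: continuous comonad dalg}. The comonadic descent criterion of Proposition \ref{prop: criteria for descent} is then applied, which is your ``both are comonadic over $\dalg^\jcomp_A$ with the same comonad, compared via the coaugmentation'' argument in packaged form.

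Two of your justifications should be corrected, though neither affects the conclusion. First, the split-totalization hypothesis holds because every functor preserves split totalizations, and a $d^0$-split cosimplicial object is itself split via $s^0$. It does not hold because $(-)\ctensor_A A[\ZZ]$ preserves limits; that fails in general, since completed direct sums do not commute with infinite products. Second, $U^J\colon\dalg^\jcomp_A\to\Mod^\jcomp_A$ preserves limits and sifted colimits, not all colimits.
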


Note that Theorem \ref{main thm spfbgm} is not a formal consequence of the easier statement for $\Qcoh$. See Remark \ref{remark: grdalg is not formal}. Thus this is proved using the continuous comonadicity established in Theorem \ref{thm: cts monadicity}.

In Remark \ref{remark: classifying stacks diagonalizable group schemes}, we will explain how to prove Theorem \ref{main thm spfbgm} in the case one replaces $B\G_m$ with the classifying stack $B\DD(M)$ where $\DD(M)$ is the diagonalizable group scheme  associated to an arbitrary abelian group $M$, e.g. as explained in \cite[\S~2]{Tate1997FiniteFlatGroupSchemes}.

We will use this result along with the notion of quasi-affine morphisms in derived algebraic geometry, which we now recall.

Following \cite{GaitsgoryRozenblyum2017SAGI} we will say that a morphism $f\colon X\to Y$ of derived stacks is quasi-affine if for any animated ring $R$ with a morphism $\spec (R)\to Y$, the base change $X\times_Y \spec(R)$ is representable by a quasi-compact quasi-affine derived scheme (the latter condition meaning that it admits an open immersion into an affine scheme). Then our result (see Proposition \ref{cor: quasi-aff preserves dalg}) is the following:

\begin{theorem}\label{thm: intro quasiaffine embedding}
    Let $f\colon X\to Y$ be a quasi-affine morphism of derived stacks. Then there is an equivalence of $\icats$ $$\dalg(X)\simeq \dalg(Y)_{f_*\cO_X/}.$$
\end{theorem}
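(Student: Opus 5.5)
Since $\dalg(-)$ is right Kan extended from affines, I would first reduce to the case where $Y=\spec(R)$ is affine. For a general $Y$, write $Y\simeq\colim_{\spec R\to Y}\spec R$, so that $\dalg(Y)\simeq\lim_{\spec R\to Y}\dalg_R$. Since $f$ is quasi-affine, each base change $X_R:=X\times_Y\spec(R)$ is a qcqs quasi-affine derived scheme, and $\dalg(X)\simeq\lim_R\dalg(X_R)$ because colimits are universal in stacks. For the right-hand side I need two facts: pushforward along quasi-affine (hence qcqs) maps satisfies base change, giving $(f_*\cO_X)|_{\spec R}\simeq (f_R)_*\cO_{X_R}$; and undercategories commute with limits of $\icats$, giving $\dalg(Y)_{f_*\cO_X/}\simeq\lim_R(\dalg_R)_{(f_R)_*\cO_{X_R}/}$. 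Once I check that the comparison functor is natural in $R$, the statement reduces to the affine case.

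For $Y=\spec R$ the target is $\dalg_B$ with $B:=\Gamma(X,\cO_X)$, and the undercategory $(\dalg_R)_{B/}$ is just $\dalg_B$. I would take the comparison functor to be $f_*\colon\dalg(X)\to\dalg_B$, which is well defined because $f_*$ is lax symmetric monoidal and compatible with the $\L\Sym$ monads. The key input is the classical fact (Lurie, SAG; Gaitsgory--Rozenblyum) that for a quasi-compact quasi-affine $X$ the functor $f_*$ is fully faithful on $\Qcoh$. Its essential image is the $B$-modules $M$ with $M\simeq f_*f^*M$, i.e.\ the modules local for the open $U=X\subset\spec B$. In other words $\Qcoh(X)$ is a smashing (Bousfield) localization of $\Mod_B$, with left adjoint $f^*$.

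Next I would show that this localization lifts to derived algebras. Concretely, I need $f^*$ (computed on modules) to preserve derived algebras, and the unit $M\to f_*f^*M$ to be a map of derived algebras when $M$ is one. Both follow once $f^*\colon\dalg_B\to\dalg(X)$ commutes with the forgetful functors, since $f^*$ is symmetric monoidal and commutes with $\L\Sym$; this compatibility holds by construction on Zariski opens. The adjunction $f^*\dashv f_*$ on $\dalg$ then has counit $f^*f_*\to\id$, which is an equivalence because it is one on underlying modules. Hence $f_*\colon\dalg(X)\to\dalg_B$ is fully faithful.

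The main obstacle is the reverse inclusion. The statement claims every derived $B$-algebra $C$ lies in the image, i.e.\ that $C\to f_*f^*C$ is an equivalence. This is false for bare $B$-modules: take $X$ the punctured plane and $M$ a module supported at the origin. The resolution must use the algebra structure. Because $\Gamma(X,\cO_X)=B$, for the localization $\mathrm{L}=f_*f^*$ we have $\mathrm{L}(B)=B$, and the $B$-module $C$ receives a unit map from $B$. So for a \emph{ring} $C$ I would argue that $C\simeq C\otimes_B B\to C\otimes_B\mathrm{L}B\simeq\mathrm{L}C$, using that the localization is smashing; this exhibits $C$ as local. The step to verify carefully is smashingness, i.e.\ $f_*f^*M\simeq M\otimes_B f_*\cO_X$. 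I expect this to follow from the projection formula for qcqs $f$, but I would check it honestly rather than assume it, since the example above shows that the precise form of the statement hinges on this point.
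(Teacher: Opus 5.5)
Your reduction to affine $Y$ is fine, but the central step rests on a misidentification, and as written the argument is internally inconsistent.

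\textbf{The misidentification.} For $Y=\spec R$ and $X$ quasi-affine, the functor $\Gamma(X,-)\colon\Qcoh(X)\to\Mod_B$, with $B=\Gamma(X,\cO_X)$, is an \emph{equivalence}, not a proper localization. The reason is that $\cO_X$ is a compact generator of $\Qcoh(X)$, so Schwede--Shipley applies. Globally this is the statement $\Qcoh(X)\simeq\Mod_{f_*\cO_X}(\Qcoh(Y))$, i.e.\ Proposition~\ref{prop: GR facts we need}(3), and it is the real non-formal input. (Note also that $f_*$ into $\Mod_R$ is not fully faithful.) Your punctured-plane counterexample conflates $B$ with the connective ring $C=k[x,y]$ in which $X=\A^2\setminus\{0\}$ sits as an open. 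It is $\Mod_C$, not $\Mod_B$, of which $\Qcoh(X)$ is a smashing localization; here $j_*\cO_X=B$ is a non-connective idempotent $C$-algebra. A nonzero module supported at the origin admits no $B$-module structure at all: any $B$-module $M$ is a retract of $M\otimes_C B\simeq j_*j^*M$, which vanishes in this case. For the same reason there is in general no morphism $X\to\spec B$. So your functor $f^*\colon\dalg_B\to\dalg(X)$, defined ``by construction on Zariski opens'', has to be built through $C$.

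\textbf{The inconsistency, and two repairs.} Suppose $\mathrm{L}$ were a smashing localization of $\Mod_B$ with $\mathrm{L}B\simeq B$. Then $\mathrm{L}M\simeq M\otimes_B B\simeq M$ for \emph{every} $B$-module, so the ring structure plays no role and your counterexample could not exist. Indeed, the identity you flag as unverified, $f_*f^*M\simeq M\otimes_B f_*\cO_X$, is in your setup exactly the equivalence $\Qcoh(X)\simeq\Mod_B$ that you declared false. The projection formula says something different: $f_*f^*N\simeq N\otimes_R B$ for $N\in\Mod_R$, which identifies a monad rather than proving smashingness over $B$.

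The proof can be repaired in two ways.
\begin{enumerate}
\item Work over $C$. The adjunction $j^*\dashv j_*$ lifts to $\dalg$ with invertible counit, so $\dalg(X)$ is the full subcategory of local objects in $\dalg_C$. A derived $C$-algebra is local if and only if it admits a map from $B$. Since $B$ is idempotent, $\Maps_{\dalg_C}(B,A)$ is empty or contractible. This gives $\dalg(X)\simeq(\dalg_C)_{B/}\simeq\dalg_B$, which is the correct form of ``using the algebra structure''.
\item Follow the paper. By Proposition~\ref{prop: GR facts we need}, $f_*$ is conservative and colimit-preserving, so $f_*^{\dalg}$ is monadic by Barr--Beck--Lurie. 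The projection formula identifies the monad $f_*^{\dalg}f^{*,\dalg}$ with $-\otimes_{\cO_Y}f_*\cO_X$, whose algebras are $\dalg(Y)_{f_*\cO_X/}$. This route works globally and needs no reduction to affine $Y$.
\end{enumerate}
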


Similar techniques which go in the proof of Theorem \ref{thm: main theorem}, also allow us to deduce a (derived) base change theorem for derived algebras in Proposition \ref{prop: derived base change for algebras}.

We apply these result to quasi-affine stacks over $\bgm$ to obtain results of the following form. Recall that a prism $(A,I)$ is called orientable if the effective Cartier divisor $I\in \Pic(A)$ admits a global generator $d\in I$. See Example \ref{example main thm.}.

\begin{theorem}\label{thm: main theorem}
Let $(A,I)$ be an orientable bounded prism. Fix an orientation $d\in I$. Then there's an equivalence of $\icats$

    $$\fil\dalg^{(p,d)\mathrm{-comp}}_{\fil_{(d)}A}\simeq \dalg(\spf(A[u,t]/(ut-d))/\GG_m)$$ where the formal direction on the right is $(p,d)$-adic.
\end{theorem}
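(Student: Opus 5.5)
The strategy is to combine the two geometric inputs already announced: Theorem \ref{main thm spfbgm}, which describes derived algebras on $\bgm\times\spf(A)$, and Theorem \ref{thm: intro quasiaffine embedding}, which handles quasi-affine morphisms.

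Set $\mathcal{X}:=\spf(A[u,t]/(ut-d))/\G_m$, with $t$ of weight $1$ and $u$ of weight $-1$ (or the opposite convention, matching the paper's conventions for $\agm$). Consider the structure map
$$f\colon \mathcal{X}\to \bgm\times\spf(A),$$
where the formal direction is $(p,d)$-adic on both sides. Since $\spf(A)$ is $(p,d)$-adic and $d=ut$, completing $A[u,t]/(ut-d)$ along $(p,d)$ is the same as completing relative to $\spf(A)$. The first point to verify is that $f$ is affine, hence quasi-affine. After base change along any $\spec(R)\to\bgm\times\spf(A)$, in which $(p,d)$ is nilpotent on $\pi_0(R)$, the fiber is the relative spectrum of the graded algebra $A[u,t]/(ut-d)\otimes_A R$ twisted by the torsor. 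This is affine, and no further completion is needed because $(p,d)$ is already nilpotent.

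Then Theorem \ref{thm: intro quasiaffine embedding} gives
$$\dalg(\mathcal{X})\simeq\dalg(\bgm\times\spf(A))_{f_*\cO_{\mathcal X}/},$$
and Theorem \ref{main thm spfbgm}, with $J=(p,d)$, identifies the right side with $(\Gr\dalg_A^{(p,d)\text{-comp}})_{B/}$. Here $B$ is the graded derived algebra corresponding to $f_*\cO_{\mathcal X}$, namely the graded-piecewise completion of $A[u,t]/(ut-d)$. Because $A$ is bounded and $ut=d$ is a nonzerodivisor, $B$ is discrete. Its graded pieces are $B_n=A\,t^n$ for $n\ge 0$ and $B_{-n}=A\,u^n\simeq d^{\,n}A$ for $n\ge0$ (the latter via $u^n\mapsto u^n t^n=d^n$ after an appropriate identification). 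So $B$ is the Rees algebra of the $d$-adic filtration on $A$, i.e. $B=\mathrm{Rees}(\fil_{(d)}A)$, already $(p,d)$-complete in each degree. This identification should be a direct computation, with $t$ playing the role of the Rees parameter.

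It remains to prove that graded derived algebras under $\mathrm{Rees}(\fil_{(d)}A)$, with complete graded pieces, are the same as completed filtered derived algebras over $\fil_{(d)}A$. The plan is to use the Rees equivalence for derived algebras relative to $A[t]$, i.e. $\fil\dalg\simeq\Gr\dalg_{A[t]/}$; this is the derived-algebra version of Moulinos' $\agm$ theorem. One could alternatively obtain it from the same two theorems applied to $\A^1/\G_m\times\spf(A)\to\bgm\times\spf(A)$, so that
$$\fil\dalg^{J\text{-comp}}_A\simeq\dalg(\A^1/\G_m\times\spf(A))\simeq(\Gr\dalg_A^{J\text{-comp}})_{A[t]/}.$$
Taking undercategories then gives
$$(\Gr\dalg^{J\text{-comp}}_A)_{\mathrm{Rees}(\fil_{(d)}A)/}\simeq(\fil\dalg^{J\text{-comp}}_A)_{\fil_{(d)}A/},$$
and the right side is by definition $\fil\dalg^{(p,d)\text{-comp}}_{\fil_{(d)}A}$.

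I expect the main obstacle to be checking that $f_*\cO_{\mathcal X}$ is really the discrete, degreewise-complete Rees algebra, with no extra derived terms or completion issues, as an object of $\Gr\dalg^{J\text{-comp}}_A$. This requires the boundedness of the prism, so that $(p,d)$-completion of the free graded pieces behaves well, together with the fact that $d$ is a nonzerodivisor. A secondary point is making sure the filtered–graded Rees dictionary is compatible with the $J$-complete monad $(\L\Sym_A)^\wedge_J$ and with the comonadic description from Theorem \ref{thm: cts monadicity}.
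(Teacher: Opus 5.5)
Your proposal is correct and is essentially the paper's argument: the theorem is the case $T=\fil_{(d)}A$ of Corollary~\ref{cor: cts main theorem for dalg}. That corollary combines four ingredients: Proposition~\ref{prop: spf graded dalg bgm}; Proposition~\ref{cor: quasi-aff preserves dalg}; the identification $(p_T^{\gr})_*\cO\simeq\rees(T)$ of Proposition~\ref{prop: cts algebra sended correctly}, which is your ``main obstacle'' and is handled there by base change along the trivial torsor $\spf(A)\to\bgm\times\spf(A)$ and comparison of the cobar coactions; and the Rees equivalence on undercategories. Example~\ref{example main thm.} then supplies $\rees(\fil_{(d)}A)\simeq A[u,t]/(ut-d)$.

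One caveat: your ``alternative'' route to the Rees step through $\agm\times\spf(A)$ is circular. The two geometric theorems only give $\dalg(\agm\times\spf(A))\simeq(\Gr\dalg_A^{J\text{-comp}})_{A[t]/}$, while $\fil\dalg_A^{J\text{-comp}}\simeq\dalg(\agm\times\spf(A))$ is itself deduced in the paper from the Rees equivalence. Also, boundedness of the prism is not actually needed for the completeness of the free rank-one graded pieces.
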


Note that when $(A,I)$ is a perfect prism, then there's a canonical isomorphism 
$$(A/I)^\nyg\simeq \spf(A[u,t]/(ut-d))/\GG_m $$ and since $\dalg$ is functorial in stacks we have 

\begin{corollary}
    There is an equivalence of $\icats$
  $$\fil\dalg^{(p,d)\mathrm{-comp}}_{\fil_{(d)}A}\simeq \dalg((A/I)^\nyg)$$
\end{corollary}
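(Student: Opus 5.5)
The corollary follows by combining Theorem \ref{thm: main theorem} with the identification of $(A/I)^\nyg$ for a perfect prism. Two things need checking: that a perfect prism satisfies the hypotheses of Theorem \ref{thm: main theorem}, and that $\dalg(-)$ turns the isomorphism $(A/I)^\nyg\simeq \spf(A[u,t]/(ut-d))/\GG_m$ into an equivalence of $\icats$.

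First, the hypotheses. A perfect prism $(A,I)$ is bounded, since $A/I$ has bounded $p^\infty$-torsion. It is also orientable in the cases of interest: $A\simeq W(A/I)$ is local over a perfectoid ring, and $I$ is principal for any perfect prism. Concretely, $I=\ker(\theta)$ is generated by a distinguished element $d$. Fixing such an orientation $d\in I$, Theorem \ref{thm: main theorem} gives
$$\fil\dalg^{(p,d)\mathrm{-comp}}_{\fil_{(d)}A}\simeq \dalg(\spf(A[u,t]/(ut-d))/\GG_m),$$
where the formal direction on the right is $(p,d)$-adic.

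Second, the transfer along the isomorphism. By construction, $\dalg(-)$ is right Kan extended to all prestacks, or at least defined functorially on derived stacks via pullback of derived algebras. It therefore sends isomorphisms of stacks to equivalences of $\icats$. The canonical isomorphism $(A/I)^\nyg\simeq \spf(A[u,t]/(ut-d))/\GG_m$ is an isomorphism of fpqc stacks compatible with the $(p,d)$-adic formal structure. Hence pullback along it identifies $\dalg((A/I)^\nyg)$ with the right-hand side above, and composing with the first equivalence gives the corollary.

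The main point needing care is this second step: we must make sure the isomorphism with the syntomic/Nygaard stack is one of stacks on animated rings, the same category on which $\dalg$ was Kan extended, so that no further descent argument is needed. For the Nygaard filtered prismatization of a perfectoid ring this is the standard computation, as in Bhatt's lectures, where $(A/I)^\nyg$ is the Rees stack of the $I$-adic filtration on $A$ with the $(p,I)$-adic formal structure. Granting that citation, everything is formal.

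A minor additional remark: the equivalence depends a priori on the choice of orientation $d$. Any two generators differ by a unit, and the resulting isomorphisms of the Rees stacks are compatible. So the statement is canonical up to this harmless choice, which the corollary does not need to track.
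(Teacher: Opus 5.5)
Your argument is the paper's: combine Theorem~\ref{thm: main theorem} with the canonical isomorphism $(A/I)^\nyg\simeq \spf(A[u,t]/(ut-d))/\GG_m$ for a perfect prism, and use that $\dalg$ is functorial in (pre)stacks. One small slip in your hypothesis check: a perfect prism has $A\simeq W((A/I)^\flat)$, not $W(A/I)$, and $A$ need not be local; still, what you actually need does hold for every perfect prism, namely that $I$ is principal with a distinguished generator and that $A/I$ has bounded $p^\infty$-torsion.
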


In \cite{Sah25Nyg}, for any bounded prism $(A,I)$ we will define the relative Nygaard filtered prismatization following the constructions implicit in \cite[\S~5]{Bhatt22}. We will show that the
relative Nygaard filtered prismatization provides a good theory of coefficients for Nygaard filtered prismatic cohomology. Theorem \ref{thm: main theorem} then plays the main role therein.

\begin{remark}
    From our perspective, the key issues solved in this note are how to work around the non-obvious functoriality of the notion of $\dalg$. As an example of what would be desirable, see Remark \ref{remark: base changes for contexts.}. 
\end{remark}

\subsection{Forthcoming work.}

The tools developed in this note are in service to the the authors' forthcoming work \cite{Sah25Nyg} and \cite{Sah25Syn}. We will explain the relation of our current paper with the ones cited above in  \S~\ref{sec: forthcoming work}.

\subsection{Related work.}
This work is closely related to several recent preprints with different authors, although the results are mostly orthogonal. We have tried our best to represent them herein, and all misconceptions and mistakes in representing the work lie with the current author.

In \cite[Definition 5.1]{Antieau2025FiltrationsCohomologyI} a generalization of the derived algebraic contexts of Raksit are described. Unfortunately they still require compatibility of the $t$-structure with filtered colimits. However, it seems those definitions can be used to simplify the proofs presented in \S~\ref{ssec: generalisation to graded algebras}.

A less precise version of Proposition \ref{theorem: graded dalg bgm} appears as \cite[Remark B.5]{LahotiManam2025CohomologyRingStacks}. The authors state that it follows from their Lemmas B$.2$ and B$.3$, but the current author was unable to reconstruct their reasoning. In any case, our version of Proposition \ref{theorem: graded dalg bgm} is more precise and compatible with \cite{Moulinos2021GeometryOfFiltrations} by construction.

The recent preprint \cite{IshizukaYoshikawa2026DerivedGradedModules}, studies results analogous to those presented in our \S~\ref{ssec: continuous comonad}. Their results are more general in the sense that they work with graded discrete rings which are complete for \emph{homogeneous} ideals. Moreover they work with comodules over the group ring of torsion free $\ZZ$-modules, encompassing examples like $\ZZ[1/p]$ which show up in their theory of graded perfectoid rings \cite{IshizukaYoshikawa2025GradedPerfectoidRings}. It should be possible to extend their results to animated rings using our techniques, but it is not clear to the author if there are any applications of such a theory. However, their paper was an inspiration to extend our classification to (formal) classifying stacks of diagonalizable group schemes.

After a preliminary draft of this work had been prepared, the author learned of the closely related but somewhat orthogonal work   \cite{GardnerHekking2025IdealsDerivedGeometries}. In that paper, the authors study a notion of ideals and completions in their `derived geometric contexts' of \cite{BenBassatHekking2025BlowUpsNormalBundles}, which (to the author's understanding) is a geometric generalization of the `derived algebraic contexts' of Raksit. The paper \cite{GardnerHekking2025IdealsDerivedGeometries}, in particular works with a notion of $\cC_{\geq 0}$-stacks, where $\cC$ is a derived algebraic context and $\cC_{\geq 0}$ is the connective part. They then study a notion of ideals and completeness in this setting and give several deep applications to their notion of \emph{derived geometries}. This is in contrast to our work, where we explicitly aim to set up a working theory of non-connective formal geometry with applications to prismatic cohomology. The main difference between their and our results is that they study quasi-coherent sheaves in this generality, which when specialized to the context of $\Mod_{\ZZ}$ studies completeness of modules over $\dalg_\ZZ$, while we study complete objects in $\dalg_{\ZZ}$ itself. We refer the reader to \S~$1$ of their paper for a more in depth explanation of their results and technique than the author is capable of giving.

\subsection{Organization of the note.}
The note is organized as follows. Fix an animated ring $A$ and an ideal $J\subset \pi_0(A)$. In \S~\ref{sec: jcomp setting} we study a notion of $J$-complete derived algebras over $A$. The main observation is that $\dalg_A^\jcomp$ is a localizaing subcategory of $\dalg_A$ and that when one defines $\spf(A)$ as a prestack on all animated rings, with the formal direction being $J$-adic, then $\dalg(\spf(A))=\dalg_A^\jcomp.$ In \S~\ref{sec: graded dalg and comonads} we study $J$-complete graded derived algebras over the animated ring $A$ and prove various results which show that the $\icat$ $\Gr\dalg_A$ and $\Gr\dalg_A^\jcomp$ are comonadic over $\dalg_A$ and $\dalg_A^\jcomp$ in various settings. 
In \S~\ref{sec: dalg on geomstacks} we study the notion of $\dalg$ on geometric (formal) stacks, by which we loosely mean those admitting a representable faithfully flat affine morphism from representable (formal) stacks. It is in this section where we apply the theory of the previous sections to prove the classification theorems.

\subsection{Notation and conventions.}\label{notations and conventions}
We will freely use the language of $\infty$-categories and higher algebra as developed in  \cite{HTT} and \cite{HA} respectively. Our notation and conventions will thus reflect this. 

\subsubsection{}Geometric functors like $f_*$ and $\Gamma(X,-)$ will always be derived. Similarly, the symmetric monoidal functor on any symmetric monoidal $\infty$-category will simply be denoted $\otimes$.

\subsubsection{}We denote by $\mathcal{S}$ the $\infty$-category of spaces, or equivalently $\infty$-groupoids, or equivalently animae. We denote by $\mathrm{Sp}$ the $\infty$-category of spectra, and by $\mathbf{S}$, the sphere spectrum .

\subsubsection{} \label{infty nonsense}For the descent results we will need $\catinf$, the $\infty$-category of all $\infty$-categories. We will also need $\mathrm{Pr}^L$ the $\infty$-catgory of all presentable $\infty$-categories with colimit preserving functors between them, and its opposite category $\Pr^R$ which has the same objects but limit preserving accessible functors. $\Pr^L$ is a symmetric monoidal $\infty$-category equipped with a Lurie tensor product $\cC\otimes \cD$ for any $\infty$-categories $\cC, \cD\in \mathrm{Pr}^L.$  Thus we will also contemplate $\Calg(\Pr^L)$, which is equivalently the $\icat$ of of presentably symmetric monoidal $\icats$.

\subsubsection{} $\aring$ denotes the $\infty$-category of animated rings and if $A$ is an animated ring then we denote $\aring_A$ the under category  $\aring_{A/}$. 

\subsubsection{}\label{forgetful} We denote by $\Calg_\ZZ$ denotes the $\infty$-category of $\einf$-algebras in $\Sp$. There is a functor $\Theta\colon \aring\to \Calg_\ZZ$ which sends an animated ring $A$ to its underlying $\einf$-ring $A^\circ.$ 

\subsubsection{}
The $\infty$-category of modules $\Mod_A$ is defined as the $\infty$-category $\Mod_{A^\circ.}$
The $\infty$-category $\Mod_A$ is symmetric monoidal and we write the tensor product as $\otimes_A$; in particular even when $A$ is discrete, the tensor product is always derived. $\Mod_A$ carries a natural $t$-structure compatible with $\otimes_A$ such that an $A$-module is (co)connective if and only if its image along the forgetful functor $\Mod_A\to \mathrm{Sp}$ is (co)connective. We denote by $(\Mod_A)_{\geq 0}$ (resp. $(\Mod_A)_{\leq 0}$) the connective (resp. coconnective) parts of $\Mod_A$. 

\subsubsection{} Let $\cC$ and $\cD$ be $\infty$-categories with $t$-structures. A functor $F\colon \cC\to \cD$ is \emph{left} $t$-exact (resp. \emph{right} $t$-exact) if it preserves coconnective (resp. connective) objects. It is $t$-exact if it is both left and right $t$-exact.

\subsubsection{}\label{quotients} Let $A$ be an animated ring and $f\in \pi_0A$. Then we write $A/\!\!/f$ for the tensor product $\ZZ\otimes_{\ZZ[x]} A$, where the latter is the algebra obtained by freely setting $f=0$ in $A$ \footnote{Note that this is not independent of the choice of generators. For example $A/\!\!/0=A\oplus A[1]\neq A$ unless $A$ itself is the zero ring.}. This is also equivalent as a module to $\cofib(A\xrightarrow{f} A).$ For a sequence of elements $(f_1,\ldots, f_r)$ we write $A/\!\!/(f_1,\ldots, f_r)$ for the tensor product $A \otimes_{\ZZ[x_1,\ldots,x_r]} \ZZ$. As a module this is equivalent to $\bigotimes_{i=1}^r \cofib(A\xrightarrow{f_i}A).$ For an $A$-module $M$, we denote by $M/\!\!/(f_1,\ldots, f_r)$ the tensor product $M\otimes_A A/\!\!/(f_1,\ldots, f_r).$

\subsubsection{}
On occasion it would be necessary to take quotients of discrete rings $A$ by sequences $f_1,\ldots, f_r\in A$. In this case we write $A/(f_1,\ldots, f_r)$ for the quotient. Similarly if $M$ is a discrete $A$-module and $(f_1,\ldots, f_r)$ is a sequence of elements of $A$ then the classical quotient is $M/(f_1,\ldots, f_r).$ Note that even if $A$ is discrete, $A/\!\!/(f_1,\ldots, f_r)\neq A/(f_1,\ldots, f_r)$ and the latter is the Koszul complex on the sequence $(f_1,\ldots, f_r).$

%\subsubsection{}\label{torsion} Let $A$ be an animated ring  and let $J\subset \pi_0(A)$ be a finitely generated ideal. A module $M\in \Mod_A$ is called $J$-torsion if $J$ acts locally nilpotently on $\pi_*(M).$ Let $\Mod_A^{J\text{-}\mathrm{tors}}$ denote the full subcategory of $\Mod_A$ spanned by $J$-torsion modules. This is a colocalizing subcategory of $\Mod_A$ and the inclusion $\Mod_A^{J\text{-}\mathrm{tors}}\into \Mod_A$ admits a right adjoint $\Gamma_J(-)\colon \Mod_A\to \Mod_A^{J\text{-}\mathrm{tors}}$ which we call local cohomology since it agrees with the classical notion. Note that this is a natural candidate for full subcategory of modules supported on the topological space $|\mathrm{Spec} (\pi_0(A)/J)|$.

\subsubsection{ }\label{complete} Let $A$ be an animated ring  and let $J\subset \pi_0(A)$ be a finitely generated ideal. We say a module $M\in \Mod_A$ is $J$-complete if after choosing generators $(f_1,\ldots, f_r)=J$, the canonical map $M\to \lim_n M/\!\!/(f_1^n,\ldots, f_r^n)$ is an isomorphism. This is independent of the choice of generators. We let  $\Mod_A^{J\text{-}\mathrm{comp}}$ the full subcategory $\Mod_A^{J\text{-}\mathrm{comp}}$ of $\Mod_A$ spanned by $J$-complete modules. This is a localizing subcategory of $\Mod_A$ and the left adjoint to the inclusion $\Mod_{A}^{J\text{-}\mathrm{comp}}\into \Mod_A$ is given by completion $(-)^\wedge_J\colon \Mod_A\to \Mod_{A}^{J\text{-}\mathrm{comp}}.$ %Note that this is another candidate for modules supported on $|\spec \pi_0(A)/J|$. 

%\subsubsection{}\label{mgm} The categories of \ref{torsion} and \ref{complete} are equivalent by May-Greenless-Matlis duality. The orthogonal complement of both these categories is given by $\Mod_A^{J\text{-}\mathrm{loc}}$, the full subcategory spanned by modules $M$ where each $x\in J$ acts invertibly on $\pi_*M.$

\subsubsection{}\label{flatness} Suppose $M\in \Mod_A$. Then recall from \cite[Definition 7.2.2.10]{HA} that  is called flat if $\pi_0M$ is a flat $\pi_0(A)$ module and the natural action map $\pi_*(A)\otimes_{\pi_0(A)}\pi_0(M)\to \pi_*(M)$ is an isomorphism of $\ZZ$-graded abelian groups (in particular, $M$ is connective). It is faithfully flat if it is flat and $\pi_0M$ is a faithfully flat $\pi_0A$-module.

\subsubsection{}\label{jcompflat}For $J\subset \pi_0(A)$ finitely generated, we say that a module $M\in \Mod_A$ is $J$-completely (faithfully) flat if after choosing generators $J=(f_1,\ldots, f_r)$ the module $M/\!\!/(f_1,\ldots, f_r)$ is a (faithfully) flat module over $A/\!\!/(f_1,\ldots, f_r)$. This is equivalent to $M\otimes_A(-)$ being $t$-exact on $\Mod_A^{J\text{-}\mathrm{tors}}$, the $\icat$ of $J$-torsion $A$ modules, for the induced $t$-structure from $\Mod_A$ and thus is independent of choice of generators.

\subsubsection{}
Faithfully flat maps over an animated ring $A$ form a Grothendieck topology on $(\aring_A)^\mathrm{op}$ which we call the fpqc topology and denote $(\aring_{A})_{\mathrm{fpqc}}.$ We will use that the assignment $A\mapsto \Mod_A$ is a (hyper)sheaf on $(\aring_{A})_{\mathrm{fpqc}}.$

\subsubsection{}
A prestack over an animated ring $A$ is simply an accessible functor $X\colon \aring_A\to \calS$. A stack is an accessible hypersheaf  $X\colon (\aring_A)_{\fpqc}\to \calS.$ While we will work in the generality of prestacks, the objects of interest will be genuine stacks.

\subsubsection{}\label{right kan qcoh.}
The category of quasi-coherent sheaves on a prestack $X\colon \aring_A\to \calS$ is defined by right Kan extension by the formula 

$$\Qcoh(X)=\lim_{\spec(R)\to X} \Mod_R$$ where the limit is over all animated rings with a map to $X.$ The $\icat$ $\Qcoh(X)$ carries a $t$-strucutre wherein a module is connective if and only if it is after pullback to $\Mod_R$ for all $R$ appearing in the limit diagram. The co-connective part is defined by orthogonality and does not yield an easy description except in specific cases. 

%\subsubsection{}\label{geometric stack t-structure.}
%We say that a stack $X$ is a geometric stack if there is a faithfully flat\footnote{We warn the reader that \emph{faithfully flat} here doesn't mean an effective epimorphism for the flat topology. A representable morphism of stacks $X\to Y$ is faithfully flat if, after base change from an animated ring $\spec(R)\to Y$, the induced map $\spec(R)\times_Y X\to \spec(R)$ is faithfully flat in the sense of usual derived geometry. } representable affine morphism $p\colon \spec(R)\to X$ from an animated ring $R.$ Then,it follows that a complex $\cF\in \Qcoh(X)$ is (co)connective if and only if its pullback $p^*\cF\in \Mod_R$ is (co)connective. This follows from item $(3)$ of \cite[Proposition 3.4.17]{Lurie2011DAGVIII} and with the same proof.

\subsubsection{}\label{set theory} We state our conventions on set theory. In the main body of the paper the only point at which we need to contend with set theory is when computing limits for right Kan extensions as in the formula in \ref{right kan qcoh.}. However this will not be an issue as in our cases of interest, the limits will always be computed on a small category indexed by the non-negative integers. On the other hand, in Appendix \ref{appendix: lsym on prestacks}, we need to work with arbitrary prestacks and in that case we will work with prestacks valued in the $\infty$-category $\widehat{\calS}$ of large spaces, defined as in \cite[Remark 1.2.16.4]{HTT}.

\subsection{Acknowledgements.}

We thank Benjamin Antieau, Ryo Ishizuka, Aise Johan de Jong, Dmitry Kubrak, Akhil Mathew, Tasos Moulinos, Joshua Mundinger, Sasha Petrov and Chris Xu for useful correspondences or conversations.
In particular, Dmitry Kubrak explained one of the results in their forthcoming work with Shizhang Li \cite{KubrakLiKGn_inprep}, which is then used in our Appendix \ref{appendix: lsym on prestacks}, also mostly explained to us by Kubrak, as well as gave some useful comments on an earlier draft.
We also thank Maxwell Johnson for several conversations about $\icats$ over the years, which gave the author the confidence to pursue the topics herein.

As always, we are very grateful to our advisor Kiran Kedlaya for support, encouragement and the freedom to pursue our interests without which this paper would not exist in any form. The author was partially supported by NSF grant DMS-$2401536$
and from S.E. Warschawski Professorship under Kiran Kedlaya.

\section{Derived algebras on formal spectra.}\label{sec: jcomp setting}

\begin{notation}\label{notation: jcomp}
    For the rest of this section, fix an animated ring $A$ and $J\subset \pi_0(A)$ a finitely generated ideal.
\end{notation}
\begin{goal} \label{goal: purpose of dalgj}
 We are interested in defining an $\infty$-category $\dalg_A^\jcomp$ of $J$-complete derived $A$-algebras with the following properties

    \begin{enumerate}
\item $\dalg_A^\jcomp$ is the full subcategory of $\dalg_A$ consisting of derived algebras whose underlying modules are $J$-complete. 
    \item $\dalg_A^\jcomp$ is presentably symmetric monoidal and the inclusion functor $\dalg_A^\jcomp\into \dalg_A$ is a localization and the right adjoint $(-)^\wedge_J\colon \dalg_A\to \dalg_A^\jcomp$ is given by completion at the level of modules in the sense of \ref{complete}.
    \item The forgetful functor $\dalg_A^\jcomp\to \Mod_A^\jcomp$ is monadic with left adjoint $(-)^\wedge_{J}\circ \L\Sym_A$. 
    \item The forgetful functor $\dalg_A^\jcomp\to \Calg_A^\jcomp$ preserves all colimits.
    \item When we consider $\spf(A)$, defined as the subfunctor of $\spec(A)$, sending $B\in \aring$ to the subspace of $\Maps(A,B)$ annihilating some power of $J\subset \pi_0(A)$, then $\dalg(\spf(A))$, defined via the general procedure of right Kan extensions, agrees with $\dalg_A^\jcomp$.
    \item On the category of animated $A$-algebras, the functor $B\mapsto \dalg_B^\jcomp$ satisfies $J$-completely flat descent in the sense of \ref{jcompflat}.
\end{enumerate}
\end{goal}

In the next few subsections we will show that the naive definition of $\dalg_A^\jcomp$ as those derived algebras which satisfy property $(1)$ above accomplishes all the goals highlighted in Goal \ref{goal: purpose of dalgj}. We prove the first $5$ in this section and the last one is established in the next one.

\subsection{$J$-complete derived algebras.}

\begin{definition}\label{def: dalgj 1}
    We define $\dalg_A^\jcomp$ as the full subcategory of $\dalg_A$ consisting of derived algebras where the underlying functor $U\colon \dalg_A\to \Mod_A$ admits a factorisation $\dalg_A\xrightarrow{U^J}\Mod_A^\jcomp\to \Mod_A$ where the last functor is the canonical inclusion of $J$-complete $A$-modules.
\end{definition}

We begin by giving another definition of $\dalg_A^\jcomp$ which makes the functorialities of the above constructions apparent.

\begin{lemma}\label{lem: pullback def dalgj}
    There is a canonical pullback diagram in $\Pr^R$ the $\infty$-category of presentable $\infty$-categories along with right adjoint functors
    \[\begin{tikzcd}[cramped]
	{\dalg_A^{\jcomp}} & {\dalg_{A}} \\
	{\Mod_A^{\jcomp}} & {\Mod_A}
	\arrow[from=1-1, to=1-2]
	\arrow[from=1-1, to=2-1]
	\arrow[from=1-2, to=2-2]
	\arrow[from=2-1, to=2-2]
\end{tikzcd}\] where the bottom horizontal arrow is the inclusion $\Mod_A^{\jcomp}\into \Mod_A$ and the right vertical arrow is $U\colon \dalg_A\to \Mod_A.$
\end{lemma}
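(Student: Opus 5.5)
The plan is to use the fact that limits in $\Pr^R$ are computed in $\catinf$: the forgetful functor $\Pr^R\to\catinf$ preserves small limits (HTT, Theorem 5.5.3.18). Granting this, the proof has two steps. First, I will show that the fiber product $\Mod_A^\jcomp\times_{\Mod_A}\dalg_A$ formed in $\catinf$ is the full subcategory $\dalg_A^\jcomp$ of Definition \ref{def: dalgj 1}. Second, I will check that the cospan actually lives in $\Pr^R$, so that the $\catinf$ pullback is also the $\Pr^R$ pullback.

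For the identification in $\catinf$, the key point is that the inclusion $\Mod_A^\jcomp\into\Mod_A$ is fully faithful. Fully faithful functors are stable under pullback in $\catinf$, so the map $\Mod_A^\jcomp\times_{\Mod_A}\dalg_A\to\dalg_A$ is fully faithful. Its essential image consists of those $B\in\dalg_A$ for which $U(B)$ lies in the essential image of $\Mod_A^\jcomp$, i.e. $U(B)$ is $J$-complete. This is precisely the condition in Definition \ref{def: dalgj 1}. Indeed, a factorisation $U^J$ through a full subcategory is unique if it exists, so the definition imposes a property of $B$ and no extra structure.

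For the second step, I need to show each object and each functor in the cospan is a morphism in $\Pr^R$.
\begin{enumerate}
\item $\Mod_A$, $\Mod_A^\jcomp$ and $\dalg_A$ are presentable. For $\Mod_A^\jcomp$, this holds because it is an accessible localization of $\Mod_A$ (\ref{complete}). For $\dalg_A$, it holds because it is the category of modules over an accessible monad on a presentable category (Raksit).
\item The inclusion $\Mod_A^\jcomp\into\Mod_A$ is a right adjoint, with left adjoint $(-)^\wedge_J$. It is accessible, since $J$-completeness is detected by a countable limit of the functors $M\mapsto M/\!\!/(f_1^n,\dots,f_r^n)$, so the inclusion is closed under sufficiently filtered colimits.
\item $U\colon\dalg_A\to\Mod_A$ preserves limits and sifted colimits, because $\L\Sym_A$ preserves sifted colimits; in particular it is accessible. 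It also has the left adjoint $\L\Sym_A$.
\end{enumerate}
So both legs of the cospan are morphisms in $\Pr^R$, and the pullback exists in $\Pr^R$ and agrees with the $\catinf$ pullback.

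The step I expect to need the most care is the bookkeeping that the top and left arrows out of $\dalg_A^\jcomp$ are themselves right adjoints, so that the square is genuinely a diagram in $\Pr^R$. Abstractly this follows from the pullback property. Concretely, I would exhibit the adjoints directly: the left adjoint of $\dalg_A^\jcomp\into\dalg_A$ should be $B\mapsto B^\wedge_J$ on underlying modules. That requires knowing that completion of a derived algebra carries a derived algebra structure, i.e. that $(-)^\wedge_J$ is compatible with the $\L\Sym$ monad. I would not prove that here. Instead, I would deduce the existence of the left adjoints from presentability via the adjoint functor theorem, and postpone their explicit identification as completion to later results.
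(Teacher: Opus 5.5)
Your proposal is correct and follows the paper's route. The paper likewise identifies the $\catinf$ pullback with the full subcategory of Definition \ref{def: dalgj 1}: the pullback of the fully faithful inclusion $\Mod_A^\jcomp\into\Mod_A$ is fully faithful, and its image consists exactly of derived algebras with $J$-complete underlying module. Your second step, checking that the cospan lies in $\Pr^R$ and that $\Pr^R\to\catinf$ preserves small limits (HTT 5.5.3.18), is bookkeeping the paper leaves implicit; it also settles your last worry, since the legs of a limit cone in $\Pr^R$ are automatically right adjoints.
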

\begin{proof}
     Let us write $\cC$ for the $\icat$ of Definition \ref{def: dalgj 1} and $\cC'$ for the pullback in the lemma. Note that the functor $\cC'\to \dalg_A$ is full faithful being pulled back along a fully faithful functor $\Mod_A^\jcomp\to \Mod_A$. 
    Therefore the canonical functor $\cC\to \cC'$ induced by the universal property of $\cC'$ is fully faithful as the composite $\cC\to \cC'\to \dalg_A$ is fully faithful. It remains to check that the functor is essentially surjective. To show this it remains to check that a pair $(R, M, \sigma)$ consisting of $R\in \dalg_A, M\in \Mod_A^\jcomp$ and an isomorphism $\sigma\colon M\to R$ in $\Mod_A$ comes from $\cC.$ But $\cC$ consists of derived algebras whose underlying modules are $J$-complete, thus upto isomorphism we may pick $(R, U(R), \id_R)$ where $U\colon \dalg_A\to \Mod_A$ is the forgetful functor.
\end{proof}

\begin{proposition}\label{prop: dalgj is presentable}
    The $\infty$-category $\dalg_A^\jcomp$ satisfies the following properties
    \begin{enumerate}
        \item $\dalg_A^\jcomp$ is a presentable $\icat$ and therefore admits all limits and colimits. 
        \item The inclusion $\dalg_A^\jcomp\into \dalg_A$ is a fully faithful right adjoint, and therefore admits a left adjoint $L\colon \dalg_A\to \dalg_A^\jcomp.$ In other words it is a localizing subcategory of $\dalg_A^\jcomp.$
        \item $\dalg_A^\jcomp$ inherits a symmetric monoidal structure from $\dalg_A$ given by $L(R\otimes_A S)$ and so that $L\colon \dalg_A\to \dalg_A^\jcomp$ is symmetric monoidal.
    \end{enumerate}
\end{proposition}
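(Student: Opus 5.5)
The plan is to use Lemma \ref{lem: pullback def dalgj} throughout and deduce each property from the corresponding property of the pullback in $\Pr^R$.

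\emph{Part (1).} The $\infty$-category $\Pr^R$ is closed under limits in $\catinf$ (\cite[Theorem 5.5.3.18]{HTT}), and all three corners of the square in Lemma \ref{lem: pullback def dalgj} are presentable:
\begin{itemize}
\item $\dalg_A$ is presentable, since it is monadic over $\Mod_A$ for an accessible monad.
\item $\Mod_A^\jcomp$ is presentable, since it is an accessible localization of $\Mod_A$.
\end{itemize}
Both functors into $\Mod_A$ are accessible right adjoints. For the inclusion $\Mod_A^\jcomp\into\Mod_A$, the left adjoint is completion; $\Mod_A^\jcomp$ is closed under limits and $\kappa$-filtered colimits for $\kappa$ large enough, because completion is a countable limit of tensor products. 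For $U$, it preserves limits and sifted colimits. Hence the pullback $\dalg_A^\jcomp$ is presentable, and the projection $\dalg_A^\jcomp\to\dalg_A$ is a morphism in $\Pr^R$, i.e.\ an accessible limit-preserving functor.

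\emph{Part (2).} Fully faithfulness is already recorded in the proof of Lemma \ref{lem: pullback def dalgj}: the inclusion is a pullback of the fully faithful functor $\Mod_A^\jcomp\into\Mod_A$. Since it lies in $\Pr^R$, the adjoint functor theorem gives a left adjoint $L$. The statement's wording ``localizing subcategory of $\dalg_A^\jcomp$'' should read ``of $\dalg_A$''. Concretely, $L$ is the reflective localization at those maps $f$ with $U(f)$ a $J$-adic equivalence. I will also verify directly that $U(LR)\simeq U(R)^\wedge_J$. The natural approach is to show that $U(R)^\wedge_J$ carries a derived algebra structure making $R\to R^\wedge_J$ the unit. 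To get this, I will write completion as $\lim_n R\otimes_A A/\!\!/(f_1^n,\dots,f_r^n)$. Each $A/\!\!/(f^n)$ is an animated ring, so each term is a derived algebra (a pushout in $\dalg_A$). The limit is then computed in $\dalg_A$, since $U$ preserves limits. Its universal property against $J$-complete derived algebras follows from the module-level universal property together with fully faithfulness. This identification is really Goal (2) and may be deferred, but it is the cleanest route.

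\emph{Part (3).} I will use the criterion \cite[Proposition 2.2.1.9]{HA}: a localization $L$ of a symmetric monoidal $\infty$-category is compatible with the symmetric monoidal structure if, for every $L$-equivalence $f\colon R\to R'$ and every $S$, the map $f\otimes_A S$ is again an $L$-equivalence. The coproduct $\otimes_A$ in $\dalg_A$ has underlying module the tensor product of modules, since $U$ is symmetric monoidal. By the description above, $L$-equivalences are the maps that are equivalences after $J$-completion, equivalently after $-\otimes_A A/\!\!/(f_1,\dots,f_r)$. Tensoring with $S$ clearly preserves this class. The criterion then gives the symmetric monoidal structure $L(R\otimes_A S)$ on $\dalg_A^\jcomp$ with $L$ symmetric monoidal.

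The main obstacle is the description of $L$-equivalences in (2), i.e.\ that the left adjoint is module-level completion. Everything else is formal.
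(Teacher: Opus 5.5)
Your proposal is correct and follows essentially the paper's route: (1) and (2) are deduced from the pullback square of Lemma \ref{lem: pullback def dalgj} in $\Pr^R$ exactly as in the paper, and the identification of $L$ with $\lim_n R\otimes_A A/\!\!/(f_1^n,\ldots,f_r^n)$ that you fold into (2) is what the paper proves separately (via \cite[Proposition 5.2.7.4]{HTT}) right afterwards in Lemma \ref{lem: adjoint is completion}. For (3) you use the same key input as the paper, namely that module-level $J$-equivalences (maps with $J$-local fiber) are stable under $-\otimes_A S$; you package it through the compatible-localization criterion of \cite[Proposition 2.2.1.9]{HA} rather than the paper's direct check that $L(R\otimes_A S)\simeq L(LR\otimes_A LS)$ for the cocartesian structure, and you are right that ``localizing subcategory of $\dalg_A^\jcomp$'' should read ``of $\dalg_A$''.
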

\begin{proof}
    \begin{enumerate}
        \item It follows from Lemma \ref{lem: pullback def dalgj} that $\dalg_A^\jcomp$ is presentable as the pullback diagram is taken in $\Pr^R$ and can be computed after forgetting to $\catinf.$ Since a presentable $\icat$ has all limits and colimits, the second conclusion follows.
        \item The first claim can be checked in two ways, either from Definition \ref{def: dalgj 1} or from the pullback diagram in Lemma \ref{lem: pullback def dalgj}. In the latter case note that pullbacks of fully faithful functors remain fully faithful. The inlcusion is a right adjoint since the pullback diagram is taken in $\Pr^R$ and therefore admits a left adjoint $L\colon \dalg_A\to \dalg_A^\jcomp.$

        \item Recall from \cite[Remark 5.2.7.5]{HTT} that colimits in $\dalg_A^\jcomp$ can be computed in $\dalg_A$ and then completed. This allows us to induce a cocartesian monoidal structure of \cite[Definition 2.4.0.1]{HA} on $\dalg_A^\jcomp$, which by the above is given by $L(R\otimes_A S).$ The part about symmetric monoidality follows from the fact that for any derived $A$-algebra $R$ the functor $R\to L(R)$ is an $J$-complete equivalence at the level of module. In particular the fiber (as modules) is $J$-local. But as explained in \cite[Remark 4.2.6]{Lur11}, if $M$ is an $J$-local $A$-module then $N\otimes_A M $ is $J$-local. it follows that the canonical morphism of derived algebras $L(R\otimes_A S)\to L(L(R)\otimes_A L(S))$ is an equivalence.
    \end{enumerate}
\end{proof}

Our goal now is to prove that the right adjoint $L\colon \dalg_A\to \dalg_A^\jcomp$ can be computed at the level of modules.

\begin{construction}\label{cons: completion def}[A formula for completion.]
Fix generators $J=(f_1,\ldots, f_r).$

For each $n>0$ consider the derived $A$-algebra $$A_n:=A\qq(f_1^n,\ldots, f_r^n)$$ of \ref{quotients}.

Now let $R$ be a derived $A$-algebra. We will define a $J$-complete derived $A$-algebra $R^\wedge_J$ as follows. 

Since tensor products of derived $A$-algebras remain derived $A$-algebras we set $$R_n=R\otimes_A A_n.$$
Then set
\begin{equation}\label{eq: completion def}
    R^\wedge_J=\llim_n R_n.
\end{equation}

Since $\dalg_A$ has all limits, and they are computed at the level of modules, we see that $R^\wedge_J$ is a derived algebra. Further from Definition \ref{def: dalgj 1} we see that $R^\wedge_J$ is clearly $J$-complete, for example from \ref{complete}.

There is a canonical morphism of derived $A$-algebras
\begin{equation}\label{eq: can map to for completion}
  R\to  R^\wedge_J  
\end{equation}

\end{construction}

We next show that the procedure in Construction \ref{cons: completion def} provides an explicit formula for the completion.

\begin{lemma}\label{lem: adjoint is completion}
    The canonical morphism $R\to R^\wedge_J$ of Equation \ref{eq: can map to for completion} exhibits the target as a left adjoint to the inclusion $\dalg_A^\jcomp\to \dalg_A.$
\end{lemma}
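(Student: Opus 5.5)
We must show that for every $S\in\dalg_A^\jcomp$, precomposition with the map $R\to R^\wedge_J$ of Equation \eqref{eq: can map to for completion} induces an equivalence
$$\Maps_{\dalg_A}(R^\wedge_J,S)\to\Maps_{\dalg_A}(R,S).$$
Since Proposition \ref{prop: dalgj is presentable} already provides a left adjoint $L$, it suffices to identify $R^\wedge_J$ with $L(R)$ compatibly with the unit maps.

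The plan is to show that the morphism $R\to R^\wedge_J$ is an $L$-equivalence, i.e.\ that $L(R)\to L(R^\wedge_J)=R^\wedge_J$ is an equivalence. The target is already $J$-complete by Construction \ref{cons: completion def}, so $L(R^\wedge_J)\simeq R^\wedge_J$. Once $R\to R^\wedge_J$ is an $L$-equivalence, the universal property of localization gives the claimed equivalence on mapping spaces.

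The key step is to describe the class of $L$-equivalences in $\dalg_A$. I claim that any map $\phi\colon R\to R'$ of derived algebras whose underlying map of modules is a $J$-adic equivalence (i.e.\ $\fib(\phi)$ is $J$-local) is an $L$-equivalence. To prove this, I would show directly that for $S$ $J$-complete the map $\phi^*\colon\Maps(R',S)\to\Maps(R,S)$ is an equivalence, by writing $R$ and $R'$ as colimits of free algebras via the monadic bar resolution. The bar resolution gives $R\simeq|\L\Sym_A^{\bullet+1}(R)|$, reducing the problem to free algebras $\L\Sym_A(M)\to\L\Sym_A(M')$ with $M\to M'$ a $J$-adic equivalence. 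For free algebras, $\Maps(\L\Sym_A M,S)=\Maps_{\Mod_A}(M,S)$, and this is invariant under $J$-adic equivalences $M\to M'$ because $S$ is $J$-complete.

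The subtle point is that the levelwise terms $\L\Sym^{k}_A(R)\to\L\Sym^k_A(R')$ must themselves be $J$-adic equivalences. So one needs that $\L\Sym_A$, and its iterates, preserve $J$-adic equivalences. I expect this to be the main obstacle, since $\L\Sym^n$ on nonconnective objects is defined by right-left extension and is not obviously compatible with $J$-local fibers. To handle it, I would use that $J$-adic equivalences are exactly the maps that become equivalences after $-\otimes_A A_n$ for all $n$, together with base change for the filtered monad: $\L\Sym_A(M)\otimes_A A_n\simeq \L\Sym_{A_n}(M\otimes_A A_n)$. Given this base change, the claim follows immediately.

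Finally, $R\to R^\wedge_J$ is a $J$-adic equivalence on underlying modules, because limits in $\dalg_A$ are computed in $\Mod_A$. Its underlying module is $\lim_n R\otimes_A A_n$, which is the module-level $J$-completion by \ref{complete}, and the comparison map is the completion map. Combining this with the key step completes the proof.
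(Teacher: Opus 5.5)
Your argument is correct, but it takes a different route from the paper's. The paper never analyzes $\L\Sym_A$. It regards $R\mapsto \lim_n R\otimes_A A_n$ as an endofunctor of $\dalg_A$, which lifts module-level completion because tensor products with $A_n$ and limits in $\dalg_A$ are computed on underlying modules. It then invokes the criterion of \cite[Proposition 5.2.7.4]{HTT}: an endofunctor $L$ with a transformation $\id\to L$ that becomes an equivalence after applying $L$ and on objects of the form $L(C)$ is a localization. The required equivalences are checked after the conservative forgetful functor $U$, where they reduce to $M\simeq\lim_n M/\!\!/(f_1^n,\ldots,f_r^n)$ for $J$-complete $M$.

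You instead verify the universal property directly on mapping spaces via the monadic bar resolution. This forces you to prove an extra input: that $U\circ\L\Sym_A$ and its iterates preserve $J$-adic equivalences. Your reduction of this is sound: you combine the base change $\L\Sym_A(M)\otimes_A A_n\simeq\L\Sym_{A_n}(M\otimes_A A_n)$ with the fact that $J$-adic equivalences are detected by $-\otimes_A A_n$ (indeed $n=1$ suffices).

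Your route buys a stronger intermediate statement. Every morphism of derived $A$-algebras that is a $J$-adic equivalence on underlying modules is inverted by the localization. Moreover, the abstract existence of $L$ from Proposition \ref{prop: dalgj is presentable} becomes unnecessary, since the mapping-space equivalence plus completeness of $R^\wedge_J$ already exhibits the unit. The paper's route buys brevity: it is purely formal once one knows $U$ is conservative and commutes with $-\otimes_A A_n$ and with limits.
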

\begin{proof}
       Note that each ring $A_n$ in Construction \ref{cons: completion def} is $J$-nilpotent and therefore $J$-complete. A similar statement holds for $R\otimes_A A_n.$ Thus the algebras $R\otimes_A A_n$ are all $J$-complete. Now limits of $J$-complete derived algebras remain $J$-complete as explained in the proof of Proposition \ref{prop: dalgj is presentable}. Thus we see that $R^\wedge_J$ is a $J$-complete derived algebra. 

    Now as explained in the proof of \cite[Proposition 5.2.7.4]{HTT} we may check that for any $J$-complete derived algebra $R$, the canonical map $R\to R^\wedge_J$ is an isomorphism. For this we may forget to $\Mod_A$ in which case we are reduced to checking the following fact: Let $M$ be in $\Mod_A^\jcomp$, then the canonical morphism $M\to \llim_n M/\!\!/(f_1^n,\ldots, f_r^n)$ is an isomorphism. But this is well known and may be deduced, for example, from \cite[Lemma 3.4.12]{BhattScholze2015ProEtale}, after specializing to the punctual $\infty$-topos $\mathcal{S}$ of spaces and noting that the argument in \emph{loc.cit.} works formally for animated rings.
\end{proof}

\begin{notation}
    In light of Lemma \ref{lem: adjoint is completion}, we will henceforth denote the left adjoint $L\colon \dalg_A\to \dalg_A^\jcomp$ by $(-)^\wedge_J$ \emph{with the understanding} that the universal property of left adjoints shows that it is independent of the set of generators.
\end{notation}

We thank Ben Antieau for suggesting the statement of the next corollary.

\begin{corollary}\label{cor: dalgj is right adjointable}
 The diagram in Lemma \ref{lem: pullback def dalgj} is left adjointable. More precisely, the following diagram 
\[\begin{tikzcd}[cramped]
	{\dalg_A^\jcomp} & {\dalg_A} \\
	{\Mod_A^\jcomp} & {\Mod_A}
	\arrow["{U^J}"', from=1-1, to=2-1]
	\arrow["{(-)^\wedge_J}"', from=1-2, to=1-1]
	\arrow["U", from=1-2, to=2-2]
	\arrow["{(-)^\wedge_J}"', from=2-2, to=2-1]
\end{tikzcd}\]

canonically commutes.
\end{corollary}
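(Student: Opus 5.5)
We need to show the natural Beck--Chevalley transformation $(-)^\wedge_J\circ U\Rightarrow U^J\circ(-)^\wedge_J$ of functors $\dalg_A\to\Mod_A^\jcomp$ is an equivalence. This transformation is the mate of the commutative square of right adjoints in Lemma \ref{lem: pullback def dalgj}. Concretely, for $R\in\dalg_A$ it is the unique map of $J$-complete modules
\[
U(R)^\wedge_J\longrightarrow U^J(R^\wedge_J)
\]
factoring the map $U(R)\to U(R^\wedge_J)$ obtained by applying $U$ to the unit $R\to R^\wedge_J$.

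The plan is to compute both sides explicitly using Construction \ref{cons: completion def} and Lemma \ref{lem: adjoint is completion}. Fix generators $J=(f_1,\ldots,f_r)$. By Lemma \ref{lem: adjoint is completion}, the left adjoint is $R\mapsto \lim_n R\otimes_A A_n$ with $A_n=A/\!\!/(f_1^n,\ldots,f_r^n)$. We then check that $U$ commutes with each ingredient.

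First, $U$ preserves limits, since it is a right adjoint (and limits in $\dalg_A$ are computed on underlying modules). Second, $U(R\otimes_A A_n)\simeq U(R)\otimes_A A_n$. This holds because the tensor product of derived $A$-algebras, i.e.\ the coproduct in $\dalg_A$, has underlying module the tensor product of underlying modules: the forgetful functor $\dalg_A\to\Calg_A$ is symmetric monoidal, as in \cite{Rak20}. Combining these gives
\[
U(R^\wedge_J)\simeq \lim_n U(R)/\!\!/(f_1^n,\ldots,f_r^n),
\]
and the right-hand side is the formula for the module completion $U(R)^\wedge_J$ from \ref{complete}.

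It remains to check that this identification is the Beck--Chevalley map, not merely some abstract equivalence. Both maps come from the compatible maps $R\to R\otimes_A A_n$. Under $U$ these become the canonical maps $U(R)\to U(R)/\!\!/(f_1^n,\ldots,f_r^n)$, which induce the module completion map. The unit $U(R)\to U(R)^\wedge_J$ is therefore carried to $U$ of the algebra unit. By the universal property of $(-)^\wedge_J$ on $\Mod_A$, the factorization through $U(R)^\wedge_J$ is unique, so the mate is exactly this identification and hence an equivalence.

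The main obstacle is bookkeeping rather than mathematics: identifying the mate transformation with the explicit comparison. Care is needed that the symmetric monoidal compatibility of $U$ is natural in $n$, so that the maps $R\otimes_A A_n\to R\otimes_A A_{n-1}$ go to the transition maps of the Koszul tower.
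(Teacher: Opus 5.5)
Your proposal is correct and takes the same approach as the paper. The paper's proof only says the claim follows from Lemma \ref{lem: adjoint is completion} by inspecting the formula $R^\wedge_J=\lim_n R\otimes_A A_n$. You spell out what that inspection requires: $U$ preserves limits and the tensor products $R\otimes_A A_n$, naturally in $n$, and the resulting identification is the Beck--Chevalley map by the universal property of module completion.
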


\begin{proof}
    This follows immediately from Lemma \ref{lem: adjoint is completion} by examining the explicit formula for completion.
\end{proof}

\begin{corollary}\label{cor: dalgj to modj is symm monoidal}
    The functor $U^J\colon \dalg_A^\jcomp\to \Mod_A^\jcomp$ is symmetric monoidal.
\end{corollary}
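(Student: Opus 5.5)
The plan is to show that $U^J$ is compatible with the two symmetric monoidal structures. On $\dalg_A^\jcomp$ the tensor product is $L(R\otimes_A S)=(R\otimes_A S)^\wedge_J$ (Proposition \ref{prop: dalgj is presentable}(3)). On $\Mod_A^\jcomp$ it is the completed tensor product $(M\otimes_A N)^\wedge_J$, characterized by the requirement that completion $(-)^\wedge_J\colon \Mod_A\to\Mod_A^\jcomp$ be symmetric monoidal; this uses the same fact, \cite[Remark 4.2.6]{Lur11}, that $J$-local modules form a $\otimes$-ideal.

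First I will recall that $U\colon \dalg_A\to\Mod_A$ is symmetric monoidal. This holds because the tensor product of derived algebras is computed on underlying modules, as used in Construction \ref{cons: completion def}: the coproduct in $\dalg_A$ is the relative tensor product, and it is sent to $\otimes_A$.

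Next I will form the composite $(-)^\wedge_J\circ U\colon \dalg_A\to\Mod_A^\jcomp$. It is symmetric monoidal, being a composite of two symmetric monoidal functors. By Corollary \ref{cor: dalgj is right adjointable}, this composite is identified with $U^J\circ(-)^\wedge_J$. The functor $L=(-)^\wedge_J$ on derived algebras is a symmetric monoidal localization (Proposition \ref{prop: dalgj is presentable}(3)). By \cite[Proposition 2.2.1.9]{HA} and the universal property of symmetric monoidal localizations, a symmetric monoidal functor out of $\dalg_A$ that inverts $L$-equivalences factors essentially uniquely as a symmetric monoidal functor out of $\dalg_A^\jcomp$. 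The composite $(-)^\wedge_J\circ U$ inverts $L$-equivalences, since these are exactly the maps that become equivalences after completing underlying modules, by Corollary \ref{cor: dalgj is right adjointable}. The induced symmetric monoidal functor agrees with $U^J$ on objects, because restricting along the inclusion recovers $U^J$ (complete modules are fixed by completion). Hence $U^J$ acquires a symmetric monoidal structure.

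Concretely, the structure map on a pair $R,S$ is the canonical map
\[(U^J R\otimes_A U^J S)^\wedge_J\to U^J\big((R\otimes_A S)^\wedge_J\big).\]
This map is an equivalence by Corollary \ref{cor: dalgj is right adjointable} applied to $R\otimes_A S$. The unit is $A^\wedge_J$ in both categories.

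The main obstacle is coherence, i.e.\ producing an honest symmetric monoidal functor rather than just a pointwise-compatible one. I will handle it as described above, through the universal property of the localization \cite[Proposition 2.2.1.9]{HA}, rather than by building the higher coherences by hand. The key input to check there is that $U\circ$ preserves the class of $L$-local equivalences, which Corollary \ref{cor: dalgj is right adjointable} supplies.
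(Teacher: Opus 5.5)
Your proposal is correct and takes the same route as the paper. The paper also deduces the claim from three inputs: the commuting square of Corollary~\ref{cor: dalgj is right adjointable}, the symmetric monoidality of completion from Proposition~\ref{prop: dalgj is presentable}(3), and the symmetric monoidality of $U\colon \dalg_A\to\Mod_A$. Your appeal to the universal property of symmetric monoidal localizations just makes explicit the coherence that the paper's one-line proof leaves implicit.
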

\begin{proof}
    This follows again from Corollary \ref{cor: dalgj is right adjointable}, along with point $(3)$ in Proposition \ref{cor: dalgj to modj is symm monoidal} and the fact that $U\colon \dalg_A\to \Mod_A$ is symmetric monoidal.
\end{proof}

\begin{notation}
    In light of Corollary \ref{cor: dalgj to modj is symm monoidal} we will now denote the tensor product of two derived $A$-algebras $R$ and $S$ by $R\widehat{\otimes}_A S$ as no confusion can arise. 
\end{notation}

\begin{construction}\label{cons: functor to calg}
    Note that the $\icat$ of $J$-complete $\einf$-algebras over $A$ also sits in the pullback diagram in $\Pr^R$
\[\begin{tikzcd}[cramped]
	{\Calg_A^\jcomp} & {\Calg_A} \\
	{\Mod_A^\jcomp} & {\Mod_A}
	\arrow[from=1-1, to=1-2]
	\arrow[from=1-1, to=2-1]
	\arrow[from=1-2, to=2-2]
	\arrow[from=2-1, to=2-2]
.\end{tikzcd}\] 
Note that this gives $\Calg_A^\jcomp$ a cocartesian monoidal structure admitted by $J$-completing the usual tensor product in $\Calg_A.$
Using from point $(c)$ of \cite[Notation 4.2.28]{Rak20}, that there is a forgetful `underlying' $\einf$-algebra functor $\Theta\colon \dalg_A\to \Calg_A$ which is symmetric monoidal, conservative and preserves all limits and colimits.

Thus, by the pullback description of $\dalg_A^\jcomp$ in Lemma \ref{lem: pullback def dalgj}, there is an evident forgetful functor $\Theta^J\colon \dalg_A^\jcomp\to \Calg_A^\jcomp$ fitting into a commutative diagram

\[\begin{tikzcd}[cramped]
	{\dalg_A^{\jcomp}} & {\dalg_A} \\
	{\Calg_A^\jcomp} & {\Calg_A}
	\arrow[from=1-1, to=1-2]
	\arrow["{\Theta^J}"', from=1-1, to=2-1]
	\arrow["\Theta", from=1-2, to=2-2]
	\arrow[from=2-1, to=2-2]
\end{tikzcd}\]
where the horizontal arrows the canonical inclusions.
\end{construction}

\begin{proposition}\label{prop: map to calgj is symm monoidal}
    The functor $\theta^J\colon \dalg_A^\jcomp\to \Calg_A^\jcomp$ is conservative, preserves all limits and colimits and is symmetric monoidal. 
\end{proposition}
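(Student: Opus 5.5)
The plan is to deduce everything from the corresponding properties of $\Theta\colon \dalg_A\to \Calg_A$ (Raksit, \cite[Notation 4.2.28]{Rak20}), together with the fact that both $\dalg_A^\jcomp$ and $\Calg_A^\jcomp$ are reflective subcategories whose reflectors are computed on underlying modules. The commutative square in Construction \ref{cons: functor to calg} is the bridge between the two settings.

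Conservativity and limits are immediate. Both inclusions $\dalg_A^\jcomp\into \dalg_A$ and $\Calg_A^\jcomp\into\Calg_A$ are fully faithful right adjoints. Hence limits in either complete category are computed in the ambient category, which is the underlying-module statement of Lemma \ref{lem: adjoint is completion} and Proposition \ref{prop: dalgj is presentable}. Since $\Theta$ preserves limits and the square commutes, $\Theta^J$ preserves limits. Conservativity follows because a map in $\dalg_A^\jcomp$ is an equivalence if and only if it is one in $\dalg_A$, and $\Theta$ is conservative.

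For colimits, take a diagram $R_\bullet$ in $\dalg_A^\jcomp$. Its colimit is $(\colim^{\dalg_A} R_\bullet)^\wedge_J$. The key step is to show that $\Theta$ commutes with $J$-completion, i.e. that the following square of left adjoints commutes:
\[\begin{tikzcd}[cramped]
	{\dalg_A} & {\Calg_A} \\
	{\dalg_A^\jcomp} & {\Calg_A^\jcomp}
	\arrow["\Theta", from=1-1, to=1-2]
	\arrow["{(-)^\wedge_J}"', from=1-1, to=2-1]
	\arrow["{(-)^\wedge_J}", from=1-2, to=2-2]
	\arrow["{\Theta^J}"', from=2-1, to=2-2]
\end{tikzcd}\]
I would prove this by computing both sides with Construction \ref{cons: completion def}. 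On the derived side, $R^\wedge_J=\lim_n R\otimes_A A_n$. Since $\Theta$ is symmetric monoidal and preserves limits, we get $\Theta(R^\wedge_J)\simeq \lim_n \Theta(R)\otimes_A\Theta(A_n)$. The same argument as in Lemma \ref{lem: adjoint is completion}, run in $\Calg_A$, identifies this limit with the $J$-completion in $\Calg_A^\jcomp$: the limit is complete, and the comparison map is a $J$-equivalence on underlying modules. Equivalently, the canonical map $\Theta(R)\to\Theta(R^\wedge_J)$ is a map to a $J$-complete object whose underlying module map is a $J$-equivalence, so it is the localization.

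With the square in hand, $\Theta^J$ carries the colimit to $(\colim^{\Calg_A}\Theta R_\bullet)^\wedge_J$, which is exactly the colimit in $\Calg_A^\jcomp$. Symmetric monoidality follows in the same way. The tensor products are $L(R\otimes_A S)$ on both sides, by Proposition \ref{prop: dalgj is presentable}(3) and its analogue for $\Calg_A^\jcomp$. The square above, together with the symmetric monoidality of $\Theta$, gives $\Theta^J(R\widehat\otimes_A S)\simeq \Theta^J R\widehat\otimes_A\Theta^J S$ naturally.

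Coherence can also be handled abstractly. Both structures are cocartesian, so a colimit-preserving functor is automatically symmetric monoidal (\cite[Proposition 2.4.3.8]{HA} style). I would use this to avoid checking coherences by hand.

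The main obstacle is the compatibility of $\Theta$ with completion in the square above. In particular, one must know that $J$-equivalences are detected on underlying modules uniformly in both settings, so that the two reflections agree. The rest is formal.
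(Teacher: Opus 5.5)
Your proposal is correct and follows essentially the same route as the paper. Conservativity and limits come from the commuting square with $\Theta$ and the reflectivity of the complete subcategories. Colimits come from the compatibility $\Theta^J\circ(-)^\wedge_J\simeq(-)^\wedge_J\circ\Theta$; the paper cites Corollary \ref{cor: dalgj is right adjointable} together with the fact that completion of $\mathbf{E}_\infty$-algebras is computed on underlying modules, while you check it by hand with the formula of Construction \ref{cons: completion def}. Symmetric monoidality is then deduced, as in the paper, from colimit preservation between cocartesian monoidal structures.
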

\begin{proof}
It is clear that the functor $\theta^J$ is conservative. The functor $\Theta^J\colon \dalg^\jcomp\to \Calg^\jcomp$ is right adjoint as it is induced entirely in $\Pr^R$ and hence preserves all limits. It remains to show the case of colimits. There is an evident commutative diagram 

\[\begin{tikzcd}[cramped]
	{\dalg_A^{\jcomp}} & {\dalg_A} \\
	{\Calg_A^\jcomp} & {\Calg_A}
	\arrow["{\Theta^J}"', from=1-1, to=2-1]
	\arrow["{(-)^\wedge_J}"', from=1-2, to=1-1]
	\arrow["\Theta", from=1-2, to=2-2]
	\arrow["{(-)^\wedge_J}"', from=2-2, to=2-1]
\end{tikzcd}\]
obtained from the right adjointability of the second diagram in Construction \ref{cons: functor to calg}. This right adjointability can be seen from Corollary \ref{cor: dalgj is right adjointable} and because completions of $\einf$-algebras can be computed at the level of modules by \cite[Remark 4.2.6]{Lur11}. 

For the purpose of this proof let us denote the inclusion $\dalg_A^\jcomp\into \dalg_A$ by $i_{\dalg}$ and the inclusion $\Calg_A^\jcomp\into \Calg_A$ by $i_{\Calg}.$
Now let $F\colon K\to \dalg_A^\jcomp$ be a colimit diagram. 
Then we have that the colimit in $\dalg_A^\jcomp$ is computed by $(\colim_K(i_\dalg(F))^{\wedge}_J$. 
On the other hand note that $$\Theta(\colim_K (i_\dalg(F)))=\colim_K(\Theta\circ i_{\dalg}(F))=\colim_K(i_{\Calg}\Theta^J(F))$$ where the first equality comes because, as recalled in Construction \ref{cons: functor to calg}, $\Theta$ commutes with all colimits, and the second equality follows because of the commutativity of the second diagram in Construction \ref{cons: functor to calg}. 

Completing the rightmost expression we obtain $\colim_K(i_\Calg\Theta^J(F))^\wedge$ which is the colimit of the diagram $\Theta^J\circ F\colon K\to \Calg_A^\jcomp$. But this agrees with $\Theta^J((\colim_K(i_\dalg(F))^{\wedge}_J)$ by commutativity of the displayed diagram in this proof.

Lasty, the symmetric monoidality of $\Theta^J$ follows by the fact that it preserves colimits and both the source and target are equipped with their respective cocartesian monoidal structure.
\end{proof}

\begin{remark}
    Let $R\leftarrow S\rightarrow T$ be a span in $\dalg_A^\jcomp$. Then Proposition \ref{prop: map to calgj is symm monoidal} implies that the pushout can be computed as $R\ctensor_S T$ and agrees with the completed tensor product of the underlying $\einf$-algebras. 
\end{remark}

We can now establish functoriality of the assignment on animated $A$-algebras which sends $B\mapsto \dalg_B^\jcomp.$

\begin{remark}\label{remark: functoriality of dalgj}
     Let $B\to C$ be a map of animated $A$-algebras. Then the functor which sends $\dalg_B^\jcomp\to \dalg_C^\jcomp$ is given by $R\mapsto R\ctensor_B C.$ Indeed it suffices to observe that the completed tensor product of a $J$-complete $A$-algebra $R$ with another (possibly non-complete) $A$-algebra $T$ is $J$-complete. But this follows because $J$-completion is symmetric monoidal as explained in point $(3)$ of Proposition \ref{prop: dalgj is presentable}.
\end{remark}

We end by establishing the monadicity of the forgetful functor $U^J\colon \dalg_A^\jcomp \to \Mod_A^\jcomp.$ 

\begin{proposition}\label{prop: dalgj is monadic.}
    The forgetful functor $U^J\colon \dalg_A^\jcomp\to \Mod_A^\jcomp$ is monadic and the adjoint is given by $(\L\Sym_A)\cj:=(-)^\wedge_J\circ \L\Sym_A.$
\end{proposition}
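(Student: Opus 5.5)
We first produce the left adjoint by composing adjunctions, and then verify the hypotheses of the Barr--Beck--Lurie theorem \cite[Theorem 4.7.3.5]{HA} for $U^J$.

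\emph{The left adjoint.} For $M\in\Mod_A^\jcomp$ and $R\in\dalg_A^\jcomp$ we have natural equivalences
\[\Maps_{\dalg_A^\jcomp}\bigl((\L\Sym_A M)^\wedge_J,R\bigr)\simeq\Maps_{\dalg_A}(\L\Sym_A M,R)\simeq\Maps_{\Mod_A}(M,U(R))\simeq\Maps_{\Mod_A^\jcomp}(M,U^J(R)).\]
The first uses Lemma \ref{lem: adjoint is completion}. The second is the free--forgetful adjunction for $\dalg_A=\Mod_{\L\Sym_A}(\Mod_A)$. The third holds because $\Mod_A^\jcomp\into\Mod_A$ is fully faithful. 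Hence $(-)^\wedge_J\circ\L\Sym_A$, restricted to $\Mod_A^\jcomp$, is left adjoint to $U^J$. By Corollary \ref{cor: dalgj is right adjointable}, the underlying module of this left adjoint applied to $M$ is $(U\L\Sym_A M)^\wedge_J$. So the associated monad on $\Mod_A^\jcomp$ is $M\mapsto(\L\Sym_A M)^\wedge_J$, which justifies the notation $(\L\Sym_A)\cj$.

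\emph{Conservativity.} A morphism in $\dalg_A^\jcomp$ that becomes an equivalence under $U^J$ also becomes one under $U$, since $U$ factors as $U^J$ followed by the fully faithful inclusion. Since $U$ is conservative (it is monadic), $U^J$ is conservative.

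\emph{$U^J$-split geometric realizations.} This is the step I expect to be the main obstacle, because colimits in both $\dalg_A^\jcomp$ and $\Mod_A^\jcomp$ are computed by completing the colimit taken in the ambient category. Let $R_\bullet$ be a simplicial object of $\dalg_A^\jcomp$ whose image $U^J R_\bullet$ is split in $\Mod_A^\jcomp$. A split simplicial object stays split under any functor, so $U R_\bullet$ is split in $\Mod_A$. By monadicity of $U$, the realization $|R_\bullet|$ exists in $\dalg_A$ and $U$ preserves it. Its underlying module is therefore the colimit of a split simplicial diagram, which is the augmentation term $M_{-1}$. Since $M_{-1}$ lies in $\Mod_A^\jcomp$, the realization $|R_\bullet|$ is already $J$-complete. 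Being a colimit in $\dalg_A$ that lies in the full subcategory $\dalg_A^\jcomp$, it is also the colimit there, as the completion $(-)^\wedge_J$ fixes it. Its image under $U^J$ is $M_{-1}$, which is the colimit in $\Mod_A^\jcomp$ because split colimits are absolute.

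Barr--Beck--Lurie then gives monadicity. As an alternative route to the last step, one could note that $U^J$ preserves all colimits, since it is the composite of the colimit-preserving functor $\Theta^J$ (Proposition \ref{prop: map to calgj is symm monoidal}) with the forgetful functor $\Calg_A^\jcomp\to\Mod_A^\jcomp$, which preserves sifted colimits after completion. The split-simplicial argument above avoids needing that.
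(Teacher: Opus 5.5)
Your proof is correct. The construction of the left adjoint through the chain of mapping-space equivalences, and the conservativity argument, are the same as in the paper. Where you diverge is the geometric-realization step.

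\textbf{The paper's route.} It uses the coarse form of Barr--Beck--Lurie and shows that $U^J$ preserves \emph{all} geometric realizations. It writes the realization in $\dalg_A^\jcomp$ as the $J$-completion of the realization in $\dalg_A$. It then moves the completion past the forgetful functor using Corollary \ref{cor: dalgj is right adjointable}, and uses that $U$ preserves sifted colimits. The result is recognized as the completed realization in $\Mod_A$, which is the realization in $\Mod_A^\jcomp$.

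\textbf{Your route.} You use the refined criterion, which only concerns $U^J$-split simplicial objects. You observe that for these the realization computed in $\dalg_A$ already has $J$-complete underlying module, namely the augmentation term. So it already lies in $\dalg_A^\jcomp$, and no completion ever has to be commuted past anything. This argument is purely formal. It works whenever a full subcategory is cut out as the preimage, under a monadic functor, of a full subcategory of the target, provided the restricted functor has a left adjoint. In your proof you only need Corollary \ref{cor: dalgj is right adjointable} to identify the underlying endofunctor of the monad.

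\textbf{What each buys.} The paper's version gives the stronger statement that $U^J$ preserves geometric realizations of arbitrary simplicial objects, not just $U^J$-split ones. Yours gives exactly what monadicity requires, with fewer inputs.
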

\begin{proof}
    We will use the coarse Barr-Beck-Lurie criteria of \cite[Theorem 4.7.0.3]{HA}. First note that as already established $U^J\colon \dalg_A^\jcomp\to \Mod_A^\jcomp$ is a right adjoint between presentable $\icats$ and so it admits a left adjoint. We will discuss the left adjoint soon. But first we verify the other criteria in the monadicity theorem. Note that $U^J$ is conservative by construction. $\dalg_A^\jcomp$ is presentable hence admits geometric realizations. It suffices to show that $U^J$ preserves all geometric realizations. We will follow the same strategy as in the proof of \ref{prop: map to calgj is symm monoidal}.
    To this end denote by $i_\dalg$ the inclusion $\dalg_A^\jcomp\to \dalg_A$ and by $i_\Mod$ the inclusion $\Mod_A^\jcomp\to \Mod_A.$
Now let $X_\bullet\in \dalg^\jcomp$ be a simplicial object and $|X_\bullet|$ be its geometric realiszation. Then note first that $$U^J|X_\bullet|=U^J((|i_\dalg(X)^\bullet|)^\wedge_J)=(U|i_\dalg(X)^\bullet|)\cj,$$ where the first equality is by the definition of colimits in $\dalg_A^\jcomp$ and the second by the commutativity of the diagram in Corollary \ref{cor: dalgj is right adjointable} wherein $U^J\circ (-)\cj\simeq (-)\cj\circ U.$
Further note that $U\colon \dalg_A\to \Mod_A$ commutes with all sifted colimits by point $(c)$ of \cite[Notation 4.2.28]{Rak20}. Thus $$(U|i_\dalg(X)^\bullet|)\cj=|U\circ i_{\dalg}(X)^\bullet|\cj.$$
Now by commutativity of the defining diagram in Lemma \ref{lem: pullback def dalgj} we have $U\circ i_\dalg \simeq i_\Mod\circ U^J$ whence we obtain that $$|U\circ i_{\dalg}(X)^\bullet|\cj=|i_\Mod \circ U^J(X)|\cj.$$
But the right hand side is precisely the definition of $|U^J(X^\bullet)|$ as desired.

It remains to compute the monad which means for us to compute the left adjoint. To this end note that if $R$ is a $J$-complete derived $A$-algebra then we have the following equalities for any $M\in \Mod_A^\jcomp$

$$\Maps_{\Mod_A^\jcomp}(M,R)=\Maps_{\Mod_A}(M,R)=\Maps_{\dalg_A}(\L\Sym(M),R)$$ where the first equality is because $\Mod_A^\jcomp$ is fully faithful inside $\Mod_A$, the second one is because $\L\Sym\colon \Mod_A\to \dalg_A$ is a left adjoint. Now since completion is left adjoint we conclude that the righmost mapping space is equivalent to $\Maps_{\dalg_A^\jcomp}(\L\Sym(M)\cj,R)$ and therefore we have a canonical equivalence of mapping spaces
    $$\Maps_{\Mod_A^\jcomp}(M,R)\simeq\Maps_{\dalg_A^\jcomp}(\L\Sym(M)\cj,R).$$
\end{proof}

\begin{remark}\label{remark: base changes for contexts.}
Note that the notion of a \emph{derived algebraic context} due to Raksit \cite[Defintion 4.2.1]{Rak20} while very useful is somewhat strict of a notion. For example in the sequel we would like to rely on results presented in this section but for $\Gr\Mod_A$ and $\filmod_A$ respectively. It would be very desireable to clarify the theory as follows
\begin{enumerate}
    \item A derived algebraic context $\cC$ is $\Mod_\ZZ$-linear object in $\Pr^L$. It would be very useful to make the notion flexible enough to obtain functoriality for the base changes $\cC\otimes_{\Mod_\ZZ}\Mod_A.$
    \item Moreover it would be desireable to obtain the results in this section via base changes of the $\Mod_A$-linear category $\cC\otimes_{\Mod_\ZZ}\Mod_A$ along $\Mod_A\to \Mod_A^\jcomp$ where the map is given by completion. 
\end{enumerate}
Both of the results above would imply that one can do the completed theory presented here rather formally even in the case of completed graded and filtered $A$-modules, a notion we will introduce piecemeal in the sequel.
\end{remark}

\subsection{Derived algebras on $J$-adic formal spectra.}

We continue with the notation established in Notation \ref{notation: jcomp}.
Our goal is the following

\begin{goal}\label{goal: prestack dalg}
    Consider the prestack $\spf(A)\colon \aring_\fpqc \to \mathcal{S}$ which sends any animated ring $B$ to the summand of $\maps_{\aring}(A,B)$ consisting of maps which annihilate some power of $J\subset \pi_0(A)$. 
    Our goal is to show that when we define $\dalg(\spf(A))$ by right Kan extensions, then there is an equivalence of $\icats$
    $$\dalg(\spf(A))=\dalg_{A}^\jcomp.$$
\end{goal}

We will achieve Goal \ref{goal: prestack dalg} by first establishing the analgous statement for (the right Kan extension of) $\Qcoh(-)$. The result will then follow from conservativity of the forgetful functor.

\begin{remark}\label{remark: two cons of dalg are equivalent}
In the forthcoming Construction \ref{cons: right kan extension of dalg}, we define $\dalg(X)$ on a prestack $X$ simply by Kan extensions. 
However, one may consider the coCartesian fibration $\Mod\to \aring$ and note that  \cite[Remark 4.2.29]{Rak20} also implies that one has an endo-transformation $\L\Sym$ of the coCartesian fibration $\Mod\to \aring$ i.e. it preserves all coCartesian arrows. This gives a natural endofunctor $\L\Sym_X\in \End(\Qcoh(X))$ for any prestack $X$, which one can check by hand to be monadic (or by using more precise results of Kubrak-Li presented in Construction \ref{cons: Kubrakli}). Then one may contemplate the difference between $\Mod_{\L\Sym_X}(\Qcoh(X))$ and $\dalg(X).$ Indeed the definition via the global monad $\L\Sym_X$ is useful in calculations, as done in \cite[\S~2]{petrov_de_rham}, while that of Construction \ref{cons: right kan extension of dalg} is useful in proving results like our Corollary \ref{cor: dalg on spf}.
In fact we will show that both these constructions are equivalent in our Appendix \ref{appendix: lsym on prestacks}, using ideas of D. Kubrak.
\end{remark}

\begin{construction}[Derived algebras on prestacks]\label{cons: right kan extension of dalg}
    For any animated ring $R\in \aring$, the assignment $R\mapsto \dalg_R$ forms a $\catinf$-valued functor by \cite[Remark 4.2.29]{Rak20}. It follows by applying the $\icat$ Grothendieck construction in the form of Lurie's unstraightening \cite[Theorem 3.2.0.1]{HTT} that one gets a coCartesian fibration $\dalg\to \aring$. Using the dual of \cite[Proposition 2.7.6.]{Lurie2011DAGVIII} in the form of Remark $2.7.8$ of \emph{loc.cit.}, we may \emph{define} $\dalg(X)$ for any prestack $X$ on $\aring$ via the formula
    $$\dalg(X)=\lim_{\spec(R)\to X} \dalg_R.$$

    This definition has the following features. 

    \begin{enumerate}
        \item For any stack $X$, there is a forgetful functor $U_X\colon \dalg(X)\to \Qcoh(X)$ which is monadic by Lemma \ref{lem: qcoh to dalg is monadic}.
        \item The functor $U_X\colon \dalg(X)\to \Qcoh(X)$ factors over $\Calg(X):=\Calg(\Qcoh(X))$ is conservative and preserves all limits and colimits.
        \item Let $L_X\colon \Qcoh(X)\to \dalg(X)$ be the left adjoint. Then the monad $U_X\circ L_X$ on $\Qcoh(X)$ is given by the global monad $\L\Sym_X$ explained in Remark \ref{remark: two cons of dalg are equivalent} (or more precisely in Construction \ref{cons: Kubrakli}) by Proposition \ref{prop: kubrak prop}.
    \end{enumerate}

\end{construction}

We now need an analogue of \cite[Lemma 8.1.2.2]{Lur18} for animated rings complete along a finitely generated ideal. In the setting of \emph{derived} algebraic geometry this lemma can be substantially simplified.

\begin{lemma}[\label{lem: luries lemma on spf}]
Let $A$ be an animated ring and $J\subset \pi_0(A)$ a finitely generated ideal. Then there exists an $\mathbf{Z}_{>0}$-indexed tower $\ldots \to A_4\to A_3\to A_2\to A_1$ in the $\infty$-category $\aring_A$ with the following properties
    \begin{enumerate}
        \item The maps $A_{i+1}\to A_i$ induce a surjection on $\pi_0(A_{i+1})\to \pi_0(A_i)$ with nilpotent kernel.
        \item For every animated ring $B$, the canonical map $$\colim_n\maps_{\aring}(A_n,B)\to \maps_{\aring}(A,B) $$ induces a homotopy equivalence of the source with the summand consisting of those maps $A\to B$ which annihilate some power of $J$ i.e. with $\spf (A)$.
        \item Each of the animated rings $A_n$ are perfect when regarded as an $A$-module.
    \end{enumerate}
\end{lemma}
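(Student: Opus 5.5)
We take $A_n := A/\!\!/(f_1^n,\ldots,f_r^n)$, with generators $J=(f_1,\ldots,f_r)$ fixed as in Construction \ref{cons: completion def}. Concretely, $A_n = A\otimes_{\ZZ[x_1,\ldots,x_r]}\ZZ$, where $x_i\mapsto f_i^n$ and $x_i\mapsto 0$ in $\ZZ$. The transition maps $A_{n+1}\to A_n$ come from the maps of polynomial rings $\ZZ[y_1,\ldots,y_r]\to\ZZ[x_1,\ldots,x_r]$ with $y_i\mapsto x_i^{n+1}$ versus $x_i^n$. More cleanly, we write $A_n = A\otimes_{\ZZ[t_1,\ldots,t_r]}\ZZ[t_1,\ldots,t_r]/(t_1^n,\ldots,t_r^n)$ with $t_i\mapsto f_i$. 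The latter ring is flat over $\ZZ$ and is the Koszul quotient by $t_i^n$. The tower is then induced by the surjections $\ZZ[t]/(t^{n+1})\to\ZZ[t]/(t^n)$, which is functorial in $\aring_A$.

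\emph{Property (3).} As an $A$-module, $A_n \simeq \bigotimes_i \cofib(A\xrightarrow{f_i^n}A)$. This is the base change along the regular sequence $t_i^n$ in $\ZZ[t]$, so it is a finite tensor product of cofibers of maps between free rank-one modules, hence perfect.

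\emph{Property (1).} Since $\pi_0$ is right exact, $\pi_0(A_n)=\pi_0(A)/(f_1^n,\ldots,f_r^n)$. The map $\pi_0(A_{n+1})\to\pi_0(A_n)$ is therefore surjective with kernel generated by the images of the $f_i^n$. These satisfy $(f_i^n)^2\in (f_i^{n+1})$ for $n\geq1$, so the kernel is finitely generated by elements whose squares vanish, hence nilpotent.

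\emph{Property (2).} We use the pushout description $\maps(A_n,B)=\maps(A,B)\times_{\maps(\ZZ[t],B)}\maps(\ZZ[t]/(t^n),B)$, and filtered colimits commute with these fiber products in $\calS$. This reduces us to the universal case $A=\ZZ[t_1,\ldots,t_r]$, where $\maps(\ZZ[t],B)=\Omega^\infty B^{r}$ as an anima. The claim becomes: $\colim_n \maps(\ZZ[t]/(t^n),B)\to \Omega^\infty B^r$ is an equivalence onto the union of components of tuples in $\pi_0(B)$ that are nilpotent. This is the main obstacle, because $\maps(\ZZ[t]/(t^n),B)$ is the \emph{derived} space of paths $b^n\simeq 0$, not a subspace. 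It further reduces to $r=1$, since the $r$ factors are independent. For $r=1$, the fiber of $\maps(\ZZ[t]/(t^n),B)\to\Omega^\infty B$ over $b$ is the space of nullhomotopies of $b^n$, namely the fiber of $\Omega^\infty(B)\xrightarrow{}$ at $b^n$, which is empty or a torsor under $\Omega^{\infty+1}B$. We then show that the transition maps in the colimit act on these torsors by multiplication by $b$. Concretely, a nullhomotopy of $b^{n}$ is sent to a nullhomotopy of $b^{n+1}$ by multiplying by $b$; the direction of the induced map is checked from $t\mapsto t$ in $\ZZ[t]/(t^{n+1})\to\ZZ[t]/(t^n)$. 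So the colimit of fibers is the colimit of the $\pi_k(B)$ along multiplication by $b$, shifted, namely $\pi_*(B[b^{-1}])$. This vanishes when $b$ is nilpotent in $\pi_0 B$, since then $B[b^{-1}]=0$. Hence the colimit of fibers is contractible exactly over nilpotent $b$ and empty otherwise. Making this torsor and colimit computation precise is the step we expect to require the most care. One approach is to compute the fiber directly as $\maps_{\Mod_\ZZ}$-data, using that the free animated ring $\ZZ[t]$ represents $\Omega^\infty$ and that $\ZZ[t]/(t^n)$ is a pushout of free rings. Finally, we identify the resulting union of components with $\spf(A)(B)$ as defined in Goal \ref{goal: prestack dalg}.
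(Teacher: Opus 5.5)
Your proposal is correct and is essentially the paper's own argument. Both use the same tower $A/\!\!/(f_1^n,\ldots,f_r^n)$ and reduce to a single generator. Both identify the fiber over $\varphi$ with the space of paths from $\varphi(f)^n$ to $0$, a torsor under $\Omega^{\infty+1}B$ whose transition maps are multiplication by $\varphi(f)$, and conclude from the vanishing of $\operatorname{colim}\bigl(\pi_{m+1}(B)\xrightarrow{\varphi(f)}\pi_{m+1}(B)\xrightarrow{\varphi(f)}\cdots\bigr)$ when $\varphi(f)$ is nilpotent. Your presentation by base change from the discrete rings $\ZZ[t_1,\ldots,t_r]/(t_1^n,\ldots,t_r^n)$ is somewhat more explicit than the paper's, which only sketches the construction of the transition maps and the reduction, and simply asserts (1) and (3).
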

\begin{proof}
     As in the proof of \cite[Lemma 8.1.2.2]{Lur18} it suffices to construct for each $x\in \pi_0(A)$ a tower of rings $\ldots \to A_4(x)\to A_3(x)\to A_2(x)\to A_1(x)$ satisfying the properties in the lemma i.e. prove the lemma for $J=(x).$ Then we can conclude the case of arbitrary finitely generated ideals by choosing generators $J=(x_1,\ldots, x_r)$ and setting $A_n=A_n(x_1)\otimes_A A_n(x_2)\otimes_A \ldots \otimes_A A_n(x_r).$

    In fact, since we are in the animated setting, our proof is far simpler since quotients exist as described for example in \ref{quotients} and for $x\in \pi_0(A)$ we may simply take $A_n(x)=A/\!\!/x^n.$ It remains to check that the the listed properties in the statement of the lemma are satisfied. The properties $(1)$ and $(3)$ are clearly true. Thus it remains to verify property $(2).$ The argument here matches the argument in \emph{loc.cit.}

    First fix an $n>0$ and consider the canonical map $A\to A\qq x^n.$  Suppose $B$ is an animated ring and $\varphi\colon A\to B$ a morphism. Then the space of factorizations of $\varphi\colon A\to B$ through $A\to A\qq x^n$ is identified with the space of paths in $\Maps_{\ZZ}(\ZZ[w],B)\simeq B$ between $\varphi(x^n)$ and $0.$ Indeed this follows immediately from the universal property of $A\qq x^n$ as described in \ref{quotients}. It follows that the homotopy fiber $P^n$ of $\Maps_{\aring}(A_n,B)\to \Maps_{\aring}(A,B)$ over $\varphi$ is identified with $\Maps_{B}(\varphi(x^n),0)$ inside the $\infty$-category of the underlying space of $B.$ Note that then the homotopy fiber of $$\colim_n\Maps_{\aring}(A_n,B)\to \Maps_{\aring}(A,B)$$ is identified with $P_\infty:=\colim_n P_n$ where the transition map is given by multiplication by $\varphi(x).$
    If $\varphi(x)$ is not nilpotent in $B$ then $P_n$ is empty for each $n$ and so there's nothing to prove. So assume that $P_n$ is non-empty for some $n$. Then we observe that $\pi_m(P_n)=\pi_{m+1}(B).$ Indeed this follows from noting that $\Maps_{B}(\varphi(x^n),0)$ is a torsor over $\Maps_{B}(0,0)=\Omega_{0}B$ where the right hand side is the space of loops based at $0$.
    So now we have $$\pi_m(P_\infty)=\colim_n\pi_m(P_n)=\colim( \pi_{m+1}(B)\xrightarrow{\varphi(x)} \pi_{m+1}(B)\xrightarrow{\varphi(x)}\pi_{m+1}(B)\to \ldots)$$ but the right hand side is $0$ since $\varphi(x)$ is nilpotent.
\end{proof}

\begin{remark}\label{remark: A_n for animated choose generators}
    In light of \emph{the proof of Lemma \ref{lem: luries lemma on spf}}, whenever we choose a tower $\ldots \to A_3\to A_2\to A_1$ for the animated ring $A$, we have \emph{implicitly chosen a set of generators for $J$}.
\end{remark}

\begin{corollary}\label{cor: limit of mod on spf}
    Let $A$ an animated ring and $J\subset \pi_0(A)$ be finitely generated then $\Qcoh(\spf A)=\llim_n \Mod_{A_n}$ and $\dalg(\spf A)=\llim_n \dalg_{A_n}$ where the $A_n$ are as constructed in Lemma \ref{lem: luries lemma on spf}.
\end{corollary}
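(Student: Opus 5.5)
The plan is to deduce both equivalences formally from property $(2)$ of Lemma \ref{lem: luries lemma on spf}. That property says the prestack $\spf(A)$ is the colimit, in prestacks, of the representables $\spec(A_n)$. Write $\mathcal{F}$ for either $\Qcoh(-)$ or $\dalg(-)$, viewed as a functor on prestacks. By definition (\ref{right kan qcoh.} and Construction \ref{cons: right kan extension of dalg}), $\mathcal{F}$ is the right Kan extension of $R\mapsto \Mod_R$, resp.\ $R\mapsto \dalg_R$, along the Yoneda embedding $\aring^{\mathrm{op}}\to \mathrm{PreStk}$. A right Kan extension along Yoneda of a functor valued in a complete $\icat$ (here $\catinf$, or $\widehat{\catinf}$ to handle size as in \ref{set theory}) sends colimits of prestacks to limits. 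It is in fact the unique limit-preserving extension, since it is computed as $\lim_{\spec(R)\to X}$ and every prestack is canonically the colimit of its representables over it.

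First, I would note that in $\mathrm{PreStk}$ colimits are computed pointwise, and that the colimit in property $(2)$ is filtered (indexed by $\mathbf{Z}_{>0}^{\mathrm{op}}$). So for each $B$ there is an equivalence $\colim_n \Maps(A_n,B)\simeq \spf(A)(B)$, natural in $B$. This naturality holds because it is induced by the compatible maps $A\to A_n$, so it is an equivalence of functors $\aring\to\calS$: $\colim_n \spec(A_n)\simeq \spf(A)$.

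Second, I would apply $\mathcal{F}$. This gives $\mathcal{F}(\spf A)\simeq \lim_n \mathcal{F}(\spec A_n)$. I would check limit-preservation directly. For $X=\colim_i X_i$ one has $\mathrm{PreStk}_{/X}$-level descriptions, and the comma category $\aring^{\mathrm{op}}_{/X}$ is the colimit of the categories $\aring^{\mathrm{op}}_{/X_i}$ in the appropriate sense. Alternatively, one can use that $\Maps(X,\mathcal{F}')=\lim_i\Maps(X_i,\mathcal{F}')$, realizing $\mathcal{F}(X)$ as sections of the (co)Cartesian fibration $\dalg\to\aring$ (resp.\ $\Mod\to\aring$) over $X$, as in the dual of \cite[Proposition 2.7.6]{Lurie2011DAGVIII}. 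In that formulation, limits of sections over a colimit of bases are formal. Finally, $\mathcal{F}(\spec A_n)\simeq \Mod_{A_n}$ (resp.\ $\dalg_{A_n}$), because the indexing category for a representable has a final object. This yields $\Qcoh(\spf A)=\lim_n\Mod_{A_n}$ and $\dalg(\spf A)=\lim_n\dalg_{A_n}$, with transition functors given by base change along $A_{n+1}\to A_n$.

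The main obstacle is the bookkeeping showing that the Kan-extended $\mathcal{F}$ turns the colimit into a limit when $\spf(A)$ is only a prestack on a large category. I expect to handle this by passing to large spaces as in \ref{set theory}, where the formal statement holds. Alternatively, I would argue cofinality: $\mathbf{Z}_{>0}^{\mathrm{op}}\to \aring^{\mathrm{op}}_{/\spf A}$ should be limit-cofinal, since every $\spec(B)\to\spf(A)$ factors through some $\spec(A_n)$ with a contractible space of factorizations. That contractibility is exactly the filtered colimit statement of property $(2)$, via Quillen's Theorem A.
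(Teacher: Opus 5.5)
Your argument is correct, and your main route differs somewhat from the paper's. The paper's proof is a one-line cofinality argument. It asserts that the tower $\spec(A_1)\to\spec(A_2)\to\cdots$ is final in the indexing category $\daff_{\spf(A)}$ of affines over $\spf(A)$, so the defining limits for $\Qcoh$ and $\dalg$ can be computed along the tower.

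That is exactly your fallback argument. Your Quillen Theorem A sketch supplies the justification the paper leaves implicit. The comma category at an object $\spec(B)\to\spf(A)$ is a left fibration over $\mathbf{Z}_{>0}$. Its weak homotopy type is therefore the fiber of $\colim_n\Maps(A_n,B)\to\spf(A)(B)$, which is contractible by property $(2)$ of Lemma \ref{lem: luries lemma on spf}.

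Your primary route instead uses the general principle that a functor right Kan extended along the Yoneda embedding sends colimits of prestacks to limits, applied to $\spf(A)\simeq\colim_n\spec(A_n)$. This is more conceptual and yields more. By universality of colimits in prestacks it also gives $\Qcoh(X)\simeq\lim_n\Qcoh(X\times_{\spec(A)}\spec(A_n))$ for any $J$-adic formal prestack $X$, which the paper asserts in Remark \ref{remark: definition of formal stack}. The cost is that on the large category $\aring$ this principle needs the universe bookkeeping you flag, and its proof is essentially the same cofinality argument. The paper's route instead exhibits the limit directly over the small category $\mathbf{Z}_{>0}$, in line with its set-theoretic conventions.

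One cosmetic point: the colimit $\colim_n\Maps(A_n,B)$ is a sequential colimit indexed by $\mathbf{Z}_{>0}$, not $\mathbf{Z}_{>0}^{\mathrm{op}}$, which is not filtered. The tower functor $\mathbf{Z}_{>0}\to\daff_{\spf(A)}$ is the one that should be colimit-cofinal. A few of your orientations are flipped, but nothing depends on it.
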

\begin{proof}
   Keeping in mind Remark \ref{remark: A_n for animated choose generators}, both of these follow from noting that the category defined by the tower $\spec(A_1)\to \spec (A_2)\to \spec(A_3)\to \ldots$ are final in the indexing $\icat$ $\daff_X$ by Lemma \ref{lem: luries lemma on spf} and so the limit of Construction \ref{cons: right kan extension of dalg} and \ref{right kan qcoh.} can be computed on this category.
\end{proof}

\begin{construction}\label{cons: can map from spf to spec}
Note that by the explicit description of $\spf(A)$ as a prestack on $\aring_\fpqc$ there is a canonical morphism of prestacks
$$\eta_A\colon \spf(A)\to \spec(A).$$

This induces a pullback morphism 
$$(\eta_A^*)_{\Mod}\colon \Qcoh(\spec(A))\simeq \Mod_A\to \Qcoh(\spf(A))$$
and similarly 

$$(\eta_A^*)_{\dalg}\colon \dalg(\spec(A))\simeq \dalg_A\to \dalg(\spf(A)).$$

By construction, it follows that the following diagram commutes $$% https://tikzcd.yichuanshen.de/#N4Igdg9gJgpgziAXAbVABwnAlgFyxMJZABgBpiBdUkANwEMAbAVxiRAB12pGBzAfQCCIAL6l0mXPkIoAjOSq1GLNp24MeACk5w0AMw0CAlIZFiQGbHgJEyMhfWatEHdgFlog0+MtSicu9QOys6qvFrsOjAAxgbGIgowUDzwRKC6AE4QALZIZCA4EEhyio5s4TA4dIIAegBUhnyh6l4gGdm51AVIAEyBSk4gAKqe1Ax0AEYwDAAKElbSIOlYPAAWOC1tOYjFXYgAzH2lzsPA2nqxwhuZW735hfujE1OzPtbOS6vrh8Eg5ZU19Uabmg8WEQA
\begin{tikzcd}
\dalg_A \arrow[r, "(\eta_A^*)_\dalg"] \arrow[d, "U_A"'] & \dalg(\spf(A)) \arrow[d, "U_{\spf(A)}"] \\
\Mod_A \arrow[r, "(\eta_A^*)_\Mod"']                    & \dalg(\spec(A))                        
\end{tikzcd}$$
    
\end{construction}

Using Construction \ref{cons: can map from spf to spec} we now prove an analogue of \cite[Theorem 8.3.4.4]{Lur18} in the setting of formal derived algebraic geometry.

\begin{remark}\label{remark: mod on spf ref}
    As stated in \cite[Remark 8.3.4.5]{Lur18}, in this setting we do not need a connectivity hypothesis while working in derived algebraic geometry. Compare also \cite[Footnote 12]{BS19}. Note that this result appears in a restricted setting in \cite[\href{https:/\!\!/stacks.math.columbia.edu/tag/0H0L}{Tag 0H0L}]{stacks-project}\footnote{Note that the Noetherian hypothesis in the referenced tag is inessential and can be replaced by a notion of ideals generated by weakly proregular sequences. See \cite[\href{https:/\!\!/stacks.math.columbia.edu/tag/0H1E}{Tag 0H1E}]{stacks-project}, with the point being that for a weakly proregular sequence in any discrete ring $A$, the pro-systems $\{A/\!\!/(x_1^n,\ldots, x_r^n)\}$ and $\{A/(x^n_1,\ldots, x^n_r)\}$ are pro-isomorphic.} and \cite[Appendix A.]{AntieauKrauseNikolaus2023PrismaticDeltaRings} in the classical setting of bounded $p$-torsion rings, and should certainly be well known over animated rings. However, we do not know a reference in this generality, although note Remark \ref{remark: mundinger}.
    \end{remark}

\begin{theorem}\label{thm: mod on spf}
    The pullback $(\eta_A^*)_{\Mod}\colon \Qcoh(\spec(A))\simeq \Mod_A\to  \Qcoh(\spf(A))$ restricts to an equivalence on $\Mod_A^\jcomp$
\end{theorem}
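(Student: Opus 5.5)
By Corollary \ref{cor: limit of mod on spf}, after fixing generators $J=(f_1,\ldots,f_r)$ we have $\Qcoh(\spf(A))\simeq \lim_n \Mod_{A_n}$ with $A_n=A\qq(f_1^n,\ldots,f_r^n)$ and transition functors given by base change $-\otimes_{A_{n+1}}A_n$. Under this identification $(\eta_A^*)_{\Mod}$ sends $M$ to the compatible system $\{M\otimes_A A_n\}_n$. The plan is to write down a right adjoint, show it lands in $\Mod_A^\jcomp$, and then check that unit and counit are equivalences after restricting to $\Mod_A^\jcomp$.

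The right adjoint is $G\colon \{M_n\}\mapsto \lim_n M_n$, the limit being computed in $\Mod_A$ via restriction of scalars along $A\to A_n$. Adjointness follows formally: a map $M\to \lim_n M_n$ in $\Mod_A$ is the same as a compatible family of maps $M\to M_n$, hence of $A_n$-linear maps $M\otimes_A A_n\to M_n$. Each $A_n$ is $J$-nilpotent, so every $A_n$-module is $J$-complete when regarded as an $A$-module. Limits of $J$-complete modules are again $J$-complete, so $G$ factors through $\Mod_A^\jcomp$. For $M\in \Mod_A^\jcomp$, the unit $M\to \lim_n M\otimes_A A_n=\lim_n M\qq(f_1^n,\ldots,f_r^n)$ is an equivalence by definition \ref{complete}, exactly as used in the proof of Lemma \ref{lem: adjoint is completion}. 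This gives full faithfulness of $(\eta_A^*)_{\Mod}$ on $\Mod_A^\jcomp$. Since $(\eta^*_A)_{\Mod}$ kills $J$-local modules ($J$-local tensor $J$-nilpotent is zero), it also factors through the completion, which is a convenient sanity check.

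The main work is the counit. For a compatible system $\{M_n\}$ with $M_n\otimes_{A_{n+1}}A_n\simeq M_{n+1}\otimes$-compatibly, we must show that
\[(\lim_m M_m)\otimes_A A_k\to M_k\]
is an equivalence for each fixed $k$. Here I will use property (3) of Lemma \ref{lem: luries lemma on spf}: each $A_k$ is a perfect $A$-module, so $-\otimes_A A_k$ commutes with the limit. This reduces the claim to showing that $\lim_m (M_m\otimes_A A_k)\to M_k$ is an equivalence. Now
\[M_m\otimes_A A_k\simeq M_m\otimes_{A_m}(A_m\otimes_A A_k),\]
and for $m\ge k$ we have $M_m\otimes_{A_m}A_k\simeq M_k$. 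So the task is to compare the pro-systems $\{A_m\otimes_A A_k\}_m$ and $\{A_k\}$ of $A_m$-algebras, or in other words to show that $\{A_m\otimes_A A_k\}_m$ is pro-equivalent to the constant system $A_k$.

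This is the expected obstacle. The plan for it is a direct computation. First,
\[A_m\otimes_A A_k\simeq A_k\otimes_{\ZZ[x]}\ZZ[x]/\!\!/x^m\]
coordinatewise, and in $A_k$ each $f_i^m$ is already nullhomotopic for $m\ge k$ (up to the derived structure). So, one variable at a time, $A_k\qq f^m\simeq A_k\oplus A_k[1]$ as modules. Second, the transition map from $m'$ to $m$ is multiplication by $f^{m'-m}$ on the $A_k[1]$ summand, and this is nilpotent on $A_k$-modules once $m'-m\ge k$. Hence the pro-system of $A_k[1]$-summands is pro-zero, and the limit collapses to $M_k$ (Mittag-Leffler-free, since ghost pro-objects have vanishing $\lim$). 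The splitting should be handled canonically using the universal property in \ref{quotients}, or else avoided by a filtration argument on fibers. Combining the unit and counit steps shows that $(\eta_A^*)_{\Mod}$ restricted to $\Mod_A^\jcomp$ is an equivalence, with inverse $G$.
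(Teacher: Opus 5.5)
Your architecture is the paper's. You identify $\Qcoh(\mathrm{Spf}\,A)$ with $\lim_n \mathrm{Mod}_{A_n}$, take $G=\lim_n$ as the right adjoint, get the unit from $J$-completeness, and handle the counit via perfectness of $A_k$ plus the Koszul comparison of $\{A_m\otimes_A A_k\}_m$ with $A_k$. The paper phrases the last step as conservativity of $G$, but its Lemma~\ref{lem: BS lemma for arbitrary rings} is exactly your counit identity $(\lim_m M_m)\otimes_A A_k\simeq M_k$.

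There is one genuine gap, in the step you flag as the obstacle. You need the comparison \emph{after} applying $M_m\otimes_{A_m}(-)$. So the splitting of $A_m\otimes_A A_k$, and the vanishing of the transition maps on the shifted summand, must be $A_m$-linear (respectively $A_{m'}$-linear). What you actually establish is only $A_k$-linear: $f^m\simeq 0$ on $A_k$, hence $A_k/\!\!/f^m\simeq A_k\oplus A_k[1]$, and $f^{m'-m}$ is null on $A_k$. The $A_m$-linear statement is false. Take $A=\ZZ[f]$, $k=1$, $m=2$, and suppose $A_2\otimes_A A_1\simeq A_1\oplus A_1[1]$ $A_2$-linearly. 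Then
\[
\ZZ\otimes_{A_2}(A_2\otimes_A A_1)\simeq(\ZZ\otimes_{A_2}\ZZ)\oplus(\ZZ\otimes_{A_2}\ZZ)[1],
\]
and the right side has nonzero $\pi_2$. But the left side is $\ZZ\otimes_{\ZZ[f]}\ZZ$, which is concentrated in degrees $0,1$. So ``the limit collapses to $M_k$'' does not follow as written. The paper's proof of Lemma~\ref{lem: BS lemma for arbitrary rings} makes the same move when it tensors the splitting with $M_n$, so the repair below is needed there as well.

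The fix is to drop the splitting and work with an $A_m$-linear fiber.

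For $m\ge k$, the multiplication map $\mu_m\colon A_m\otimes_A A_k\to A_k$ is $A_m$-linear, and $M_m\otimes_{A_m}\mu_m$ is your comparison map $M_m\otimes_A A_k\to M_m\otimes_{A_m}A_k\simeq M_k$.

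Everything is base changed along $P=\ZZ[x]\to A$, $x\mapsto f$, from $B_m=\ZZ[x]/x^m$. Concretely, $A_m\otimes_A A_k\simeq A_m\otimes_{B_m}(B_m\otimes_P B_k)$, so $M_m\otimes_A A_k\simeq M_m\otimes_{B_m}(B_m\otimes_P B_k)$.

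In this universal case $B_m\otimes_P B_k$ is the complex $[B_m\xrightarrow{x^k}B_m]$. Hence $\mathrm{fib}(\mu^B_m)\simeq (x^{m-k}B_m)[1]\cong B_k[1]$ is a discrete module in degree $1$. The transition from $m'$ to $m$ is multiplication by $x^{m'-m}$ on $B_k$.

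For $m'\ge m+k$ this transition is zero on $\pi_1$. Since maps between $1$-shifted discrete modules are detected on $\pi_1$, it is null as a $B_{m'}$-linear map. Therefore the tower $\{M_m\otimes_{B_m}\mathrm{fib}(\mu^B_m)\}_m$ has null transitions across every gap of length $k$, its limit vanishes, and $\lim_m (M_m\otimes_A A_k)\simeq M_k$.

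For $r$ generators, factor $\mu_m$ one variable at a time. Each step has a fiber of the same shape, tensored over $\ZZ$ with the remaining factors, so the same argument applies. This route also makes your worry about choosing the splitting canonically irrelevant.
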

\begin{proof}
    Choose a set of generators $J=(x_1,\ldots, x_r)$ and set $A_n=A/\!\!/(x_1^n,\ldots, x_r^n).$

    By Corollary \ref{cor: limit of mod on spf} we can identify $\Qcoh(\spf(A))\simeq \llim_n\Mod_{A_n}$ and the functor $(\eta_A^*)_{\Mod}$ may be identified with the functor $F\colon \Mod_A\to \llim_n \Mod_{A_n}$ sending $M\mapsto \{M\otimes_A A_n\}$. This functor admits a right adjoint $G\colon \llim_n \Mod_{A_n}\to \Mod_A$ sending $\{M_n\}\mapsto \llim_n M_n.$ Since each of the $M_n$ are modules over the $J$-nilpotent animated rings $A_n$  it follows that $\llim_n M_n$ is a $J$-complete module $A$ and lands in $\Mod_A^\jcomp.$
    
    By abuse of notation we denote the restriction of $F$ to $\Mod_A^\jcomp$ also by $F$ and so we have an adjunction 
    $$F \colon \Mod_A^\jcomp \rightleftarrows \llim_n\Mod_{A_n}\colon G .$$
    
    It follows that the unit $\eta_M \colon M\to G\circ F(M)=\llim_n M\otimes_A A_n$ is an isomorphism for all $J$-complete modules $M$. Thus we have a fully faithful embedding $F\colon \Mod_A\to \llim_n \Mod_{A_n}.$  For formal reasons,\footnote{Let $F\colon \cC\to \cD$ be a functor of $\icats$ with right adjoint $G\colon  \cD\to \cC$. If $F$ is fully faithful, and $G$ is conservative, then $F$ is an equivalence of $\infty$-categories. Indeed full faithfulness of $F$ gives that $\eta\colon \id_C\to  G\circ F$ is an equivalence. Now it suffices to show that the counit $\varepsilon\colon F\circ G\to \id_D$ is an equivalence. By the triangle identity we have that $G(\varepsilon)\circ \eta_G=\id_G$ and so since $\eta_G$ is an equivalence, we learn that $G(\varepsilon)$ is an equivalence. By conservativity of $G$ we conclude that $\varepsilon$ is an equivalence. A similar proof holds when $G$ is fully faithful and $F$ is conservative.} it suffices to show that $G$ is conservative.
    
    Since we are in a stable $\icat$ it suffices to show that if $G(\{M_n\})=\llim_n M_n=0$ then $M_n=0$ for all $n>0.$ This is a consequence of Lemma \ref{lem: BS lemma for arbitrary rings}. 
\end{proof}

In Lemma \ref{lem: BS lemma for arbitrary rings}, we will prove an animated analogue of Lemma \cite[3.5.4]{BhattScholze2015ProEtale}, proved there for discrete Noetherian rings in a classical topos. 
Before we do that we need a calculation of certain pro-systems defined by Koszul complexes.

\begin{lemma}\label{lem: pro koszul}
    In the situation of Theorem \ref{thm: mod on spf}, consider the pro-system given by $\{A_{n}\otimes_A A_1\}_{n>0}$ and the constant pro-system $\{A_1\}_{n>0}$. Then $\{A_{n}\otimes_A A_1\}_{n>0}$ is pro-isomorphic to $\{A_1\}_{n>0}$.
\end{lemma}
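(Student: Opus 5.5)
The plan is to reduce to the case of a single generator and then compute the Koszul pro-system by hand. With $A_n=A/\!\!/(x_1^n,\ldots,x_r^n)$ we have
\[
A_n\otimes_A A_1\simeq \bigotimes_{i=1}^r \bigl(A/\!\!/x_i^n\otimes_A A/\!\!/x_i\bigr),
\]
as $A$-modules (indeed as animated $A$-algebras), tensored over $A$. Tensor products of pro-isomorphisms, taken levelwise over the same index category, are again pro-isomorphisms, since a pro-isomorphism is witnessed by compatible maps in both directions up to reindexing. So it suffices to show, for a single $x\in\pi_0(A)$, that $\{A/\!\!/x^n\otimes_A A/\!\!/x\}_n$ is pro-isomorphic to the constant system $\{A/\!\!/x\}_n$. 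Here the transition maps $A/\!\!/x^{n+1}\to A/\!\!/x^n$ are the ones induced by multiplication by $x$ on the source copy of $A$ in the cofiber $\cofib(A\xrightarrow{x^{n+1}}A)$.

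The key computation is as follows. Put $B=A/\!\!/x$, an $A$-algebra in which $x$ acts by a nullhomotopic map; more precisely, $x$ is zero in $\pi_0(B)$, and multiplication by $x$ on any $B$-module is nullhomotopic. Then
\[
A/\!\!/x^n\otimes_A B\simeq \cofib(B\xrightarrow{x^n}B)\simeq B\oplus B[1]
\]
for $n\geq 1$, using a chosen nullhomotopy of $x^n$. The transition map from level $n+1$ to level $n$ is induced by the square with $x$ on the left copy of $B$ and the identity on the right copy. On the associated cofiber sequence $B\to B/\!\!/x^{n}\to B[1]$, it is the identity on the $B$ summand and multiplication by $x$ on the $B[1]$ summand, up to an off-diagonal correction term.

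I will then show that the composite of two transition maps kills the $B[1]$ piece. Multiplication by $x$ on a $B$-module factors through zero, so composing two steps gives a map $B/\!\!/x^{n+2}\to B/\!\!/x^n$ that factors through the natural map $B\to B/\!\!/x^n$. This is the standard argument that the $\lim^1$/pro-zero part is killed after finitely many steps; the point is that the composite $B[1]\to B/\!\!/x^{n+2}\to B/\!\!/x^{n}\to B[1]$ is $x$ composed with $x$, and any correction term also involves $x$. Concretely, this yields maps $\{B/\!\!/x^{n}\}\to\{B\}$ (projection using the splitting) and $\{B\}\to\{B/\!\!/x^n\}$ (inclusion). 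One composite is the identity on $\{B\}$. The other composite agrees with the transition map $B/\!\!/x^{n+2}\to B/\!\!/x^n$ after reindexing by $2$, and is therefore the identity in the pro-category.

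The main obstacle is making the factorization statement precise homotopically rather than on homotopy groups. A map of cofibers is determined by a commutative square \emph{plus} a homotopy, so I must check that the nullhomotopies can be chosen compatibly. I would avoid this by working universally: everything is base changed from $\ZZ[t]$ with $t\mapsto x$. It then suffices to treat $\ZZ[t]/\!\!/t^n\otimes_{\ZZ[t]}\ZZ$, whose terms are discrete complexes. There the statement is an explicit computation with two-term complexes of free $\ZZ[t]$-modules, namely $\ZZ[t]/t^n\otimes^{\L}\ZZ$, which has $H_0=\ZZ$ and $H_1=\ZZ$ with transition maps on $H_1$ given by multiplication by $t=0$. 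The pro-isomorphism is then pulled back along the symmetric monoidal base change $-\otimes_{\ZZ[t]}A$.
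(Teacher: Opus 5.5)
Your proposal is correct: once you fall back on the universal computation over $\ZZ[t]$ and base change along $t\mapsto x$, it is essentially the paper's argument. The paper does the same chain-level computation over $\ZZ[x_1,\ldots,x_r]$ in one go, obtaining $A_n\otimes_A A_1\simeq\bigoplus_{k=0}^r A_1^{\oplus\binom{r}{k}}[k]$ with transition maps $\id$ on the unshifted summand and $0$ on the shifted ones, rather than first reducing to one generator by tensoring pro-isomorphisms. In that universal splitting a single transition map is already $\id\oplus 0$ on the nose, so the off-diagonal correction term and the two-step factorization in your middle paragraph are not needed.
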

\begin{proof}
Note that for any $n>0$ we have $A_n=A\qq f_1^n,\ldots, f^n_r$ is obtained by freely setting the elements $f^n_i=0$ for $ 1\leq i\leq r$.

In particular if $M$ is any $A$-module, then the action of $x_i^n$ on $M\otimes_A A_n$ factors through $0.$
Since tensors are computed at the level of modules, we may also write the underlying as $\bigotimes_{i=1}^{i=r}\cofib(A\xrightarrow{x_i^n} A).$

Thus we compute 
\begin{align}\label{eq: formula for antimesa1}
A_n\otimes_A A_1
  &= (\bigotimes_{i=1}^{r}\cofib(A\xrightarrow{x_i^n} A))\otimes_A (\bigotimes_{i=1}^{r} A\qq x_i)\notag\\
  &= \bigotimes_{i=1}^{r}\cofib(A\qq x_i\xrightarrow{x_i^n} A\qq x_i)\notag\\
  &= \bigotimes_{i=1}^{r}(A\qq x_i\oplus A \qq x_i[1])\notag\\
  &= \bigoplus_{k=0}^{r} A_1^{\oplus \binom{r}{k} }[k].
\end{align}

Thus we learn that the terms in the tower $\{A_n\otimes_A A_1\}$ are independent of $n.$
Further from \ref{eq: formula for antimesa1} there is an obvious morphism of pro-systems $\{A_n\otimes_A A_1\}_{n>0}\to \{A_1\}_{n>0}$ corresponding to projection on the first factor and a section  $\{A_1\}_{n>0}\to \{A_n\otimes_A A_1\}_{n>0}.$ 

It remains to check that this induced morphism is actually a pro-isomorphism. For this it suffices to show that for any $n>1$ the morphism $A_n\otimes_A A_1\to A_{n-1}\otimes_A A_1$ is identity on the first factor of \ref{eq: formula for antimesa1} and $0$ on the other factors. This will prove the result as it is now clear that the morphism of pro-systems $\{A_n\otimes_A A_1\}_{n>0}\to \{A_1\}_{n>0}$ is an isomorphism. 

So now we prove the fact about the morphism being $\id$ on the first factor and $0$ on positive factors.

Note that the morphism $A_n\to A_{n-1}$ is obtained by base change along $\ZZ[x_1,\ldots, x_r]\xrightarrow{x_i\to f_i} A$ of the universal morphism 
\begin{equation}\label{eq: universal quotients}
  \ZZ[x_1,\ldots, x_r]/(x_1^n,\ldots, x_r^n)\to \ZZ[x_1,\ldots, x_r]/(x_1^{n-1},\ldots, x_r^{n-1})  
\end{equation}

The tensor product $A_n\otimes_A A_1\to A_{n-1}\otimes_A A_1$ is obtained by a further base change along $A\otimes_{\ZZ[x_1,\ldots, x_r]} \ZZ$. Thus to understand the morphism we may just base change \ref{eq: universal quotients} along $-\otimes_{\ZZ[x_1,\ldots, x_r]} \ZZ$.

Writing $$\ZZ[x_1,\ldots, x_r]/(x_1^n,\ldots, x_r^n)=\bigotimes_{i=1}^{i=r}\cofib(\ZZ[x_1,\ldots, x_r]\xrightarrow{x_i^n} \ZZ[x_1,\ldots, x_r])$$ and viewing the induced morphism of \ref{eq: universal quotients} by coming, after iterated tensoring, from the complex

$$% https://tikzcd.yichuanshen.de/#N4Igdg9gJgpgziAXAbVABwnAlgFyxMJZABgBpiBdUkANwEMAbAVxiRAB12AtL5ADwD6ARlKcGUCDjikABIIBOFEAF9S6TLnyEUIytXrNWiDt16CRYiVNkKlq9djwEiIoVVqMWbTj37DR7OKS0nICiipqIBiOWkRkbvqeRia+5gFB1qHhyu4wUADm8ESgAGbyEAC2SGQgOBBIIh6GbIJYAHqE9iBllQ3UdUgATInNxpxYUBGl5VWINQOIAMwjXsatU90zSMu19YjDTasgrW3AYAC0QsoqFMpAA
\begin{tikzcd}
{\ZZ[x_1,\ldots, x_r]} \arrow[r, "x_i^n"] \arrow[d, "x_i"] & {\ZZ[x_1,\ldots, x_r]} \arrow[d, "\id"] \\
{\ZZ[x_1,\ldots, x_r]} \arrow[r, "x_i^{n-1}"]              & {\ZZ[x_1,\ldots, x_r]}                 
\end{tikzcd},$$ we see that the derivation of \ref{eq: formula for antimesa1}, gives that the induces morphism $$\bigoplus_{k=0}^{r}\ZZ^{\oplus \binom{r}{k}}[k]\to \bigoplus_{k=0}^{r}\ZZ^{\oplus \binom{r}{k}}[k]$$ is $\id$ on the first factor and zero on all factors with a postive degree. After base change to $A_1$ we conclude the same for the induced morphism  

$$\bigoplus_{k=0}^{r} A_1^{\oplus \binom{r}{k} }[k]\to \bigoplus_{k=0}^{r} A_1^{\oplus \binom{r}{k} }[k].$$

This concludes the proof.

\end{proof}

\begin{corollary}
    Let $k>0$, then the pro-systems $\{A_n\otimes_A A_k\}_{n>0}\to \{A_k\}_{n>0}$ are pro-isomorphic.
\end{corollary}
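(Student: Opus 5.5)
The plan is to run the argument of Lemma \ref{lem: pro koszul} with $A_1$ replaced by $A_k$, or, more efficiently, to reduce to the lemma applied to a different set of generators. Set $g_i=x_i^k$. Then the tower built from the generators $(g_1,\ldots,g_r)$ of the ideal $(x_1^k,\ldots,x_r^k)$ has first term $A\qq(g_1,\ldots,g_r)=A_k$ and $m$-th term $A\qq(x_1^{km},\ldots,x_r^{km})=A_{km}$. Lemma \ref{lem: pro koszul} only uses that each $A_n$ is obtained by freely killing $n$-th powers of a fixed finite sequence of elements of $\pi_0(A)$. Applied to $(g_i)$, it gives a pro-isomorphism between $\{A_{km}\otimes_A A_k\}_{m>0}$ and the constant system $\{A_k\}_{m>0}$.

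Next we pass from the subsequence $\{km\}$ to all $n$. The subsequence $m\mapsto km$ is cofinal in $\mathbf{Z}_{>0}$, so the inclusion of towers $\{A_{km}\otimes_A A_k\}_m\to\{A_n\otimes_A A_k\}_n$ is a pro-isomorphism. Reindexing along a cofinal map does not change the pro-object. Hence $\{A_n\otimes_A A_k\}_{n>0}\simeq\{A_k\}_{n>0}$.

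Alternatively, one could argue directly as in the lemma. For $n\geq k$ we have
\[
A_n\otimes_A A_k\simeq\bigotimes_{i}\cofib\bigl(A\qq x_i^k\xrightarrow{x_i^n}A\qq x_i^k\bigr)\simeq\bigotimes_i\bigl(A\qq x_i^k\oplus A\qq x_i^k[1]\bigr).
\]
This holds because $x_i^n$ factors through $x_i^k$, which is nullhomotopic on $A\qq x_i^k$. One then checks that the transition map $A_{n+k}\otimes A_k\to A_n\otimes A_k$ is the identity on the degree-zero summand and zero on the shifted summands. As in the universal computation of Lemma \ref{lem: pro koszul}, this holds after composing $k$ consecutive transitions (the map $x_i^k$ on the shifted part). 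So the transitions are zero on the positive summands after passing to a cofinal subsystem, which yields the pro-isomorphism.

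The main obstacle is the naturality of the splittings. The nullhomotopy of $x_i^n$ on $A\qq x_i^k$ must be chosen compatibly with the transition maps, so that the decomposition is a map of towers and not merely levelwise. The reduction to the lemma via the generators $x_i^k$ sidesteps this, since the lemma already handles compatibility. Only the cofinality step then needs checking, and it is formal.
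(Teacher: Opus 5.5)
Your main argument is correct and is essentially the paper's proof. The paper likewise applies Lemma \ref{lem: pro koszul} to the subtower $\{A_{mk}\}_{m>0}$ to get $\{A_{mk}\otimes_A A_k\}_{m>0}\simeq\{A_k\}_{m>0}$, and concludes because this subtower is final in $\{A_n\otimes_A A_k\}_{n>0}$; your explicit change of generators to $g_i=x_i^k$ spells out what the paper leaves implicit when it invokes the lemma for the subtower.
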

\begin{proof}
    We may look at the sub-pro-system of $\{A_n\}_{n>0}$ consisting of the tower $\{A_{mk}\}_{m>0}$. Then by Lemma \ref{lem: pro koszul}, the pro-systems $\{A_{mk}\otimes_A A_k\}_{m>0}\to \{A_k\}_{m>0}$ are pro-isomorphic. 
    Note that $\{A_{mk}\otimes_A A_k\}$ is final in the co-filtered system $\{A_{n}\otimes_A A_k\}_{n>0}$, thus we can conclude the result.
\end{proof}

\begin{lemma}\label{lem: BS lemma for arbitrary rings}
    In the setting of Theorem \ref{thm: mod on spf}, any tower $\{M_n\}\in \llim_n \Mod_{A_n}$ we have $G(\{M_n\})\otimes_A A_k=(\llim_n M_n)\otimes_A A_k=M_k.$
\end{lemma}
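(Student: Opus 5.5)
The plan is to commute the tensor product with the limit and then use the pro-isomorphism of the preceding Corollary. Fix $k>0$. The $A$-module $A_k=A\qq(x_1^k,\ldots,x_r^k)$ is perfect over $A$ by item $(3)$ of Lemma \ref{lem: luries lemma on spf}, being a finite tensor product of cofibers of multiplication maps. Hence $-\otimes_A A_k$ commutes with all limits in $\Mod_A$, and we obtain
\[
(\llim_n M_n)\otimes_A A_k\simeq \llim_n (M_n\otimes_A A_k).
\]

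Next we rewrite the terms of this limit. The object $\{M_n\}\in \llim_n\Mod_{A_n}$ comes with compatible equivalences $M_m\otimes_{A_m}A_n\simeq M_n$ for $m\ge n$. We then compute
\[
M_n\otimes_A A_k\simeq M_n\otimes_{A_n}(A_n\otimes_A A_k).
\]
This exhibits the tower $\{M_n\otimes_A A_k\}_n$ as obtained by applying the functors $M_n\otimes_{A_n}(-)$ to the pro-system $\{A_n\otimes_A A_k\}_n$. By the Corollary following Lemma \ref{lem: pro koszul}, this pro-system is pro-isomorphic to the constant system $\{A_k\}$.

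We then transport the pro-isomorphism. For $n\ge k$ we have
\[
M_n\otimes_{A_n}A_k\simeq M_k
\]
by the compatibility of the tower, since the map $A_n\to A_k$ is the structure map and the Koszul projection is the one used in Lemma \ref{lem: pro koszul}. Tensoring the pro-isomorphism $\{A_n\otimes_A A_k\}_{n\ge k}\to\{A_k\}$ with the compatible family $M_n\otimes_{A_n}-$ therefore gives a pro-isomorphism
\[
\{M_n\otimes_A A_k\}_{n\ge k}\;\to\;\{M_k\}.
\]
Since pro-isomorphic towers have equivalent limits, and the limit of a constant tower over $\ZZ_{>0}$ is its value, we get $\llim_n(M_n\otimes_A A_k)\simeq M_k$. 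This proves the statement.

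The main obstacle is making the transport of the pro-isomorphism rigorous. The functors $M_n\otimes_{A_n}-$ vary with $n$, so we cannot directly "apply a single functor to a pro-system". I would handle this concretely using the explicit description from Lemma \ref{lem: pro koszul}. There, $A_n\otimes_A A_k$ splits as $A_k$ plus shifted summands, and the transition maps (for the cofinal subsystem $n=mk$) kill the shifted summands and are the identity on the $A_k$ summand. After tensoring, $M_{mk}\otimes_A A_k$ splits as $M_k\oplus(\text{shifted copies of }M_k)$, and the transition maps are the identity on $M_k$ and zero on the rest. A tower in which every composite of two consecutive transition maps factors through the constant summand has limit equal to that summand: the $\lim^1$ and the limit of the nilpotent part both vanish, because the complementary tower is pro-zero. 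Passing to the cofinal subsystem $\{A_{mk}\}_m$ as in the Corollary then finishes the argument.
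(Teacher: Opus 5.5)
Your outline is the paper's own proof. You commute $-\otimes_A A_k$ past the limit using perfectness of $A_k$, rewrite $M_n\otimes_A A_k\simeq M_n\otimes_{A_n}(A_n\otimes_A A_k)$, and feed in the Koszul splitting of Lemma~\ref{lem: pro koszul} along the cofinal subsystem $n=mk$. Those preliminary steps are fine. The step you flag as the main obstacle, however, is not resolved by your proposed fix. The paper's proof makes exactly the same move, so this is a gap you share with it rather than a deviation from it.

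The splitting $A_n\otimes_A A_k\simeq\bigoplus_j A_k^{\oplus\binom{r}{j}}[j]$ comes from $x_i^n$ acting by zero on $A/\!\!/x_i^k$. It is therefore a splitting of $A_k$-modules through the \emph{right} tensor factor. To apply $M_n\otimes_{A_n}-$ you need it $A_n$-linearly through the \emph{left} factor, where the module is $\bigotimes_i\mathrm{cofib}(A_n\xrightarrow{x_i^k}A_n)$, and there it fails. Take $A=\ZZ$, $J=(p)$, $k=1$, $n\geq 2$. The $\ZZ/p^n$-module $\mathrm{cofib}(\ZZ/p^n\xrightarrow{p}\ZZ/p^n)$ is perfect while $\ZZ/p$ is not, so it cannot be $\ZZ/p\oplus\ZZ/p[1]$ in $\mathrm{Mod}_{\ZZ/p^n}$.

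Hence the asserted decomposition $M_{mk}\otimes_A A_k\simeq M_k\oplus(\text{shifts of }M_k)$, with transition maps $\mathrm{id}\oplus 0$, does not follow. It is true a posteriori, once $M_n\simeq(\lim_n M_n)\otimes_A A_n$ is known, but that is what you are proving. For the same reason, the pro-isomorphism of the Corollary after Lemma~\ref{lem: pro koszul} lives in pro-$A$-modules and cannot simply be pushed through the varying functors $M_n\otimes_{A_n}-$.

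The repair is to use fiber sequences and nullhomotopies that are linear over the varying rings.
\begin{itemize}
\item Put $P=\ZZ[t_1,\ldots,t_r]\to A$ with $t_i\mapsto x_i$, and $P_n=P/(t_1^n,\ldots,t_r^n)$. This is a discrete ring with $A_n\simeq A\otimes_P P_n$. Then $M_n\otimes_A A_k\simeq M_n\otimes_{P_n}(P_n\otimes_P P_k)$ and $M_n\otimes_{P_n}P_k\simeq M_n\otimes_{A_n}A_k\simeq M_k$.
\item For $n\geq k$ the multiplication $P_n\otimes_P P_k\to P_k$ is $P_n$-linear and an isomorphism on $\pi_0$. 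So its fiber $F_n$ has homotopy in degrees $[1,r]$.
\item The computation behind Lemma~\ref{lem: pro koszul}, done in this universal case on homotopy groups (via K\"unneth over $\ZZ$), shows that along $n=mk$ the transition maps $F_{(m+1)k}\to F_{mk}$ are zero on all homotopy groups.
\item Since $P_N$ is discrete and the homotopy is bounded (which would fail over a non-discrete $A$), you can use Postnikov truncations in $\mathrm{Mod}_{P_N}$. A composite of $r$ such maps is $P_N$-linearly nullhomotopic: the composite into the $i$-th target lifts to its $\tau_{\geq i+1}$, and $\tau_{\geq r+1}=0$.
\item Tensoring with $M_n$ over $P_n$ preserves nullhomotopies. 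So the tower $\{M_n\otimes_{P_n}F_n\}$ is pro-zero with vanishing limit, and therefore $\lim_n(M_n\otimes_A A_k)\simeq\lim_n M_k=M_k$.
\end{itemize}
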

\begin{proof}
        Note that each $A_k$ is a perfect (hence dualizable) $A$-module by point $(3)$ of Lemma \ref{lem: luries lemma on spf}. 
    Thus we have that $(\llim_n M_n)\otimes_A A_k=\llim_n(M_n\otimes_A A_k).$ Therefore it suffices to verify that the pro-system $\{M_n\otimes_A A_k\}$ is pro-isomorphic to the constant system $\{M_k\}.$

    Now note that we have a change of rings pro-isomorphism $\{M_n\otimes_A A_k\}\simeq \{M_n\otimes_{A_n} A_n\otimes_A A_k\}.$

    We first assume that $k=1.$ In this case we observe that the explicit proof of lemma \ref{lem: pro koszul}, in particular \ref{eq: formula for antimesa1}, shows that for each $n>0$ the algebra $A_n\otimes_A A_1$ is equivalent to $A_1\oplus C$ where $C$ is $\binom{r}{k}-1$ shifted copies of $A_1$ and the number of shifts. Thus tensoring $M_n$ with $A_n\otimes_A A_1$ gives $M_n\otimes_{A_n} A_1\oplus C'$ where $C'$ is  $\binom{r}{k}-1$ shifted copies of $M_n\otimes_{A_n} A_1$.
    Note that, after this identification, the induced map in the tower $M_n\otimes_{A_n} A_n\otimes_A A_1\to M_{n-1}\otimes_{A_{n-1}} A_{n-1}\otimes_A A_1 $ is the one coming from $$M_n\otimes_{A_n}A_1\to M_{n-1}\otimes_{A_{n-1}} A_1$$ on the first factor and zero on the rest of the factors. 

    We may apply similar reasoning for the tower $\{M_n\otimes_{A_n}A_1\}_{n>0}.$
    Thus the natural projection map again is a pro-isomorphism. 

    Now suppose $k>1$. In this case we again look at the final sub-pro-system given by the indices $\{M_{mk}\otimes_{A_{mk}}A_k\}_{m>0}$ and similarly for $\{M_{mk}\otimes_{A_{mk}}A_k\}_{m>0}$ and repeat the reasoning.

    Therefore $\llim_n (M_n\otimes_A A_k)=\llim_n(M_n\otimes_{A_n} A_k)$ and the second limit is eventually constant with value $M_k.$ So we conclude.
\end{proof}

\begin{remark}\label{remark: pro-iso stacksproject}
      In \cite[\href{https://stacks.math.columbia.edu/tag/0GZI}{Tag 0GZI}]{stacks-project} a different proof of Lemma \ref{lem: BS lemma for arbitrary rings} is given. We are not sure if it applies in our situation as the models of pro-systems of \emph{loc.cit.} seems slightly different as it is constructed using flat modules over explicit DGAs..
\end{remark}

\begin{corollary}\label{cor: dalg on spf}
   The pullback $(\eta_A^*)_{\dalg}\colon \dalg(\spec(A))=\dalg_A\to \dalg(\spf(A))$ restricts to an on equivalence $\dalg_A^\jcomp.$  
\end{corollary}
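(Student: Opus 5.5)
We reduce the statement to Theorem \ref{thm: mod on spf} by mimicking its proof at the level of derived algebras, with conservativity of the forgetful functors doing the rest. By Corollary \ref{cor: limit of mod on spf}, after choosing generators $J=(x_1,\ldots,x_r)$ and setting $A_n=A\qq(x_1^n,\ldots,x_r^n)$, we identify $\dalg(\spf(A))\simeq \llim_n\dalg_{A_n}$. Under this identification, $(\eta_A^*)_{\dalg}$ becomes the functor $F_{\dalg}\colon R\mapsto\{R\otimes_A A_n\}_n$; here we use that base change of derived algebras along $A\to A_n$ is computed on underlying modules (Remark \ref{remark: functoriality of dalgj} and the fact that tensor products of derived algebras are computed on modules).

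The right adjoint is $G_{\dalg}\colon\{R_n\}\mapsto\llim_n R_n$, the limit taken in $\dalg_A$ of the images of the $R_n$ under the forgetful functors $\dalg_{A_n}\to\dalg_A$. Since limits in $\dalg_A$ are computed on underlying modules, $G_{\dalg}$ lands in $\dalg_A^\jcomp$: each $R_n$ is a module over the $J$-nilpotent ring $A_n$, so $\llim_n R_n$ is $J$-complete. Restricting $F_{\dalg}$ to $\dalg_A^\jcomp$ therefore gives an adjunction $F_{\dalg}\colon\dalg_A^\jcomp\rightleftarrows\llim_n\dalg_{A_n}\colon G_{\dalg}$.

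The key point is that this adjunction lies over the adjunction $F\dashv G$ from the proof of Theorem \ref{thm: mod on spf}, via the forgetful functors $U^J\colon\dalg_A^\jcomp\to\Mod_A^\jcomp$ and $\lim_n U_{A_n}\colon\llim_n\dalg_{A_n}\to\llim_n\Mod_{A_n}$. That is, $U\circ F_{\dalg}\simeq F\circ U^J$ (the square in Construction \ref{cons: can map from spf to spec}) and $U^J\circ G_{\dalg}\simeq G\circ U$. We also need that the unit and counit of the $\dalg$-adjunction map to the unit and counit of the $\Mod$-adjunction. This is the step requiring the most care. I would handle it by noting that both functors commute with the forgetful functors, so the unit $R\to\llim_n R\otimes_A A_n$ is the canonical map of Construction \ref{cons: completion def}, whose underlying module map is exactly the unit $M\to\llim_n M\otimes_A A_n$; similarly the counit $\{(\llim_m R_m)\otimes_A A_k\}_k\to\{R_k\}_k$ has underlying map the counit in $\Mod$.

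With this in hand the conclusion is formal. The unit on $\dalg_A^\jcomp$ is an equivalence because its underlying module map is one (Theorem \ref{thm: mod on spf}, or equivalently Lemma \ref{lem: adjoint is completion}). The counit is an equivalence because its underlying map is an equivalence by Lemma \ref{lem: BS lemma for arbitrary rings}, and $\lim_n U_{A_n}$ is conservative, since equivalences in a limit of $\infty$-categories are detected levelwise and each $U_{A_n}$ is conservative. Hence $F_{\dalg}$ restricted to $\dalg_A^\jcomp$ is an equivalence. Equivalently, one can use the footnote criterion from Theorem \ref{thm: mod on spf}: $F_{\dalg}$ is fully faithful and $G_{\dalg}$ is conservative, the latter because $U^J\circ G_{\dalg}\simeq G\circ U$ and both $G$ and $U$ are conservative.
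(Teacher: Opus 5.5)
Your proposal is correct and follows essentially the same route as the paper. The paper's proof is the one-line remark that the corollary follows as in the proof of Theorem \ref{thm: mod on spf} using conservativity of the forgetful functor $\dalg_A\to\Mod_A$. You have written out that transfer in full: the identification $\dalg(\spf(A))\simeq\llim_n\dalg_{A_n}$, the check of the unit on underlying modules, and the deduction of conservativity of $G_{\dalg}$ from $U^J\circ G_{\dalg}\simeq G\circ U$, which sidesteps the stability argument available only for modules.
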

\begin{proof}
    This follows exactly as in the proof of Theorem \ref{thm: mod on spf} using the conservativity of the forgetful functor $\dalg_A\to \Mod_A.$
\end{proof}

\begin{remark}\label{remark: mundinger}
After the first version of this paper had been prepared, J. Mundinger informed the author that the results of this section bear some similarity to those occuring in \cite[\S~7.1]{GR_DGIndschemes_ConM610_2014}.
\end{remark}

\begin{remark}\label{remark: functoriality of spf}
It follows from Theorem \ref{thm: mod on spf} and Corollary \ref{cor: dalg on spf} that if $f\colon \spf B\to \spf C$ is a morphism of formal schemes over $A$ induced where the formal direction is along $J$, induced from a morphism of rings $B\to C$. Then the induced functor $f^*\colon \Qcoh(\spf(C))\to \Qcoh(\spf B)$  can, after identifying the source and target with $\Mod_C^\jcomp$ and $\Mod_B^\jcomp$ respectively, be identified with sending $M\mapsto M\widehat{\otimes}_C B$ where $\widehat{\otimes}_C$ is the $J$-completed tensor product. An obviously similar statement holds for the induced morphism $f^*\colon \dalg(\spf(C))\to \dalg(\spf (B))$. Indeed the first can be deduced by the commutativity of the diagram $$
\begin{tikzcd}[cramped]
\Mod_C^{\jcomp} \arrow[d, "f^*"'] & \Mod_C \arrow[d, "f^*"] \arrow[l, "(-)^{\wedge}_J"'] \\
\Mod_B^\jcomp                     & \Mod_B \arrow[l, "(-)^\wedge_J"]                    
\end{tikzcd}$$ and similarly for the second. 
\end{remark}

\begin{remark}\label{remark: definition of formal stack}
    Using Lemma \ref{lem: luries lemma on spf}, we may define a $J$-adic formal $A$-prestack as follows. Let $X\colon \aring_A\to \calS$ be a prestack over $A$. Then we say that $X$ is a $J$-adic formal $A$-prestack if the structure morphism $X\to \spec(A)$ factors through $X\to \spf(A).$
    This definition now implies that by Lemma \ref{lem: luries lemma on spf}, we have $$X=\colim_n X\times_{\spec(A)} \spec(A_n),$$ for some tower $A_i$ satisfying the hypothesis of Lemma \ref{lem: luries lemma on spf}
    In particular we have $$\Qcoh(X)=\lim_n \Qcoh(X\times_{\spec(A)} \spec(A_n)),$$ which heuristically gives a notion of $J$-complete sheaves on $X$. 
    Indeed if $X$ is a stack over $A$-algebras then $X\times \spf(A)$ can be thought of as the $J$-completion of $X$ and 
    $$\Qcoh(X\times \spf(A))=\lim_n \Qcoh(X\times_{\spec(A)} \spec(A_n)),$$ corresponds to pro-systems of complexes $$\widehat{\cF}= \{\cF\otimes_A A\qq (f_1^n,\ldots, f_r^n)\}_{n>0}.$$ 
    A similar statement, of course, also holds for $\dalg(X).$
    This viewpoint will be useful in our in-preparation work \cite{MundingerSahai_inprep}. See Remark \ref{remark: limitations of quasi-affine}.
\end{remark}

\section{Graded derived algebras and comonadicity.}\label{sec: graded dalg and comonads}

\subsection{Preliminaries on graded derived algebras.}
In this section we collect some preliminaries on graded derived algebras needed for our geometric classification theorems.

\begin{recollection}(Local systems of spectra .)\label{rec: loc sys of spectra}
Let $R$ be an $\einf$-ring spectrum and let $X$ be a space. Then, following \cite[\S~3]{AndoBlumbergGepner2018ParametrizedThom} and \cite{Lurie2014Lecture21ATheory}, the $\icat$ of $\Mod_R$ valued local systems on $X$ is defined as $\loc(X,R):=\mathrm{Fun}(X^\op,\Mod_R)$\footnote{Note that as explained in \cite[Remark 3.2]{AndoBlumbergGepner2018ParametrizedThom}, in the $\infty$-category of space there is a canonical equivalence between functors $(-)^\op\to \id$. In particular $\loc(X,R)\simeq \loc(X^\op,R)$ but we stick with the first category as in the applications we are interested in working with decreasing filtrations.}.

For any morphism $f\colon X\to Y$ of spaces, there is an induced pullback functor $f^*\colon \loc(Y,R)\to \loc(X,R)$. As explained in \cite[\S~5.4]{AndoBlumbergGepner2018ParametrizedThom} and \cite[Remark 5]{Lurie2014Lecture21ATheory}, the $\icat$ $\Mod_R$ is presentable and admits all small limits and colimits and therefore, by left and right Kan extensions the functor $f^*$ admits a left adjoint $$f_!\colon \loc(X,R)\rightleftarrows \loc(Y,R)\colon f^*$$ the \emph{proper} pushforward and a right adjoint $$f^*\colon \loc(Y,R)\rightleftarrows \loc(X,R)\colon f_*$$ which is the usual pushforward. 
\end{recollection}

For a space $X$ which admits an $\einf$-group like structure and an $\einf$-ring $R$, there are two monoidal structures on $\loc(X,R)$ which we now recall. 

\begin{recollection}(Pointwise monoidal structure on local systems of spectra.)\label{pointwise monoidal}
 The \emph{pointwise} monoidal two local systems $\cL_1,\cL_2\colon X\to \Mod_R$ to the local system $\cL_1\otimes_X \cL_2:=(\Delta_X)^*(\cL_1\boxtimes \cL_2)$ where $\Delta_X\colon X\to X\times X$ is the diagonal and $\cL_1\boxtimes \cL_2$ is the local system $p_1^*\cL_1\otimes p_2^*\cL_2\in \loc(X\times X,R).$ Explicitly it's value on $x\in X$ is $(\cL_1\otimes_X \cL_2) (x)=\cL_1(x)\otimes_R \cL_2(x).$

With respect to this monoidal structure, for a morphism of space $f\colon X\to Y$, the induced functors $f_!\dashv f^* \dashv f_*$ of Recollection \ref{rec: loc sys of spectra} have the following properties:

\begin{enumerate}
    \item $f^*$ is symmetric monoidal. 
    \item $f_!$ being a right adjoint to a symmetric monoidal functor is op-lax monoidal.
    \item $f_*$ being a left adjoint to a symmetric monoidal functor is lax monoidal.
    \item $f_!$ and $f^*$ satisfy a projection formula  $f_!(f^*(-)\otimes_X(-))\simeq (-)\otimes_Y f_!(-).$
\end{enumerate}

See \cite[Proposition 6.8]{AndoBlumbergGepner2018ParametrizedThom} for a proof. 
\end{recollection}

\begin{recollection}(Day convolution on local systems of spectra over commutative monoids in spaces.)\label{day conv on locsys}
Now let $X$ be an $E_1$-grouplike object in space as in \cite[Definition 5.2.6]{HA}. The multiplication $m\colon X\times X\to X$ endows $X$ with a monoidal structure which is symmetric monoidal when $X$ is $\einf$. It follows from \cite[Corollary 6.13]{AndoBlumbergGepner2018ParametrizedThom} that $\loc(X,R)$ is now endowed with a symmetric monoidal structure given by Day convolution \cite[\S~2.2.6]{HA} with respect to which for any morphism of $\einf$-grouplike objects in space $f\colon X\to Y$, then $f_!$ is symmetric monoidal. 
\end{recollection}

\begin{remark}\label{remark: animated ring loc sys}
    Let $R$ now be an animated ring. Then the notion of $\Mod_R$-valued local systems of Recollection \ref{rec: loc sys of spectra} depend only on the underlying $\einf$-ring $R^\circ$ of \ref{forgetful}. 
\end{remark}

\begin{notation}
    We can now prove the first result that we want. We set $\ZZ^\ds$ be the discrete grouplike $\einf $space obtained by thinking of $\ZZ$ as a discrete space. 
    Then $\loc(\ZZ^\ds,\Mod_R)\simeq \Gr\Mod_R$ and the symmetric monoidal product induced in Recollection \ref{day conv on locsys} induces the usual Day convolution monoidal structure on $\Gr\Mod_R.$
\end{notation}

\begin{recollection}\label{rec: morphisms in contexts}
    Recall following \cite[Definition 4.2.1]{Rak20} we recall that a morphism of derived algebraic contexts $\cC$ and $\cD$ is a colimit preserving right $t$-exact symmetric monoidal functor $F\colon \cC\to \cD$ so that $F(\cC^0)\subset \cD^0.$ Note that since $\C$ and $\cD$ are both presentable $F$ admits a right adjoint $G.$

    As explained in \cite[Remark 4.2.25]{Rak20} these functors induce a colimit morphism $F'\colon \dalg(C)\to \dalg(D)$ which admits a right adjoint (due to the adjoint functor theorem, accessible by the presentability of the source and target) $G'\colon \dalg(D)\to \dalg(C)$ so that the following diagrams commute: 
    
    \begin{equation*}
       \begin{tikzcd}
\dalg(\cC) \arrow[d, "U_C"'] \arrow[r, "F'"] & \dalg(\cD) \arrow[d, "U_D"] \\
\cC \arrow[r, "F"]                         & \cD                        
\end{tikzcd} \text{ and }  
\begin{tikzcd}
\dalg(\cC) \arrow[d, "U_C"'] & \dalg(\cD) \arrow[d, "U_D"] \arrow[l, "G'"'] \\
\cC                          & \cD \arrow[l, "G"']                         
\end{tikzcd}.
    \end{equation*}
    
\end{recollection}

It is now very useful to recall the comonadic version of Barr-Beck-Lurie \cite[Theorem 4.7.0.3]{HA}. Our statement follows the somewhat streamlined presentation in \cite[Theorem 3.3]{galoisgroup}.

\begin{theorem}[Comonadic Barr-Beck-Lurie]\label{barrbeck comonadic}
    Let $F\colon \cC \rightleftarrows \cD\colon G$ be a pair of adjoint functors between $\icats$. Then $F$ is comonadic if and only if 
    \begin{enumerate}
        \item $F$ is conservative.
        \item Given a cosimplicial object $X^\bullet\in \cC$ such that $F(X^\bullet)$ is a split cosimplicial object, then $\Tot(X^\bullet)$ exists in $\cC$ and $F(\Tot(X^\bullet))\to \Tot(F(X^\bullet))$ is an equivalence.
    \end{enumerate}
\end{theorem}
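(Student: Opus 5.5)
The plan is to deduce Theorem \ref{barrbeck comonadic} formally from the monadic Barr-Beck-Lurie theorem \cite[Theorem 4.7.0.3]{HA} by passing to opposite $\infty$-categories; there is no new mathematics, only bookkeeping of dualities. First, I will note that an adjunction $F\colon \cC\rightleftarrows \cD\colon G$ with $F\dashv G$ induces an adjunction $G^{\op}\colon \cD^{\op}\rightleftarrows \cC^{\op}\colon F^{\op}$ with $G^{\op}\dashv F^{\op}$. The unit and counit are interchanged: the counit $FG\to \id_\cD$ becomes the unit of the new adjunction in $\cD^{\op}$, and dually for the other. Consequently the comonad $FG$ on $\cD$ is identified with the monad $F^{\op}G^{\op}=(FG)^{\op}$ on $\cD^{\op}$. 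Under this identification, coalgebras over $FG$ in $\cD$ correspond to $(\mathrm{Mod}_{(FG)^{\op}}(\cD^{\op}))^{\op}$, and the comparison functor $\cC\to \mathrm{coMod}_{FG}(\cD)$ is the opposite of the comparison functor $\cC^{\op}\to \mathrm{Mod}_{(FG)^{\op}}(\cD^{\op})$. By definition, then, $F$ is comonadic if and only if the right adjoint $F^{\op}$ is monadic. I would set this up either directly via \cite[Definition 4.7.0.1]{HA}, which is phrased in terms of endomorphism monoids in $\Fun(\cD,\cD)$, or simply as the definition of comonadicity.

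Next I will translate the hypotheses. Conservativity is self-dual, so $F$ is conservative if and only if $F^{\op}$ is. A cosimplicial object $X^\bullet\colon \Delta\to \cC$ is the same as a simplicial object $(X^\bullet)^{\op}\colon \Delta^{\op}\to \cC^{\op}$. Totalizations in $\cC$ correspond to geometric realizations in $\cC^{\op}$. A split cosimplicial object, meaning one extending to $\Delta_{-\infty}$ in Lurie's terminology (or a split augmented cosimplicial object), is exactly the opposite of a $G^{\op}$-split, i.e.\ $F^{\op}$-split, simplicial object in the sense of \cite[Definition 4.7.2.2]{HA}. Finally, the comparison map $F(\Tot X^\bullet)\to \Tot F(X^\bullet)$ is the opposite of $|F^{\op}(X_\bullet)|\to F^{\op}|X_\bullet|$. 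Thus conditions (1) and (2) of the statement are precisely the hypotheses of \cite[Theorem 4.7.0.3]{HA} for $F^{\op}$: conservativity, existence of colimits of $F^{\op}$-split simplicial objects, and their preservation by $F^{\op}$. Both directions of the "if and only if" then follow from the two directions of Lurie's theorem.

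The only step needing care is the identification of comonadicity of $F$ with monadicity of $F^{\op}$, specifically the compatibility of the comparison functors. The cleanest route is to take Lurie's definition, in which $G$ is monadic if it exhibits $\cC$ as a left-module object over the endomorphism monad, and to dualize it verbatim. I will remark that $\Fun(\cD,\cD)^{\op}\simeq\Fun(\cD^{\op},\cD^{\op})$ is a monoidal equivalence reversing nothing essential, since composition is preserved. I would also follow the streamlined treatment in \cite[Theorem 3.3]{galoisgroup}, which states exactly this dual form, and cite it for the remaining routine verifications.
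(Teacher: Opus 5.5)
Your proposal is correct and matches the paper. The paper gives no independent argument: it recalls this statement as the comonadic dual of \cite[Theorem 4.7.0.3]{HA}, in the form of \cite[Theorem 3.3]{galoisgroup}, and your passage to opposite categories is exactly the dualization that justifies this, with $G^{\mathrm{op}}\dashv F^{\mathrm{op}}$, the comonad $FG$ becoming the monad $(FG)^{\mathrm{op}}$, totalizations becoming geometric realizations, and split cosimplicial objects becoming $F^{\mathrm{op}}$-split simplicial objects. The only slip is terminological: the relevant notion is $F^{\mathrm{op}}$-split (split after applying the right adjoint $F^{\mathrm{op}}$), not ``$G^{\mathrm{op}}$-split''.
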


\begin{lemma}\label{monadicity in dalg}
    Let $F\colon \cC\to \cD$ be a morphism of derived algebraic contexts (which automatically admits a right adjoing $G$) such that $F$ is comonadic. Then the induced functor $F'\colon \dalg(\cC)\to \dalg(D)$ (which automatically admits a right adjoint $G'$) is comonadic.
\end{lemma}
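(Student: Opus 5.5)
We plan to apply the comonadic Barr-Beck-Lurie theorem (Theorem \ref{barrbeck comonadic}) directly to the adjunction $F'\colon \dalg(\cC)\rightleftarrows \dalg(\cD)\colon G'$ of Recollection \ref{rec: morphisms in contexts}. The two inputs are the commutative squares $U_\cD\circ F'\simeq F\circ U_\cC$ and $U_\cC\circ G'\simeq G\circ U_\cD$, and the standard properties of the forgetful functors $U_\cC$, $U_\cD$ from \cite[Notation 4.2.28]{Rak20}: they are conservative and preserve all limits and sifted colimits. Since $\dalg(\cC)$ is presentable, all totalizations exist there, so the existence part of condition (2) is automatic.

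First we check conservativity. Let $\phi$ be a morphism in $\dalg(\cC)$ with $F'(\phi)$ an equivalence. Then $U_\cD F'(\phi)\simeq F(U_\cC\phi)$ is an equivalence. Since $F$ is comonadic, it is conservative, so $U_\cC\phi$ is an equivalence, and hence $\phi$ is an equivalence because $U_\cC$ is conservative.

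Next we check condition (2). Let $X^\bullet$ be a cosimplicial object of $\dalg(\cC)$ such that $F'(X^\bullet)$ is split. Applying $U_\cD$ preserves splitness, since split cosimplicial objects are preserved by any functor. So $U_\cD F'(X^\bullet)\simeq F(U_\cC X^\bullet)$ is split in $\cD$. Comonadicity of $F$ then gives that $F(\Tot\, U_\cC X^\bullet)\to \Tot\, F(U_\cC X^\bullet)$ is an equivalence. Because $U_\cC$ and $U_\cD$ preserve limits, we have $U_\cC \Tot X^\bullet\simeq \Tot U_\cC X^\bullet$, and similarly after applying $F'$. The comparison map $F'(\Tot X^\bullet)\to \Tot F'(X^\bullet)$ is therefore carried by $U_\cD$ to the map above, which is an equivalence. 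Conservativity of $U_\cD$ concludes.

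The main obstacle is making sure the comparison maps are compatible under the identification $U_\cD F'\simeq F U_\cC$. Concretely, $U_\cD$ applied to the canonical map $F'\Tot\to \Tot F'$ must coincide with the canonical map $F\Tot\to\Tot F$ precomposed with the limit-preservation equivalences. This follows from naturality of the square of Recollection \ref{rec: morphisms in contexts} together with the fact that $U_\cD$ preserves limits. One would verify it by observing that both maps are induced by the universal property of $\Tot$ from the same coaugmentation.
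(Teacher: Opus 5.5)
Your proposal is correct and follows the paper's route: you verify the two comonadic Barr--Beck--Lurie conditions for $F'$ after applying the forgetful functors $U_\cC$ and $U_\cD$, using that they are conservative and preserve limits, together with the commuting square $U_\cD F'\simeq F U_\cC$. You give more detail than the paper, notably on why the comparison maps match under the forgetful functors, but the argument is the same.
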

\begin{proof}
    We need to check the conditions in Theorem \ref{barrbeck comonadic}. The first condition can be checked after applying $U_D$ in the first diagram in Recollection \ref{rec: morphisms in contexts} in which case it follows from the conservativity of $F\circ U_C$. 
    The second fact can also be checked after applying $U_D$ as the functors $U_C$ and $U_D$ preserve all limits.
\end{proof}

\begin{proposition}\label{pi! is a morphism of contexts}
Let $\pi\colon \ZZ^\ds\to {*}$ be the canonical projection and let
The functor $\pi_!\colon \loc(\ZZ^\ds,\ZZ)\simeq \Gr\Mod_\ZZ\to \loc({*}, \ZZ)\simeq \Mod_\ZZ$ be the induced symmetric monoidal functor of Recollection \ref{day conv on locsys}. Then $\pi_!$ is a morphism of derived algebraic contexts in the sense of Recollection \ref{rec: morphisms in contexts}.
\end{proposition}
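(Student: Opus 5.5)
We need to verify that $\pi_!\colon \Gr\Mod_\ZZ\to\Mod_\ZZ$ is a morphism of derived algebraic contexts in the sense of Recollection \ref{rec: morphisms in contexts}. That is, it is colimit preserving, symmetric monoidal, right $t$-exact, and carries the chosen generating subcategory $(\Gr\Mod_\ZZ)^0$ into $\Mod_\ZZ^0$. The plan is to make $\pi_!$ completely explicit first. Since $\ZZ^\ds$ is a discrete space, left Kan extension along $\pi$ is the coproduct, so $\pi_!(\{M_n\}_{n\in\ZZ})\simeq\bigoplus_{n\in\ZZ}M_n$. Every required property will be checked against this formula, using the standard derived algebraic context structure on $\Gr\Mod_\ZZ$ from \cite{Rak20}: connective objects are those $\{M_n\}$ with each $M_n$ connective, and $(\Gr\Mod_\ZZ)^0$ is spanned by finite direct sums of $\ZZ(i)$, that is, $\ZZ$ placed in weight $i$.

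The steps, in order, are as follows.
\begin{enumerate}
\item \emph{Colimits.} $\pi_!$ is a left adjoint to $\pi^*$ by Recollection \ref{rec: loc sys of spectra}, so it preserves colimits.
\item \emph{Symmetric monoidality.} This is supplied by Recollection \ref{day conv on locsys} for the Day convolution structure, since $\pi$ is a map of grouplike $\einf$-spaces. The resulting structure is the usual identification $\bigoplus_n(M\otimes N)_n\simeq(\bigoplus M_n)\otimes(\bigoplus N_n)$.
\item \emph{Right $t$-exactness.} A direct sum of connective $\ZZ$-modules is connective, because homotopy groups commute with direct sums. I would also note that $\pi_!$ is in fact $t$-exact, since coconnective objects are likewise stable under direct sums in $\Mod_\ZZ$.
\item \emph{Compact projective generators.} $\pi_!(\ZZ(i))=\ZZ$, and $\pi_!$ preserves finite sums, so $\pi_!$ sends $(\Gr\Mod_\ZZ)^0$ into the finitely generated free $\ZZ$-modules, which is $\Mod_\ZZ^0$.
\end{enumerate}

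The main obstacle is the bookkeeping in step 2. One has to make sure that the symmetric monoidal structure produced abstractly in Recollection \ref{day conv on locsys} agrees with the one implicitly used for Raksit's graded context. Concretely, $\loc(\ZZ^\ds,\ZZ)\simeq\Gr\Mod_\ZZ$ must be an equivalence of symmetric monoidal $\infty$-categories with Day convolution on both sides, including the trivial symmetry on $\ZZ$ with no Koszul sign twist. I would handle this by observing that both structures are Day convolution on $\Fun(\ZZ^\ds,\Mod_\ZZ)$ with respect to the addition on $\ZZ^\ds$, so the identification in the notation preceding the proposition is monoidal by construction. Everything else is routine once the explicit formula for $\pi_!$ is in place.
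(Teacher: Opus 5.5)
Your proposal is correct and follows essentially the same route as the paper: identify $\pi_!$ with the direct sum over weights, get colimit preservation from $\pi_!$ being a left adjoint, get ($t$-)exactness from the direct-sum formula, take symmetric monoidality from Recollection \ref{day conv on locsys}, and get the generator condition from $\pi_!(\ZZ(i))=\ZZ$ together with Raksit's description of $(\Gr\Mod_\ZZ)^0$. Your explicit check that finite sums of the $\ZZ(i)$ land in finitely generated free $\ZZ$-modules is, if anything, a slightly more precise verification of the condition $F(\cC^0)\subset\cD^0$ than the paper's phrasing in terms of generating $(\Mod_\ZZ)_{\geq 0}$.
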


\begin{proof}
    First note that $\pi_!$ is colimit preserving. Indeed it is a left adjoint and so preserves all colimits. In fact one can write down a formula for $\pi_!$ evaluated at $E$ for any $E\in \Gr\Mod_\ZZ$ as $\pi_!(E)=\bigoplus_{i\in \ZZ}E(i).$ Indeed this follows from the definition of $\pi_!$ as a left Kan extension over a discrete space.

    The formula then shows that for the neutral $t$-structure on $\Gr\Mod_\ZZ$, the functor $\pi_!$ is $t$-exact and hence right $t$-exact. 

    It remains to verify compact generation. The $\infty$-category $\Mod_\ZZ$ is projectively generated by $\ZZ\in (\Mod_\ZZ)^\heartsuit$ by \cite[Corollary 7.1.4.15.]{HA}. Thus as explained in \cite[Construction 4.3.4]{Rak20} we may generate $\Gr\Mod_\ZZ$ by finite coproducts of $\ZZ(n)$ where $(n)$ denotes $\ZZ$ in grading degree $n.$ Since these also generate $(\Mod_\ZZ)_{\geq0}$ after applying $\pi_!$ as we see from the formula above, we conclude.
\end{proof}

\begin{remark}\label{remark: Moulinos's proof works for animated rings}
  As remarked in Remark \ref{remark: animated ring loc sys}, when $R$ is animated one can also contemplate $\Gr\Mod_R$ and $\Mod_R$ through the lens of $\Mod_R$ valued local systems on $\ZZ^\ds$ and ${*}$ respectively. In this case the proof of Proposition \ref{Moulinos's coaction} works formally over any animated (in fact $\einf$) ring $R$. Indeed this is already observed in \cite[Remark 5.2]{Moulinos2021GeometryOfFiltrations}. We will use this fact in the sequel.
\end{remark}

\begin{corollary}\label{pi! on dalg}
    The functor $\pi_!\colon \Gr\Mod_\ZZ\to \Mod_\ZZ$ lifts to a morphism of derived algebras $\pi_!\colon \Gr\dalg_\ZZ\to \dalg_\ZZ$ where $\Gr\dalg_R:=\dalg(\Gr\Mod_R).$
\end{corollary}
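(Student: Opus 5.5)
The plan is to deduce the statement directly from Proposition \ref{pi! is a morphism of contexts} together with the general functoriality of $\dalg(-)$ along morphisms of derived algebraic contexts recalled in Recollection \ref{rec: morphisms in contexts}. The work lies in making sure both source and target really are derived algebraic contexts in Raksit's sense, with the structures Proposition \ref{pi! is a morphism of contexts} refers to.

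First I fix the contexts. On the target side, $\Mod_\ZZ$ carries its standard structure, with $\cC^0$ the finitely generated free modules. On the source side, $\Gr\Mod_\ZZ$ carries the Day convolution symmetric monoidal structure of Recollection \ref{day conv on locsys} (the one identified with the usual graded tensor product in the Notation following Remark \ref{remark: animated ring loc sys}) and the neutral $t$-structure. Its compact projective generators are the finite direct sums of the $\ZZ(n)$, as in \cite[Construction 4.3.4]{Rak20}. Raksit shows in that construction that this is a derived algebraic context, so $\Gr\dalg_\ZZ:=\dalg(\Gr\Mod_\ZZ)$ makes sense.

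Second, I check that $\pi_!$ satisfies every axiom of a morphism of contexts. Proposition \ref{pi! is a morphism of contexts} already gives that it preserves colimits and is right $t$-exact. It is symmetric monoidal for Day convolution by Recollection \ref{day conv on locsys}, because $\pi\colon \ZZ^\ds\to *$ is a map of $\einf$-grouplike spaces. It carries generators to generators: the explicit formula $\pi_!(E)=\bigoplus_i E(i)$ gives $\pi_!(\ZZ(n))\simeq\ZZ$, and $\pi_!$ preserves finite sums, so $\pi_!$ sends finite sums of the $\ZZ(n)$ into $\cC^0$.

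Third, I apply \cite[Remark 4.2.25]{Rak20}, as recalled in Recollection \ref{rec: morphisms in contexts}. This produces a colimit-preserving functor $\pi_!'\colon \Gr\dalg_\ZZ\to\dalg_\ZZ$ lying over $\pi_!$, that is, with $U\circ\pi_!'\simeq\pi_!\circ U$. I will also note its right adjoint, which lies over $\pi^*$ and which the comonadicity arguments of the next subsection will need.

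The only potentially delicate point is symmetric monoidal compatibility. Raksit's construction needs $\pi_!$ to be symmetric monoidal as an $\infty$-functor, compatibly with the filtered monad $\L\Sym$, and not just to preserve tensor products objectwise. I expect this to follow from the fact that $\pi_!$ is the left Kan extension of a map of $\einf$-monoids, by \cite[Corollary 6.13]{AndoBlumbergGepner2018ParametrizedThom}. Raksit's naturality of $\L\Sym$ for morphisms of contexts should then handle the rest formally.
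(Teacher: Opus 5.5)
Your proposal is correct and follows the paper's route. The paper's proof is the one-line deduction from Proposition \ref{pi! is a morphism of contexts} plus the functoriality of $\dalg(-)$ along morphisms of derived algebraic contexts (\cite[Remark 4.2.25]{Rak20}). Your extra checks, such as $\pi_!(\ZZ(n))\simeq\ZZ$ giving $\pi_!(\cC^0)\subset\cD^0$, only re-verify hypotheses already packaged in that proposition. The compatibility with $\L\Sym$ that you flag as delicate is supplied by Raksit's functoriality once $\pi_!$ is known to be a morphism of contexts.
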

\begin{proof}
    This follows from functoriality of derived algebraic contexts and Proposition \ref{pi! is a morphism of contexts}.
\end{proof}

\begin{remark}(Naturality of $\pi_!$.)\label{naturalityofpi!} As explained in Recollection \ref{rec: morphisms in contexts} the functor $\pi_!\colon \Gr\dalg_\ZZ\to \dalg_\ZZ$ preserves all small colimits and admits a right adjoint $\pi^*$ so that the analogue of the diagrams in \ref{rec: morphisms in contexts} commute. 

\begin{equation}
    \begin{tikzcd}
\Gr\dalg_\ZZ \arrow[r, "\pi_!"] \arrow[d, "U"'] & \dalg_\ZZ \arrow[d, "U"] \\
\Gr\Mod_\ZZ \arrow[r, "\pi_!"]                  & \Mod_\ZZ                
\end{tikzcd}\text{ and } \begin{tikzcd}
\Gr\dalg_\ZZ \arrow[d, "U"'] & \dalg_\ZZ \arrow[d, "U"] \arrow[l, "\pi^*"'] \\
\Gr\Mod_\ZZ                  & \Mod_\ZZ \arrow[l, "\pi^*"]                 
\end{tikzcd}.
\end{equation}
\end{remark}

\begin{notation}
    Let $\mathbf{S}$ be the sphere spectrum. Then let $\mathbf{S}[\ZZ]:=\Sigma_{+}^\infty\ZZ^\ds$. The functor $\Sigma_{+}^\infty\colon \mathcal{S}\to \Sp$ is symmetric monoidal and when evaluated in the discrete space $\ZZ$, gives a biaglebra in $\Sp$. If $R$ is any $\einf$-ring spectrum then $R[\ZZ]:=R\otimes \mathbf{S}[\ZZ]$ and in particular if $R$ is discrete then the latter is just the bicommutative Hopf algebra $R[t,t^{-1}]$  which is the Hopf algebra of functions on $\G_m.$\footnote{We clarify that the point of recalling this notation here is to show that Moulinos's argument for $\mathbf{S}[\ZZ]$ goes through for $R[\ZZ]$ as well by base change.}
\end{notation}
We now recall one of the main result of \cite[Proposition 4.2]{Moulinos2021GeometryOfFiltrations}. Before this we recall how to describe equivalences of monads. 

\begin{remark}[How to detect equivalences of monads?]\label{remark: how to detect equivalences of monads}
    Given an $\icat$ $\cC$ a monad $T$ on $\cC$ is formally a $\mathbf{E}_1$ algebra object of $\End(\cC).$ Let $T\to S$ be a morphism of $\mathbf{E}_1$-algebra objects in $\End(\cC).$ To show that such a morphism is an equivalence we use that fact that the map $\mathrm{Alg}(\End(\cC))\to \End(\cC)$ is conservative. In other words it suffices to verify that the morphism of endofunctors is an equivalence, and the latter can be detected on objects of $\cC.$ An equivalent statement applies in the case of comonads. We will use this without comment in the sequel.
\end{remark}

\begin{proposition}[Moulinos]\label{Moulinos's coaction}
    The functor $\pi_!\colon \Gr\Mod_\ZZ\to \Mod_\ZZ$ is comonadic and identifies the source as $\cmod_{\ZZ[\ZZ]}(\Mod_\ZZ)$
\end{proposition}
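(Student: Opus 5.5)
The plan is to apply the comonadic Barr--Beck--Lurie theorem (Theorem \ref{barrbeck comonadic}) to the adjunction $\pi_!\dashv\pi^*$ of Recollection \ref{rec: loc sys of spectra}. The identification of the comonad will then follow from the projection formula of Recollection \ref{pointwise monoidal}, together with the formula $\pi_!(E)=\bigoplus_{i\in\ZZ}E(i)$ from Proposition \ref{pi! is a morphism of contexts}.

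\emph{Step 1: conservativity.} Since $\pi_!(E)=\bigoplus_i E(i)$ and each $E(i)$ is a retract of this sum, $\pi_!$ reflects zero objects. Both categories are stable and $\pi_!$ is exact, so a map $\phi$ with $\pi_!(\phi)$ an equivalence has $\pi_!(\cofib\phi)=0$, hence $\cofib\phi=0$. Thus $\pi_!$ is conservative.

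\emph{Step 2: totalizations.} Let $X^\bullet$ be a cosimplicial object of $\Gr\Mod_\ZZ$ such that $\pi_!X^\bullet$ is split. Then $\Tot X^\bullet$ exists and is computed degreewise, with $(\Tot X^\bullet)(i)=\Tot X^\bullet(i)$. Each $X^\bullet(i)$ is a retract of $\pi_!X^\bullet$ as a cosimplicial object, via the inclusion and the projection of the $i$th summand. A retract of a split cosimplicial object is split, so $X^\bullet(i)$ is split, and therefore its totalization is computed by the augmentation and commutes with any functor, in particular with infinite direct sums. This gives
\[
\pi_!\Tot X^\bullet=\bigoplus_i\Tot X^\bullet(i)\simeq\Tot\bigoplus_i X^\bullet(i)=\Tot\pi_!X^\bullet .
\]
This step is where I expect the main subtlety. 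In general $\bigoplus$ does not commute with $\Tot$, and the argument relies on splitness passing to the summands. The point to check carefully is that the retraction is compatible with the extra codegeneracies, which should hold because the splitting on $\pi_!X^\bullet$ restricts along the inclusion and projection of the $i$th summand. If this is awkward to verify directly, I would fall back on the observation that $\pi_!$ is also a right adjoint: on a discrete space $\bigoplus_i\simeq\prod_i$ fails in general, but in a stable setting split totalizations are finite-limit-like, which should suffice.

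\emph{Step 3: the comonad.} The comonad is $\pi_!\pi^*$. Applying the projection formula of Recollection \ref{pointwise monoidal} with unit $\mathbf 1$ gives
\[
\pi_!\pi^*M\simeq M\otimes\pi_!\pi^*\ZZ\simeq M\otimes\bigoplus_{n\in\ZZ}\ZZ=M\otimes_\ZZ\ZZ[\ZZ].
\]
It remains to match the comultiplication $\pi_!\pi^*\to\pi_!\pi^*\pi_!\pi^*$, which comes from the unit $\id\to\pi^*\pi_!$, with the coalgebra structure of $\ZZ[\ZZ]=\ZZ\otimes\Sigma^\infty_+\ZZ^\ds$. The coalgebra structure on $\Sigma^\infty_+\ZZ^\ds$ is induced by the diagonal $\ZZ^\ds\to\ZZ^\ds\times\ZZ^\ds$, and $\pi^*\pi_!$ is computed by the diagonal correspondence, so the two structures agree by naturality of $\Sigma^\infty_+$ (base change from $\mathbf S$ as in Moulinos).

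By Remark \ref{remark: how to detect equivalences of monads} it suffices to check the resulting map of comonads on underlying endofunctors, where it is the equivalence above. Barr--Beck--Lurie then identifies $\Gr\Mod_\ZZ$ with $\cmod_{\ZZ[\ZZ]}(\Mod_\ZZ)$.
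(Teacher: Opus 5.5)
Your proposal is correct and is essentially the paper's argument (Moulinos's proof, whose comonadicity check is spelled out in the proof of Proposition~\ref{prop: cts monad mod}). Conservativity comes from $\pi_!E=\bigoplus_i E(i)$. The totalization condition comes from exhibiting each $X^\bullet(i)$ as a retract of the split object $\pi_!X^\bullet$ and reassembling the splittings degreewise. The comonad is identified via the projection formula $\pi_!\pi^*M\simeq M\otimes_\ZZ\pi_!(\mathbf{1})=M\otimes_\ZZ\ZZ[\ZZ]$.

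Two side remarks in Step~2 should be dropped:
\begin{enumerate}
\item The extra codegeneracies of $\pi_!X^\bullet$ are not graded maps, so they do not literally restrict to the summands. What you need is the lemma that a retract of a split cosimplicial object is split, which the paper takes from \cite[Corollary 4.7.2.13]{HA}. Alternatively, observe that the comparison map $\pi_!\Tot X^\bullet\to\Tot\pi_!X^\bullet$ is a retract of the corresponding map for the split object $\pi^*\pi_!X^\bullet$, which is invertible.
\item Your fallback is wrong, since $\pi_!=\bigoplus_{\ZZ}$ is not a right adjoint and does not commute with totalizations in general.
\end{enumerate}
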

\begin{proof}
We sketch Moulinos's proof.
This is an application of the comonadic Bar-Beck-Lurie, recalled for example in \cite[Theorem 4.4]{Moulinos2021GeometryOfFiltrations}. 
The comondadicity is simple to check using the explicit formula for $\pi_!$ described in the proof of Lemma \ref{pi! is a morphism of contexts}.\footnote{We will recall the argument in the proof of Proposition \ref{prop: cts monad mod}.}
It remains to identify the comonad $\pi_!\circ \pi^*\colon \Mod_\ZZ\to \Mod_\ZZ$ with a coaction of $\ZZ[\ZZ]$. One sees first that by point $(2)$ of Recollection $\ref{pointwise monoidal}$ the functor $\pi_!$ is op-lax monoidal for the pointwise monoidal structure and so $\pi_!(1_\ZZ)$ is a Hopf algebra in $\Mod_\ZZ$. Since $1_\ZZ=\pi^*\ZZ$ one sees that $\pi_!(\pi^*\ZZ)=\ZZ[\ZZ]$. 
Then by the projection formula in point $(3)$ of Recollection $\ref{pointwise monoidal}$ one sees that for any $M\in \Mod_\ZZ$ one has $\pi_!(\pi^*(M))=\pi_!(\pi^*(M)\otimes_{\ZZ^\ds}1_\ZZ)= M\otimes_\ZZ \pi_!(1_\ZZ)=M\otimes_\ZZ\ZZ[\ZZ]$ and the diagrams witnessing the $\pi_!\circ \pi^*$-comodule structure on $M$ amount to showing that $M$ admits a coaction of $\ZZ[\ZZ].$
\end{proof}

\begin{corollary}{\label{pi1 is comonadic on dalg}}
    The functor $\pi_!\colon \Gr\dalg_\ZZ\to \dalg_\ZZ$ of Corollary \ref{pi! on dalg} is comonadic and exhibits the source as the $\cmod_{\ZZ[\ZZ]}(\dalg_\ZZ).$
\end{corollary}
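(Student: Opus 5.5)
Comonadicity follows by combining Proposition \ref{pi! is a morphism of contexts}, Proposition \ref{Moulinos's coaction} and Lemma \ref{monadicity in dalg}. By Proposition \ref{pi! is a morphism of contexts}, $\pi_!\colon \Gr\Mod_\ZZ\to\Mod_\ZZ$ is a morphism of derived algebraic contexts. By Proposition \ref{Moulinos's coaction}, it is comonadic. Lemma \ref{monadicity in dalg} then says that the induced functor $\pi_!\colon \Gr\dalg_\ZZ\to\dalg_\ZZ$ of Corollary \ref{pi! on dalg} is comonadic. For clarity, the two Barr-Beck-Lurie conditions can be checked directly after applying $U$, via the commuting squares of Remark \ref{naturalityofpi!}. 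Conservativity holds because $U\circ\pi_!\simeq\pi_!\circ U$ and both $\pi_!$ on modules and $U$ are conservative. The totalization condition holds because $U$ preserves limits and reflects them, being conservative and limit preserving.

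It remains to identify the comonad $\pi_!\circ\pi^*$ on $\dalg_\ZZ$ with coaction by $\ZZ[\ZZ]$. The first step is to compute $\pi_!\pi^*(R)$ on underlying modules. The right-hand square in Remark \ref{naturalityofpi!} gives $U\pi^*\simeq \pi^*U$, and the left-hand square gives $U\pi_!\simeq\pi_!U$. Hence $U(\pi_!\pi^*R)\simeq \pi_!\pi^*(UR)\simeq UR\otimes_\ZZ\ZZ[\ZZ]$ by the computation in the proof of Proposition \ref{Moulinos's coaction}. This identification should be compatible with the comonad structure, since the counit and comultiplication of the adjunction on $\dalg$ are carried by $U$ to those on $\Mod$.

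Next I would upgrade this identification to derived algebras. $\ZZ[\ZZ]=\ZZ[t^{\pm1}]$ is a discrete commutative Hopf algebra, so it is a bicommutative Hopf algebra object in $\dalg_\ZZ$. Since $\dalg_\ZZ$ carries the cocartesian symmetric monoidal structure given by $\otimes_\ZZ$, the functor $R\mapsto R\otimes_\ZZ\ZZ[\ZZ]$ is a comonad on $\dalg_\ZZ$ whose underlying module comonad is Moulinos's. I would construct a comparison map of comonads $\pi_!\pi^*\to (-)\otimes_\ZZ\ZZ[\ZZ]$ on $\dalg_\ZZ$. The map should come from the fact that $\pi^*$ of a derived algebra is the constant graded algebra, while $\pi_!$ is symmetric monoidal for Day convolution and preserves colimits. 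Concretely, $\pi^*R\simeq \pi^*\ZZ\otimes R$ in the pointwise structure, and $\pi_!(\pi^*\ZZ)=\ZZ[\ZZ]$ as an algebra. Remark \ref{remark: how to detect equivalences of monads} then reduces checking that this map is an equivalence of comonads to checking it on underlying objects. Conservativity of $U$ reduces it further to modules, where it is Moulinos's identification.

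The main obstacle is constructing the comparison map as a map of comonads in $\dalg_\ZZ$, rather than only after forgetting to modules. The difficulty is that the projection formula of Recollection \ref{pointwise monoidal} concerns the pointwise structure, not Day convolution, and $\dalg$ is built using the Day convolution context. My first attempt would avoid this by exhibiting $\Gr\dalg_\ZZ$ directly as comodules. The coaction $\pi_!E\to \pi_!E\otimes\ZZ[\ZZ]$ sends the degree $n$ summand $E(n)$ to $E(n)\otimes t^n$. For derived algebras this is a map in $\dalg_\ZZ$, because it is $\pi_!$ applied to the map of graded derived algebras $E\to E\otimes \ZZ[t^{\pm1}]$, where $t$ is placed in degree $1$. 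Because $\pi_!$ is symmetric monoidal, this produces the coaction as a map of derived algebras. Naturality then determines the comonad map, which I would compare with the module-level identification.
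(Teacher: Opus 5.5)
Your argument is essentially the paper's. Comonadicity comes from Lemma~\ref{monadicity in dalg} applied to Propositions~\ref{pi! is a morphism of contexts} and~\ref{Moulinos's coaction}, and the comonad is identified by computing $U(\pi_!\pi^*T)\simeq U(T)\otimes_\ZZ\ZZ[\ZZ]\simeq U(T\otimes_\ZZ\ZZ[\ZZ])$ through the squares of Remark~\ref{naturalityofpi!} and concluding by conservativity of $U$. You are in fact more careful than the paper in insisting that a comparison map must first exist in $\dalg_\ZZ$. Your formula $\pi^*R\simeq \ins^0(R)\otimes\pi^*\ZZ$ (Day convolution), together with monoidality of $\pi_!$, supplies this at least at the level of endofunctors.

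One slip in your last paragraph: the coaction is $\pi_!$ of the unit $E\to \pi^*\pi_!E\simeq \ins^0(\pi_!E)\otimes\pi^*\ZZ$, with $\pi_!E$ in degree $0$ and $t$ in degree $1$. It is not $\pi_!$ of a map $E\to E\otimes\ZZ[t^{\pm1}]$, because with $t$ in degree $1$ the assignment $e\mapsto e\otimes t^n$ on $E(n)$ would land in degree $2n$.
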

\begin{proof}
    In view of Proposition \ref{Moulinos's coaction}, Lemma \ref{pi! is a morphism of contexts}, the first part of the statement follows from Lemma \ref{monadicity in dalg}. It remains to identify the comonad $\pi_!\circ \pi^*\colon \dalg_\ZZ\to \dalg_\ZZ$ with a coaction of $\ZZ[\ZZ].$ First note that $\pi_!(1_\ZZ)=\ZZ[\ZZ]$ and this is a Hopf algebra object of $\dalg_\ZZ$. So one can contemplate the category $\cmod_{\ZZ[\ZZ]}(\dalg_\ZZ).$ It suffices to now identify the functor $\pi_!\circ \pi(-)\simeq (-)\otimes_\ZZ \ZZ[\ZZ].$
    To see this note that from the diagram in Remark \ref{naturalityofpi!} we have the following commutative diagram  
    $$
\begin{tikzcd}
\dalg_\ZZ \arrow[r, "\pi^*"] \arrow[d, "U"'] & \Gr\dalg_\ZZ \arrow[r, "\pi_!"] \arrow[d, "U"'] & \dalg_\ZZ \arrow[d, "U"'] \\
\Mod_\ZZ \arrow[r, "\pi^*"]                  & \Gr\Mod_\ZZ \arrow[r, "\pi_!"]                  & \Mod_\ZZ                 
\end{tikzcd}$$

So for $T\in \dalg_\ZZ$ we have $$U(\pi^*\pi_!(T))\simeq \pi^*\pi_! U(T)=U(T)\otimes_\ZZ \ZZ[\ZZ]=U(T)\otimes_{U(R)} U(\ZZ[\ZZ])=U(T\otimes_\ZZ \ZZ[\ZZ])$$ wherein we used that $\ZZ[\ZZ]$ is a Hopf algebra object of $\dalg_\ZZ$ and $U$ is symmetric monoidal and so we conclude by the conservativity of $U.$
\end{proof}

\begin{remark}\label{remark: remark on diagonalizable coactions}
    It is not hard to generalize the previous constructions to the following. Let $M$ be a discrete abelian group. Then one may consider the group algebra $\Sigma^\infty_{+} M^{\ds}$ and therefore the base change $\ZZ[M]=\ZZ\otimes_{\sph} \Sigma^\infty_{+} M^{\ds}$. This is the commutative bicommutative Hopf algebra of functions on the diagonalizable group scheme $\mathbf{D}(M)$ \cite[\S~2.6]{Tate1997FiniteFlatGroupSchemes}. Now define $\Gr\Mod^M_\ZZ=\Fun(M^\ds,\Mod_\ZZ)$ and $\Gr\dalg^M_\ZZ=\dalg(\Gr\Mod^M_\ZZ)$. Then write $\pi^M\colon M^\ds\to *$ for the canonical projection and $\pi_!^M\colon \Gr\Mod_{\ZZ}^M\to \Mod_\ZZ$ for the induced compactly supported cohomology functor of  Recollection \ref{rec: loc sys of spectra}. Then the direct analogue of Corollary \ref{pi1 is comonadic on dalg} remains true. More precisely, 
    \begin{enumerate}
        
        \item The functor $\pi_!^M\colon \Gr\Mod_\ZZ^M\to \Mod_\ZZ$ is comonadic and exhibits the source as the $\icat$ of comodules over bicommutative Hopf algebra $\ZZ[M]$. 
        \item The morphism of derived algebraic contexts $\pi_!^M\colon \Gr\dalg_\ZZ^M\to \dalg_\ZZ$ is comonadic and exhibits the source as the $\icat$ of comodules in $\dalg_\ZZ$ over the bicommutative Hopf algebra $\ZZ[M].$
    \end{enumerate}
    Indeed this follows directly from replicating the reasoning already present in this section \emph{with only corresponding notational changes.}
\end{remark}

\subsection{Generalisation to graded algebras over animated rings.}\label{ssec: generalisation to graded algebras}

We are now interested in showing that most of the results in the previous subsection work over an arbitrary animated ring $A$. Since $\Mod_A$ is not a context unless $A$ is discrete we have to be a bit careful about how we approach this.

\begin{remark}\label{remark: identify animated with connective dalg}
    Recall from \cite[Remark 4.3.3]{Rak20} that the $\icat$ $$\dalg_\ZZ^\cn:=\dalg_\ZZ\times_{\Mod_\ZZ}(\Mod_\ZZ)_{\geq 0}$$ agrees with animated commutative rings. A similar observation shows that $$\Gr\dalg_\ZZ^\cn:=\Gr\dalg_\ZZ\times_{\Gr\Mod_\ZZ}(\Gr\Mod_\ZZ)_{\geq 0}$$ where the $t$-structure is the neutral one, can \emph{by definition} be treated as graded animated rings.
\end{remark}

\begin{remark}\label{remark: ins0 animated rings}
    Observe that by \cite[Remark 4.3.6]{Rak20} there is a natural functor $\ins^0\colon \dalg_\ZZ\to \Gr\dalg_\ZZ$ viewing a derived algebra in graded degree $0$. This functor has a right adjoint $\ev^0\colon \Gr\dalg_{\ZZ}\to \dalg_\ZZ$, evaluating the $0$th graded piece of a derived algebra so that $\ev^0\ins^0\simeq \id.$ 
    In particular $\ins^0$ is fully faithful. Note that after the identification of animated rings with $\dalg^\cn$ of Remark \ref{remark: identify animated with connective dalg} the functor $\ins^0$ sends $\dalg^\cn\to \Gr\dalg^\cn$ and so we may thus regard an animated ring $A$ as a graded animated ring in $\Gr\dalg_{\ZZ}^\cn.$ 
\end{remark}
\begin{construction}\label{cons: lifting pi! to animated}
Let $A$ be an animated ring.
    We explain why the induced functor $\pi_!\colon \Gr\dalg_\ZZ\to \dalg_\ZZ$ can be lifted to a functor $\pi_!\colon \Gr\dalg_{A}\to \dalg_A$ so that the analogues of the diagram in Remark \ref{naturalityofpi!} commute.

Note that the functor $\pi_!\colon \Gr\dalg_{\ZZ}\to \dalg_\ZZ$ satisfies $\pi_!\circ \ins^0=\id$ by examining the formula explained in the proof of Proposition \ref{Moulinos's coaction}. Now let $A$ be an animated ring considered as a graded animated ring by Remark \ref{remark: ins0 animated rings}. Then we obtain a functor 
$$\pi_!\colon (\Gr\dalg_\ZZ)_{A/}\to (\dalg_\ZZ)_{A/}.$$
After identifying the source and target with  $\Gr\dalg_{A}$ and $\dalg_{A}$ by point $(b)$ of \cite[Notation 4.2.28]{Rak20} we obtain the desired lift 
$$\pi_!\colon \Gr\dalg_{A} \rightleftarrows \dalg_A \colon \pi^*.$$

Note that $\pi_!$ preserves all colimits since colimits of non-empty diagrams are computed in the underlying category $\Gr\dalg_\ZZ$ and $\dalg_\ZZ$ respectively, where they can be computed by the proof of Corollary \ref{pi1 is comonadic on dalg} and the initial objects are preserved by fiat. Since both the source and target are presentable $\icats$, it follows that there is right adjoint $R\colon \dalg_A\to \Gr\dalg_A$ which by generalities on $\icats$\footnote{Let $F\colon \cC\to \cD$ be a functor between presentable $\icats$ which preserves all colimits and therefore admits a right adjoint $G$. Then $F\colon C_{c/}\to D_{F(c)/}$ also preserves all colimits and its adjoint is heuristically given by sending $(F(c)\to d)\mapsto (c\xrightarrow{\eta_c}GF(c)\to G(d))$ where $\eta_c$ is the unit. This is \cite[Proposition 5.2.5.1]{HTT} for the case of overcategories.} is given by the restriction of $\pi^*\colon \dalg_\ZZ\to \Gr\dalg_{\ZZ}$ to the under-categories. 
Thus we have an adjunction 
$$\pi_!\colon \Gr\dalg_A\rightleftarrows \dalg_A\colon \pi^*$$

\end{construction}

\begin{construction}\label{cons: commuting diagrams for animated}
    We have observed in Remark \ref{remark: Moulinos's proof works for animated rings} that if $A$ is an animated ring, then there is a comonadic adjunction 
$$\pi_!\colon \Gr\Mod_A\rightleftarrows \Mod_A\colon \pi^*$$ such that $\pi_!\circ \pi\simeq (-)\otimes_A A[\ZZ]$ as comonads. Now it is clear from the construction of the adjunction in Construction \ref{cons: lifting pi! to animated} that the following diagrams commute

\begin{equation}
    \begin{tikzcd}
\Gr\dalg_A \arrow[r, "\pi_!"] \arrow[d, "U"'] & \dalg_A \arrow[d, "U"] \\
\Gr\Mod_A \arrow[r, "\pi_!"]                  & \Mod_A                
\end{tikzcd}\text{ and } \begin{tikzcd}
\Gr\dalg_A \arrow[d, "U"'] & \dalg_A \arrow[d, "U"] \arrow[l, "\pi^*"'] \\
\Gr\Mod_A                  & \Mod_A \arrow[l, "\pi^*"]                 
\end{tikzcd}.
\end{equation}
\end{construction}

\begin{proposition}\label{prop: monadicity over animated}
    Let $A$ be an animated ring. Then the functor $\pi_!\colon \Gr\dalg_A\to \dalg_A$ of Construction \ref{cons: lifting pi! to animated} is comonadic and exhibits the source as $\cmod_{A[\ZZ]}(\dalg_A).$
\end{proposition}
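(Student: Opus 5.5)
We verify the two hypotheses of the comonadic Barr-Beck-Lurie theorem (Theorem \ref{barrbeck comonadic}) for $\pi_!\colon \Gr\dalg_A\to \dalg_A$, transporting everything to modules via the commuting squares of Construction \ref{cons: commuting diagrams for animated}. We then identify the comonad exactly as in Corollary \ref{pi1 is comonadic on dalg}.

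\emph{Conservativity.} Let $f$ be a morphism in $\Gr\dalg_A$ with $\pi_!(f)$ an equivalence. Then $U\pi_!(f)\simeq \pi_! U(f)$ is an equivalence in $\Mod_A$. The module-level functor $\pi_!\colon \Gr\Mod_A\to\Mod_A$ is conservative, being comonadic by Remark \ref{remark: Moulinos's proof works for animated rings}; concretely $\pi_!(E)=\bigoplus_i E(i)$. Hence $U(f)$ is an equivalence, and since $U\colon\Gr\dalg_A\to \Gr\Mod_A$ is conservative, so is $f$.

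\emph{Totalizations.} Let $X^\bullet$ be a cosimplicial object of $\Gr\dalg_A$ with $\pi_!X^\bullet$ split. Totalizations exist in $\Gr\dalg_A$ because it is presentable. The forgetful functors $U$ preserve all limits: in $\dalg_A=(\dalg_\ZZ)_{A/}$, limits of diagrams in under-categories are computed in $\dalg_\ZZ$ and then in $\Mod_\ZZ$, and similarly in the graded case. Also, $U$ carries split cosimplicial objects to split ones. Therefore
$$U\pi_!\Tot(X^\bullet)\simeq \pi_!\Tot(UX^\bullet)\simeq \Tot(\pi_!UX^\bullet)\simeq \Tot(U\pi_!X^\bullet)\simeq U\Tot(\pi_!X^\bullet).$$
The middle equivalence uses comonadicity of the module-level $\pi_!$: the object $\pi_!UX^\bullet=U\pi_!X^\bullet$ is split, so $\pi_!$ preserves this totalization. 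The canonical comparison map $\pi_!\Tot X^\bullet\to\Tot \pi_!X^\bullet$ becomes this equivalence after applying $U$, and conservativity of $U$ finishes the step. This is just Lemma \ref{monadicity in dalg}, re-proved for a situation where $\Mod_A$ is not a context. The only input that needs checking is that the adjunction of Construction \ref{cons: lifting pi! to animated} is compatible with $U$ on both sides, which is the content of Construction \ref{cons: commuting diagrams for animated}.

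\emph{Identifying the comonad.} The object $\pi_!(1)=\pi_!\pi^*A$ is a bicommutative Hopf algebra in $\dalg_A$. Its underlying module is $A[\ZZ]=A\otimes_\ZZ\ZZ[\ZZ]$, the base change of the Hopf algebra $\ZZ[\ZZ]\in\dalg_\ZZ$ along $\ZZ\to A$, so it lives in $\dalg_A$. Following the computation of Corollary \ref{pi1 is comonadic on dalg}, for $T\in\dalg_A$ we have
$$U(\pi_!\pi^*T)\simeq \pi_!\pi^*U(T)\simeq U(T)\otimes_A A[\ZZ]\simeq U(T\otimes_A A[\ZZ]),$$
using the module-level projection formula and the fact that $U$ is symmetric monoidal on $\dalg_A$. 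We promote this to a natural map of derived algebras $\pi_!\pi^*T\to T\otimes_A A[\ZZ]$. One route is the counit-induced map $T\otimes_A \pi_!\pi^*A\to\pi_!\pi^*T$, which is defined because $\pi_!\pi^*$ is lax compatible with coproducts (coproducts in $\dalg_A$ being $\otimes_A$). That map is an equivalence after applying $U$, hence an equivalence, by conservativity of $U$ and Remark \ref{remark: how to detect equivalences of monads}.

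The main obstacle is this last step: producing the comparison as a map of \emph{comonads} rather than just of underlying functors. I would first try to get it from the symmetric monoidality of $\pi_!\colon\Gr\dalg_A\to\dalg_A$ for Day convolution together with the cocartesian structures. If that is awkward, I would instead base change the comonad equivalence of Corollary \ref{pi1 is comonadic on dalg} along $\dalg_\ZZ\to(\dalg_\ZZ)_{A/}$, using that $\pi^*A\simeq\ins^0$-type structure is compatible with $A$ sitting in degree $0$.
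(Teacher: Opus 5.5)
This is correct and is essentially the paper's argument. The paper proves the proposition by rerunning Lemma~\ref{monadicity in dalg} and Corollary~\ref{pi1 is comonadic on dalg} \emph{mutatis mutandis} via the commuting squares of Construction~\ref{cons: commuting diagrams for animated}, and, exactly as you do, identifies $\pi_!\pi^*$ with $(-)\otimes_A A[\ZZ]$ only on underlying modules, concluding by conservativity of $U$.

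The paper also passes over your concern about producing an honest map of comonads. One correction: the comparison $T\otimes_A\pi_!\pi^*A\to\pi_!\pi^*T$ is not induced by the counit, which goes $\pi_!\pi^*T\to T$. It comes from applying $\pi_!$ to the map $\ins^0T\to\pi^*T$ adjoint to $\mathrm{id}_T$, combined with the lax compatibility of $\pi_!\pi^*$ with coproducts.
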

\begin{proof}
    In light of Construction \ref{cons: commuting diagrams for animated}, this follows \emph{mutatis mutandis} as in the proof of Corollary \ref{pi1 is comonadic on dalg}.
\end{proof}

\begin{remark}\label{remark: coactions for diagonalizable on animated rings}
    Continuing with the thread (and corresponding notation) of Remark \ref{remark: remark on diagonalizable coactions}, we see that analogues of Proposition \ref{prop: monadicity over animated} remains true with \emph{the same proof} giving us the following result 

    Let $A$ be an animated ring, then the functor $\pi_!^M\colon \Gr\dalg^M_A\to \dalg_A$ constructed analogously to Construction \ref{cons: lifting pi! to animated} is comonadic with right adjoint $\pi^{*,M}\colon \dalg_A\to \Gr\dalg^M_A$, and exhibits the source as comodules over the bicommutative Hopf algebra $A[M]=A\otimes_\ZZ\ZZ[M]$ i.e. $\cmod_{A[M]}(\dalg_\ZZ).$

    One may think of $A[M]$ as the corresponding derived ring of functions on the diagonalizable group scheme $\mathbf{D}(M)\times \spec(A).$
\end{remark}

\subsection{Continuous comonadicity.}\label{ssec: continuous comonad}

Our aim now is to prove a continuous analogue of Proposition \ref{prop: monadicity over animated} in the $J$-complete setting of \S~\ref{sec: jcomp setting}. for the reader's convenience we recall the notation therein.

\begin{notation}\label{notation: jcomp redux}
Fix an animated ring $A$ and a finitely generated ideal $J\subset \pi_0(A).$
\end{notation}

Our first goal is to set up a variant of Definition \ref{def: dalgj 1} for graded $A$-modules.

\begin{recollection}\label{rec: def of gradedjcomp}
    We define the $\infty$-category of $J$-complete graded $A$-modules as the full subcategory of $\Gr\Mod_A$ so that the $\gr_i\colon \Gr\Mod_A\to \Mod_A$ factor through $\Mod_A^\jcomp\to \Mod_A.$ Equivalently, one has $$\Gr\Mod_A^\jcomp=\mathrm{Fun}(\ZZ^\ds,\Mod_A^\jcomp).$$

    This makes it compatible with, for example, Notation $6.1.1$ and Notation $\text{D}.12$ of \cite{APC}.
    
\end{recollection}

\begin{definition}\label{def: def of dalggradedjcomp}
    Let $\Gr\dalg_A^\jcomp$ denote full subcategory of $\Gr\dalg$ so that the forgetful functor $\Gr\dalg_A\to \Gr\Mod_A$ factors over $\Gr\Mod_A^\jcomp.$ 
\end{definition}

\begin{remark}\label{lem: not hard pullback}
    It is not too hard to give a pullback definition of $\Gr\dalg_A^\jcomp$ as in Lemma \ref{lem: pullback def dalgj}. We do not do so here. However we remark that the forgetful functor $U_J\colon \Gr\dalg_A^\jcomp\to \grmod_A^\jcomp$ is easily seen to be conservative and limit preserving. The conservativity follows by definition and the preservation of limits follows by noting that the graded analogue of the pullback definition of Lemma \ref{lem: pullback def dalgj} takes place in $\Pr^R$ whence the forgetful functor is a right adjoint.
\end{remark}

\begin{construction}\label{cons: continuous pi1}
We consider the functor $(\pi_!)\cj\colon \Gr\Mod_A^\jcomp\to \Mod^\jcomp_A$ as the composite $(-)\cj\circ\pi_!$. It is clear that this functor preserves colimits as it is the composite of the inclusion $$\Gr\Mod_A^\jcomp\into\Gr\Mod_A \xrightarrow{\pi_!}\Mod_A\xrightarrow{(-)\cj}\Mod_A^\jcomp$$ and colimits in $\Gr\Mod_A^\jcomp$ are computed in $\Gr\Mod_A$ and then completed.
One can also note the preservation of colimits by the following chain of equivalences of mapping spaces

$$\Maps_{\Mod_A^\jcomp}((\pi_!N)\cj,M)\simeq \Maps_{\Mod_A}(\pi_!N,M)\simeq \Maps_{\Gr\Mod}(N,\pi^*M)=\Maps_{\Gr\Mod_A^\jcomp}(N,\pi^*M),$$ where the first equivalence comes from the fact that completion is left adjoint to the inclusion $\Mod_A^\jcomp\into \Mod_A$, the second equivalence comes from the adjunction of Proposition \ref{Moulinos's coaction}, the third equality from the fact that both $N$ and $\pi^*M$ are $J$-complete graded modules, the former by fiat and the latter because $\pi^*M$ inserts $M$ in every graded degree and $M$ is already in $\Mod_A^\jcomp,$ again by fiat.
 Thus we have an adjunction

$$(\pi_!)\cj\colon \Gr\Mod_A^\jcomp\rightleftarrows \Mod_A^\jcomp\colon \pi^*.$$
\end{construction}

\begin{proposition}\label{prop: cts monad mod}
    The adjunction of Construction \ref{cons: continuous pi1} is comonadic and we have an identification of comonads $(\pi_!)\cj\circ \pi^*\simeq (-)\ctensor_A A[\ZZ].$
\end{proposition}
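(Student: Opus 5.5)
We verify the two hypotheses of the comonadic Barr--Beck--Lurie theorem (Theorem \ref{barrbeck comonadic}) for $(\pi_!)\cj$, and then identify the comonad through the projection formula followed by completion. Throughout we use the formula $(\pi_!)\cj(E)=(\bigoplus_{i\in\ZZ}E(i))\cj$ from Proposition \ref{pi! is a morphism of contexts}.

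\emph{Conservativity.} Since everything is stable, it suffices to show that if $E\in\Gr\Mod_A^\jcomp$ satisfies $(\bigoplus_i E(i))\cj=0$, then every $E(i)=0$. Choose generators $J=(f_1,\ldots,f_r)$ and put $A_1=A\qq(f_1,\ldots,f_r)$. Completion does not change $-\otimes_A A_1$, because the fiber of $M\to M\cj$ is $J$-local and therefore dies after tensoring with $A_1$. Hence
\[0=\Big(\bigoplus_i E(i)\Big)\cj\otimes_A A_1\simeq\bigoplus_i E(i)\otimes_A A_1 .\]
Each summand $E(i)\otimes_A A_1$ therefore vanishes. A $J$-complete module $M$ with $M\otimes_A A_1=0$ is zero: by induction on the Koszul filtration, $M\otimes_A A_n=0$ for all $n$, so $M=\lim_n M\otimes_A A_n=0$. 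So $E=0$.

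\emph{Split totalizations.} Let $E^\bullet$ be a cosimplicial object of $\Gr\Mod_A^\jcomp$ such that $(\pi_!)\cj E^\bullet$ is split. Totalizations in $\Gr\Mod_A^\jcomp$ exist and are computed pointwise in each grading degree, since limits of complete modules are complete. The key observation is that split cosimplicial objects have totalizations that are preserved by every functor. We apply this to the exact, colimit-preserving projection functor $\mathrm{pr}_i$ that extracts the $i$-th summand. Concretely, $(\pi_!)\cj E^\bullet$ receives a splitting in $\Mod_A^\jcomp$, and tensoring with $A_n$ gives $\bigoplus_i E^\bullet(i)\otimes_A A_n$, with the splitting compatible in $n$.

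The main obstacle is that the splitting maps need not respect the direct-sum decomposition. My first attempt is to avoid this issue: the underlying functor $\Gr\Mod_A^\jcomp\to\Mod_A^\jcomp$ given by $E\mapsto E(i)$ is a retract of $(\pi_!)\cj$. Indeed $E(i)$ is a retract of $(\bigoplus_j E(j))\cj$ via the inclusion of and projection onto the $i$-th summand; the projection exists after completion because completion is additive. This retraction is natural in $E$. A retract of a split augmented cosimplicial object, formed via natural maps of cosimplicial diagrams, is a limit diagram, since retracts of limit diagrams in a stable category are limit diagrams (limits commute with retracts at the level of mapping spectra). Thus each $E^\bullet(i)$ is a limit diagram after $\Tot$, and so $\Tot E^\bullet$ computed degreewise is preserved by $(\pi_!)\cj$. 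This follows by comparing with $\Tot((\pi_!)\cj E^\bullet)$, which is a finite-type limit for split objects and is therefore preserved by the direct sum followed by completion. If this retract argument has gaps, the fallback is Lurie's version of the criterion, which only needs $(\pi_!)\cj$-split totalizations, together with the fact that $(\pi_!)\cj$ is exact and commutes with $\Tot$ of uniformly bounded objects.

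\emph{Identifying the comonad.} For $M\in\Mod_A^\jcomp$ we have $\pi^*M$ complete. By the projection formula of Proposition \ref{Moulinos's coaction} over $A$ (Remark \ref{remark: Moulinos's proof works for animated rings}), $\pi_!\pi^*M\simeq M\otimes_A A[\ZZ]$, so
\[(\pi_!)\cj\pi^*M\simeq (M\otimes_A A[\ZZ])\cj=M\ctensor_A A[\ZZ].\]
The comultiplication is obtained by completing Moulinos's comonad structure. This uses symmetric monoidality of completion (Proposition \ref{prop: dalgj is presentable}(3) and its module analogue) and the commutative square $(-)\cj\circ\pi_!\circ\pi^*\simeq(\pi_!)\cj\circ\pi^*$ on complete objects. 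We then conclude by Remark \ref{remark: how to detect equivalences of monads}.
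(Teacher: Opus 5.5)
Your overall architecture agrees with the paper's: comonadic Barr--Beck--Lurie, with the comonad identified by completing Moulinos's projection formula $\pi_!\pi^*\simeq(-)\otimes_A A[\ZZ]$. Your conservativity argument (tensor with $A_1$, then derived Nakayama for $J$-complete modules) is correct. It is a valid alternative to the paper's argument, which instead notes that each $M_i$ is a retract of $(\bigoplus_j M_j)^\wedge_J$, because the projection onto $M_i$ factors through the complete module $\prod_j M_j$.

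The gap is in the split-totalization step. You correctly build a natural retraction of $\ev_i$ off $\pij$, so $E^\bullet(i)$ is a retract of the split cosimplicial object $\pij E^\bullet$. But the property you then transport along this retraction, namely being a limit diagram, is vacuous for $E^\bullet(i)$ augmented by its own totalization. It also does not give what is required, which is that $\bigl(\bigoplus_i \Tot E^\bullet(i)\bigr)^\wedge_J\to\Tot\bigl((\bigoplus_i E^\bullet(i))^\wedge_J\bigr)$ is an equivalence. Totalizations do not commute with an infinite direct sum followed by completion just because each degreewise totalization exists. Split totalizations are also not ``finite-type'' limits but absolute ones. The fallback via uniformly bounded objects has nothing to attach to: no boundedness is available, and completion is not left $t$-exact.

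The missing idea is that what passes to retracts is splitness itself. A retract of a split cosimplicial object is split (\cite[Corollary 4.7.2.13]{HA}; $\mathrm{Mod}_A^{J\text{-}\mathrm{comp}}$ is idempotent complete). Hence each $E^\bullet(i)$ is split. Then $E^\bullet$ is split in $\Gr\mathrm{Mod}_A^{J\text{-}\mathrm{comp}}=\prod_{\ZZ}\mathrm{Mod}_A^{J\text{-}\mathrm{comp}}$, by choosing the splitting componentwise. The totalization of a split cosimplicial object is preserved by every functor, in particular by $\pij$. This is exactly the paper's argument, and it needs nothing beyond the retraction you already built.
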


\begin{proof}

    We use the criteria recalled in Theorem \ref{barrbeck comonadic} and follow the ideas in the proof of \cite[Proposition 4.2]{Moulinos2021GeometryOfFiltrations} to verify them.
    We first check conservativity. Note that the composite $$\Gr\Mod_A^\jcomp\into \Gr\Mod_A\xrightarrow{\pi_!}\Mod_A$$ is conservative. So we need to check that $(-)\cj$ doesn't kill anything which is in the image of the composite above. But note that the functor explicitly sends the direct sum $\bigoplus_{i\in \ZZ} M_i$ of modules where each $M_i\in \Mod_A^\jcomp$ to their coproduct in $\Mod_A^\jcomp$. The latter vanishes if and only if each of the $M_i$ is $0.$ This can be seen because for any $i\in \ZZ$ the composite is the identity
    $$M_i\to \bigoplus_{i\in \ZZ} M_i\to \prod_{i\in \ZZ}M_i\to M_i$$ and the $M_i$ are already $J$-complete, so $(-)\cj$ is the identity.
    It remains to show criteria $(2)$ in Theorem \ref{barrbeck comonadic}. We want to show that if $X^\bullet\in \Gr\Mod_A^\jcomp$ is a $\pij$ split cosimplicial object then $X^\bullet$-splits in $\Gr\Mod_A^\jcomp.$ To see this first observe that since $\grmod^\jcomp_A=\mathrm{Fun}(\ZZ^\ds,\Mod_A^\jcomp)$, and limits in functor categories are computed levelwise, it suffices to show that the components $X^\bullet(i)$ for $i\in \ZZ$ split in $\grmod_A^\jcomp.$
    Note that $X^\bullet(i)$ is a retract of $X^\bullet$. Indeed there is a functor $\ev_i\colon \Gr\Mod_A^\jcomp\to \Mod^\jcomp_A$ with left adjoint $\ins^i$. Note that $\ins^i$ is also a right adjoint to $\ev^i.$ Thus we have $$\ins^i \ev^i X^\bullet=X(i) \to X^\bullet \to \ins^i\ev^i X^\bullet=X(i)$$ which gives the retraction.

    Thus $\pij(X^\bullet(i))$ is a retract of $\pij(X^\bullet).$ Since the latter splits, it follows from \cite[Corollary 4.7.2.13]{HA} that the former splits. But $\pij(X^\bullet(i))$ is just $X^\bullet(i)\in \Mod_A^\jcomp$, whence we may split the object $X^\bullet$ by taking the product of $X^\bullet(i)$ in each degree $i\in \ZZ$. 
    Now since any functor preserves split cosimplicial objects (for example,as explained in \cite[Example 3.11]{galoisgroup}, it follows that the canonical morphism $\pij(\Tot(X^\bullet))\to \Tot(\pij(X^\bullet))$ is an equivalence. 

    Lastly it remains to identify the continuous comonad with tensoring with the completed Hopf algebra $(-)\ctensor_A A[\ZZ].$ As explained in Remark \ref{remark: Moulinos's proof works for animated rings} or Construction \ref{cons: commuting diagrams for animated} we know that there's an equivalence of comonads $\pi_!\circ \pi^*(-)\simeq (-)\otimes_A A[\ZZ]$ whence we conclude by completing both the source and the target.\footnote{Note that $\pij$ has the same monidality as $\pi_!$, as $(-)\cj$ is symmetric monoidal.}
\end{proof}

\begin{construction}\label{cons: continuous pij on derived algebras}
   In Construction \ref{cons: lifting pi! to animated} we have constructted an adjunction $$\pi_!\colon \Gr\dalg_A \rightleftarrows \dalg_A\colon \pi^*.$$ 
   Consider the composite $\Gr\dalg_A\xrightarrow{\pi_!}\dalg_A\xrightarrow{(-)\cj}\dalg_A^\jcomp$ where the second arrow is the completion operation of Construction \ref{cons: completion def} and the target is the $\icat$ of Definition \ref{def: dalgj 1}. For the same reason as the mapping space computation done in Construction \ref{cons: continuous pi1}, it follows that there is an adjunction 
   $$\pij\colon \Gr\dalg_A^\jcomp \rightleftarrows \dalg_A^\jcomp\colon \pi^*$$ and the following diagrams commute 

   \begin{equation}
    \begin{tikzcd}
\Gr\dalg^\jcomp_A \arrow[r, "\pij"] \arrow[d, "U_J"'] & \dalg^\jcomp_A \arrow[d, "U_J"] \\
\Gr\Mod^\jcomp_A \arrow[r, "\pij"]                  & \Mod^\jcomp_A                
\end{tikzcd}\text{ and } \begin{tikzcd}
\Gr\dalg^\jcomp_A \arrow[d, "U_J"'] & \dalg^\jcomp_A \arrow[d, "U_J"] \arrow[l, "\pi^*"'] \\
\Gr\Mod^\jcomp_A                  & \Mod^\jcomp_A \arrow[l, "\pi^*"]                 
\end{tikzcd}.
\end{equation}
Indeed the commutativity of the first diagram is a consequence of the commutativity of the first diagram in Construction \ref{cons: commuting diagrams for animated} along with the left adjointability diagram of Corollary \ref{cor: dalgj is right adjointable}, while the commutativity of the second diagram is a consequence of the fact that $\pi^*$ sends complete objects to complete objects.
\end{construction}

\begin{corollary}\label{cor: continuous comonad dalg}
    The adjunction of Construction \ref{cons: continuous pij on derived algebras} is comonadic and there is an isomorphism  of comonads $\pij\circ \pi^*(-)\simeq (-)\ctensor_A (A[\ZZ])^{\wedge}_J.$
\end{corollary}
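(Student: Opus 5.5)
We argue as in Lemma \ref{monadicity in dalg}, with Proposition \ref{prop: cts monad mod} as the module-level input. The plan is to verify the two conditions of Theorem \ref{barrbeck comonadic} for $\pij\colon \Gr\dalg_A^\jcomp\to\dalg_A^\jcomp$ by applying $U_J$ and using the two commuting squares of Construction \ref{cons: continuous pij on derived algebras}. Afterwards we identify the comonad at the level of underlying modules and upgrade using conservativity, as in Corollary \ref{pi1 is comonadic on dalg}.

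\emph{Conservativity.} Let $f$ be a morphism in $\Gr\dalg_A^\jcomp$ with $\pij(f)$ an equivalence. By the first square, $U_J\pij(f)\simeq \pij U_J(f)$, so $\pij(U_J f)$ is an equivalence in $\Mod_A^\jcomp$. Proposition \ref{prop: cts monad mod} then shows $U_J f$ is an equivalence. Since $U_J\colon\Gr\dalg_A^\jcomp\to\Gr\Mod_A^\jcomp$ is conservative (Remark \ref{lem: not hard pullback}), $f$ is an equivalence.

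\emph{Totalizations.} Let $X^\bullet$ be a cosimplicial object of $\Gr\dalg_A^\jcomp$ such that $\pij X^\bullet$ is split. Then $U_J\pij X^\bullet\simeq\pij U_J X^\bullet$ is split, since any functor preserves split cosimplicial objects. The totalization $\Tot X^\bullet$ exists because the category is presentable, and $U_J$ preserves limits, both in the graded case (Remark \ref{lem: not hard pullback}) and in the ungraded case (Proposition \ref{prop: dalgj is presentable} together with Lemma \ref{lem: pullback def dalgj}). Hence $U_J$ applied to $\pij\Tot X^\bullet\to\Tot\pij X^\bullet$ gives $\pij\Tot U_JX^\bullet\to\Tot\pij U_JX^\bullet$. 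This map is an equivalence by the module-level comonadicity: Barr-Beck-Lurie for $\pij$ on $\Gr\Mod_A^\jcomp$ guarantees that $\pij$ preserves $\pij$-split totalizations. Conservativity of $U_J$ on $\dalg_A^\jcomp$ then concludes this step. This is the step I expect to need the most care. One must check that the comparison map really is carried by $U_J$ to the module comparison map, which uses the compatibility of the squares with the adjunction data. I would address this by noting that both squares come from the left adjointability in Corollary \ref{cor: dalgj is right adjointable} applied to the uncompleted squares of Construction \ref{cons: commuting diagrams for animated}.

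\emph{Identifying the comonad.} First, $(A[\ZZ])\cj=\pij(1)$ is a Hopf algebra object in $\dalg_A^\jcomp$. It is the image of the Hopf algebra $A[\ZZ]\in\dalg_A$ of Proposition \ref{prop: monadicity over animated} under the symmetric monoidal completion functor of Proposition \ref{prop: dalgj is presentable}(3). For $T\in\dalg_A^\jcomp$, combining the two squares with Proposition \ref{prop: cts monad mod} and Corollary \ref{cor: dalgj to modj is symm monoidal} gives
$$U_J(\pij\pi^*T)\simeq \pij\pi^*U_J(T)\simeq U_J(T)\ctensor_A A[\ZZ]\simeq U_J\big(T\ctensor_A (A[\ZZ])\cj\big).$$
Alternatively, one can complete the uncompleted identification $\pi_!\pi^*T\simeq T\otimes_AA[\ZZ]$ in $\dalg_A$ from Proposition \ref{prop: monadicity over animated} directly. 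This yields a natural map of comonads in $\dalg_A^\jcomp$, which is an equivalence because $U_J$ is conservative, by Remark \ref{remark: how to detect equivalences of monads}.
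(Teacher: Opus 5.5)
Your proposal is correct and is essentially the paper's argument: the paper deduces the corollary from Proposition \ref{prop: cts monad mod} in the same way that Corollary \ref{pi1 is comonadic on dalg} was deduced from Proposition \ref{Moulinos's coaction}. That is, it checks the conditions of Theorem \ref{barrbeck comonadic} after applying the conservative, limit-preserving forgetful functors via the squares of Construction \ref{cons: continuous pij on derived algebras}, and it identifies the comonad on underlying modules using symmetric monoidality of $U^J$ and conservativity. One small correction: only the $\pij$-square is needed for the totalization comparison, and the $\pi^*$-square comes from $\pi^*$ preserving $J$-complete objects, not from the left adjointability of Corollary \ref{cor: dalgj is right adjointable}.
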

\begin{proof}
    In light of the commutativity of the diagrams in Construction \ref{cons: continuous pij on derived algebras}, this follows from Proposition \ref{prop: cts monad mod} in the same way as Corollary \ref{pi1 is comonadic on dalg} followed from Proposition \ref{Moulinos's coaction}.
\end{proof}

\begin{remark}\label{remark: continuous coaction diagonalizable schemes}
    Continuing with the ideas of Remarks \ref{remark: remark on diagonalizable coactions} and \ref{remark: coactions for diagonalizable on animated rings}, we  may define for any commutative abelian group $M$, the $\icat$ $\Gr\Mod_A^{M,\jcomp}$ as the $\icat$ $\mathrm{Fun}(M^\ds, \Mod_A^\jcomp)$ which is a direct analogue of Definition \ref{def: def of dalggradedjcomp}. 
    Repeating the reasoning employed in this section, we may also get a comonadic adjunction

    $$(\pi_!^M)^\wedge_{J}\colon \Gr\Mod_A^{M,\jcomp}\leftrightarrows \Mod_A^{M,\jcomp}\colon \pi^{*,M}$$ identifying the continuous comonad $$(\pi_!^M)\circ \pi^{*,M}(-)\simeq (-)\widehat{\otimes}_A A[M].$$
We may now define $\Gr\dalg_A^{M,\jcomp}$ exactly as in Definition \ref{def: def of dalggradedjcomp}.
    Thus we have a comonadic adjunction 
    $$(\pi_!^M)^\wedge_{J}\colon \Gr\dalg_A^{M,\jcomp}\leftrightarrows \dalg_A^{M,\jcomp}\colon \pi^{*,M}$$ identifying the continuous comonad $$(\pi_!^M)\circ \pi^{*,M}(-)\simeq (-)\widehat{\otimes}_A A[M].$$

    Here we view the completed bicommutative Hopf algebra $A[M]^\wedge_J$ as the ring of functions on the formal group scheme $\mathbf{D}(M)\times \spf(A).$ 
\end{remark}

\section{Derived algebras on geometric stacks.}\label{sec: dalg on geomstacks}

In this section our objective is to study the functor $\dalg(-)$ on geometric stacks. Here we use the term geometric somewhat vaguely to mean roughly a stack which admits an faithfully flat representable affine morphism from a representable (formal) stack, but will specialize in the course of this section.

We begin by establishing flat descent for derived algebras.

\subsection{Flat descent for derived algebras}

In this section we show that the functor which sends an animated ring $A\mapsto \dalg_A$ is a flat sheaf valued in $\Calg(\Pr^L).$ This is a relatively simple application of (comonadic) Barr-Beck-Lurie theorem, we chose to prove it since we cannot find an equivalent statement in the literature.

\begin{remark}
    In \cite[Corollary 2.19]{MM24}, the authors prove that the functor sending an animated ring $A$ to the $\infty$-category of $n$-truncated derived $A$-algebras $\dalg_{A,\leq n}$ is a flat hypersheaf. Here $n$-truncations are defined in Construction $2.20$ in \emph{loc.cit.}
    If Postnikov towers of derived algebras over an animated ring converge then flat hyperdescent can be deduced from their result. However, since we did not wish to ruminate on convergence issues, we have decided to record a simpler direct proof of flat descent.
\end{remark}

\begin{question}
    Does the assignment $A\mapsto \dalg_A$ satisfy hyperdescent for the flat topology on animated rings?
\end{question}

To collect the results we need a consequence of the (comonadic) Barr-Beck-Lurie theorem \cite[Theorem 4.7.5.2]{HA} which we will recall below Proposition \ref{prop: criteria for descent} for the reader's convenience. But first we recall the Beck-Chevalley conditions to define the right adjointability conditions we will need. 

\begin{definition}\label{beck-chevalley}(The Beck-Chevalley condition)
A diagram of $\infty$-categories $$
\begin{tikzcd}
\cC \arrow[r, "f"] \arrow[d, "g"'] & \cD \arrow[d, "g'"] \\
\cC' \arrow[r, "f'"]               & \cD'               
\end{tikzcd}$$ commuting up to specified homotopy $\alpha\colon f'\circ g\simeq g'\circ f$ is called \emph{right adjointable} if $f$ and $f'$ admit left adjoints $f_R$ and $f'_R$ so that the right \emph{Beck-Chevalley transformation} $$\beta\colon f'_L\circ g'\to f'_L\circ g'\circ f\circ f_L\mathrel{\overset{\alpha}{\simeq}} f'_L\circ f'\circ g\circ f_L\to g\circ f_L$$ is an isomorphism.
\end{definition}

\begin{example}\label{modules beck-chevalley}
    Let $$
% https://tikzcd.yichuanshen.de/#N4Igdg9gJgpgziAXAbVABwnAlgFyxMJZABgBpiBdUkANwEMAbAVxiRAEEQBfU9TXfIRQBGclVqMWbAELdeIDNjwEiZYePrNWiDgHI5fJYKKj11TVJ3T9XcTCgBzeEVAAzAE4QAtkjIgcEEjCPG6ePoh+AUgATCEgHt4x1FGIAMxxCeGi-oFptlxAA
\begin{tikzcd}
A \arrow[r] \arrow[d] & B \arrow[d] \\
A' \arrow[r]          & B'         
\end{tikzcd}$$ be a pushout diagram of animated rings. Since the forgetful functor $\Theta\colon \aring \to \mathrm{CAlg}_\ZZ^{\einf}$ of \ref{forgetful} commutes with all small colimits by \cite[Proposition 25.1.2.2]{Lur18}, this is also a pushout diagram in $\einf$-rings. Thus the induced diagram $$
\begin{tikzcd}
\Mod_A \arrow[r] \arrow[d] & \Mod_{B} \arrow[d] \\
\Mod_{A'} \arrow[r]        & \Mod_{B'}         
\end{tikzcd}$$
with the arrows implemented via pullbacks is right adjointable by \cite[Lemma D.3.5.6]{Lur18}. Indeed, the right adjoint is given by the obvious forgetful functor.
\end{example}

\begin{lemma}\label{lem: adjointable dalg}
    Consider the pushout diagram of animated rings in Example \ref{modules beck-chevalley}. Then the induced diagram of derived algebras $$
\begin{tikzcd}
\dalg_A \arrow[r] \arrow[d] & \dalg_{B} \arrow[d] \\
\dalg_{A'} \arrow[r]        & \dalg_{B'}         
\end{tikzcd}$$ is right adjointable. 
\end{lemma}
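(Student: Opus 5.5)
The plan is to reduce right adjointability for $\dalg$ to the module-level statement of Example \ref{modules beck-chevalley}, using that the forgetful functors $U$ are conservative and compatible with everything in sight.

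\textbf{The functors.} The horizontal and vertical functors $\dalg_A\to\dalg_B$, etc., are base change $R\mapsto R\otimes_A B$. They are the pushout in $\dalg$ along $A\to B$, equivalently the functoriality of \cite[Remark 4.2.29]{Rak20} after identifying $\dalg_A\simeq(\dalg_\ZZ)_{A/}$. Each admits a right adjoint, namely restriction of scalars $\dalg_B\to\dalg_A$, which precomposes the structure map with $A\to B$. On underlying modules, base change is $U(R)\otimes_A B$ and restriction is the forgetful functor. So the squares
\[
U_B\circ(-\otimes_A B)\simeq(-\otimes_A B)\circ U_A
\quad\text{and}\quad
U_A\circ\mathrm{res}\simeq \mathrm{res}\circ U_B
\]
commute. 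The first holds because pushouts in $\dalg_A$ are computed by relative tensor products on underlying $\einf$-rings via $\Theta$, which is symmetric monoidal and colimit preserving (\cite[Notation 4.2.28]{Rak20}). The second holds by construction of the adjunction on undercategories, as in the footnote of Construction \ref{cons: lifting pi! to animated}.

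\textbf{Comparing Beck--Chevalley maps.} Write the Beck--Chevalley transformation $\beta\colon (-\otimes_A A')\circ\mathrm{res}^B_A\to \mathrm{res}^{B'}_{A'}\circ(-\otimes_B B')$, a natural transformation of functors $\dalg_B\to\dalg_{A'}$. By conservativity of $U_{A'}$ it suffices to show that $U_{A'}(\beta)$ is an equivalence. The main step is to check that $U_{A'}(\beta)$ agrees with the Beck--Chevalley map of the module square in Example \ref{modules beck-chevalley}, applied to $U_B(R)$. This holds because the units and counits of the $\dalg$-adjunctions map under $U$ to the units and counits of the module adjunctions. Concretely, the unit $R\to R\otimes_A B$ (for $R\in\dalg_A$) is the coproduct inclusion, and it underlies $M\to M\otimes_A B$. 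The counit $S\otimes_A B\to S$ (for $S\in \dalg_B$) is the multiplication map, and it underlies the module counit. Given this, the mate construction is preserved by $U$, because the homotopies $\alpha$ are compatible.

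\textbf{Conclusion.} Since $A\to B\to B'$, $A\to A'\to B'$ is a pushout of animated rings, it is also a pushout of $\einf$-rings. The module-level Beck--Chevalley map $M\otimes_A A'\to M\otimes_B B'$ is then the equivalence $M\otimes_B(B\otimes_A A')\simeq M\otimes_B B'$ of \cite[Lemma D.3.5.6]{Lur18}. Hence $U_{A'}(\beta)$ is an equivalence, and so is $\beta$.

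The main obstacle is the coherence claim that $U$ carries the $\dalg$ mate to the module mate. I would handle it by observing that the adjunctions $(-\otimes_A B)\dashv\mathrm{res}$ on $\dalg$ and on $\Mod$ are both obtained from the coCartesian fibrations $\dalg\to\aring$ and $\Mod\to\aring$. The forgetful map between these fibrations preserves coCartesian edges by \cite[Remark 4.2.29]{Rak20}, and it also preserves the relevant Cartesian edges, since restriction commutes with $U$. Adjointability statements are then compatible with such a map of bifibrations.

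Alternatively, one can avoid the mate bookkeeping. One checks directly that both $(R\otimes_A A')$ and $R$, viewed as a $B'$-algebra via the pushout, have underlying module $R\otimes_B B'$. One then verifies that the canonical comparison map is the identity on underlying modules.
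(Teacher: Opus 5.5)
Your proposal is correct and follows the paper's route. Like the paper, you take base change as the functor with restriction of scalars as its right adjoint, use conservativity of the forgetful functor to reduce the Beck--Chevalley equivalence to underlying modules, and conclude from the module-level adjointability of Example \ref{modules beck-chevalley}; your argument that $U$ carries the $\dalg$ mate to the module mate spells out a compatibility the paper leaves implicit.
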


\begin{proof}
This follows immediately from \cite[Remark 4.2.29]{Rak20} but we spell out the details for the convenience of the reader.
For a morphism of animated (in fact, derived) rings $R\to S$, the forgetful functor $\dalg_S\to \dalg_R$ admits a left adjoint given by $-\otimes_R S$. The fact that the Beck-Chevalley conditions of Definition \ref{beck-chevalley} are satisfied can be checked after applying the forgetful functor $\dalg_R\to \Mod_R$ which is conservative. Indeed, in the latter case right adjointability of the diagram in Example \ref{modules beck-chevalley} solves the problem.   
\end{proof}

Our descent statement will take the form of displaying the target $\infty$-category as a limit of cosimplicial $\infty$-categories. For this we recall the comonadic Barr-Beck-Lurie theorem below.

\begin{variant}[Completed adjointability]\label{variant: completed adjointability}
Fix a base ring $A$ and $J\subset \pi_0(A)$ a finitely generated ideal of $A.$ Consider the pushout diagram of animated $A$-algebras given as 
$$% https://tikzcd.yichuanshen.de/#N4Igdg9gJgpgziAXAbVABwnAlgFyxMJZABgBpiBdUkANwEMAbAVxiRACUQBfU9TXfIRQBGclVqMWbAMrdeIDNjwEiZYePrNWiDgHI5fJYKKj11TVJ3T9XcTCgBzeEVAAzAE4QAtkjIgcEEjCPG6ePoh+AUgATCEgHt4x1FGIAMxxCeGi-oFptlxAA
\begin{tikzcd}
R \arrow[r] \arrow[d] & S \arrow[d] \\
R' \arrow[r]          & S'         
\end{tikzcd}.$$

Consider the diagram of $\icats$ where the arrows are given by completed base change 

$$% https://tikzcd.yichuanshen.de/#N4Igdg9gJgpgziAXAbVABwnAlgFyxMJZABgBpiBdUkANwEMAbAVxiRAB12BZaAPU4BWAYwgBbNAH0ASiAC+pdJlz5CKAIzkqtRizaceUCQGV+7YWLRyFIDNjwEiZNVvrNWiDt2gTgUgOSypubiVop2KkQaztSuuh763sBGAUEiIbJaMFAA5vBEoABmAE5iSGQgOBBIavKFJaJl1JVIAEy1IMWliC1NVYgAzO2dDYgaFX2DFLJAA
\begin{tikzcd}
\Mod^\jcomp_R \arrow[r] \arrow[d] & \Mod_S^\jcomp \arrow[d] \\
\Mod_{R'}^\jcomp \arrow[r]        & \Mod_{S'}^\jcomp       
\end{tikzcd}$$ is right adjointable. 
This follows from Example \ref{modules beck-chevalley} because completion is symmetric monoidal. 

Note that as a general remark for any morphism $R\to S$ of $A$-algebras, the forgetful functor $$\Mod_S^\jcomp\to \Mod_R^\jcomp$$ preserves completions. Indeed a $J$-complete derived $R$ module $M$ satisfies 
    $$M=\lim_n M\otimes_R R\qq (f^n_1,\ldots, f^n_r)=\lim_n M\otimes_R R\otimes_A A\qq (f^n_1,\ldots, f^n_r)=\lim_n M\otimes_A A\qq (f^n_1,\ldots, f^n_r)$$ and the forgetful functor preserves limits. 

As a consequence the analogous diagram of completed derived algebras
$$
\begin{tikzcd}
\dalg^\jcomp_R \arrow[r] \arrow[d] & \dalg_S^\jcomp \arrow[d] \\
\dalg_{R'}^\jcomp \arrow[r]        & \dalg_{S'}^\jcomp       
\end{tikzcd}$$
 is right adjointable by the same reasoning as in Lemma \ref{lem: adjointable dalg}, applied in the completed case using conservativity of the forgetful functor $\dalg_A^\jcomp\to \Mod_A^\jcomp$ of Proposition \ref{prop: dalgj is monadic.} .   
\end{variant}

\begin{proposition}\label{prop: criteria for descent}
    Let $\cC^\bullet\colon \mathrm{N}(\mathbf{\Delta^+})\to \cC\mathrm{at}_\infty$ be an augmented cosimplicial $\infty$-category. Set $\cC=\cC^{-1}$ and let $G\colon \cC\to \cC^{0}$ be the obvious functor. Assume that:
    \begin{enumerate}
        \item The $\infty$-category $\cC$ admits geometric realisations of $G$-split simplicial objects and those geometric realisations are preserved by $G$.
        \item For every morphism $\alpha\colon [m]\to [n]$ in $\mathbf{\Delta}_+,$ the diagram $$
\begin{tikzcd}
\cC^m \arrow[d] \arrow[r, "d^0"] & \cC^{m+1} \arrow[d] \\
\cC^n \arrow[r, "d^0"]           & \cC^{n+1}          
\end{tikzcd}$$
        is right adjointable.

        Then the canonical map $\theta\colon \cC\to \lim_{n\in \mathbf{\Delta}} \cC^n$ admits a fully faithful right adjoint. If $G$ is conservative, then $\theta$ is an equivalence.
    \end{enumerate}
\end{proposition}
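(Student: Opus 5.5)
This is the comonadic Barr--Beck--Lurie descent criterion, i.e.\ the dual form of \cite[Theorem 4.7.5.2]{HA} (Lurie's Beck--Chevalley descent criterion). The plan is to reduce to the comonadic Barr--Beck--Lurie theorem (Theorem \ref{barrbeck comonadic}) applied to $G\colon \cC\to \cC^0$. Let $\cC' := \lim_{n\in\mathbf{\Delta}}\cC^n$ with canonical projection $\mathrm{ev}^0\colon \cC'\to \cC^0$, so that $G=\mathrm{ev}^0\circ\theta$.

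The first step is to show that the adjointability hypothesis $(2)$ (applied with the coface maps $d^0$) gives each $d^0$ a right adjoint and makes the right adjoints compatible under base change along the cosimplicial structure maps. By \cite[Corollary 4.7.4.18]{HA}, adjointability lets one pass to the limit: $\mathrm{ev}^0\colon \cC'\to\cC^0$ admits a right adjoint, and so does $G$, whose right adjoint $G_R$ is the composite $\cC^0\to \cC^{-1}$ of the $d^0$-adjoint. Moreover, the Beck--Chevalley transformations identify the comonad $G G_R$ on $\cC^0$ with the comonad $\mathrm{ev}^0\,\mathrm{ev}^0_R$; concretely both are computed as $d^1_R\circ d^0$ (a right adjoint composed with the other coface map), using the square for $[-1]\to[0]$ and adjointability.

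Next I will show that $\mathrm{ev}^0$ is comonadic by checking the conditions of Theorem \ref{barrbeck comonadic}. Conservativity holds because an equivalence in the limit can be checked on every $\cC^n$, and each $\cC^n$ receives a functor from $\cC^0$ via iterated cofaces. For the second condition, limits of $\mathrm{ev}^0$-split cosimplicial objects can be computed levelwise, since the $d^0$'s preserve split totalizations (every functor does). With both comonads identified with the same comonad $K$ on $\cC^0$, $\theta$ becomes a functor over $\mathrm{coMod}_K(\cC^0)$. I note that hypothesis $(1)$ is stated with ``geometric realisations of $G$-split simplicial objects''; in the comonadic setting the relevant condition is the dual one, totalizations of $G$-split cosimplicial objects, and I will read it that way.

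Finally, under hypothesis $(1)$, the comparison functor $\cC\to \mathrm{coMod}_K(\cC^0)\simeq\cC'$ admits a fully faithful right adjoint. This right adjoint sends a comodule to the totalization of its cobar resolution, which exists and is preserved by $G$ because it is $G$-split. The counit is then an equivalence after applying $G$, and since this holds without conservativity, the right adjoint is fully faithful. If in addition $G$ is conservative, the full Barr--Beck--Lurie theorem makes $\cC\to\mathrm{coMod}_K$ an equivalence, so $\theta$ is an equivalence.

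I expect the main obstacle to be the identification of the two comonads: one must check that the equivalence $GG_R\simeq \mathrm{ev}^0\mathrm{ev}^0_R$ respects the comonad structures, not merely the underlying endofunctors. I would handle this exactly as in \cite[\S 4.7.5]{HA}, by exhibiting $\theta$ as a map of comonadic adjunctions over $\cC^0$ (so the comonad map is induced) and then checking that it is an equivalence on underlying endofunctors via Beck--Chevalley, using Remark \ref{remark: how to detect equivalences of monads}.
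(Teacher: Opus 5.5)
Your proposal is correct and follows the paper's route: the paper simply invokes the comonadic form of \cite[Corollary 4.7.5.3]{HA} (pointing to \cite[Corollary 4.8]{Moulinos2021GeometryOfFiltrations}), whose proof is exactly your reduction to comonadic Barr--Beck--Lurie for $G$ and for $\mathrm{ev}^0$ with the two comonads matched via Beck--Chevalley, and you are right to read hypothesis $(1)$ as totalisations of $G$-split cosimplicial objects, which is what the paper actually checks in its applications. One small correction: the right adjointable square for $[-1]\to[0]$ (horizontal maps $d^0$, right vertical map $d^1$) gives $GG_R\simeq (d^0)_R\circ d^1$, not $d^1_R\circ d^0$ --- the hypotheses only supply right adjoints for the maps $d^0$.
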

\begin{proof}
    This is the comonadic variant of the statement proven in \cite[Corollary 4.7.5.3]{HA}. See \cite[Corollary 4.8]{Moulinos2021GeometryOfFiltrations} for an explanation. 
\end{proof}

We will verify that the functor $\dalg_{(-)}\colon \aring\to \catinf$ satisfies the conditions of Proposition \ref{prop: criteria for descent} when evaluated on Cech nerves of faithfully flat maps of animated rings.
 
\begin{theorem}\label{flat descent}
    The  $\catinf$ valued functor which sends an animated ring $A\to \dalg_A$ satisfies descent for the faithfully flat topology on $\aring$.
\end{theorem}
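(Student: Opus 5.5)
We will apply Proposition \ref{prop: criteria for descent} to the augmented cosimplicial $\infty$-category obtained from the \v{C}ech nerve of a faithfully flat map $A\to B$ of animated rings. Write $B^{\bullet}$ for the \v{C}ech nerve, with $B^n=B^{\otimes_A (n+1)}$ and $B^{-1}=A$. Set $\cC^n=\dalg_{B^n}$, with transition functors given by base change $-\otimes_{B^m}B^n$ (Remark 4.2.29 of \cite{Rak20}), and let $G\colon \dalg_A\to \dalg_B$ be $R\mapsto R\otimes_A B$. Since descent for a general cover reduces to \v{C}ech descent for single faithfully flat maps together with preservation of finite products, we will first check that $\dalg_{A_1\times A_2}\simeq \dalg_{A_1}\times\dalg_{A_2}$. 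This holds because $\Mod$ has the same property and the monad $\L\Sym$ is compatible with base change.

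Condition (2) of Proposition \ref{prop: criteria for descent} is immediate from Lemma \ref{lem: adjointable dalg}. Each square in question, with $d^0$ horizontal, is induced by a pushout square of animated rings, because the \v{C}ech nerve is built from iterated tensor products. Indeed $B^{m+1}\simeq B\otimes_A B^m$, and the coface $d^0$ is base change along $B^m\to B\otimes_A B^m$. Any map $\alpha$ gives a pushout square, and Lemma \ref{lem: adjointable dalg} then gives right adjointability.

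The remaining work is conservativity of $G$ and condition (1). All of this can be checked after applying the forgetful functors $U_A\colon \dalg_A\to\Mod_A$ and $U_B$, which satisfy $U_B\circ G\simeq (-\otimes_A B)\circ U_A$.

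Conservativity of $G$ follows because $U_A$ is conservative and $-\otimes_A B$ is conservative on $\Mod_A$ for faithfully flat $B$. The latter holds since flatness lets us compute $\pi_*(M\otimes_A B)\simeq \pi_*M\otimes_{\pi_0A}\pi_0B$, and faithful flatness detects vanishing.

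For condition (1), take a cosimplicial object $X^\bullet$ of $\dalg_A$, the dual of the simplicial one appearing in the proposition, such that $G(X^\bullet)$ is split. We must show that $G$ preserves its totalization.
\begin{enumerate}
\item Totalizations exist in $\dalg_A$ by presentability.
\item $U_A$ preserves all limits, by monadicity (and it is a right adjoint).
\item The image $U_A X^\bullet\otimes_A B$ is split in $\Mod_B$, since split objects are preserved by any functor. So it remains to show that $-\otimes_A B$ preserves the totalization of $U_AX^\bullet$.
\item This is the module-level statement, which is exactly what is used in the classical proof of faithfully flat descent for $\Mod$ (\cite[Lemma D.3.5.7]{Lur18}, or the proof of descent for $\QCoh$). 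There, a cosimplicial $A$-module that becomes split after $-\otimes_A B$ is shown to have its totalization commute with $-\otimes_A B$. The argument is that the \v{C}ech-type Tot tower becomes pro-constant, and that pro-constancy can be detected by the faithfully flat, hence conservative and $t$-exact, functor.
\end{enumerate}

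I expect step 4 to be the main obstacle. The cleanest route is to deduce it directly: Lurie's fpqc descent for $\Mod$ already verifies the hypotheses of Proposition \ref{prop: criteria for descent} for the augmented diagram $\Mod_{B^\bullet}$, and that includes exactly this preservation of $G$-split totalizations. Since our $G$ on $\dalg$ lies over the module-level $G$, and $U$ preserves and detects the relevant limits, the $\dalg$ statement follows. Proposition \ref{prop: criteria for descent} then yields $\dalg_A\simeq\lim_{\mathbf{\Delta}}\dalg_{B^\bullet}$.
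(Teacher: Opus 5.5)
Your proposal is correct and follows the paper's proof. Both apply Proposition \ref{prop: criteria for descent} to the \v{C}ech nerve, get adjointability from Lemma \ref{lem: adjointable dalg}, and reduce conservativity and the split-totalization condition to the module level through the conservative, limit-preserving forgetful functor. You additionally make explicit two things the paper leaves implicit: the finite-product check, and the appeal to faithfully flat descent for module categories.
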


\begin{proof}
We will verify the conditions in Proposition \ref{prop: criteria for descent}. Let $f\colon A\to B$ be a morphism of animated rings. Let $B^\bullet$ be the \v{C}ech nerve of $f$. Then we have a base change functor $\mathrm{coaug}\colon \dalg_A\to \dalg_{B^\bullet}$ sending a derived $A$-algebra $R$ to the algebra $R\otimes_A B^\bullet$ giving us a cosimplicial augmented diagram in $\catinf$. The adjointability conditions in Proposition \ref{prop: criteria for descent} follow from Lemma \ref{lem: adjointable dalg}. Now it remains to check conditions $(1)$ and $(2)$ of the aforementioned propositions. But they can be checked by conservativity of the functor $\dalg_A\to \Mod_A$ via point $(c)$ of \cite[Notation 4.2.28]{Rak20}.

\end{proof}

\begin{remark}\label{dalg is symmetricmonoidal}
    In fact the functor $A\mapsto \dalg_A$ is clearly valued in $\Calg(\Pr^L)$ and the base change functors are symmetric monoidal. Since limits are preserved along the forgetful functors $\Calg(\Pr^L)\to \Pr^L\to \catinf$ we conclude that the flat descent holds as presentably symmetric monoidal categories.
\end{remark}

\begin{remark}\label{lem: dalg evaluated in cech}
   Now the result in Theorem \ref{flat descent} shows that if $X\to Y$ is an effective epimorphism in the $\infty$-topos $\Shv(\aring_{\fpqc})$  so that $Y=\colim(\mathrm{Cech}(X\to Y))$, then $$\dalg(Y)=\lim \dalg(\mathrm{Cech}(X\to Y))$$ where the right hand side is shorthand for a cosimplicial $\infty$-category obtained by evaluating $\dalg$ on terms of the simplicial object $\mathrm{Cech}(X\to Y)$, where $\dalg$ is evaluated on stacks via right Kan extensions as defined in Construction \ref{cons: right kan extension of dalg}.
   The equivalence holds in $\Calg(\Pr^L)$ again by Remark \ref{dalg is symmetricmonoidal}.
   A proof of why flat descent for sheaves valued in presentable $\infty$-categories implies the Cech nerve criteria above, can be found in \cite[Proposition 3.48.]{Yaylali2022NotesDAG} after noting that $\catinf$ is presentable which can be checked by its explicit presentation as a combinatorial simplicial model category by \cite[Proposition 3.1.5.2]{HTT}.
\end{remark}

We also record a corollary which we won't use but is useful to know. This is the derived algebra variant of \cite[Footnote 12]{BS19}. This is the promised point $(6)$ of Goal \ref{goal: purpose of dalgj}.

\begin{corollary}
    Let $A$ be an animated ring and $J\subset \pi_0(A)$ a finitely generated ideal. Then the functor $B\mapsto \dalg^\jcomp_B$ satisfies descent for the $J$-completely flat topology on $\aring_A.$ 
\end{corollary}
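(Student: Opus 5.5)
We argue exactly as in the proof of Theorem \ref{flat descent}, but now in the completed setting, applying Proposition \ref{prop: criteria for descent} to the \v{C}ech nerve of a $J$-completely faithfully flat map. Let $f\colon B\to C$ be a $J$-completely faithfully flat map of animated $A$-algebras and let $C^\bullet$ be its \v{C}ech nerve in animated $B$-algebras. By Remark \ref{remark: functoriality of dalgj}, base change gives an augmented cosimplicial diagram
\[
\dalg_B^\jcomp\to \dalg_{C^\bullet}^\jcomp,\qquad R\mapsto R\ctensor_B C^\bullet .
\]
The adjointability hypothesis (2) of Proposition \ref{prop: criteria for descent} is exactly Variant \ref{variant: completed adjointability}: the squares in the \v{C}ech nerve come from pushouts of animated rings, and the forgetful functors preserve $J$-completeness.

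For the remaining hypotheses (conservativity of $G=(-)\ctensor_B C$ and preservation of $G$-split geometric realisations), we first pass to modules. Since $U^J\colon\dalg^\jcomp\to \Mod^\jcomp$ is conservative and monadic (Proposition \ref{prop: dalgj is monadic.}), and $U^J$ commutes with completed base change (Corollary \ref{cor: dalgj is right adjointable}), it suffices to prove the module statement: $M\mapsto M\ctensor_B C$ from $\Mod_B^\jcomp$ to $\Mod_C^\jcomp$ is conservative and comonadic. A cleaner alternative reduces module descent to Theorem \ref{thm: mod on spf}. Write $\Mod_B^\jcomp\simeq \lim_n \Mod_{B_n}$ with $B_n=B\qq(f_1^n,\ldots,f_r^n)$, and similarly for each $C^k$. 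By hypothesis, $C/\!\!/(f_i)$ is faithfully flat over $B/\!\!/(f_i)$, so $C\otimes_B B_n$ is faithfully flat over $B_n$. To see this, note that $B_n$ is a finite iterated extension of $B_1$-modules (up to the choice of generators), so flatness can be checked mod $J$ and propagated through the square-zero tower; alternatively one can use the torsion characterization in \ref{jcompflat}. Ordinary flat descent for modules (and for $\dalg$, Theorem \ref{flat descent}) then holds for each $n$. Commuting the limit over $n$ with the cosimplicial limit gives
\[
\dalg_B^\jcomp\simeq \lim_n\dalg_{B_n}\simeq \lim_n\lim_{\Delta}\dalg_{C^\bullet\otimes_B B_n}\simeq \lim_\Delta\lim_n \dalg_{C^\bullet\otimes_B B_n}\simeq \lim_\Delta \dalg^\jcomp_{C^\bullet},
\]
using Corollary \ref{cor: dalg on spf} (and Corollary \ref{cor: limit of mod on spf}) for both the first and last identifications, together with $(C^k)\otimes_B B_n\simeq C^k\qq(f_1^n,\ldots,f_r^n)$.

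The main obstacle is the step verifying that $C\otimes_B B_n$ is faithfully flat over $B_n$ for every $n$ from the mod-$J$ hypothesis. Here I would use that the fiber of $B_{n}\to B_1$ is built by finitely many extensions from $B_1$-modules, and that flatness of a module $N$ over an animated ring $R$ is detected by $t$-exactness of $N\otimes_R-$ on $R$-modules, which are generated under extensions by their $B_1$-module pieces. Compatibility of the identifications with the transition maps in $n$ is then formal from the naturality in Remark \ref{remark: functoriality of spf}.
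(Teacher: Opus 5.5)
Your proof is correct, and its operative part, the displayed chain of equivalences, is the paper's argument: pass to the levels $B_n\to C\otimes_B B_n$, apply ordinary flat descent (Theorem \ref{flat descent}) at each level, commute the limit over $n$ with the cosimplicial limit, and identify both ends via Corollary \ref{cor: dalg on spf}; the opening Barr--Beck--Lurie paragraph is not needed. The only difference is in the faithful flatness of $B_n\to C\otimes_B B_n$: the paper cites \cite[Lemma 5.2.2]{CesnaviciusScholze2024FlatPurity}, whereas your d\'evissage proves it directly (discrete $\pi_0(B_n)$-modules are finite extensions of $\pi_0(B)/J$-modules, on which $C\otimes_B-$ is $t$-exact), though you should add the remark that surjectivity on $\mathrm{Spec}\,\pi_0$ is insensitive to nilpotents to get the ``faithfully'' part.
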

\begin{proof}
Write $\mathrm{Cech}(B\to C)$ for the Cech nerve of $B\to C$. We want to show that 

$$\dalg^\jcomp_B=\lim \dalg^\jcomp_{\mathrm{Cech}{(A\to B)}}$$

Fix generators $J=(x_1,\ldots, x_r).$ 

A map of animated $A$-algebras $\varphi\colon B\to C$ is $J$-completely faithfully flat if and only if $\varphi_1\colon B/\!\!/(x_1,\ldots, x_r)\to C/\!\!/(x_1,\ldots, x_r)$ is faithfully flat as explained in \ref{jcompflat}. 

This implies that for each $n\geq 1$ the maps $\varphi_n\colon B\qq (x_1^n,\ldots, x_r^n)\to C\qq (x_1^n,\ldots, x_r^n)$ are faithfully flat by \cite[Lemma 5.2.2]{CesnaviciusScholze2024FlatPurity}.

Thus by flat descent of $\dalg_{(-)}$ it follows that $$\dalg_{B\qq (x_1^n,\ldots, x_r^n)}=\llim \dalg_{\mathrm{Cech}(A\to B)\qq (x_1^n,\ldots, x_r^n)},$$

Note that by Corollary \ref{cor: dalg on spf} we have $\dalg_B^J=\llim_n \dalg_{B/\!\!/(x^n_1,\ldots, x^n_r)}$ 
The induced map  is faithfully flat for each $n\geq 0$. But since limits commute with limits, we conclude.
    
\end{proof}
\subsection{Derived algebras along quasi-affine morphisms.}\label{sec: dalg along qaff}

We now begin with the main classical results we need. Our work here has a mostly utilitarian flavour. We will remark on this as we get to the needed results. 

\begin{construction}\label{cons: lax monoidal right adjoint}
    Let $f\colon X\to Y$ be a morphism of prestacks. Then the functor $f^*\colon \Qcoh(Y)\to \Qcoh(X)$ is symmetric monoidal and preserves all colimits and therefore admits a lax monoidal right adjoint $f_*\colon \Qcoh(X)\to \Qcoh(Y)$, for example by \cite[Proposition 3.2.1]{Lurie2011DAGVIII}.

     It follows that there is a factorization of $f_*$ as 
    \begin{equation}\label{eq: factorisation fOX}
            \Qcoh(X)\xrightarrow{\tilde{f_*}}\Mod_{f_*\cO_X}(\Qcoh(Y))\to \Qcoh(Y).
\end{equation}

 On the other hand, the adjunction $f^*\dashv f_*$ induces a monad $f_*f^*$ acting on $\Qcoh(Y).$ It follows formally that there is also a factorization 

\begin{equation}\label{eq: factorisation ff*OX}
     \Qcoh(X)\xrightarrow{\tilde{\tilde{f_*}}}\Mod_{f_*f^*}(\Qcoh(Y))\to \Qcoh(Y).
\end{equation}

 The counit $f^*f_*\to \id$ induces a morphism of monads $-\otimes_{\cO_Y}f_*\cO_X\to f_*f^*$ whence a forgetful functor 

\begin{equation}\label{eq: final factorisation}
     \Qcoh(X)\xrightarrow{\tilde{\tilde{f_*}}}\Mod_{f_*f^*}(\Qcoh(Y))\xrightarrow{U} \Mod_{f_*\cO_X}(\Qcoh(Y))\to  \Qcoh(Y),
\end{equation}
 where the composite of $U\circ \tilde{\tilde{f_*}}$ is $\tilde{f_*}$ of \ref{eq: factorisation fOX}.

\end{construction}

\begin{remark}
The functor $f_*$ described above is not very well behaved: for example it does not satisfy a projection formula. 
Indeed, one sufficient condition for the morphism $U$ of \ref{eq: final factorisation} to be an equivalence is for the morphism of monads $-\otimes_{\cO_Y}f_*\cO_X\to f_*f^*$ to be an equivalence.
We next isolate a class of morphisms when the projection formula is satisfied. We follow \cite[\S~3]{GaitsgoryRozenblyum2017SAGI}.
\end{remark}

\begin{definition}\cite[\S~3.3]{GaitsgoryRozenblyum2017SAGI}\label{def: quasi-affine}
    A derived scheme is called quasi-affine if it is quasi-compact and admits an open immersion into an affine derived scheme.
\end{definition}

\begin{definition}\label{def: quasi-affine etc}
Let $f\colon X\to Y$ be a morphism of prestacks.
\begin{enumerate}
    \item  We say that $f$ is \emph{schematic} if for any affine derived scheme $\spec(R)\to Y$, the base change $X\times_{Y}\spec(R)$ is representable by a derived scheme.
    \item  We say that $f$ is \emph{quasi-compact} schematic if it is schematic and in addition $X\times_{Y}\spec(R)$ is a quasi-compact.
    \item We say that $f$ is \emph{quasi-affine} if it is schematic and $X\times_{Y}\spec(R)$  is a quasi-affine scheme.
\end{enumerate}
   
\end{definition}

We now collect some results from \cite{GaitsgoryRozenblyum2017SAGI} which are direct consequences of Definition  above\footnote{Note that \cite{GaitsgoryRozenblyum2017SAGI} has a running characteristic $0$ hypothesis, as their intended applications are to geometric Langlands in characteristic $0$. However, the categorical statements in Proposition \ref{prop: GR facts we need} do not need any restriction on the base ring.}. 

\begin{proposition}\label{prop: GR facts we need}
    Let $f\colon X\to Y$ be a quasi-affine morphism of prestacks then 
\begin{enumerate}
    \item The morphisms of monads $-\otimes_{\cO_Y}f_*\cO_X\to f_*f^*$ is an isomorphism,
    \item The morphism $U$ of \ref{eq: final factorisation} is an isomorphism.
    \item The morphism $\tilde{\tilde{f_*}}$ of the factorisation of \ref{eq: factorisation ff*OX} is an equivalence.
    \item For any prestack $g:Z\to Y$, giving a pullback diagram $$% https://tikzcd.yichuanshen.de/#N4Igdg9gJgpgziAXAbVABwnAlgFyxMJZARgBoAGAXVJADcBDAGwFcYkQANEAX1PU1z5CKMsWp0mrdgE0efEBmx4CRcqTE0GLNohAAtOfyVDVFcVqm6OAHWt4AtvAD60gAQHu4mFADm8IqAAZgBOEPZIaiA4EEhkEtrsgYYgIWFIAEw00bGakjogPiA0jPQARjCMAAoCysIgwVg+ABY4yanhiADMWTGImSAl5VU1JroNza25CbqBAORtoR3dUb2RFvk+857cQA
\begin{tikzcd}
X\times_Y Z \arrow[d, "f'"'] \arrow[r, "g'"] & X \arrow[d, "f"] \\
Z \arrow[r, "g"']                            & Y               
\end{tikzcd},$$ the induced (rotated) diagram of $\icats$ 
$$% https://tikzcd.yichuanshen.de/#N4Igdg9gJgpgziAXAbVABwnAlgFyxMJZARgBoAGAXVJADcBDAGwFcYkQAdDgRQGMIAFgAoAGgEoQAX1LpMufIRRli1Ok1bsufQaK54AtvAD6ATQAEALQnTZ2PASLlSKmgxZtEnHv2FWpMkAw7BUcKVTcNTy0fIRNrVRgoAHN4IlAAMwAnCH0kJxAcCCQAZld1DxB0gD0AKhAaRnoAIxhGAAU5e0UQTKwkgRx-DOzcxAAmGkKkMhABGHoodkgwNhtKkenJovGy93YhdIByMVqh9ZySraQJkEaW9s6Qz17+wd3IkCTTtayLxHypogZhEKkIksdvpRJEA
\begin{tikzcd}
\Qcoh(Y) \arrow[d, "g^*"']  & \Qcoh(X) \arrow[l, "f^*"'] \arrow[d, "g'^*"] \\
\Qcoh(Z) \arrow[r, no head] & \Qcoh(X\times_Y Z) \arrow[l, "f'^*"]        
\end{tikzcd}$$ is right adjointable. 
\end{enumerate}
\end{proposition}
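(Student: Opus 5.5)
The plan is to reduce everything to the case where $Y$ is affine and then glue, since both $\Qcoh$ and the formation of $f_*$ are defined via right Kan extension along $\spec(R)\to Y$. We first treat base change (4), because the other statements follow from it.

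For (4), consider first $Y=\spec(R)$ and $Z=\spec(R')$ affine. Then $X$ is a quasi-compact quasi-affine derived scheme, and we can cover it by finitely many affine opens $U_i=\spec(R[1/s_i])$ that are basic opens of the ambient affine. Intersections of these are again affine basic opens. So $\Qcoh(X)$ is the finite limit $\lim_{I}\Mod_{R_I}$ over the \v{C}ech diagram. On each affine piece, base change is the right adjointability of Example \ref{modules beck-chevalley}. Pushforward from $X$ is the finite limit of the pushforwards from the $U_I$, and pullback $g^*$ is exact, so it commutes with finite limits. This gives base change for affine $Y,Z$. For general prestacks we then write $\Qcoh(Y)=\lim_{\spec R\to Y}\Mod_R$. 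The affine base change is exactly what makes the family $\{f_{R,*}\}$ compatible with the transition pullbacks, so the pushforwards assemble to a functor $\Qcoh(X)\to\Qcoh(Y)$. This functor is right adjoint to $f^*$ and is computed fibrewise, and base change along any $Z\to Y$ follows by evaluating on affines over $Z$.

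With (4) in hand, (1) becomes the projection formula. The map $M\otimes f_*\cO_X\to f_*f^*M$ may be checked after pullback to every $\spec R\to Y$, and base change reduces it to $Y$ affine. There both sides commute with colimits in $M$: $f_*$ is a finite limit of forgetful functors, hence exact and colimit preserving. Both sides also agree on $M=R$, and $\Mod_R$ is generated under colimits and shifts by $R$, so the map is an isomorphism.

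Statement (2) is then formal: an equivalence of monads induces an equivalence of module categories, so $U$ is an equivalence.

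For (3) we use Barr--Beck--Lurie \cite[Theorem 4.7.0.3]{HA}. It requires $f_*$ to preserve geometric realizations (indeed all colimits, since by the affine description it is colimit preserving after every base change) and $f_*$ to be conservative. Conservativity is the main obstacle. After base change it reduces to: if $X\subset\spec R$ is quasi-compact open and $\Gamma(X,\mathcal F)=0$, then $\mathcal F=0$. We would argue with the generating sections $s_i$. Since $f_*$ commutes with the localization $R\to R[1/s_i]$ by base change, we get $\Gamma(U_i,\mathcal F)=\Gamma(X,\mathcal F)[1/s_i]=0$. This forces $\mathcal F|_{U_i}=0$ for every $i$, hence $\mathcal F=0$. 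Monadicity then identifies $\Qcoh(X)$ with $\Mod_{f_*f^*}(\Qcoh(Y))$, and the comparison functor is exactly $\tilde{\tilde{f_*}}$.
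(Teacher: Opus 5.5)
Your argument is correct in substance, and it takes a genuinely different route from the paper, which gives no argument of its own. The paper cites \cite[Lemma 3.2.4]{GaitsgoryRozenblyum2017SAGI} for (1), \cite[Proposition 3.3.3]{GaitsgoryRozenblyum2017SAGI} for (3) and \cite[Proposition 2.2.2]{GaitsgoryRozenblyum2017SAGI} for (4), and deduces (2) from (1).

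You instead build a self-contained proof in three steps:
\begin{enumerate}
\item You prove base change first. The affine case uses a finite Zariski cover with affine intersections, exactness of $g^*$, and Example~\ref{modules beck-chevalley}; general prestacks follow by right Kan extension.
\item You get the projection formula from base change, since both sides preserve colimits and agree on the generator.
\item You get monadicity from Barr--Beck--Lurie, checking conservativity by localizing at the $s_i$.
\end{enumerate}

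What your route buys is transparency. It shows that only Zariski descent for $\Qcoh$ and the affine Beck--Chevalley property are used. So the characteristic-$0$ standing hypothesis of \cite{GaitsgoryRozenblyum2017SAGI}, which the paper's footnote asserts without argument is inessential, is visibly irrelevant. The citation buys brevity. It also outsources the coherence step you gloss over: that the family $\{f_{R,*}\}$ assembles into a right adjoint of $f^*$ computed pointwise. That is the $\infty$-categorical statement that limits of right adjointable diagrams of adjunctions have pointwise right adjoints \cite[\S~4.7.4]{HA}.

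One slip needs repair. After base change along $\spec R\to Y$, the quasi-affine scheme $X_R$ is open in \emph{some} affine $\spec B$ with $B$ an $R$-algebra, not in $\spec R$ itself. So the cover should be $U_i=\spec(B[1/s_i])$ with $s_i\in\pi_0 B$. Likewise, your reduction of conservativity to ``$X\subset\spec R$ quasi-compact open'' is not what base change gives; as written it only covers the case where $f_R$ is an open immersion.

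The fix is short:
\begin{itemize}
\item Factor $f_R$ as $X_R\xrightarrow{j}\spec B\to\spec R$.
\item Since $\Mod_B\to\Mod_R$ is conservative, it suffices that $j_*$ is conservative.
\item Your localization argument applies verbatim to $j$, using base change along $\spec B[1/s_i]\to\spec B$. This is an instance of the affine case of (4) you already established.
\end{itemize}
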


\begin{proof}
    \begin{enumerate}
        \item This amounts to the projection formula where this is \cite[Lemma 3.2.4]{GaitsgoryRozenblyum2017SAGI}.
        \item This follows from the previous item.
        \item This is \cite[Proposition 3.3.3]{GaitsgoryRozenblyum2017SAGI}.
        \item This amounts to showing that the canonical morphism $g^*\circ f_*\to f'_*\circ g'^*$ is an isomorphism, where this is  \cite[Proposition 2.2.2.]{GaitsgoryRozenblyum2017SAGI}.
    \end{enumerate}
\end{proof}

\begin{remark}\label{remark: pushforward also work for formal stacks}
    Note that the results of Proposition \ref{prop: GR facts we need} are also valid in the $J$-adic formal setting of Remark \ref{remark: definition of formal stack}. Indeed the statement of Proposition \ref{prop: GR facts we need} is at the level of prestacks and therefore works as well for $J$-completed formal stacks.
\end{remark}

We can now prove the analogs of the above statements for derived algebras. However, before we do so we remark on the limitations of what we are doing and what it would be desirable to do. 

\begin{remark}\label{remark: limitations of quasi-affine}
    Definition \ref{def: quasi-affine etc} works with a notion of \emph{schematic} quasi-affine morphisms. In \cite[Definition 3.1.24]{Lurie2011DAGVIII} defines quasi-affine morphisms of \emph{relatively} Deligne-Mumford stacks. In characteristic $0$, where one can work with $\einf$-rings instead of derived rings, this yields a larger class of morphisms even in the derived setting.
    Moreover in characteristic $0$, one can deduce from \cite[Theorem 2.3]{BhattHalpernLeistner2017TannakaRevisited}, a classification of quasi-affine relatively Deligne-Mumford stacks as compact localizations of commutative algebras in quasi-coherent sheaves. 
    It is natural to wonder whether the theory of derived algebras yields a refinement of these concepts in the setting of derived algebraic geometry. 
    
    This is under investigation by the current author in collaboration with J. Mundinger \cite{MundingerSahai_inprep}
\end{remark}

\begin{lemma}\label{lem: pushforwad preserves dalg}
Let $f\colon X\to  Y$ be a morphism of stacks. Then $f_* \colon \Qcoh(X) \to  \Qcoh(Y)$ admits a natural enhancement to $f^\dalg_*\colon \dalg(X)\to \dalg(Y )$ so that the following diagram commutes

$$
\begin{tikzcd}
\dalg(X) \arrow[r, "f^\dalg_*"] \arrow[d, "U_X"'] & \dalg(Y) \arrow[d, "U_Y"] \\
\Qcoh(X) \arrow[r, "f_*"]                 & \Qcoh(Y)               
\end{tikzcd}
$$
\end{lemma}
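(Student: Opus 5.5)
The plan is to construct $f^\dalg_*$ as the right adjoint of a pullback functor $f^*_\dalg\colon \dalg(Y)\to\dalg(X)$, and then check that this right adjoint is computed on underlying sheaves by $f_*$.

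First, the pullback. Recall from Construction \ref{cons: right kan extension of dalg} that $\dalg(-)$ is defined on prestacks by right Kan extension, $\dalg(X)=\lim_{\spec(R)\to X}\dalg_R$, and likewise $\Qcoh(X)=\lim\Mod_R$. A map $f\colon X\to Y$ induces a functor of indexing categories $\daff_X\to\daff_Y$, and hence restriction functors $f^*_\dalg\colon\dalg(Y)\to\dalg(X)$ and $f^*\colon\Qcoh(Y)\to\Qcoh(X)$. The forgetful functors $U_R\colon\dalg_R\to\Mod_R$ are natural in $R$ by \cite[Remark 4.2.29]{Rak20}, so they assemble into $U_X$ and $U_Y$ with $U_X\circ f^*_\dalg\simeq f^*\circ U_Y$.

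Next, $f^*_\dalg$ preserves colimits. Each base change functor $-\otimes_R S\colon\dalg_R\to\dalg_S$ is a left adjoint, so the limit diagrams lie in $\Pr^L$, and limits in $\Pr^L$ are computed in $\catinf$. Thus $\dalg(X)$ and $\dalg(Y)$ are presentable, and $f^*_\dalg$ is a limit of colimit-preserving functors, so it is itself colimit preserving. By the adjoint functor theorem it admits a right adjoint, which we define to be $f^\dalg_*$.

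It remains to identify $U_Y\circ f^\dalg_*$ with $f_*\circ U_X$. By the mate construction, the commuting square $U_X f^*_\dalg\simeq f^*U_Y$ yields a Beck--Chevalley transformation. Showing that this transformation is an equivalence is the main obstacle. I would proceed by passing to left adjoints. By Lemma \ref{lem: qcoh to dalg is monadic} and Proposition \ref{prop: kubrak prop}, $U_X$ and $U_Y$ have left adjoints $L_X$ and $L_Y$, with monads $\L\Sym_X$ and $\L\Sym_Y$. The identity $U_Y f^\dalg_*\simeq f_*U_X$ is equivalent, by taking left adjoints, to $f^*_\dalg L_Y\simeq L_X f^*$. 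This in turn reduces to the compatibility $f^*\L\Sym_Y\simeq\L\Sym_X f^*$ together with the compatibility of the monad structures. That compatibility holds because $\L\Sym$ is an endotransformation of the coCartesian fibration $\Mod\to\aring$, i.e.\ it preserves coCartesian edges (Remark \ref{remark: two cons of dalg are equivalent}), so it commutes with every base change and hence with the restrictions $f^*$ between limits.

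Finally, the two monadic presentations, the intertwining of $L$ with pullback, and conservativity of $U_Y$ together give an equivalence between the right adjoints $U_Y f^\dalg_*\simeq f_*U_X$. This is precisely the commutativity of the displayed square.
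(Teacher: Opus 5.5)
Your proposal is correct and follows essentially the same route as the paper, which recalls Petrov's argument: construct a colimit-preserving pullback on derived algebras, take its right adjoint $f^{\dalg}_*$ by the adjoint functor theorem, and identify $U_Y\circ f^{\dalg}_*\simeq f_*\circ U_X$ by comparing left adjoints, $f^{*}_{\dalg}\circ L_Y\simeq L_X\circ f^*$, using that $\L\Sym$ commutes with base change. The differences are cosmetic: you define the pullback directly as restriction of the limit and get colimit-preservation because the limit is taken in $\mathrm{Pr}^L$, whereas the paper induces the pullback from the naturality of $\L\Sym$ with $f^*$ and checks sifted colimits on underlying sheaves and coproducts on underlying $\mathbf{E}_\infty$-algebras.
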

\begin{proof}
    This is \cite[Proposition 2.13]{petrov_de_rham}. We recall their proof for the convenience of the reader. First we prove the claim for the functor $f^*\colon \Qcoh(Y)\to \Qcoh(X).$ Note that by naturality of the functors $\L\Sym^n_X\circ f^*=f^*\circ \L\Sym^n_X$, there is an induced colimit preserving functor $f^{*,\dalg}\colon \dalg(Y)\to \dalg(X)$. Indeed the functor preserves colimits because it preserves sifted colimits, which can be detected after passing to $\Qcoh$ and it preserves coproducts, which can be detected after passing to $\Calg.$
    On the other hand, since both $\dalg(X)$ and $\dalg(Y)$ are presentable (by presentability  of the corresponding quasi-coherent categories and \cite[Proposition 4.1.10]{Rak20}), we see that there is a right adjoint $f_*^\dalg\colon \Qcoh(X)\to \Qcoh(Y).$
    It remains to check that $ U\circ f_*\simeq U\circ f_*^\dalg$. To do so, note that the functors in question are the right adjoints of the corresponding functors $\L\Sym\circ f^* \simeq f^{*,\dalg}\circ \L\Sym$.

\end{proof}

\begin{notation}\label{notation: abuse of notation for pushforward}
      To avoid clutter we denote the induced functor $$f_*^\dalg\colon \dalg(X)\to \dalg(Y)$$ by $f_*$ as in our applications the refinement will be clear from context. When we need, for example in proofs, we will revert to the notation of Lemma \ref{lem: pushforwad preserves dalg}.
\end{notation}

We may now begin to prove the desired results of this section. These results will be analogs of Proposition \ref{prop: GR facts we need}.

\begin{proposition}\label{cor: quasi-aff preserves dalg}

    Let $f\colon X\to Y$ be a quasi-affine morphism of prestacks. Then there is an equivalence of $\icats$
    $$\dalg(X)\simeq \dalg(Y)_{f_*\cO_X/}.$$
\end{proposition}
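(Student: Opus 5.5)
The plan is to build the comparison functor $\Phi\colon \dalg(X)\to \dalg(Y)_{f_*\cO_X/}$ directly from the enhanced pushforward of Lemma \ref{lem: pushforwad preserves dalg}. The functor $f^\dalg_*$ is a right adjoint to $f^{*,\dalg}$, so it sends the initial object $\cO_X\in\dalg(X)$ to a derived algebra $f_*\cO_X\in \dalg(Y)$. For every $R\in\dalg(X)$, the unit map $\cO_X\to R$ then gives a map $f_*\cO_X\to f^\dalg_*R$. Hence $f^\dalg_*$ factors as
\[
\Phi\colon \dalg(X)\to \dalg(Y)_{f_*\cO_X/}.
\]
This $\Phi$ is a functor between presentable $\infty$-categories: the source is presentable as in the proof of Lemma \ref{lem: pushforwad preserves dalg}, and the target because undercategories of presentable categories are presentable. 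It also preserves limits, since $f^\dalg_*$ does and limits in undercategories are computed underneath. Its left adjoint $\Psi$ is described by the footnote in Construction \ref{cons: lifting pi! to animated}: it sends $f_*\cO_X\to S$ to the pushout $f^{*,\dalg}S\otimes_{f^{*,\dalg}f_*\cO_X}\cO_X$, where the map $f^{*,\dalg}f_*\cO_X\to \cO_X$ is the counit.

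To prove $\Phi$ is an equivalence, I would reduce to the module-level statement of Proposition \ref{prop: GR facts we need}. Parts (1)--(3) of that proposition combine to an equivalence $\Qcoh(X)\simeq \Mod_{f_*\cO_X}(\Qcoh(Y))$ induced by $\tilde{f_*}$. The key observation is that derived algebras under $f_*\cO_X$ should be the same as derived algebras in the module category $\Mod_{f_*\cO_X}(\Qcoh(Y))$, with monad $\L\Sym_{f_*\cO_X}$. Concretely, I would show that the forgetful functor
\[
\dalg(Y)_{f_*\cO_X/}\to \Mod_{f_*\cO_X}(\Qcoh(Y))
\]
is monadic, with monad $M\mapsto f_*\cO_X\otimes_{\L\Sym_Y(f_*\cO_X)}\L\Sym_Y(M)$, i.e.\ the relative free algebra. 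Then I would compare this monad, via $\tilde f_*$, with the monad $U_X L_X=\L\Sym_X$ on $\Qcoh(X)$ coming from Construction \ref{cons: right kan extension of dalg} and Lemma \ref{lem: qcoh to dalg is monadic}. Both forgetful functors are conservative and preserve sifted colimits: on the $X$ side this is monadicity; on the $Y$ side, $f_*$ preserves sifted colimits because the projection formula makes $f_*$ colimit preserving for quasi-affine $f$. So by Barr-Beck-Lurie, together with Remark \ref{remark: how to detect equivalences of monads}, it suffices to check that the natural transformation of endofunctors
\[
f_*\cO_X\otimes_{\L\Sym_Y(f_*\cO_X)}\L\Sym_Y(f_*M)\longrightarrow f_*\L\Sym_X(M)
\]
is an equivalence for every $M\in\Qcoh(X)$.

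Checking this last equivalence is the step I expect to be the main obstacle. My first attempt is to verify it locally. By Proposition \ref{prop: GR facts we need}(4) (base change) together with the fact that $\L\Sym$ commutes with pullback, the claim can be tested after base change along every $\spec R\to Y$. This reduces to the case where $Y=\spec R$ and $X$ is a quasi-compact quasi-affine derived scheme. Pick a finite Zariski cover of $X$ by affines $U_i=\spec B_i$; its \v{C}ech nerve consists of affines because $X$ is separated. On the terms of the \v{C}ech nerve, the map in question becomes the standard identity $B\otimes_{\L\Sym_R B}\L\Sym_R(N)\simeq \L\Sym_B(N)$ for $R\to B$ and $N\in\Mod_B$. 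This identity follows from the universal property of free derived algebras in undercategories, i.e.\ from $\dalg_B=(\dalg_R)_{B/}$ by \cite[Notation 4.2.28]{Rak20}. To pass from the affine pieces to $X$, I use that $f_*$ is a finite limit over the \v{C}ech nerve, and that both sides commute with this finite totalization by the projection formula for quasi-affine maps. The delicate point is that $\L\Sym_Y$ does not commute with finite limits in general. I would work around this by comparing with the filtered pieces $\L\Sym^n$, which are excisively well-behaved (right-left extended) on finite totalizations of the type coming from Zariski covers. Alternatively, I would argue with the open immersion $X\hookrightarrow \spec B$ directly, using that $\Qcoh(X)$ is a localization of $\Mod_B$ compatible with $\L\Sym$.
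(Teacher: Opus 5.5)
Your construction of $\Phi$ and of its left adjoint $\Psi$ is fine. The step that is supposed to show $\Phi$ is an equivalence, however, is not established, and part of it is ill-posed.

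The monad you write on $\Mod_{f_*\cO_X}(\Qcoh(Y))$, namely $M\mapsto f_*\cO_X\otimes_{\L\Sym_Y(f_*\cO_X)}\L\Sym_Y(M)$, is not defined for a general $f_*\cO_X$-module $M$. An algebra map $\L\Sym_Y(f_*\cO_X)\to\L\Sym_Y(M)$ would require a module map $f_*\cO_X\to M$, which you are not given. For the same reason, the ``standard identity'' $B\otimes_{\L\Sym_R B}\L\Sym_R(N)\simeq \L\Sym_B(N)$ for $N\in\Mod_B$ is not a valid statement. What is true is base change, $\L\Sym_B(B\otimes_R N_0)\simeq B\otimes_R \L\Sym_R(N_0)$ for $N_0\in \Mod_R$; the free derived $B$-algebra on a non-induced $B$-module is only reached through a bar resolution.

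The local-to-global step is also not an argument. It would need the free-algebra functor to commute with the \v{C}ech totalization. You rightly note that $\L\Sym$ does not commute with finite limits, and saying that the $\L\Sym^n$ are right-left extended does not supply the required exactness. Since this comparison of monads carries the entire proof, the proposal as written does not prove the proposition.

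The missing idea is to run Barr--Beck--Lurie on $f^\dalg_*\colon \dalg(X)\to\dalg(Y)$ itself, over $\dalg(Y)$ rather than over $\Mod_{f_*\cO_X}(\Qcoh(Y))$. Then free algebras never enter, and this is the paper's argument:
\begin{enumerate}
\item $f^\dalg_*$ is monadic. Indeed $U_Y\circ f^\dalg_*\simeq f_*\circ U_X$, and $f_*$ is conservative (Proposition \ref{prop: GR facts we need}(3)) and colimit preserving (projection formula).
\item Its monad $f^\dalg_* f^{*,\dalg}$ receives the evident map from $(-)\otimes_{\cO_Y} f_*\cO_X$, the coproduct with $f_*\cO_X$ in $\dalg(Y)$. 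The underlying module of this coproduct is the tensor product, since $\dalg(Y)\to\Calg(Y)$ preserves colimits.
\item On underlying modules this map is the projection-formula map of Proposition \ref{prop: GR facts we need}(1), so it is an equivalence by conservativity of $U_Y$.
\item Finally, $\dalg(Y)_{f_*\cO_X/}$ is monadic over $\dalg(Y)$ with monad $f_*\cO_X\otimes_{\cO_Y}(-)$, which gives the claim.
\end{enumerate}

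If you prefer to keep your route, the comparison can be repaired without any exactness of $\L\Sym$. Reduce to $Y=\spec R$ and write $A=\Gamma(X,\cO_X)$. The composite of the inverse of $\tilde{f_*}$ with restriction to an affine open $\spec B'\subset X$ is $B'\otimes_A(-)$. This commutes with free algebras because the corresponding forgetful functors commute. Then $\Qcoh(X)\simeq\Mod_A$ identifies $f_*\L\Sym_X(M)$ with the free derived $A$-algebra on $f_*M$.
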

\begin{proof}
For clarity, we revert to the notation employed in the proof of Lemma \ref{lem: pushforwad preserves dalg} i.e. we denote the adjunction on derived algebras as $f^{*,\dalg}\dashv f_*^\dalg. $

First observe that the induced functor $f^\dalg_*\colon \dalg(X)\to \dalg(Y)$ is monadic. Indeed this follows from dual to the reasoning employed in Lemma \ref{monadicity in dalg}, but we spell it out. 

Using the coarse Barr-Beck criteria \cite[Theorem 4.7.0.3]{HA}, we need to show that $f^\dalg_*$ preserves is conservative and  preserves geometric realizations. 

For conservativity, note that $f_*\circ U_X=U_Y\circ f^\dalg_*$ is conservative by point $(3)$ of Proposition \ref{prop: GR facts we need} and conservativity of $U_X$. Since $U_Y$ is conservative, then so is $f_*.$ 

For preservation of geometric realisations, let $X^\bullet\in \dalg(X)$ be a simplicial object and $|X^\bullet|$ be its geometric realisation. We need to compute $f_*^\dalg(|X^\bullet|).$ Since the functor $U_Y$ preserves geometric realisations and is conservative, it suffices to show that $U_Y\circ f_*^\dalg|X^\bullet|$ commutes with geometric realisations. But $U_Y\circ f_*^\dalg=f_*\circ U_X $ and both of the functors on the right hand side preserve geometric realisations. 

Thus we have a canonical identification 
$$\dalg(X)\simeq \Mod_{f_*^\dalg f^{*,\dalg}}(\dalg(Y)).$$

Now note that there is an identification of monads $$(-)\otimes_{\cO_Y}f_*^\dalg\cO_X\simeq f_*^\dalg f^{*,\dalg}$$. Indeed, note that there is an obvious morphism of monads from the left side to the right side, which can be checked to be an isomorphism using conservativity of $U_Y$ using item $(1)$ of Proposition \ref{prop: GR facts we need}.

Thus we have a further equivalence of $\icats $
$$\dalg(X)\simeq  \Mod_{-\otimes_{\cO_Y}f_*^\dalg \cO_X}(\dalg(Y)).$$

It remains to show that $$\Mod_{-\otimes_{\cO_Y}f_*^\dalg \cO_X}(\dalg(Y))\simeq \dalg(Y)_{f^\dalg_*\cO_Y/}.$$

For this note that given any derived algebra $A\in \dalg(Y)$ there is a functor $\dalg(Y)_{A/}$ given by $-\otimes_{\cO_Y}A$ and it has a right adjoint given by the forgetful functor $U\colon \dalg(Y)_{A/}\to \dalg(Y)$. It suffices to show that this functor is monadic. Since the forgetful functor $\dalg(Y)\to \Calg(Y)$ preserves all limits and colimits, the monadicity statement follows from the analogous one for $\Calg(Y).$ 
Since the functors are obviously equivalent (even as $\mathbf{E}_1$-algebras in $\mathrm{End}(\dalg(Y))$ we conclude.
\end{proof}

\begin{remark}
    We view \ref{cor: quasi-aff preserves dalg} as a primitive derived analog of analog of \cite[Theorem 2.3]{BhattHalpernLeistner2017TannakaRevisited}. We expect to examine this in more detail in joint work with J. Mundinger \cite{MundingerSahai_inprep}.
\end{remark}

\begin{proposition}[Base change for derived algebras]\label{prop: derived base change for algebras}
    Let $f\colon X\to Y$ be a morphism of prestacks and $g\colon Z\to Y$ any morphism given a pullback diagram
   $$ \begin{tikzcd}
X\times_Y Z \arrow[d, "f'"'] \arrow[r, "g'"] & X \arrow[d, "f"] \\
Z \arrow[r, "g"']                            & Y               
\end{tikzcd},$$ the induced (rotated) diagram of $\icats$ 
$$% https://tikzcd.yichuanshen.de/#N4Igdg9gJgpgziAXAbVABwnAlgFyxMJZARgBoAGAXVJADcBDAGwFcYkQAdDgRQGMIAFgAoAGgEoQAX1LpMufIRRli1Ok1bsufQaK54AtvAD6ATQAEALQnTZ2PASLlSKmgxZtEnHv2FWpMkAw7BUcKVTcNTy0fIRNrVRgoAHN4IlAAMwAnCH0kJxAcCCQAZld1DxB0gD0AKhAaRnoAIxhGAAU5e0UQTKwkgRx-DOzcxAAmGkKkMhABGHoodkgwNhtKkenJovGy93YhdIByMVqh9ZySraQJkEaW9s6Qz17+wd3IkCTTtayLxHypogZhEKkIksdvpRJEA
\begin{tikzcd}
\dalg(Y) \arrow[d, "g^*"']  & \dalg(X) \arrow[l, "f^*"'] \arrow[d, "g'^*"] \\
\dalg(Z) \arrow[r, no head] & \dalg(X\times_Y Z) \arrow[l, "f'^*"]        
\end{tikzcd}$$ is right adjointable. 
\end{proposition}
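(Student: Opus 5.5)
The statement leaves the hypotheses on $f$ implicit. Since it is presented as the derived-algebra analogue of item $(4)$ of Proposition \ref{prop: GR facts we need}, I will assume $f$ is quasi-affine, which is the situation in which that item is available. The plan is to reduce right adjointability on $\dalg$ to the corresponding statement on $\Qcoh$, using that the forgetful functors are conservative and intertwine all the functors in question.

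Concretely, right adjointability means that the Beck--Chevalley transformation
\[\beta\colon g^{*,\dalg}\circ f^{\dalg}_*\to f'^{\dalg}_*\circ g'^{*,\dalg}\]
is an equivalence of functors $\dalg(X)\to \dalg(Z)$. Here $f^\dalg_*$ and $f'^\dalg_*$ are the right adjoints produced in Lemma \ref{lem: pushforwad preserves dalg}.

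The first step is to record that $U_Z$ intertwines everything. The pullbacks commute with the forgetful functors, $U\circ g^{*,\dalg}\simeq g^*\circ U$, by the construction of $g^{*,\dalg}$ in the proof of Lemma \ref{lem: pushforwad preserves dalg}, since that functor is induced from $g^*$ via naturality of $\L\Sym^n$. The pushforwards commute with the forgetful functors by the commutative square of Lemma \ref{lem: pushforwad preserves dalg}. The second step is to check that, under these identifications, $U_Z(\beta)$ is the Beck--Chevalley transformation $g^*f_*\to f'_*g'^*$ on quasi-coherent sheaves. This transformation is built from the unit of $f'^{*}\dashv f'_*$, the homotopy $f'^*g^*\simeq g'^*f^*$, and the counit of $f^*\dashv f_*$. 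So the real content is that the $\dalg$-level units and counits map under $U$ to the $\Qcoh$-level ones.

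I would argue this last point by passing to left adjoints. The identification $U\circ f^\dalg_*\simeq f_*\circ U$ was obtained from $f^{*,\dalg}\circ \L\Sym\simeq \L\Sym\circ f^*$, so it is a mate of that equivalence. Mates are compatible with units and counits by the standard calculus of mates, so the pasting that defines $\beta$ maps under $U_Z$ to the pasting defining the $\Qcoh$ transformation. Once that is done, item $(4)$ of Proposition \ref{prop: GR facts we need} says the $\Qcoh$ transformation is an equivalence, and conservativity of $U_Z$ lets us conclude.

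The main obstacle is this coherence bookkeeping: making sure the square in Lemma \ref{lem: pushforwad preserves dalg} is the mate square and not just some abstract equivalence. If that is awkward to verify, a fallback is to check $\beta$ objectwise on $\dalg(X)$ after applying $U_Z$. The transformation is natural, and every object of $\dalg(X)$ is a colimit of free objects $\L\Sym(M)$. However, $f_*$ does not preserve colimits in general, so this fallback is weaker, and I would rely on the mate argument.
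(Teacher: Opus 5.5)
Your proposal is correct and follows the paper's proof: use conservativity of the forgetful functor $U_Z$ to reduce to the quasi-coherent base change equivalence $g^*f_*\simeq f'_*g'^*$ of item $(4)$ of Proposition \ref{prop: GR facts we need}. Your explicit quasi-affineness assumption on $f$, and your mate-calculus check that $U_Z$ carries the Beck--Chevalley map on derived algebras to the one on $\Qcoh$, spell out what the paper's one-line proof leaves implicit.
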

\begin{proof}
    In the notation of the proof of Lemma \ref{lem: pushforwad preserves dalg}, it suffices to prove that the induced map 
    $$g^{*,\dalg}\circ f_*^\dalg\to f_*'^{,\dalg}\circ g'^{*,\dalg}$$ is an equivalence. This may checked at the level of modules where this is item $(4)$ of Proposition \ref{prop: GR facts we need}.
\end{proof}

\begin{remark}\label{remark: completed base change holds for formal stacks}
    As explained in Remark \ref{remark: pushforward also work for formal stacks}, we note that Lemma \ref{lem: pushforwad preserves dalg}, Corollary \ref{cor: quasi-aff preserves dalg} and Proposition \ref{prop: derived base change for algebras} work in the $J$-adic setting of Remark \ref{remark: definition of formal stack}. Indeed they only reference the underlying prestack of a formal stack. We give an example making this explicit.
\end{remark}

\begin{example}\label{ex: pushforward on spf}
  Let us give an example of Proposition \ref{prop: derived base change for algebras} in a case of interest. Let $A$ be an animated ring and let $J\subset \pi_0(A)$ be a finitely generated ideal. Let us consider the pushout square of formal $A$-algebra stacks in the sense of Remark \ref{remark: definition of formal stack} induced by a corresponding morphism of $A$-algebras
$$
  % https://tikzcd.yichuanshen.de/#N4Igdg9gJgpgziAXAbVABwnAlgFyxMJZABgBpiBdUkANwEMAbAVxiRAB1240AzACgAinAO5ZYACzo5gnCHgC28AL4B9AEIBhAJQglpdJlz5CKAIzkqtRizadu-bbv0gM2PASJlTl+s1aIOLl5BHT0DN2Mic29qXxsAu2C1UMsYKABzeCJQHgAnCHkkMhAcCCRTMJA8gqLqUqQAJkrqwsQGurLEAGZm-NbzEs6eiiUgA
\begin{tikzcd}
\spf(D\widehat{\otimes}_BC) \arrow[r] \arrow[d] & \spf(C) \arrow[d] \\
\spf(D) \arrow[r]                               & \spf(B)          
\end{tikzcd}$$

Note that such a morphism is a quasi-affine morphism, since, for example, it is base changed from a corresponding morphism of affine schemes along $\spf(A)\to \spec(A).$

Then the induced right adjointability statement of Proposition \ref{prop: derived base change for algebras} gives a diagram $$% https://tikzcd.yichuanshen.de/#N4Igdg9gJgpgziAXAbVABwnAlgFyxMJZABgBpiBdUkANwEMAbAVxiRAB12pGBzAPWCcAVgGMIAWzQBfAPrAAIpwDuWWAAs6OQewh5x8WQCEAwlJBTS6TLnyEUARnJVajFm07cGPGcfOWQGNh4BERk9s70zKyIHFy8fMJikjLyflZBtkSO4dSRbjEe8YkSaDKG5s4wUDzwRKAAZgBOEkhkIDgQSABMFg3N4kiO7Z2IAMy9IE0tiEMdrbmu0SAAtMqqMBpanLpY+nCyxgAEABSK7Crqmto7e0aHxgCUINQMdABGMAwACtbBdiCNLA8NQ4NKTfpIUbUOaILoLKJsVbndaba56AxlQ6pKQUKRAA
\begin{tikzcd}
\dalg^{\jcomp}_{D\widehat{\otimes}_BC} \arrow[d] & \dalg_C \arrow[d] \arrow[l, "-\widehat{\otimes}_C (D\widehat{\otimes}_B C)"'] \\
\dalg^\jcomp_D                                   & \dalg^\jcomp_B \arrow[l, "-\widehat{\otimes}_B D"]                           
\end{tikzcd}.$$

Indeed the horizontal arrows come from Remark \ref{remark: functoriality of spf} and the vertical ones are the natural forgetful functors which preserve complete objects as explained in Variant \ref{variant: completed adjointability}.
\end{example}

\subsection{Derived algebras and the Rees construction.}\label{ssec: rees construction section}

In this section our goal is to explain the functorialities of derived algebras under the Rees construction from \cite[\S~5]{Simpson1997HodgeFiltrationNonabelian}, \cite[\S~2.2.1]{Bhatt22} and \cite[\S~3.1]{Lurie2015RotationInvarianceKTheory}.

We will follow the last reference above (even though it works over the sphere spectrum $\sph$) as the author actually doesn't know a reference which proves the derived statement directly

\begin{remark}(What we mean by the Rees construction).\label{remark: what we mean by rees}
    There seems to be some discrepancy in what gets called the Rees construction in the literature. For example, working over a discrete ring $A$, in \cite{Bhatt22}, Bhatt calls the equivalence $\fil\Mod_R\simeq \Qcoh(\agm)$ the Rees equivalence. This is fine since over a discrete ring the pushforward of $\cO_{\agm}$ along the structure morphism $p\colon \agm\to \bgm$ is almost by definition identified with the $\G_m$-equivariant module $k[t]$ with $t$ in graded degree $1.$ However to do this one has to implicitly fix an equivalence of $\icats$ $\Qcoh(B\G_m)\simeq \Gr\Mod_R.$ In our derived setting it is useful to work as canonically as possible and so to keep the last two categories separate until we canonically identify them in the next section. 

   Therefore, for us, the Rees construction will simply mean an equivalence between filtered modules over a filtered ring and graded modules over its Rees construction (which we will explain in the sequel). The equivalence with stack theory will be explained in the next section.
\end{remark}.

\begin{recollection}[Lurie's Rees construction]\label{rec: luries rees construction} We recall Lurie's Rees construction from  \cite[3.1]{Lurie2015RotationInvarianceKTheory}. We work universally over the sphere spectrum $\sph$.  

Note that there's an evident inclusion of $\icats$ $\ZZ^\ds\into \ZZ^\op$\footnote{The source has the same objects as the target. So the morphism is given by sending objects to themselves and the morphisms to identity morphisms.} which induces a functor\footnote{We preemptively apologise to the reader for using the notation $\res$ and $\rees$ together to mean different things.} $$\res\colon \fil\Sp\to \Gr\Sp.$$

The unit $\sph\{0\}$ gets sent to the graded object $\sph[t]$ which in degree $n$ is given by $\sph$ if $n\leq 0$ and $0$ otherwise. More generally a filtered spectrum $X^\bullet$ is sent to the $\sph[t]$-module $\mathrm{Res}(X^\bullet)$ which in graded degree $n$ is given by $X^nt^{-n}$. 

By left Kan extensions, the functor $\res$ has a left adjoint $\splt\colon \Gr\Sp\to \fil\Sp$ sending a graded spectrum $X^\bullet$ to the filtered object $\splt(X)^\bullet$ so that $\splt(X)^i=\bigoplus_{j\geq i} X^j.$ Since $\splt$ is clearly symmetric monoidal, it follows that $\res$ is lax monoidal whence preserves commutative algebras. 

Identifying the source of $\res$ with $\Mod_{\sph\{0\}}(\fil\Sp)$ there is an induced functor $$\rees\colon \Mod_{\sph\{0\}}(\fil\Sp)\to \Mod_{\sph[t]}(\Gr\Sp)$$ which Lurie shows in \cite[Proposition 3.1.6]{Lurie2015RotationInvarianceKTheory} is in fact a symmetric monoidal equivalence.

If $R$ is any $\einf$-ring then tensoring the above equivalence with $\Mod_R$\footnote{Or in fact any other $\cC\in \Calg(\Pr^L_{\st})$.} and using \cite[Theorem 4.8.5.16]{HA}, one obtains an equivalence

\begin{equation}\label{eq: rees at filgraded}
  \rees\colon \fil\Mod_R\simeq \Mod_{R[t]}(\Gr\Mod_R)  
\end{equation}
The same equivalence holds when $R$ is an animated ring and $R[t]$ may be identified with the polynomial ring $R[t]=R\otimes_{\ZZ}\ZZ[t]\simeq R\otimes_{\sph} \Sigma_{+}^\infty(\mathbf{N}).$

\end{recollection}

\begin{remark}\label{remark: base change on rees}
    
In the equivalence of \ref{eq: rees at filgraded} one heuristically writes a filtered $R$-modules $M^\bullet$ as $$\rees(M^\bullet)=\bigoplus_{i\in \ZZ}M^it^{-i}$$ and thinks of it as a module over the polynomial ring $R[t].$

The equivalence in \ref{eq: rees at filgraded} has the following features.

\begin{enumerate}
    \item We have the base change functors $R[t]\to R$ obtained by setting $t=0$, or equivalently thinking of $R$ in graded degree $0$ and one obtains the object $\rees(M^\bullet)\otimes_{R[t]}R\in \Gr\Mod_R$ which one thinks of as the associated graded of $M^\bullet.$
    \item Along the specialisation $R[t]\to R[t,t^{-1}]$ where the source is the graded ring with $R$ in all degrees, one obtains the module $\rees(M^\bullet)\otimes_{R[t]}R[t,t^{-1}]$ which inverts the action by $t$ on $\rees(M^\bullet)$ and one recovers $$\colim(\rees(M^\bullet)\xrightarrow{t} \rees(M^\bullet)\xrightarrow{t}\rees(M^\bullet)\xrightarrow{t}\ldots)\simeq \colim(\ldots \to M^i\to M^{i-1}\to M^{i-2}\to \ldots ),$$ and the right hand side is equivalent to $\mathrm{und}(M^\bullet)\in \Mod_R$, the underlying module of $M^\bullet$.
\end{enumerate}
\end{remark}

\begin{remark}\label{remark: all base changes preserve tensors}
    
Note that the specializations above are all implemented by taking the graded tensor product in $\Mod_{R[t]}(\Gr\Mod_R)$. Therefore if one were to apply the above equivalences to a filtered $\einf$-ring $E^\bullet$, then one obtains that the following three a priori modules 
\begin{enumerate}
    \item $\rees(E^\bullet),$
    \item $\rees(E^\bullet)\underline{\otimes}_{R[t]} R,$ and
    \item $\rees(E^\bullet)\underline{\otimes}_{R[t]}R[t,t^{-1}],$
\end{enumerate}

all are $\einf$-rings in their respective categories.
A similar remark shows preservation of derived algebra structures.
\end{remark}

Our goal now is to show that the Rees construction of Recollection \ref{rec: luries rees construction} and the base changes in Remark \ref{remark: base change on rees} are appropriately functorial in derived algebras.

\begin{remark}\label{remark: compact generators preserved on sph[t]}
We remark on a light conundrum. 
    For $n\in \ZZ$, there are functors $\ins^n\colon \Sp \to \fil\Sp$ sending a spectrum $X$ to the filtered spectrum $\ins^n(X)$ which is $X$ in filtered degree $m$ for all $m\leq n$ and zero otherwise. Note that $\sph$ projectively generates $\Sp$ by \cite[Corollary 7.1.4.15]{HA} and $\fil\Sp$ is projectively generated by finite coproducts of $\ins^n(\sph).$ 
    The morphism $\gr^*\colon \fil\Sp\to \Gr\Sp$ preserves projective generators. However the morphism $\res\colon \fil\Sp\to \Gr\Sp$ recalled in the first part of Recollection \ref{rec: luries rees construction}  doesn't. Indeed $\ins^0\sph$ gets sent to the graded polynomial ring $\sph[t]$ which is large (and not compact) inside $\Gr\Sp$. On the other hand, once one passes to modules over $\sph[t]$ in the target, the projective generators $\ins^n\sph$ become (graded shifted) free over $\sph[t]$ and indeed are preserved as projective generators. A similar remark holds after base change of $\ins^n$ along $\Mod_R$ for any $\einf$-ring $R.$ 
\end{remark}

\begin{lemma}\label{lem: res is contexts}
    Working over the discrete ring $\ZZ$ the equivalence $$\rees\colon \fil\Mod_\ZZ\simeq \Mod_{\ZZ[t]}(\Gr\Mod_\ZZ)$$ is an equivalence of derived algebraic context.
\end{lemma}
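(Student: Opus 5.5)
We have to show that $\rees\colon \fil\Mod_\ZZ\to \Mod_{\ZZ[t]}(\Gr\Mod_\ZZ)$ is a morphism of derived algebraic contexts in the sense of Recollection \ref{rec: morphisms in contexts}, for suitable context structures on both sides, and that its inverse is one as well. On $\fil\Mod_\ZZ$ we take Raksit's filtered context \cite[\S~4.3]{Rak20}: the neutral $t$-structure, with the small subcategory $\cC^0$ spanned by finite direct sums of $\ins^n(\ZZ)$. On $\Mod_{\ZZ[t]}(\Gr\Mod_\ZZ)$ we take the $t$-structure in which a module is (co)connective exactly when its underlying graded $\ZZ$-module is (co)connective in the neutral $t$-structure. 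For $\cC^0$ we take finite direct sums of the graded shifted free modules $\ZZ[t](n)$, where $(n)$ shifts the grading so that the generator sits in degree $-n$, matching $\rees(\ins^n\ZZ)$.

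Since $\rees$ is a symmetric monoidal equivalence by \cite[Proposition 3.1.6]{Lurie2015RotationInvarianceKTheory}, base changed to $\Mod_\ZZ$ as in \eqref{eq: rees at filgraded}, it is colimit preserving. So the plan has four steps. First, compute that $\rees(\ins^n\ZZ)\simeq \ZZ[t](n)$; this follows from $\res(\ins^n\ZZ)$ being $\ZZ$ in every graded degree $\leq n$, which is precisely the shifted free module, as in Remark \ref{remark: compact generators preserved on sph[t]}. Consequently $\rees(\cC^0)=\cC'^0$. Second, show that $\rees$ is $t$-exact, so that the inverse is right $t$-exact too. This holds because in graded degree $-i$ the object $\rees(M^\bullet)$ is just $M^i$, so connectivity and coconnectivity are checked weight by weight on both sides.

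Third, and this is the main obstacle, verify that the target really is a derived algebraic context in Raksit's sense. The transported $t$-structure must be compatible with the symmetric monoidal structure and with filtered colimits, and it must be right complete. The connective part must be generated under sifted colimits by $\cC'^0$, with $\cC'^0$ closed under tensor products and under the symmetric powers $\Sym^n$ relevant to the filtered monad. The cleanest route is to transport everything through $\rees$, since it is a symmetric monoidal $t$-exact equivalence carrying $\cC^0$ onto $\cC'^0$. One also checks directly that $\ZZ[t](n)\otimes_{\ZZ[t]}\ZZ[t](m)\simeq \ZZ[t](n+m)$, mirroring $\ins^n\ZZ\otimes\ins^m\ZZ\simeq\ins^{n+m}\ZZ$. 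The point flagged in Remark \ref{remark: compact generators preserved on sph[t]} is that these generators are compact projective in the module category, even though $\ZZ[t]$ is not compact in $\Gr\Mod_\ZZ$. This is what makes $\cC'^0$ projectively generate the connective part: $\Maps_{\ZZ[t]}(\ZZ[t](n),-)$ is evaluation in one graded degree, which is $t$-exact and commutes with sifted colimits.

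Finally, a symmetric monoidal equivalence between contexts whose inverse also satisfies the same three conditions is an equivalence of contexts. Consequently, by Recollection \ref{rec: morphisms in contexts}, it induces an equivalence $\fil\dalg_\ZZ\simeq\dalg(\Mod_{\ZZ[t]}(\Gr\Mod_\ZZ))$ that is compatible with the forgetful functors.
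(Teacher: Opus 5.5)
Your proposal is correct and follows essentially the same route as the paper: both get colimit preservation and monoidality from Lurie's symmetric monoidal equivalence, note that $\rees$ is $t$-exact for the neutral $t$-structures because it acts weight by weight, and use that $\rees(\ins^n\ZZ)$ is a graded-shifted free $\ZZ[t]$-module (Remark \ref{remark: compact generators preserved on sph[t]}) to match the generating subcategories. You go further than the paper by checking explicitly, via transport along $\rees$, that the target is itself a derived algebraic context and that the inverse is also a morphism of contexts; the only blemish is a harmless grading slip, since under your normalization ($M^i$ in degree $-i$) the module $\rees(\ins^n\ZZ)$ is $\ZZ$ in degrees $\geq -n$ rather than $\leq n$.
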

\begin{proof}
We need to check the conditions in Recollection \ref{rec: morphisms in contexts}.
    The functor $\rees\colon \filmod_\ZZ\to \grmod_\ZZ$ is an equivalence and therefore presrves all colimits. 
    The functor is evidently $t$-exact for the neutral $t$-structure on the source and target and sends compact generators of $\filmod_\ZZ$ to those of $\grmod_\ZZ$ (other than being an equivalence, the latter can also be seen from Remark \ref{remark: compact generators preserved on sph[t]}. 
    That it is symmetric monoidal is checked in the course of the proof of \cite[Proposition 3.1.6]{Lurie2015RotationInvarianceKTheory}, whence we conclude.
    \end{proof}

\begin{lemma}\label{lem: undercategory base change.} 
    There is a canonical equivalence $\dalg(\Mod_{\ZZ[t]}(\grmod_\ZZ))\simeq \dalg(\Gr\Mod_\ZZ)_{\ZZ[t]/}$ induced by restriction of scalars $\Mod_{\ZZ[t]}(\grmod_\ZZ)\to \Gr\Mod_\ZZ$.
\end{lemma}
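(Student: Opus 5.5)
The plan is to exhibit both sides as algebras for monads on the same underlying $\infty$-category and then compare the monads. Write $\cC=\Gr\Mod_\ZZ$ and $\cC'=\Mod_{\ZZ[t]}(\cC)$, and let $V\colon \cC'\to \cC$ be restriction of scalars. The left adjoint of $V$ is $-\otimes_\ZZ \ZZ[t]$, which is symmetric monoidal. By Lemma \ref{lem: res is contexts} (transported along $\rees$), $\cC'$ is a derived algebraic context: its connective part is projectively generated by the graded shifts of $\ZZ[t]$ (Remark \ref{remark: compact generators preserved on sph[t]}), and its $t$-structure is pulled back along $V$. The functor $V$ is $t$-exact, conservative, and preserves all limits and colimits, hence sifted colimits.

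First I build a comparison functor $\Phi\colon \dalg(\cC')\to \dalg(\cC)_{\ZZ[t]/}$. The unit $\ZZ[t]\in \cC'$ carries a canonical derived algebra structure, being $\L\Sym_{\cC'}$ of zero. Restriction of scalars along a morphism of contexts, as in Recollection \ref{rec: morphisms in contexts}, applied to $\ZZ[t]\otimes-\colon \cC\to \cC'$, gives a right adjoint $V'\colon \dalg(\cC')\to \dalg(\cC)$ lying over $V$. It sends the initial object $\ZZ[t]$ to $\ZZ[t]\in \dalg(\cC)$. The functor $\Phi$ is then obtained by sending $R$ to $V'(R)$ together with the image of the unit map $\ZZ[t]\to R$. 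Its left adjoint sends $(\ZZ[t]\to S)$ to $S\otimes_{\ZZ[t]\otimes\ZZ[t]}\ZZ[t]$, that is, the base change $F'(S)$ pushed out along the multiplication.

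Next I show that both sides are monadic over $\cC'$. For the source this is the definition $\Mod_{\L\Sym_{\cC'}}(\cC')$. For the target, $\dalg(\cC)_{\ZZ[t]/}\to \Mod_{\ZZ[t]}(\dalg(\cC))$ is, as in the last step of the proof of Proposition \ref{cor: quasi-aff preserves dalg}, monadic over $\dalg(\cC)$. The composite forgetful functor $\dalg(\cC)_{\ZZ[t]/}\to \cC'$ is conservative and preserves sifted colimits, since these are computed in $\dalg(\cC)$ and then in $\cC$. The Barr--Beck--Lurie theorem \cite[Theorem 4.7.0.3]{HA} then shows it is monadic, with left adjoint $M\mapsto \ZZ[t]\otimes_{\ZZ[t]}$-relative free algebra, which on underlying objects is $\L\Sym_\cC(VM)\otimes_{\L\Sym_\cC(V\ZZ[t])}\ZZ[t]$. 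The map $\Phi$ commutes with the forgetful functors to $\cC'$, so it induces a morphism of monads $T'\to \L\Sym_{\cC'}$. By Remark \ref{remark: how to detect equivalences of monads}, it suffices to check this is an equivalence on objects.

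The main obstacle is this last comparison: showing that the relative free derived algebra $\L\Sym_{\cC}(M)\otimes_{\L\Sym_\cC(\ZZ[t])}\ZZ[t]$ agrees with $\L\Sym_{\cC'}(M)$. I would first check it on the projective generators $M=\ZZ[t](n)$ of $\cC'_{\geq0}$, where both sides reduce to $\ZZ[t]\otimes\L\Sym_\cC(\ZZ(n))$. This uses the base-change identity $\L\Sym_{\cC'}(\ZZ[t]\otimes N)\simeq \ZZ[t]\otimes\L\Sym_\cC(N)$ for the symmetric monoidal morphism of contexts $\ZZ[t]\otimes-$, via \cite[Remark 4.2.25]{Rak20}. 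I would then extend to all of $\cC'_{\geq 0}$ by sifted colimits. To reach the full stable category, I would use that both monads are determined by their filtered right-left extension, as in \cite[\S~4]{Rak20}. In practice, both functors preserve finite totalizations of coconnective objects in the appropriate sense, so they agree on the right-left extension. Since Raksit's $\L\Sym_{\cC'}$ is by construction the right-left extension from the generators, this will be the step requiring the most care.
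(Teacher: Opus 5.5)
Your set-up is correct through the connective case. $\Phi$ exists, both sides are monadic over $\cC'=\Mod_{\ZZ[t]}(\Gr\Mod_\ZZ)$, and the monad map $T'\to\L\Sym_{\cC'}$ is an equivalence on $\cC'_{\geq 0}$ by your generators-plus-sifted-colimits argument. The gap is the passage to nonconnective objects. Since $\cC'_{\geq 0}$ is closed under colimits, agreement there together with preservation of sifted colimits says nothing about, say, $T'(\Sigma^{-1}\ZZ[t])$.

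Your proposed remedy is to claim that $T'$ ``preserves finite totalizations of coconnective objects'' and is therefore a right-left extension. That claim is exactly the content that needs proof. Nothing you have established about $T'$, which is only defined abstractly as the monad of $\dalg(\Gr\Mod_\ZZ)_{\ZZ[t]/}\to\cC'$, gives it. You also have no decomposition $T'\simeq\bigoplus_n T'^n$ to match Raksit's piece-by-piece definition of the $\L\Sym^n_{\cC'}$. Relatedly, your formula $\L\Sym_\cC(VM)\otimes_{\L\Sym_\cC(V\ZZ[t])}\ZZ[t]$ is not meaningful, since there is no map $\L\Sym_\cC(V\ZZ[t])\to\L\Sym_\cC(VM)$. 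The relative free algebra is the geometric realization of a bar construction.

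The missing idea is that you never need right-left extensions. Write $U_\cC$, $U_{\cC'}$ for the forgetful functors. The identity $\L\Sym_{\cC'}(\ZZ[t]\otimes N)\simeq\ZZ[t]\otimes\L\Sym_\cC(N)$ holds for \emph{every} $N\in\cC$, not just for generators. To see this, pass to left adjoints in $U_\cC\circ V'\simeq V\circ U_{\cC'}$ and use that $F'$ lies over $\ZZ[t]\otimes-$. Also $T'(\ZZ[t]\otimes N)\simeq\ZZ[t]\otimes\L\Sym_\cC(N)$ for every $N$, by composing left adjoints, compatibly with the comparison map. Every $M\in\cC'$ is the geometric realization of its $V$-split bar resolution $\ZZ[t]\otimes\ZZ[t]^{\otimes\bullet}\otimes VM$, whose terms are free on possibly nonconnective objects. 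Both monads preserve geometric realizations, so $T'\to\L\Sym_{\cC'}$ is an equivalence on all of $\cC'$ at once, with no separate connective step.

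The paper's proof is shorter still, because it compares over $\dalg(\Gr\Mod_\ZZ)$ instead of over $\cC'$. Your $V'$ is monadic: it is conservative and preserves sifted colimits because $U_\cC\circ V'\simeq V\circ U_{\cC'}$. Its monad $V'F'$ has underlying functor $\ZZ[t]\otimes U_\cC(-)$. This is also the underlying functor of the monad $\ZZ[t]\sqcup(-)$ of $\dalg(\Gr\Mod_\ZZ)_{\ZZ[t]/}\to\dalg(\Gr\Mod_\ZZ)$. The monad map induced by $\Phi$ is then checked to be an equivalence on underlying objects. Working over $\dalg$ rather than over modules means the monads being compared are base-change monads rather than $\L\Sym$-type ones, so no analysis of $\L\Sym$ on nonconnective objects is needed.
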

\begin{proof}

The restriction of scalars $\Mod_{\ZZ[t]}(\grmod_\ZZ)\to \Gr\Mod_\ZZ$ is tautologically monadic with monad given by the graded tensor product $\ZZ[t]\underline{\otimes}_{\ZZ}-$. 
It follows that the induced functor $\dalg(\Mod_{\ZZ[t]}(\grmod_\ZZ))\to \dalg(\grmod_{\ZZ})$ is monadic since the forgetful functor to modules is conservative and preserves all sifted colimits (whence totalisations can be checked on underlying module categories). The monad identifies with $\ZZ[t]\underline{\otimes}_{\ZZ}-$, but this is precisely the monad for the adjunction $\dalg(\grmod_{\ZZ})_{\ZZ[t]/}\to \dalg(\Gr\Mod_\ZZ).$ 
\end{proof}

\begin{corollary}\label{cor: res gives derived algebras}
    The functor of Construction \ref{rec: luries rees construction} induces an equivalence $\rees\colon \fil\dalg_\ZZ\simeq \Gr\dalg_{\ZZ[t]}$
\end{corollary}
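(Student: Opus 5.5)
The plan is to combine the two preceding lemmas. First, by Lemma \ref{lem: res is contexts}, $\rees\colon \fil\Mod_\ZZ\simeq \Mod_{\ZZ[t]}(\Gr\Mod_\ZZ)$ is an equivalence of derived algebraic contexts. It is colimit preserving, $t$-exact and symmetric monoidal, it preserves the chosen projective generators, and its inverse enjoys the same properties. By the functoriality of $\dalg$ along morphisms of contexts recalled in Recollection \ref{rec: morphisms in contexts}, it induces a colimit preserving functor $\rees'\colon \fil\dalg_\ZZ=\dalg(\fil\Mod_\ZZ)\to \dalg(\Mod_{\ZZ[t]}(\Gr\Mod_\ZZ))$ lying over $\rees$. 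The inverse equivalence of contexts likewise induces a functor in the opposite direction.

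Next I would check that $\rees'$ is an equivalence. Both composites are induced by context morphisms whose underlying functors are naturally equivalent to the identity. Since the construction $\cC\mapsto\dalg(\cC)$ of \cite[Remark 4.2.25]{Rak20} is functorial in morphisms of contexts, these composites are equivalent to the identity. A more hands-on alternative avoids relying on that functoriality: show that $\rees'$ is conservative and compatible with the monads. Conservativity and compatibility with the forgetful functors follow from the commutative squares of Recollection \ref{rec: morphisms in contexts}. Because $\rees$ is a symmetric monoidal equivalence intertwining the filtered $\L\Sym$ monads (both are derived from $\Sym$ on the projective generators, which $\rees$ matches), the induced map of monads $\L\Sym\circ\rees\to\rees\circ\L\Sym$ is an equivalence. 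This can be checked on underlying endofunctors by Remark \ref{remark: how to detect equivalences of monads}, and then Barr-Beck-Lurie gives the equivalence of module categories.

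Finally, I would compose with Lemma \ref{lem: undercategory base change.}, which identifies $\dalg(\Mod_{\ZZ[t]}(\Gr\Mod_\ZZ))\simeq \dalg(\Gr\Mod_\ZZ)_{\ZZ[t]/}$. By the convention of \cite[Notation 4.2.28(b)]{Rak20} already used in Construction \ref{cons: lifting pi! to animated}, the right side is what $\Gr\dalg_{\ZZ[t]}$ denotes. This yields $\fil\dalg_\ZZ\simeq\Gr\dalg_{\ZZ[t]}$.

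The main obstacle is the second step: justifying that $\L\Sym$ commutes with $\rees$. This requires $\rees$ to be compatible with the right-left extension defining $\L\Sym^n$ on the full unbounded categories, and not only on generators. I would argue that an equivalence of contexts preserving the $t$-structures and the projective generators $\ins^n\ZZ\mapsto \ZZ[t](n)$ (Remark \ref{remark: compact generators preserved on sph[t]}) automatically preserves the right-left extensions, since those extensions are characterised intrinsically in terms of exactly these data.
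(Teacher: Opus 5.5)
Your proposal is correct and follows the paper's own proof. The paper simply combines the lemma that Rees is an equivalence of derived algebraic contexts, which yields an equivalence on derived algebras via the functoriality of Raksit's construction, with the lemma identifying derived algebras in graded $\ZZ[t]$-modules with graded derived algebras under $\ZZ[t]$. Your extra discussion of why the Rees equivalence intertwines the $\L\Sym$ monads only makes explicit what the paper leaves to that functoriality.
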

\begin{proof}
    In light of Lemma \ref{lem: undercategory base change.}, this follows from Lemma \ref{lem: res is contexts}.
\end{proof}

\begin{recollection}\label{rec: filtered derived algebras}
    Consider the $\icat$ of filtered $\ZZ$-modules $\fil\Mod_\ZZ$ with the neutral $t$-structure. Then $\fil\Mod_\ZZ$ is a derived algebraic context and one defines filtered derived rings as $\fil\dalg_{\ZZ}:=\dalg(\fil\Mod_\ZZ)$. The connective filtered derived algebras are defined as $$\fil\dalg_\ZZ^\cn:=\fil\dalg_{\ZZ}\times_{\fil\Mod_\ZZ}(\fil\Mod_{\ZZ})_{\geq 0}$$ and, in keeping with Remark \ref{remark: identify animated with connective dalg} we dub this $\icat$ the $\icat$ of filtered animated rings. 
\end{recollection}

\begin{remark}\label{remark: rees connective sent to connective}
    The restrion of the functor $\rees$ of Corollary\ref{cor: res gives derived algebras} $\rees\colon \fil\dalg_\ZZ^\cn\to \Gr\dalg_{\ZZ[t]}$ induces an equivalence with $\Gr\dalg_{\ZZ[t]}^\cn\into \Gr\dalg_{\ZZ[t]}$, with the source defined as in Remark \ref{remark: identify animated with connective dalg}. Indeed this follows from the $t$-exactness of $\rees.$ In particular why the fully faithful morphism $\ins^0\colon \dalg^\cn_\ZZ\to \fil\dalg^\cn_{\ZZ}$ we may view animated rings as filtered rings which under $\rees$ get sent to the graded polynomial ring $A[t].$
\end{remark}

\begin{remark}\label{cons: induced dalg on animated for rees}
    The morphism of Corollary \ref{cor: res gives derived algebras} we observe that for any $T\in \fil\dalg_\ZZ$ there is an induced equivalence $\rees\colon (\fil\dalg_\ZZ)_{T/}\simeq (\fil\dalg_{\ZZ[t]})_{\rees(T)/}.$

\end{remark}

\begin{remark}
    The next lemma is a standard argument which shows up, for example, in the proof of [Proposition 5.1]\cite{Moulinos2021GeometryOfFiltrations}, which we writed down for further reference.
\end{remark}

\begin{lemma}\label{lem: equivalence of operads}
    Let $F\colon \cC^\otimes\to \cD^\otimes$ be an equivalence of symmetric monoidal $\icats$. Let $A\in \Calg(\cC)$ and $F(A)\in \Calg(D)$ be its image. Then there is a symmetric monoidal equivalence of $\infty$-operads $$F\colon \Mod_A(C)^\otimes\simeq \Mod_{F(A)}(\cD)^\otimes.$$
\end{lemma}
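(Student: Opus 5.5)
The plan is to use the intrinsic characterization of module categories in \cite[\S~4.5]{HA}, so that everything becomes a formal consequence of $F$ being a symmetric monoidal equivalence. First I will recall that $\Mod(\cC)^\otimes \to \Calg(\cC)$ is obtained functorially from the symmetric monoidal $\icat$ $\cC^\otimes$; more precisely, the $\infty$-operad $\Mod_A(\cC)^\otimes$ is the fiber over $A$ of the fibration of $\infty$-operads $\Mod^{\mathrm{Comm}}(\cC)^\otimes\to \Calg(\cC)\times \mathrm{Comm}^\otimes$ of \cite[Definition 3.3.3.8, \S~4.5.1]{HA}. This construction is functorial for lax symmetric monoidal functors: a lax monoidal $G\colon \cC^\otimes\to \cD^\otimes$ induces a map of $\infty$-operads $\Mod_A(\cC)^\otimes\to \Mod_{G(A)}(\cD)^\otimes$, compatible with composition up to coherent homotopy. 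I will therefore obtain $F$ on modules and, applying the same construction to an inverse $F^{-1}$ (symmetric monoidal, with $F^{-1}F(A)\simeq A$ as commutative algebras), a functor back.

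Second, I will check that these two maps of $\infty$-operads are mutually inverse. Since the construction $\cC^\otimes\mapsto \Mod(\cC)^\otimes$ is a functor from symmetric monoidal $\icats$ (with lax monoidal functors) to fibrations of operads over $\Calg(-)$, it carries equivalences to equivalences. Concretely, $F^{-1}\circ F\simeq \id$ as symmetric monoidal functors gives $\Mod_A(\cC)^\otimes\to \Mod_{F^{-1}F(A)}(\cC)^\otimes\simeq \Mod_A(\cC)^\otimes$ homotopic to the identity, transported along the equivalence $F^{-1}F(A)\simeq A$ in $\Calg(\cC)$. The same holds on the other side.

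As a sanity check, I will also verify this directly. On underlying categories, the map is essentially surjective and fully faithful, because mapping spaces of $A$-modules are computed as totalizations of bar-type diagrams built from $\Maps_\cC(A^{\otimes n}\otimes M,N)$, and $F$ preserves these. The operadic structure given by relative tensor products $M\otimes_A N=|M\otimes A^{\otimes\bullet}\otimes N|$ is preserved because $F$ preserves tensor products and geometric realizations, being an equivalence. This shows that the functor is symmetric monoidal, and not merely a map of operads.

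The main obstacle is bookkeeping rather than mathematics. I need to make precise, and cite, the naturality of $\Mod^{\mathrm{Comm}}(-)^\otimes$ in lax monoidal functors, so that the inverse equivalence and its coherence data are transported correctly. I would first try to deduce this from the universal property of $\Mod^{\mathcal{O}}(\cC)^\otimes$ as a functor of $\cC^\otimes$ in \cite[Remark 3.3.3.9]{HA}. Failing that, I would fall back on the direct argument above, checking that the map is fully faithful and essentially surjective and that it preserves relative tensor products.
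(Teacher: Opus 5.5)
Your proposal is correct and follows essentially the paper's argument: the paper also uses the naturality in $\cC^\otimes$ of the module fibration $\Mod(\cC)^\otimes\to\Calg(\cC)\times\mathrm{Fin}_*$ to get a commuting square whose horizontal maps are equivalences, and then takes fibers over $A$ and $F(A)$. Your explicit handling of the inverse and its coherence data makes precise what the paper leaves implicit. So does your use of the fibration of $\infty$-operads from \cite[\S~3.3.3]{HA} in place of the coCartesian fibration of \cite[Theorem 4.5.3.1]{HA}, whose coCartesian property needs relative tensor products to exist.
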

\begin{proof}
    By \cite[Theorem 4.5.3.1.]{HA} there is a coCartesian fibration $p\colon \Mod(\cC)^\otimes\to \Calg(\cC)\times \mathrm{Fin}_*$ and similarly for $\cD.$ There is a commuting diagram of coCartesian fibration where both the bottom and top horizontal arrows are equivalences. 
\[\begin{tikzcd}[cramped]
	{\Mod(\cC)^\otimes} & {\Mod(\cD)^\otimes} \\
	{\Calg(\cC)\times \mathrm{Fin}_*} & {\Calg(\cD)\times \mathrm{Fin}_*}
	\arrow[from=1-1, to=1-2]
	\arrow[from=1-1, to=2-1]
	\arrow[from=1-2, to=2-2]
	\arrow[from=2-1, to=2-2]
\end{tikzcd}\]

Then by taking the fiber above $A\times \mathrm{Fin}_*$ and $F(A)\times \mathrm{Fin}_*$ we win. 
\end{proof}

\begin{remark}\label{remark: module over module is module}
    Note that for any filtered derived ring $S$ in $\fil\dalg_{\ZZ}$ and a derived algebra $T$ over $S$ i.e. $T\in \fil\dalg_S$ we have an equivalence $\Mod_T(\Mod_S(\fil\Mod_{\ZZ}))\simeq \Mod_T(\fil\Mod_\ZZ).$ Indeed this follows because the statement only depends on the underlying $\einf$-rings where it follows from \cite[Corollary 3.4.1.9]{HA}. We will use this remark implicitly in the sequel.
\end{remark}

\begin{lemma}\label{lem: equivalence of modules}
Let $A$ be an animated ring and let $T^\bullet\in \fil\dalg_A$. Then the induced functor $\Mod_{T^\bullet}(\fil\Mod_A)\to \Mod_{\mathrm{Rees}(T^\bullet)}(\Gr\Mod_A)$ is a symmetric monoidal equivalence.
\end{lemma}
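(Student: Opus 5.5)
The plan is to deduce the statement from Lemma \ref{lem: equivalence of operads}, applied to the Rees equivalence of Recollection \ref{rec: luries rees construction}. The argument has three steps.

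First, we set up the symmetric monoidal equivalence over $A$. Tensoring Lurie's symmetric monoidal equivalence $\Mod_{\sph\{0\}}(\fil\Sp)\simeq \Mod_{\sph[t]}(\Gr\Sp)$ with $\Mod_A$ (this only depends on the underlying $\einf$-ring $A^\circ$) yields the symmetric monoidal equivalence
$$\rees\colon \fil\Mod_A\simeq \Mod_{A[t]}(\Gr\Mod_A)$$
of \eqref{eq: rees at filgraded}. Call this $F\colon \cC^\otimes\to \cD^\otimes$ with $\cC=\fil\Mod_A$ and $\cD=\Mod_{A[t]}(\Gr\Mod_A)$.

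Second, we pass from $T^\bullet$ to an algebra on both sides. The filtered derived algebra $T^\bullet$ has an underlying $\einf$-algebra $\Theta(T^\bullet)\in\Calg(\fil\Mod_A)$. This uses the forgetful functor of \cite[Notation 4.2.28]{Rak20} for the context $\fil\Mod_\ZZ$, base changed along $A$ as in Construction \ref{cons: lifting pi! to animated}. Because $F$ is symmetric monoidal, $\rees(T^\bullet):=F(\Theta(T^\bullet))$ lies in $\Calg(\Mod_{A[t]}(\Gr\Mod_A))$. Lemma \ref{lem: equivalence of operads} then gives a symmetric monoidal equivalence
$$\Mod_{T^\bullet}(\fil\Mod_A)^\otimes\simeq \Mod_{\rees(T^\bullet)}\big(\Mod_{A[t]}(\Gr\Mod_A)\big)^\otimes.$$

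Third, we identify the right-hand side with $\Mod_{\rees(T^\bullet)}(\Gr\Mod_A)$. Here $\rees(T^\bullet)$ is an $\einf$-algebra in $A[t]$-modules, so it is an $\einf$-$A[t]$-algebra in $\Gr\Mod_A$. We can then apply \cite[Corollary 3.4.1.9]{HA}, in the same way as Remark \ref{remark: module over module is module}: modules over $\rees(T^\bullet)$ in $\Mod_{A[t]}(\Gr\Mod_A)$ agree with modules over its image in $\Gr\Mod_A$, and this identification is compatible with the relative tensor products, so it is symmetric monoidal. Composing the two equivalences gives the claim. It also remains to check that the composite is the functor induced by $\rees$ in the statement. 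This is a tautology, since both are obtained by applying $\rees$ to underlying objects and remembering the $A[t]$-action.

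The main obstacle is the third step. Corollary 3.4.1.9 of \cite{HA} is phrased for modules, so we have to make sure the symmetric monoidal structures match. That is, we must check that the relative tensor product $-\otimes_{\rees(T^\bullet)}-$ computed in $\Mod_{A[t]}(\Gr\Mod_A)$ agrees with the one computed in $\Gr\Mod_A$. I would handle this by writing both as geometric realizations of bar constructions. The forgetful functor $\Mod_{A[t]}(\Gr\Mod_A)\to\Gr\Mod_A$ preserves geometric realizations, and the bar construction over $A[t]$ collapses. Alternatively, I would invoke the coCartesian fibration of \cite[Theorem 4.5.3.1]{HA} again, this time for the lax monoidal restriction functor, and show that it is strong monoidal on $\rees(T^\bullet)$-modules.
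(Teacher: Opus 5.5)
Your proposal is correct and is essentially the paper's argument: the paper also passes to underlying $\mathbf{E}_\infty$-algebras and applies Lemma~\ref{lem: equivalence of operads} to the Rees equivalence $\fil\Mod_A\simeq\Mod_{A[t]}(\Gr\Mod_A)$. The paper leaves implicit, via Remark~\ref{remark: module over module is module} (i.e.\ \cite[Corollary 3.4.1.9]{HA}), the identification of $\Mod_{\mathrm{Rees}(T^\bullet)}(\Mod_{A[t]}(\Gr\Mod_A))$ with $\Mod_{\mathrm{Rees}(T^\bullet)}(\Gr\Mod_A)$ that you spell out; the obstacle you flag in your third step is already handled by that corollary, which gives an equivalence of $\infty$-operads and hence a symmetric monoidal one, so no separate bar-construction comparison is needed.
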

\begin{proof}
    By definition (see point $(a)$ of \cite[Notation 4.2.28]{Rak20}) the statement we want to conclude depends only on the underlying $\einf$-rings of all the displayed rings in the statement in the lemma. Thus we can conclude from Lemma \ref{lem: equivalence of operads}.
\end{proof}

\begin{construction}\label{cons: commuting for rees}
    In the situation of Remark \ref{cons: induced dalg on animated for rees} we observe that for any $T\in \fil\dalg_\ZZ$ there is a forgetful functor $U\colon \fil\dalg_{T}\to \Mod_T(\fil\Mod_{\ZZ})$ and similarly a forgetful functor $U\colon \Gr\dalg_{\rees(T)}\to \Mod_{\rees(T)}(\Gr\Mod_{\ZZ[t]}).$ By the equivalences of Remark \ref{cons: induced dalg on animated for rees} and Lemma \ref{lem: equivalence of modules} we have that there is a commuting diagram 
\[\begin{tikzcd}[cramped]
	{\fil\dalg_{T}} && {\Gr\Mod_{\rees(T)}} \\
	{\Mod_T(\fil\Mod_{\ZZ})} && {\Mod_{\rees(T)}(\Gr\Mod_{\ZZ[t]})}
	\arrow[from=1-1, to=1-3]
	\arrow["U"', from=1-1, to=2-1]
	\arrow["U", from=1-3, to=2-3]
	\arrow[from=2-1, to=2-3]
\end{tikzcd}\]

\end{construction}

\begin{remark}\label{remark: commutation is canonical.}
    The commutation in Construction \ref{cons: commuting for rees} is in fact canonical. Let $\cC$ be a derived algebraic context. Then there's a forgetful functor $\dalg(\cC)\to \cC$ which factors over $\Calg(\cC)\to \cC$. These functors are natural with morphisms of contexts $F\colon \cC\to \cD$. Now passing to undercategories we obtain canonical functorial commuting diagrams $\dalg_A(\cC)\to \dalg_{F(A)}(\cD)$ and with $\Calg_A(\cC)\to \Calg_A(\cD)$. Now there's a functorial morphism $\Calg_A(\cC)\to \Calg_A(\cD)$ to the arrow $\Mod_A(\cC)\to \Mod_{F(A)}(\cC)$. Composing the arrows we obtain the diagram in Construction \ref{cons: commuting for rees}.
\end{remark}

We now end with discussing how to work with filtered derived $J$-complete algebras. We will be brief as essentially the same notions discussed in the $J$-complete setting \S~\ref{sec: jcomp setting} and graded $J$-complete setting \ref{ssec: continuous comonad} work. We use the Notation \ref{notation: jcomp} so that $A$ is an animated ring $J\subset \pi_0(A)$.

We make the following definition in order to be compatible with \cite[Notation D.12]{APC}

\begin{recollection}\label{rec: filteredjcomp}
    We let $\filmod^\jcomp_A$ denote the full subcategory of $\filmod_A$ spanned by filtered modules over $A$ so that $\ev^i\colon \filmod_A\to \Mod_A$ factors over $\Mod_A^\jcomp.$
\end{recollection}

\begin{definition}
    We define the $\icat$ of filtered derived $J$-complete algebras as the full subcategory $\fil\dalg_A^\jcomp$ of $\fil\dalg_A$ so that the functor $U\colon \fil\dalg_A\to \filmod_A$ factors over $\filmod_A^\jcomp.$ 
\end{definition}

\begin{remark}
    We can again prove the relevant properties of $\fil\dalg_A^\jcomp$ as in \S~\ref{sec: jcomp setting} using the pullback definition of Lemma \ref{lem: pullback def dalgj}. The only things we need is that the induced forgetful functor $U^J\colon \fil\dalg_A^\jcomp\to \filmod_A^\jcomp$ is conservative and limit preserving. See Remark \ref{lem: not hard pullback} for how to prove this.
\end{remark}

\begin{lemma}\label{lem: rees presrves complete object}
    The functor $\rees\colon \fil\Mod_A\to \Mod_{A[t]}(\Gr\Mod_A)$ preserves $J$-complete modules.
\end{lemma}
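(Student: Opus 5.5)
The plan is to make the statement precise and then check it one graded degree at a time. The source side is $\filmod_A^\jcomp$ from Recollection \ref{rec: filteredjcomp}. On the target side, call an object of $\Mod_{A[t]}(\Gr\Mod_A)$ $J$-complete when its underlying graded $A$-module lies in $\Gr\Mod_A^\jcomp$ of Recollection \ref{rec: def of gradedjcomp}, that is, when each graded piece is $J$-complete. The restriction of scalars $\Mod_{A[t]}(\Gr\Mod_A)\to \Gr\Mod_A$ is conservative and preserves limits. So $J$-completeness in the target can be tested on underlying graded modules, degree by degree.

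First I would identify the underlying graded module of $\rees(M^\bullet)$. By Recollection \ref{rec: luries rees construction}, $\rees$ is induced from $\res\colon \fil\Mod_A\to \Gr\Mod_A$, base changed from $\sph$ to $\Mod_A$. Its underlying graded module is therefore $\res(M^\bullet)$, which is $M^{n}t^{-n}$ in graded degree $-n$. Here $t^{-n}$ is only a bookkeeping twist and does not change the underlying $A$-module. The one point needing care is that forgetting the $A[t]$-module structure on $\rees(M^\bullet)$ really recovers $\res(M^\bullet)$. This holds because Lurie's equivalence \cite[Proposition 3.1.6]{Lurie2015RotationInvarianceKTheory} is constructed by factoring $\res$ through $\Mod_{\sph[t]}(\Gr\Sp)$. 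Tensoring with $\Mod_A$ via \cite[Theorem 4.8.5.16]{HA} is compatible with the forgetful functors, since these are $\Mod_A$-linear colimit-preserving functors.

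With this in hand, the lemma follows from the definitions. For $M^\bullet\in\filmod_A^\jcomp$, each $\ev^n(M^\bullet)=M^n$ lies in $\Mod_A^\jcomp$. Hence every graded piece $\gr_{-n}\rees(M^\bullet)\simeq M^n$ is $J$-complete, so $\rees(M^\bullet)$ lies in the complete subcategory. Conversely, every graded piece of $\rees(M^\bullet)$ is some $M^n$, so the argument runs backwards. Thus $\rees$ restricts to an equivalence between $\filmod_A^\jcomp$ and the $J$-complete graded $A[t]$-modules. This is the form one wants for the later classification.

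For robustness I would also check the statement with the explicit formula of Construction \ref{cons: completion def}. Completion is the limit $\lim_n (-)\otimes_A A/\!\!/(f_1^n,\dots,f_r^n)$, and $\rees$ is an $A$-linear exact equivalence. It therefore commutes with both the tensor with the perfect module $A/\!\!/(f_1^n,\dots,f_r^n)$ and with limits. Limits are computed levelwise on both sides, since $\res$ is restriction along $\ZZ^\ds\into\ZZ^\op$. So $\rees$ also intertwines the two completion functors, which gives a commuting square like the one in Corollary \ref{cor: dalgj is right adjointable}. The main obstacle is only the bookkeeping point above: identifying the underlying graded pieces of the base-changed equivalence with $\ev^n$. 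Everything else is formal.
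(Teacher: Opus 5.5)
Your proposal is correct and follows the paper's own argument. The paper also just observes that the graded pieces of $\mathrm{Rees}(M^\bullet)$ are the filtered pieces $M^n$, so $J$-completeness carries over degree by degree. Your extra steps are correct refinements that the paper leaves implicit: identifying the underlying graded module of the base-changed equivalence with $\mathrm{Res}(M^\bullet)$, the converse (so $\mathrm{Rees}$ restricts to an equivalence on complete objects), and the compatibility with completion.
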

\begin{proof}
    This is obvious from the definition since a $J$-complete filtered module $M^\bullet$ gets sent to a graded module $\rees(M)$ whose graded pieces agree with the filtered pieces of $M^\bullet$. Whence completeness is preserved. 
\end{proof}

We next implicitly use Remark \ref{remark: rees connective sent to connective}.

\begin{corollary}\label{cor: rees preserves complete dalg}
    The functor $\rees\colon \fil\dalg_A\to \Gr\dalg_{A[t]}$ preserves $J$-complete derived algebras.
\end{corollary}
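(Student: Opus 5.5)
The plan is to reduce the statement to Lemma \ref{lem: rees presrves complete object} via the forgetful functors, using that $J$-completeness of a derived algebra is, by definition, a condition on its underlying module. First I would set up the functor on $A$-linear derived algebras. By Corollary \ref{cor: res gives derived algebras} and Remark \ref{cons: induced dalg on animated for rees}, applied with $T=\ins^0 A$ (an animated ring regarded as a filtered animated ring, whose Rees algebra is $A[t]$ by Remark \ref{remark: rees connective sent to connective}), we obtain an equivalence $\rees\colon \fil\dalg_A\simeq \Gr\dalg_{A[t]}$. The first step is to record that this lies over the module-level Rees functor. Concretely, Construction \ref{cons: commuting for rees} (with its canonicity from Remark \ref{remark: commutation is canonical.}) together with Lemma \ref{lem: equivalence of modules} gives a commuting square
\[
U\circ \rees\simeq \rees\circ U\colon \fil\dalg_A\to \Mod_{A[t]}(\Gr\Mod_A).
\]

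Second, take $R^\bullet\in \fil\dalg_A^\jcomp$. By definition $U(R^\bullet)\in \filmod_A^\jcomp$. By Lemma \ref{lem: rees presrves complete object}, $\rees(U(R^\bullet))$ is a graded $A[t]$-module whose underlying graded $A$-module has each graded piece $J$-complete: the piece in degree $n$ is $R^n t^{-n}$, which is $R^n$ up to relabelling. By the commuting square, this module is the underlying graded module of $\rees(R^\bullet)$. Hence, by the graded analogue of Definition \ref{def: def of dalggradedjcomp} for derived $A[t]$-algebras (graded pieces $J$-complete, with $J$ viewed in $\pi_0(A[t])$ via $A\to A[t]$), $\rees(R^\bullet)$ lies in $\Gr\dalg_{A[t]}^\jcomp$. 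This proves the statement.

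The only genuine point to verify, and the one I expect to be the main obstacle, is that completeness may be tested after forgetting from $\Mod_{A[t]}(\Gr\Mod_A)$ down to $\Gr\Mod_A$, degree by degree. For this I would use that, for the ideal extended from $A$, the completion of an $A[t]$-module equals its completion as an $A$-module. This is the observation in Variant \ref{variant: completed adjointability}: the forgetful functor along $A\to A[t]$ preserves completions, since $M\otimes_{A[t]}A[t]/\!\!/(f_i^n)\simeq M\otimes_A A/\!\!/(f_i^n)$. In addition, limits in graded modules are computed degreewise, so a graded module is complete exactly when each of its graded pieces is. Once this is in place, the argument above is purely formal.
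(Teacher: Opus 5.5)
Your proposal is correct and follows the paper's argument. The paper's proof is the same two steps: $J$-completeness is a condition on the underlying $A$-modules, so it reduces to Lemma \ref{lem: rees presrves complete object}. You make explicit what the paper leaves implicit, namely the compatibility $U\circ \mathrm{Rees}\simeq \mathrm{Rees}\circ U$ from Construction \ref{cons: commuting for rees} and the degreewise, essentially definitional, nature of graded $J$-completeness.
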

\begin{proof}
    The condition can be checked at the level of $A$-modules. Thus this follows from Lemma \ref{lem: rees presrves complete object}.
\end{proof}

\begin{construction}\label{cons: lem the rees construction commutes for derived algebra}
  The following diagram commutes.
\[\begin{tikzcd}[cramped]
	{\fil\dalg^\jcomp_{A}} && {\Gr\Mod^\jcomp_{A[t]}} \\
	{\fil\Mod^\jcomp_A} && {\Gr\Mod^\jcomp_{A[t]}}
	\arrow[from=1-1, to=1-3]
	\arrow["U"', from=1-1, to=2-1]
	\arrow["U", from=1-3, to=2-3]
	\arrow[from=2-1, to=2-3].
\end{tikzcd}\]
Indeed the commutativity can be checked from Construction \ref{cons: commuting for rees} and Lemma \ref{lem: rees presrves complete object} and Corollary \ref{cor: rees preserves complete dalg}.

\end{construction}

\subsection{Geometric classification theorems for derived algebras on formal classifying stacks.}
In this section, our goal is to prove some basic classification theorems for derived algebras on geometric stacks.\footnote{Theorem \ref{theorem: graded dalg bgm} is stated without proof as \cite[Remark B.5]{LahotiManam2025CohomologyRingStacks}.  In any case, we need the compatibility with Moulinos's result \cite[Proposition 4.1]{Moulinos2021GeometryOfFiltrations}. }

We begin with the case of $B\GG_m$ over arbitrary animated rings.
We first recall \cite[Proposition 4.2]{Moulinos2021GeometryOfFiltrations}. 

\begin{proposition}\label{prop: moulinos for bgm}
    Let $A$ be an animated ring and let $B\G_m$ be the classifying stack of $\G_m=\spec (A[\ZZ])$ over $A.$ Then there is a symmetric monoidal equivalence
    $$\Qcoh(B\G_m)\simeq \Gr\Mod_A.$$
\end{proposition}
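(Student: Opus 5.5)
The plan is to use faithfully flat descent along the atlas $p\colon \spec(A)\to B\G_m$ and compare the resulting cosimplicial limit with the comonadic description of $\Gr\Mod_A$ in Remark \ref{remark: Moulinos's proof works for animated rings} (the animated version of Proposition \ref{Moulinos's coaction}). First, since $p$ is an effective epimorphism of fpqc stacks with \v{C}ech nerve $\spec(A[\ZZ]^{\otimes_A \bullet})$ (the bar construction of $\G_m$ acting on a point), the right Kan extension definition of $\Qcoh$ together with flat descent for $\Mod_{(-)}$ gives
\[
\Qcoh(B\G_m)\simeq \lim_{[n]\in \mathbf{\Delta}} \Mod_{A[\ZZ]^{\otimes_A n}},
\]
and this is an equivalence in $\Calg(\Pr^L)$ because all coface maps are symmetric monoidal base changes. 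This is the same argument as Remark \ref{lem: dalg evaluated in cech}, applied to $\Mod$ instead of $\dalg$.

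Next we identify the limit with comodules. The adjunction $p^*\dashv p_*$ satisfies the hypotheses of Proposition \ref{prop: criteria for descent}. Adjointability follows from Example \ref{modules beck-chevalley}. The conservativity and $p^*$-split totalization conditions hold because $p^*$ is a faithfully flat pullback, which is conservative and preserves the relevant totalizations (these split after base change). Hence $\Qcoh(B\G_m)$ is comonadic over $\Mod_A$, and the comonad is $p^*p_*$. By base change (Proposition \ref{prop: GR facts we need}(4), valid since $p$ is affine) this comonad is $M\mapsto M\otimes_A A[\ZZ]$, with comultiplication coming from the group structure on $\G_m$. So $\Qcoh(B\G_m)\simeq \cmod_{A[\ZZ]}(\Mod_A)$. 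On the other side, Remark \ref{remark: Moulinos's proof works for animated rings} gives $\Gr\Mod_A\simeq \cmod_{A[\ZZ]}(\Mod_A)$ via $\pi_!$, with comonad $(-)\otimes_A\pi_!(1)=(-)\otimes_A A[\ZZ]$.

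The main obstacle is matching the two comonads as $\mathbf{E}_1$-algebras in $\End(\Mod_A)$, not just as underlying functors. In particular, the Hopf algebra $\pi_!(1)=A\otimes\Sigma^\infty_+\ZZ^\ds$ must be identified with $\cO(\G_m)=A[\ZZ]$ together with its geometric comultiplication. I would reduce this to the universal case over $\mathbf{S}$, resp. $\ZZ$. There $\G_m=\spec\,\mathbf{S}[\ZZ]$ is by definition the group object corresponding to the $\einf$-group $\ZZ^\ds$, so the bar constructions on both sides are literally the same cosimplicial $\einf$-coalgebra $A[\ZZ^{\times\bullet}]$. One then base changes along $\ZZ\to A$, using that $\Mod_A\otimes_{\Mod_\ZZ}(-)$ preserves these limits by dualizability, or simply naturality. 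Concretely, I would show that both $\Qcoh(B\G_m)$ and $\Fun(\ZZ^\ds,\Mod_A)$ are the limit of the same cosimplicial diagram $[n]\mapsto \Mod_A\otimes\Fun(\ZZ^n,\Sp)^{\text{Day}}\simeq \Mod_{A[\ZZ^n]}$. On the graded side this is descent for the effective epimorphism $*\to B\ZZ^\ds$, since $\Gr\Mod_A=\Fun(B\ZZ^\ds\text{-modules})$ by Koszul duality, or directly via Remark \ref{remark: Moulinos's proof works for animated rings}.

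Finally, for symmetric monoidality: on $\Qcoh(B\G_m)$ the tensor product is pointwise, and it corresponds to the tensor product of $A[\ZZ]$-comodules computed in $\Mod_A$ using the multiplication on the bialgebra $A[\ZZ]$. On $\Gr\Mod_A$, Day convolution is carried by $\pi_!$, which is symmetric monoidal by Recollection \ref{day conv on locsys}, to the same comodule tensor product. Both forgetful functors to $\Mod_A$ are symmetric monoidal and conservative, and they are compatible with the equivalence. So the equivalence upgrades to a symmetric monoidal one by the argument of Lemma \ref{lem: equivalence of operads}, or by noting that the cosimplicial diagrams above are diagrams in $\Calg(\Pr^L)$.
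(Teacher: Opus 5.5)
Your overall plan (\v{C}ech descent along $\spec(A)\to B\G_m$, comonadicity of both $p^*$ and $\pi_!$ over $\Mod_A$, then a comparison of comonads) is sound and close to Moulinos's argument. The problem is the step you yourself single out as the crux: it is only sketched, and the concrete mechanism you offer for it is false.

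\textbf{The comparison step.} The identification $\Mod_A\otimes\Fun(\ZZ^n,\Sp)^{\mathrm{Day}}\simeq\Mod_{A[\ZZ^n]}$ confuses the two sides of Cartier duality. The left side is $\ZZ^n$-graded $A$-modules, the analogue of $\Qcoh(B\G_m^n)$, not $\Qcoh(\G_m^n)\simeq\Mod_{A[\ZZ^n]}$. Already the units differ: their endomorphism rings are $A$ and $A[\ZZ^n]$ respectively. Likewise, descent along $*\to B\ZZ^\ds$ computes $\Fun(B\ZZ^\ds,\Mod_A)\simeq\Mod_{A[\ZZ]}$, i.e.\ local systems on the circle, not $\Gr\Mod_A=\Fun(\ZZ^\ds,\Mod_A)$. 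So that paragraph does not give a common cosimplicial presentation of the two sides.

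What you actually need is an identification of coalgebras, hence of comonads:
\begin{enumerate}
\item $\pi_!\pi^*$ is, as a comonad, $(-)\otimes_A\pi_!(1)$, where $\pi_!(1)=A\otimes\Sigma^\infty_+\ZZ^\ds$ has comultiplication induced by the diagonal of $\ZZ^\ds$. This comes from oplax monoidality of $\pi_!$ for the pointwise structure plus the projection formula.
\item $p^*p_*$ is $(-)\otimes_A\cO(\G_m)$, with comultiplication dual to the group law.
\item These coalgebras coincide, because the group law of $\G_m=\spec(A[\ZZ])$ is by definition $t\mapsto t\otimes t$, i.e.\ $\Sigma^\infty_+$ of that diagonal base changed from $\mathbf{S}$.
\end{enumerate}
Your sentence about the bar constructions being ``literally the same'' gestures at this, but it has to be phrased as the coalgebra identification above, not as an equivalence of the cosimplicial categories you wrote down.

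\textbf{Symmetric monoidality.} Your first argument has a gap. An equivalence that commutes with conservative symmetric monoidal forgetful functors need not itself be symmetric monoidal. Lemma \ref{lem: equivalence of operads} only transports module categories along an \emph{already} symmetric monoidal equivalence, so it does not apply here. Your second option is the right one, but it requires building the comparison as a cone in $\Calg(\Pr^L)$, which your comodule-comparison route does not do.

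\textbf{How the paper handles both issues.} In the proof of Proposition \ref{theorem: graded dalg bgm}, following Moulinos, the paper never identifies $p^*p_*$ separately. Instead it augments the \v{C}ech cosimplicial diagram by $\Gr\Mod_A$ via $\pi_!$ and the cobar complex of $\pi_!\pi^*\simeq(-)\otimes_A A[\ZZ]$. It then checks the hypotheses of Proposition \ref{prop: criteria for descent} with $\cC^{-1}=\Gr\Mod_A$:
\begin{itemize}
\item conservativity of $\pi_!$ and preservation of $\pi_!$-split totalizations, from Proposition \ref{Moulinos's coaction};
\item adjointability, from Example \ref{modules beck-chevalley}.
\end{itemize}
Since $\pi_!$ is symmetric monoidal for Day convolution, the cone lives in $\Calg(\Pr^L)$, and symmetric monoidality follows as in Remark \ref{dalg is symmetricmonoidal}. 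This route needs the same Hopf-algebra input as above, but no separate comparison of comodule categories. Remark \ref{remark: tensor with bgm} also offers base-changing Moulinos's statement over $\mathbf{S}$ wholesale, which is the clean form of your ``reduce to the universal case'' idea.
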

\begin{proof}
    The proof will be recalled in proving Proposition \ref{theorem: graded dalg bgm}.
\end{proof}

\begin{remark}\label{remark: tensor with bgm}
    Note that Moulinos works over the sphere spectrum $\sph$. To deduce the result for animated rings one could go over his arguments over an animated ring, or first base change his result for $\Gr\Sp$ along $\Sp\to \Mod_\ZZ$ observing the equivalence $$\Gr\Sp\tensor_{\Sp}\Mod_\ZZ\simeq \Gr\Mod_\ZZ$$ and on the left hand side we see that $$\Qcoh(\bgm)\otimes_{\Sp} \Mod_\ZZ=\Qcoh(\bgm)\otimes_{\Qcoh(\spec(\sph))}\Qcoh(\spec(\ZZ))\simeq \Qcoh(\bgm\times_{\spec(\sph)} \spec (\ZZ)),$$ where we used \cite[Corollary 9.4.3.8.]{Lur18} Then for any animated ring $A$ over $\ZZ$, one does the base change again and uses the analogous base change in derived algebraic geometry \cite[Proposition 3.5.3]{GaitsgoryRozenblyum2017SAGI}. We will use this sort of reasoning to get canonicity for results proved by Moulinos using Tannaka duality in derived geometric situations where Tannaka duality is no longer available.
    
    Note further that a similar reasoning will not give the same result for $\dalg.$ This is already highlighted in Remark \ref{remark: base changes for contexts.}.
\end{remark}

\begin{proposition}\label{theorem: graded dalg bgm}
Let $A$ be an animated ring and let $B\G_m$ be the classifying stack of $\G_m=\spec (A[\ZZ])$ over $A.$ Then there is a symmetric monoidal equivalence of $\infty$-categories $\dalg(B\mathbf{G}_m)\simeq \Gr\dalg_A.$
Moreover the following diagram commutes 
\[\begin{tikzcd}[cramped]
	{\Gr\dalg_A} & {\dalg(B\G_m)} \\
	{\Gr\Mod_A} & {\Qcoh(B\G_m)}
	\arrow[from=1-1, to=1-2]
	\arrow["U"', from=1-1, to=2-1]
	\arrow["U", from=1-2, to=2-2]
	\arrow[from=2-1, to=2-2]
\end{tikzcd}\] where the vertical arrows are the obvious forgetful functors and the bottom horizontal arrow is the equivalence of \ref{prop: moulinos for bgm}.
\end{proposition}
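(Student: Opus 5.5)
We would prove this by flat descent along the atlas $q\colon \spec(A)\to B\G_m$, comparing the resulting cosimplicial diagram with the comonadic description of $\Gr\dalg_A$ from Proposition \ref{prop: monadicity over animated}. The map $q$ is a faithfully flat effective epimorphism of fpqc stacks. Its Čech nerve has $n$-th term $\G_m^{\times n}=\spec(A[\ZZ]^{\otimes n})$, and this simplicial object is the bar construction of the group $\G_m$. By Remark \ref{lem: dalg evaluated in cech} (itself resting on Theorem \ref{flat descent} and Remark \ref{dalg is symmetricmonoidal}) we obtain a symmetric monoidal equivalence
$$\dalg(B\G_m)\simeq \lim_{[n]\in\mathbf{\Delta}}\dalg_{A[\ZZ]^{\otimes n}},$$
and similarly $\Qcoh(B\G_m)\simeq\lim_{[n]}\Mod_{A[\ZZ]^{\otimes n}}$. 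The forgetful functors $U$ are compatible with base change along animated ring maps, so they assemble into a map of cosimplicial diagrams, and the square with $\Qcoh$ will follow once both sides are identified compatibly.

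Next we identify the limit with comodules. By Proposition \ref{prop: criteria for descent}, in the form used in the proof of Theorem \ref{flat descent}, the limit is comonadic over $\dalg_A$ via $q^*$, with comonad $q^*q_*$. Lemma \ref{lem: adjointable dalg} applied to the pushout $A[\ZZ]=A\otimes_{A[\ZZ]^{\otimes 0}}\ldots$ gives the base change identity $q^*q_*(R)\simeq R\otimes_A A[\ZZ]$, with the comultiplication induced by the group structure of $\G_m$, i.e.\ the Hopf algebra structure of $A[\ZZ]$. This identification can be checked after applying the conservative functor $U$, as in Remark \ref{remark: how to detect equivalences of monads}. Hence $\dalg(B\G_m)\simeq\cmod_{A[\ZZ]}(\dalg_A)$. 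The same argument gives Moulinos's $\Qcoh(B\G_m)\simeq\cmod_{A[\ZZ]}(\Mod_A)\simeq\Gr\Mod_A$ (Proposition \ref{prop: moulinos for bgm}), compatibly with $U$.

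Finally, Proposition \ref{prop: monadicity over animated} identifies $\Gr\dalg_A\simeq\cmod_{A[\ZZ]}(\dalg_A)$ via $\pi_!$. Composing the two identifications yields $\Gr\dalg_A\simeq\dalg(B\G_m)$. The square commutes because both comonadic descriptions sit over the corresponding ones for modules: we use the diagrams of Construction \ref{cons: commuting diagrams for animated} together with the fact that $U$ intertwines the two comonads. Symmetric monoidality follows because $\pi_!$ is symmetric monoidal for Day convolution (Recollection \ref{day conv on locsys}), $q^*$ is symmetric monoidal, and both identify with the cocartesian-type structure on comodules over the bicommutative Hopf algebra. We can check this after forgetting to $\Calg$, where both structures are cocartesian.

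The main obstacle is showing that the two comonads on $\dalg_A$ agree as comonads, including their $\mathbf{E}_1$-coalgebra structure, and not merely as endofunctors. One comonad is $q^*q_*$ from descent; the other is $\pi_!\pi^*$ from the grading. Remark \ref{remark: how to detect equivalences of monads} only detects equivalences once a comparison map of comonads exists. We would first construct that map on $\Mod_A$, where Moulinos's result supplies it canonically (base changed from $\sph$ as in Remark \ref{remark: tensor with bgm}). We would then lift it to $\dalg_A$ using that both comonads are of the form $(-)\otimes_A A[\ZZ]$, with $A[\ZZ]$ a Hopf object in $\dalg_A$ and $U$ symmetric monoidal.
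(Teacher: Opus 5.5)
Your argument is the paper's proof in slightly different packaging. The paper also writes $\dalg(B\G_m)$ as the totalization over the \v{C}ech nerve of $\spec(A)\to B\G_m$ (Theorem \ref{flat descent}) and uses the comonadicity of $\pi_!$ (Proposition \ref{prop: monadicity over animated}); instead of matching $q^*q_*$ with $\pi_!\pi^*$, it augments the cosimplicial diagram by $\Gr\dalg_A$ via the cobar complex of $\pi_!\pi^*$ and checks Proposition \ref{prop: criteria for descent}, which is the same comparison. The coherence you flag is exactly the point the paper handles only by appeal to that cobar complex, and your proposed lift cannot rest on $U$ alone, since $U$ is not fully faithful and an equivalence of comonads on $\Mod_A$ does not by itself lift to $\dalg_A$. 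It does go through as follows: both comonads are $\dalg_A$-linear, so they are given by coalgebra structures on $A[\ZZ]$ base changed from $\ZZ[\ZZ]\in\dalg_\ZZ$. Since $\Maps_{\dalg_\ZZ}(\ZZ[\ZZ],\ZZ[\ZZ]^{\otimes n})$ is discrete, both structures reduce to the classical comultiplication $t\mapsto t\otimes t$.
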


\begin{proof}
The commutativity of the diagram is a formal consequence of the method of the proof. So we explain the proof now.

Note that the $\spec(A)\to B\G_m$ is an effective epimorphism and $\mathrm{Cech}(\spec(A)\to B\G_m)$ in degree $n$ is given by $\spec (A[\ZZ])\times_A \ldots \times_A \spec (A[\ZZ])$ where the fiber product is taken $n$-times along with the action and projection map. By Theorem \ref{flat descent} we have

    \begin{equation}\label{totalisation of bgm}
    \dalg(B\G_m)=\mathrm{Tot}(
\cosimp{\dalg_A}{\dalg_{A[\ZZ]}}{\dalg_{A[\ZZ]\otimes A[\ZZ]}})
\end{equation}

Our job now is to \emph{coherently} augment the cosimplicial diagram on the right hand side of the above diagram $\Gr\dalg_R$.\footnote{Here we note that commutativity of diagrams inside $\catinf$ (or any $\infty$-category) is a structure not property.}

Following the proof of \cite[Theorem 4.1]{Moulinos2021GeometryOfFiltrations} we will use the functor $\pi_!\colon \Gr\dalg_A\to \dalg_A$ in order to do so.  Note that in light of Corollary \ref{pi1 is comonadic on dalg}, the functor sends a derived $A$ algebra $T$ to $\pi_!T\in \dalg_A$ equipped with a coaction of $A[\ZZ]$. The cosimplicial presentation of the category in \ref{totalisation of bgm} then shows that in cosimplicial degree $n$ sending 
$$T\mapsto (\pi_!\circ \pi^*)^n\pi_!(T)=A[\ZZ]\otimes_A \pi_!(T)$$ makes the obvious cosimplicial diagram commute since the commutativity amounts to the cobar complex for the monad $\pi_!\pi^*$ on $\dalg_A$

$$
   T\mapsto  \cosimp{\pi_!T}{\pi_!T\otimes_{A}A[\ZZ]}{\pi_!T\otimes_{A}A[\ZZ]\otimes_{A}A[\ZZ]}.
$$

It remains to check using Proposition \ref{prop: criteria for descent} that this augmentation is comonadic. In this case it amounts to checking that $\pi_!$ is conservative, which follows from Corollary \ref{pi1 is comonadic on dalg}, checking that if $X^\bullet\in \Gr\dalg_R$ is a cosimplicial object so that $\pi_!X^\bullet$ splits in $\dalg_R$ then $\Tot(X^\bullet)$ exists and $\pi_!(\Tot(X^\bullet))=\Tot(\pi_!(X^\bullet))$ which is a consequence again of Corollary \ref{pi1 is comonadic on dalg}. 

The only thing (for the comonadicity claim) which remains to check is adjointability condition. But this is a consequence of faithfully flat descent established already in Lemma \ref{lem: adjointable dalg}.

Lastly we note that symmetric monoidality follows from the same argument as that used in Remark \ref{dalg is symmetricmonoidal}.

\end{proof}

\begin{remark}[The underlying derived algebra functor]\label{remark: underlying algebra functor}

Recall that for $\Gr\dalg_A$ one may define an underlying derived algebra functor $\Gr\dalg_A\to \dalg_A$, heuristically sending a graded derived algebra $S\in \Gr\dalg_A$ to $\bigoplus_{i\in \ZZ}S_i.$ Indeed this is just the functor $\pi_!\colon \Gr\dalg_A\to \dalg_A$ of Construction \ref{cons: lifting pi! to animated}. 
The proof of Proposition \ref{theorem: graded dalg bgm} then shows that this functor is implemented by the pullback $\dalg(\bgm)\to \dalg_A$ coming from the effective epimorphism $\spec(A)\to \bgm$.
    
\end{remark}

We next prove a statement which requires the notion of $J$-complete derived algebras developed in \S~\ref{sec: jcomp setting} and the continuous comonadicity developed in  \S~\ref{ssec: continuous comonad}.

\begin{remark}\label{remark: omit verification for qcoh}
    Before turning our attention to the next proposition, we must morally establish an equivalence of $\icats$ $$\Gr\Mod_A^\jcomp\simeq \Qcoh(B\G_m\times \spf(A))$$ where $A$ is an animated ring $J$ is a finitely generated ideal in $\pi_0(A)$. First note that applying Remark \ref{remark: definition of formal stack} via Lemma \ref{lem: luries lemma on spf} we have the following equivalence of $\icats$

    $$\Qcoh(B\G_m\times \spf(A))=\lim_n\Qcoh(B\G_m\times A_n).$$

   Now our definition of $\Gr\Mod_A^\jcomp$ in Definition \ref{def: def of dalggradedjcomp} sets it up so that using Proposition \ref{theorem: graded dalg bgm} we have 
    \begin{align}\label{eq: qcohbgmspf}
        \Qcoh(B\G_m\times \spf(A))&=\lim_n\Qcoh(B\G_m\times A_n)=\lim_n \Gr\Mod_{A_n}\\ &=\lim_n \Fun(\ZZ^\ds, \Mod_{A_n})=\lim_n \prod_{\ZZ} \Mod_{A_n}\notag\\
        &=\prod_{\ZZ} (\lim_n \Mod_{A_n})=\prod_{\ZZ} \Mod_A^{\jcomp}\notag\\
        &=\Gr\Mod_A^{\jcomp}.
    \end{align}

    Here we used that limits commute with limits and in the equality in the third line used our Theorem \ref{thm: mod on spf}. Of course the key non-formal ingredient is indeed our Theorem \ref{thm: mod on spf} without which we do not know how to justify the equivalence. For the $p$-adic case see \cite[Example A.13]{AntieauKrauseNikolaus2023PrismaticDeltaRings}.

    However this method doesn't make it clear that the equivalence of \ref{eq: qcohbgmspf} is compatible with the continuous comonadicity results of Proposition \ref{prop: cts monad mod}. Indeed it is important that the equivalence be compatible with the continuous comonad for the verification for derived algebras. However, we leave this to the reader and turn our attention to derived algebras. As the methods are literally identical with $\dalg$ replaced with $\qcoh$, not much is lost.
\end{remark}

\begin{remark}\label{remark: grdalg is not formal}
   Note that the statement in the next proposition i.e. Proposition \ref{prop: spf graded dalg bgm} cannot be formally proved in the same method as that of Remark \ref{remark: omit verification for qcoh}. The issue is that graded derived $A$-algebras are not just the functor category $\Fun(\ZZ^\ds, \dalg_A)$. Indeed objects of $\Gr\dalg_A$ may not have any ring structure on their positive graded pieces while the category $\Fun(\ZZ^\ds, \dalg_A)$ have a ring structure on all graded pieces.
\end{remark}

\begin{proposition}\label{prop: spf graded dalg bgm}
    Let $A$ be an animated ring and $J\subset \pi_0(A)$ a finitely generated ideal. Then there is a symmetric monoidal equivalence $$\dalg(B\G_m\times \spf A)=\Gr\dalg_A^{\jcomp}$$ where the target is as in Definition \ref{def: def of dalggradedjcomp}. Further the following diagram commutes \[\begin{tikzcd}[cramped]
	{\Gr\dalg_A^\jcomp } & {\dalg(B\G_m\times \spf(A))} \\
	{\Gr\Mod_A^\jcomp} & {\Qcoh(B\G_m\times \spf(A))}
	\arrow[from=1-1, to=1-2]
	\arrow["U_J"', from=1-1, to=2-1]
	\arrow["U_J", from=1-2, to=2-2]
	\arrow[from=2-1, to=2-2]
\end{tikzcd}\] 
\end{proposition}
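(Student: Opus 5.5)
We mimic the proof of Proposition \ref{theorem: graded dalg bgm}, replacing every object by its $J$-complete analogue. First, the morphism $\spf(A)\to B\G_m\times\spf(A)$ is an effective epimorphism, being the base change of $\spec(A)\to B\G_m$ along $\spf(A)\to\spec(A)$. Its \v{C}ech nerve in degree $n$ is $\spf(A[\ZZ]^{\otimes_A n})$, with the formal direction $J$-adic. Applying $\dalg$ (defined by right Kan extension, Construction \ref{cons: right kan extension of dalg}), flat descent in the form of Remark \ref{lem: dalg evaluated in cech}, and Corollary \ref{cor: dalg on spf} on each term, we obtain
\[
\dalg(B\G_m\times\spf A)\simeq \Tot\bigl(\dalg_A^\jcomp \rightrightarrows \dalg^\jcomp_{A[\ZZ]}\ \cdots\bigr).
\]
Here the coface maps are completed base changes, by Remark \ref{remark: functoriality of spf}, and $\dalg^\jcomp_{A[\ZZ]^{\otimes n}}$ means $J$-complete derived algebras over the completion. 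Rather than invoking fpqc descent directly for the non-affine formal stack, one can instead write $B\G_m\times\spf A=\colim_n B\G_m\times\spec(A_n)$, apply flat descent at each finite level, and commute the two limits.

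Second, we augment this cosimplicial diagram coherently by $\Gr\dalg_A^\jcomp$ using the continuous functor $\pij$ of Construction \ref{cons: continuous pij on derived algebras}. By Corollary \ref{cor: continuous comonad dalg}, $\pij$ is comonadic with comonad $(-)\ctensor_A A[\ZZ]^\wedge_J$. The augmented diagram is then the cobar construction
\[
T\mapsto \pij T\ctensor_A (A[\ZZ]^{\wedge}_J)^{\ctensor n},
\]
and its terms are identified with the terms of the \v{C}ech totalization via Remark \ref{remark: functoriality of spf}, using $\dalg^\jcomp_{A[\ZZ]}\simeq \dalg^\jcomp_{A[\ZZ]^\wedge_J}$.

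To conclude that $\Gr\dalg_A^\jcomp\to\Tot$ is an equivalence, we apply Proposition \ref{prop: criteria for descent}. Conservativity of $\pij$ and preservation of $\pij$-split totalizations both follow from Corollary \ref{cor: continuous comonad dalg}, via Proposition \ref{prop: cts monad mod} and conservativity of $U_J$. The right adjointability of the squares follows from Variant \ref{variant: completed adjointability}, applied to the pushout squares of animated rings built from $A\to A[\ZZ]^{\otimes n}$. Symmetric monoidality is argued as in Remark \ref{dalg is symmetricmonoidal}: the base-change functors are symmetric monoidal for the completed tensor product (Proposition \ref{prop: dalgj is presentable}(3)), and $\pij$ is symmetric monoidal because $\pi_!$ and $(-)\cj$ are.

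For the commutative square, we run the same argument for $\Qcoh$, using Proposition \ref{prop: cts monad mod} together with Theorem \ref{thm: mod on spf}. The forgetful functors $U_J$ commute with all the structure maps: with $\pij$ and $\pi^*$ by the diagrams in Construction \ref{cons: continuous pij on derived algebras}, and with completed base change by Corollary \ref{cor: dalgj is right adjointable}. They therefore induce a map of augmented cosimplicial diagrams, whose totalization gives the square.

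The main obstacle is the coherence of the augmentation. One must produce an honest map of augmented cosimplicial $\infty$-categories, not merely termwise identifications, and check that the \v{C}ech nerve's coface maps (action and projection, completed) agree with the cobar coface maps of the comonad $\pij\pi^*$. I would address this as Moulinos does: realize the \v{C}ech totalization of $\spf(A)\to B\G_m\times\spf A$ as the comonadic descent datum for the adjunction $\eta^*\dashv\eta_*$ along $\spf A\to B\G_m\times \spf A$. I would then identify this comonad with $\pij\pi^*$ by comparing the underlying module comonads, using conservativity of $U_J$ as in Remark \ref{remark: how to detect equivalences of monads}. This amounts to exactly the compatibility left to the reader in Remark \ref{remark: omit verification for qcoh}, which we would have to verify here.
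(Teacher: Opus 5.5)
Your proposal is correct and follows essentially the same route as the paper: \v{C}ech descent along $\spf(A)\to B\G_m\times\spf(A)$ combined with Corollary~\ref{cor: dalg on spf}, then augmenting by $\Gr\dalg_A^\jcomp$ via $\pij$ and the continuous cobar complex of Corollary~\ref{cor: continuous comonad dalg}, and concluding with Proposition~\ref{prop: criteria for descent}, where conservativity and split totalizations come from Proposition~\ref{prop: cts monad mod} and adjointability comes from Variant~\ref{variant: completed adjointability}. Your additions refine that argument rather than replace it: you reduce the descent step to the finite levels $B\G_m\times\spec(A_n)$, obtain the commutative square from a parallel comonadic argument for $\Qcoh$, and explicitly flag the coherence of the augmentation, which the paper asserts in a single line.
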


\begin{proof}
By Theorem \ref{flat descent} and Corollary \ref{cor: dalg on spf} we have a totalisation diagram 

    \begin{equation}\label{totalisation of spf bgm}
    \dalg(B\G_m\times \spf(A))=\mathrm{Tot}(
\cosimp{\dalg_A^{\jcomp}}{\dalg_{A[\ZZ]}^\jcomp}{\dalg^\jcomp_{A[\ZZ]\ctensor A[\ZZ]}})
\end{equation}

The arrows in this diagram are given by completed base changes by Remark \ref{remark: functoriality of dalgj} whence we must once again the augment the cosimplicial object on the right over $\Gr\dalg_A^\jcomp.$ We may now use the functor $\pij$ of Construction \ref{cons: continuous pij on derived algebras} whence the continuous cobar complex for the continuous comonad of Corollary \ref{cor: continuous comonad dalg}, makes the augmentation coherent. It remains to witness the comonadicity of the augmentation to verify the condition in Proposition \ref{prop: criteria for descent}, but other than the adjointability condition, the rest were checked in the proof of Propositon \ref{prop: cts monad mod}. 

The adjointability is now checked Variant \ref{variant: completed adjointability}.
\end{proof}

\begin{remark}[Underlying completed algebra functor]\label{remark: underlying completed algebra functor}
We have the completed analog of Remark \ref{remark: underlying algebra functor}. Namely, by Construction \ref{cons: continuous pij on derived algebras}, there is a functor  $$\pij\colon \Gr\dalg_A^{\jcomp}\to \dalg_A^\jcomp$$ sending a graded complete $A$-algebra $S$ to $\widehat{\bigoplus}_{i\in \ZZ}S_i$. By the proof of Proposition \ref{prop: spf graded dalg bgm}, we see that this functor is implemented by (completed) pullback $\dalg(\bgm\times \spf(A))\to \dalg(\spf(A))$ coming from the effective epimorphism $\spf(A)\to \bgm\times \spf(A).$ 
\end{remark}

\begin{remark}[Analogs of Proposition \ref{prop: spf graded dalg bgm} for diagonalizable group schemes.]\label{remark: classifying stacks diagonalizable group schemes}
   In this remark we continue the thread of Remarks \ref{remark: remark on diagonalizable coactions}, \ref{remark: coactions for diagonalizable on animated rings} and \ref{remark: continuous coaction diagonalizable schemes}. We will sketch how to prove Proposition \ref{prop: spf graded dalg bgm} for diagonalizable group schemes.

   Let $M$ be a commutative abelian group and $\mathbf{D}(M)$ the associated diagonalizable commutative group scheme obtained by the procedure of taking the spectrum of the bicommutative Hopf algebra $\ZZ[M]$ \cite[\S~2.6]{Tate1997FiniteFlatGroupSchemes}. Let $A$ be an animated ring and $J$ a finitely generated ideal of $\pi_0(A).$
   Then consider the formal classifying stack $B\DD(M)\times \spf(A)$. 
   There is of course an effective epimorphism $$\spf(A)\to B\DD(M)\times \spf(A)$$ and the Cech nerve in degree $n$ is given by $\spf(A[M])\times_{\spf(A)}\ldots \times_{\spf(A)} \spf(A[M])$, with the product taken $n$-times and the transition function given by action and coaction maps. We leave it to the interested reader to establish an equivalence of $\icats$ $$\Qcoh(\spf(A)\times B\DD(M)))\simeq \Gr\Mod_A^\jcomp$$ and we explain how to do so for $\dalg$. Note that either an explicit check or an analog of Remark \ref{remark: omit verification for qcoh} works.
   As the techniques are exactly the same, this is not much of an omission.

   By flat descent of $\dalg$, Theorem \ref{flat descent} and using Remark \ref{lem: dalg evaluated in cech} we may write

   $$\dalg(B\DD(M)\times \spf(A))=\Tot(\cosimp{\dalg_A^\jcomp}{\dalg_{A[M]}^\jcomp}{\dalg^\jcomp_{A[M]\widehat{\otimes}_A A[M]}})$$

    We may augment $\Gr\dalg_A^{M,\jcomp}$ on the cosimplicial $\icat$ via $(\pi_!^M)^{\wedge}_{J}$, the functor explained in Remarks \ref{remark: remark on diagonalizable coactions} and \ref{remark: continuous coaction diagonalizable schemes}, and using verbatim the reasoning of Proposition \ref{prop: spf graded dalg bgm}, we may test the hypothesis of Proposition \ref{prop: criteria for descent} to establish an equivalence of $\icats$

    $$\Gr\dalg_A^{M,\jcomp}\simeq \dalg(\spf(A)\times B\DD(M))).$$

\end{remark}

\subsection{Geometric classification theorems for derived algebras on formal filtered stacks.}

We now turn our attention to the promised classification of derived algebras on filtered (formal) stacks.

We will now explain how to use the constructions in \S~\ref{ssec: rees construction section} to manufacture filtered stacks (i.e. those stacks over $A$ whose structure morphism factors over $\agm\to \bgm$) over $A$ from filtered animated algebras over $A.$

\begin{construction}[Manufacturing filtered stacks from filtered animated algebras.]\label{cons: coaction of Az}
Let $T\in \fil\dalg_A^\cn$ i.e. a filtered animated ring as defined in Recollection \ref{rec: filtered derived algebras}. Then by Remark \ref{remark: rees connective sent to connective} we get a graded animated ring $\rees(T)\in \Gr\dalg^\cn_{A[t]}.$ After transporting $\rees(T)$ along the comonadic adjunction of Proposition \ref{prop: monadicity over animated} we obtain that $\pi_!\rees(T)$ is a comodule over the bicommutative Hopf algebra $A[t][\ZZ]$.

Note that the coaction is witnessed by the cobar complex
\begin{equation}\label{eq: cobar complex}
    \cosimp{\pi_!\rees(T)}{\pi_!\rees(T)\otimes_{A[t]}A[t][\ZZ]}{\pi_!\rees(T)\otimes_{A[t]}A[t][\ZZ]\otimes_{A[t]}A[t][\ZZ]}.
\end{equation}

Now we may forget along the functors $\dalg_{A[t]}\to \dalg_{A}$ and noting the equivalence of comonads

$$\pi_!\circ\pi^*\simeq (-)\otimes_{A[t]}A[t][\ZZ]\simeq (-)\otimes_{A[t]}A[t]\otimes_{A}A[\ZZ]\simeq (-)\otimes_A A[\ZZ],$$

we observe that the cobar complex in Equation \ref{eq: cobar complex}, after forgetting to $\dalg_A$ is equivalent to 

\begin{equation}\label{eq: 2nd cobar complex}
    \cosimp{\pi_!\rees (T)}{\pi_!\rees(T)\otimes_{A}A[\ZZ]}{\pi_!\rees(T)\otimes_{A}A[\ZZ]\otimes_{A}A[\ZZ]}.
\end{equation}

As explained in Proposition \ref{pi! is a morphism of contexts} and Remark \ref{remark: rees connective sent to connective} we observe that $\pi_!\rees(T)$ is still an animated connective ring (i.e. the operations described in this construction don't add negative homotopy groups) and all the terms showing up in \ref{eq: 2nd cobar complex} are clearly animated commutative $A$-algebras (noting the bicommutative Hopf algebra structure on $A[\ZZ]$).

Thus in the $\infty$-topos $\Shv((\aring_{A})_{\fpqc})$ the cobar complex for the coaction of $A[\ZZ]$ on $\pi_!\rees(T)$ gives us a simplicial diagram of sheaves whose geometric realisation is defined as the quotient stack $\spec(\pi_!\res(T))/\GG_m$ i.e. the colimit of the diagram

$$\spec(\pi_!\res(T))/\GG_m:=|\simpII{\spec(\pi_!\rees(T))}{\spec(A[\ZZ])\times\spec(\pi_!\rees(T))}|.$$

Note that this diagram lives over the diagram

$$B\GG_m:=|\simpII{\spec(A)}{\spec(A[\ZZ])\times\spec(A))}|$$ and so one obtains a graded stack $p^\gr_T\colon \spec(\pi_!\res(T))/\GG_m\to \bgm.$

\end{construction}

\begin{remark}[\label{remark: coaction is same}]
   As an example of Construction \ref{cons: coaction of Az}, if $T=A$ itself considered as filtered along $\ins^0\colon \dalg_A^\cn\to \fil\dalg_A^\cn$ we get a coaction on the polynomial ring $A[t]$. Then the stack constructed in Construction \ref{cons: coaction of Az} agrees with the stack $\agm\times_{\spec(\ZZ)}\spec(A).$ That the coaction described in Construction \ref{cons: coaction of Az} is the same as the usual one coming from the Cech nerve of $\A^1\to \agm$, as defined intrinsically in spectral algebraic geometry over $\sph$, was checked by Moulinos in the course of the proof of \cite[Proposition 5.1]{Moulinos2021GeometryOfFiltrations}. Therefore, for an animated ring $A$, our definition of $\agm$ is via Construction \ref{cons: coaction of Az}.
   We denote the induced structure morphism $p\colon \agm\to \bgm$.
\end{remark}

We now explain why the stacks of Construction \ref{cons: coaction of Az} in fact deserve the name of \emph{filtered} stacks.

\begin{remark}[Stakcs factoring over $\agm$ are filtered stacks.]\label{graded also gives filtered}
    Note that in Construction \ref{cons: coaction of Az} there is a canonical morphism $p_T^\fil\colon \spec(\pi_!\rees(T))/\G_m\to \agm$ which comes from the structure map $\spec(\pi_!\rees(T))\to \spec(A[t])$. Thus after passing to stacks we obtain a factorisation of $$p^\gr_T\colon\spec(\pi_!\rees(T))/\G_m\to \bgm$$ via $$p\colon \agm\to \bgm,$$ thus giving us a \emph{filtered} stack.
\end{remark}

The next remark explains how to use Proposition \ref{prop: moulinos for bgm} to identify module categories on filtered stacks.

\begin{remark}\label{remark: how to identify image of algebra object}
    We already know that there is a symmetric monoidal equivalence $\Gr\Mod_A\simeq \Qcoh(B\G_m)$ (for example by the tensor argument in $\prl$ advocated in Remark \ref{remark: tensor with bgm}) or just by going over Moulinos's argument leading to Proposition \ref{prop: moulinos for bgm} again. Let $E\in \Calg(\Gr\Mod_A)$ and let $\cE\in \Calg(\Qcoh(\bgm))$ denote its image under the equivalence. Then it follows for general operadic reasons from Lemma \ref{lem: equivalence of operads} that there's a symmetric monoidal equivalence $$\Mod_{E}(\Gr\Mod_A)\simeq \Mod_{\cE}(\Qcoh(B\G_m)).$$ If $\cE$ is of the form $f_*\cO_X$ for some quasi-affine morphism $f\colon X\to B\G_m$ then by Proposition \ref{prop: GR facts we need}, we see that $$\Mod_E(\Gr\Mod_A)\simeq \Qcoh(X).$$
    If $E$ was of the form $\rees(T)$ for some $T\in \fil\dalg_A$ then using Lemma \ref{lem: equivalence of modules} we can conclude that $$\fil\Mod_T\simeq \Qcoh(X).$$

    This is the format of the argument we will use next for the graded stacks from Construction \ref{cons: coaction of Az}. In particular for $T\in \fil\dalg_A^\cn$ it will suffice to show that the stack $p_T^\gr\colon \spec(\pi_!\res(T))/\GG_m\to \bgm$ has the property that under the equivalence of Proposition \ref{prop: moulinos for bgm}, the image of the algebra $\rees(T)$ and $(p_T^\gr)_*\cO$ agree.
\end{remark}

The next proposition identifies that the underlying derived algebra of $(p_T^\gr)_*\cO$ is in fact $\pi_!\rees(T).$ Here, by underlying derived algebra, we will use the notion of Remark \ref{remark: underlying algebra functor}.

\begin{proposition}\label{prop: gr sends correct algebra}
Let $T\in \fil\dalg_A^\cn$. Then the equivalence $\Gr\dalg_A\simeq \dalg(\bgm)$ of Proposition \ref{theorem: graded dalg bgm} sends $$\rees(T)\mapsto (p_T^\gr)_*\cO.$$
\end{proposition}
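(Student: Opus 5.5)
The plan is to use the cosimplicial description of $\dalg(\bgm)$ from the proof of Proposition \ref{theorem: graded dalg bgm}. There, the equivalence $\Gr\dalg_A\simeq \dalg(\bgm)$ is obtained by augmenting the cosimplicial $\icat$ $\dalg_A\rightrightarrows \dalg_{A[\ZZ]}\cdots$ (the \v{C}ech nerve of $q\colon\spec(A)\to\bgm$) by $\pi_!$. Concretely, $S\in\Gr\dalg_A$ goes to the descent datum $(\pi_!S,\ \text{coaction } \pi_!S\to \pi_!S\otimes_A A[\ZZ])$ encoded by the cobar complex of the comonad $\pi_!\pi^*$. So it suffices to show two things: first, that $q^*(p^\gr_T)_*\cO\simeq \pi_!\rees(T)$ in $\dalg_A$; second, that under this identification the descent datum of $(p^\gr_T)_*\cO$ along the \v{C}ech nerve matches the cobar complex \eqref{eq: 2nd cobar complex}.

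For the first point, I would use base change along the cartesian square formed by $q\colon \spec(A)\to\bgm$ and $p^\gr_T$. Its pullback is $\spec(\pi_!\rees(T))$, since by Construction \ref{cons: coaction of Az} the quotient stack is the geometric realisation of the action groupoid, and in the $\infty$-topos the \v{C}ech nerve of $\spec(\pi_!\rees(T))\to \spec(\pi_!\rees(T))/\G_m$ is that action groupoid. The morphism $p^\gr_T$ is affine, hence quasi-affine, because after base change to $\spec(A)$ it becomes $\spec(\pi_!\rees(T))\to\spec(A)$. Proposition \ref{prop: derived base change for algebras} then gives
\[ q^{*,\dalg}(p^\gr_T)_*^\dalg\cO \simeq \Gamma(\spec(\pi_!\rees(T)),\cO)=\pi_!\rees(T). \]
Running the same base change on each level $[n]$ of the \v{C}ech nerve, the restriction of $(p_T^\gr)_*\cO$ to $\spec(A[\ZZ]^{\otimes n})$ is the ring of functions on the $n$-th term of the action groupoid, namely $\pi_!\rees(T)\otimes_A A[\ZZ]^{\otimes n}$. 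The coface maps come from the action and projection maps, and these are by construction the maps of the cobar complex \eqref{eq: cobar complex}, transported to \eqref{eq: 2nd cobar complex} using the equivalence of comonads from Construction \ref{cons: coaction of Az}.

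The main obstacle is coherence: it is not enough to match terms levelwise, we need an equivalence of objects of the totalisation. To handle it, I would avoid comparing cosimplicial diagrams by hand and instead argue that both objects are determined by the same simplicial diagram of affine stacks. By definition of the right Kan extension, and by Remark \ref{lem: dalg evaluated in cech}, the object $(p^\gr_T)_*\cO\in\dalg(\bgm)=\lim_{[n]}\dalg_{A[\ZZ]^{\otimes n}}$ is computed levelwise by pushing forward $\cO$ along the map of simplicial affine schemes from the action groupoid to the \v{C}ech nerve of $q$. This levelwise pushforward is exactly the cosimplicial derived ring $\spec^{-1}$ of the action groupoid. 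The image of $\rees(T)$ is the cobar object of the $A[\ZZ]$-comodule $\pi_!\rees(T)$ from Proposition \ref{prop: monadicity over animated}, and by Construction \ref{cons: coaction of Az} this same cobar complex was used to define the action groupoid. The two coherent objects therefore agree as cosimplicial objects, since $\spec$ is fully faithful on connective objects. Finally, the forgetful functor $U$ commutes with everything in sight by Proposition \ref{theorem: graded dalg bgm} and Lemma \ref{lem: pushforwad preserves dalg}, and is conservative, so the compatibility of the identification with $\Qcoh$ can be checked after forgetting if needed.
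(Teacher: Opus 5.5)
Your proposal is correct and follows essentially the same route as the paper. First, base change along the trivial-torsor square via Proposition \ref{prop: derived base change for algebras} identifies $q^*(p^\gr_T)_*\cO$ with $\pi_!\rees(T)$; your check that $p^\gr_T$ is affine supplies the quasi-affineness that proposition needs, which the paper leaves implicit. Second, the descent datum along $\spec(A)\to\bgm$ is matched with the cobar complex of Construction \ref{cons: coaction of Az}, and your coherence argument spells out what the paper calls ``an examination of the complex'': the levelwise pushforward is the cosimplicial ring of the action groupoid, which was itself defined as $\spec$ of that cobar complex.
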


\begin{proof}

First note that there is a tautlogical pullback coming from the trivial torsor on $\spec(\pi_!\rees(T))/\G_m$

% https://q.uiver.app/#q=WzAsNCxbMSwwLCJcXHNwZWMoQSkiXSxbMSwxLCJcXGJnbSJdLFswLDEsIlxcc3BlYyhcXHBpXyFcXHJlZXMoVCkpL1xcR19tIl0sWzAsMCwiXFxzcGVjKFxccGlfIVxccmVlcyhUKSkiXSxbMCwxLCJxIl0sWzIsMSwicF5cXGdyX1QiLDJdLFszLDIsInFfVCIsMl0sWzMsMCwicF9UIl1d
\[\begin{tikzcd}[cramped]
	{\spec(\pi_!\rees(T))} & {\spec(A)} \\
	{\spec(\pi_!\rees(T))/\G_m} & \bgm
	\arrow["{p_T}", from=1-1, to=1-2]
	\arrow["{q_T}"', from=1-1, to=2-1]
	\arrow["q", from=1-2, to=2-2]
	\arrow["{p^\gr_T}"', from=2-1, to=2-2]
\end{tikzcd}\]

which gives a diagram on $\dalg$

% https://q.uiver.app/#q=WzAsNCxbMCwxLCJcXGRhbGcoXFxzcGVjKEEpKSJdLFsxLDEsIlxcZGFsZyhcXGJnbSkiXSxbMSwwLCJcXGRhbGcoXFxzcGVjKFxccGlfIVxccmVlcyhUKSkvXFxHX20pIl0sWzAsMCwiXFxkYWxnKFxcc3BlYyhcXHBpXyFcXHJlZXMoVCkpKSJdLFsyLDEsIihwX1ReXFxncilfKiJdLFszLDAsIihwX1QpXyoiLDJdLFsyLDMsIihxX1QpXioiLDJdLFsxLDAsInFeKiJdXQ==
\[\begin{tikzcd}[cramped]
	{\dalg(\spec(\pi_!\rees(T)))} & {\dalg(\spec(\pi_!\rees(T))/\G_m)} \\
	{\dalg(\spec(A))} & {\dalg(\bgm)}
	\arrow["{(p_T)_*}"', from=1-1, to=2-1]
	\arrow["{(q_T)^*}"', from=1-2, to=1-1]
	\arrow["{(p_T^\gr)_*}", from=1-2, to=2-2]
	\arrow["{q^*}", from=2-2, to=2-1]
\end{tikzcd}\] where we used base change for $\dalg$ as explained in Proposition \ref{prop: derived base change for algebras}. 
By identifying the top left category on the second diagram with $\dalg_{\pi_!\rees(T)}$ and the bottom left one with $\dalg_{A}$ we see that $$(p_T)_*\circ (q_T)^*\cO=q^*(p_T^\gr)_*\cO$$ where $\cO$ is the structure sheaf of $\spec(\pi_! \rees(T))/\G_m.$  But the left hand side is just $\pi_!\rees(T)$ and by Remark \ref{remark: underlying algebra functor}, this agrees with the underlying algebra of $(p_T^\gr)_*\cO.$
The action on the former is given by the cobar complex of Construction \ref{eq: 2nd cobar complex}. The action on the latter is given by descent along the cover $\spec(A)\to \bgm.$ But an examination of the complex in the proof of Proposition \ref{theorem: graded dalg bgm} shows that these are the same actions. Thus we conclude.

\end{proof}

\begin{corollary}\label{cor: simpson for filtered algebras}
Let $T\in \fil\dalg_{A}^\cn.$ Then there is a symmetric monoidal equivalence of $\icats$
$$\fil\Mod_T\simeq  \Qcoh(\spec(\pi_!\rees (T)/\G_m).$$
\end{corollary}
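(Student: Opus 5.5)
The plan is to follow the template laid out in Remark \ref{remark: how to identify image of algebra object}, chaining together three symmetric monoidal equivalences. Write $X_T:=\spec(\pi_!\rees(T))/\G_m$ and $p^\gr_T\colon X_T\to \bgm$ for the structure morphism of Construction \ref{cons: coaction of Az}.

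\textbf{Step 1: the filtered side.} Lemma \ref{lem: equivalence of modules} already gives a symmetric monoidal equivalence
\[
\fil\Mod_T=\Mod_T(\fil\Mod_A)\simeq \Mod_{\rees(T)}(\Gr\Mod_A).
\]

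\textbf{Step 2: transport to $\bgm$.} Next apply Lemma \ref{lem: equivalence of operads} to the symmetric monoidal equivalence $\Gr\Mod_A\simeq \Qcoh(\bgm)$ of Proposition \ref{prop: moulinos for bgm}. This identifies $\Mod_{\rees(T)}(\Gr\Mod_A)$ with $\Mod_{\cE}(\Qcoh(\bgm))$, where $\cE$ is the image of $\rees(T)$. Proposition \ref{prop: gr sends correct algebra} says that under the equivalence $\Gr\dalg_A\simeq\dalg(\bgm)$ one has $\rees(T)\mapsto (p^\gr_T)_*\cO$. The commutative square in Proposition \ref{theorem: graded dalg bgm} shows this is compatible with the forgetful functors to $\Gr\Mod_A$ and $\Qcoh(\bgm)$. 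Since the forgetful functors $\dalg\to\Calg$ are symmetric monoidal, we get $\cE\simeq (p^\gr_T)_*\cO_{X_T}$ as $\einf$-algebras in $\Qcoh(\bgm)$.

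\textbf{Step 3: quasi-affineness.} It remains to show $p^\gr_T$ is quasi-affine, so that Proposition \ref{prop: GR facts we need}(2),(3) give
\[
\Qcoh(X_T)\simeq \Mod_{(p^\gr_T)_*\cO}(\Qcoh(\bgm)),
\]
symmetric monoidally. The pullback square in the proof of Proposition \ref{prop: gr sends correct algebra} shows that the base change of $p^\gr_T$ along the fpqc cover $q\colon\spec(A)\to\bgm$ is $\spec(\pi_!\rees(T))\to\spec(A)$, which is affine. For an arbitrary $\spec(R)\to\bgm$, the $\G_m$-torsor trivializes fpqc-locally, so the base change $X_T\times_{\bgm}\spec(R)$ is an fpqc form of an affine morphism. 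By fpqc descent for affine morphisms it is then representable by $\spec$ of the descended algebra. Hence $p^\gr_T$ is affine, in particular quasi-affine.

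The symmetric monoidality of the last equivalence should follow because $\Qcoh(X_T)\to\Mod_{(p^\gr_T)_*\cO}$ is compatible with $(p^\gr_T)^*$ via the projection formula, Proposition \ref{prop: GR facts we need}(1). Composing the three equivalences gives the corollary.

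The main obstacle is Step 3. Descent of affineness along $\bgm$ for animated rings, and checking that the identification in Proposition \ref{prop: gr sends correct algebra} holds at the level of $\einf$-algebras and not just underlying objects, both need care. If affine descent causes trouble, I would instead compute $\Qcoh(X_T)$ directly. This means writing it as the totalization of $\Mod_{\pi_!\rees(T)\otimes A[\ZZ]^{\otimes n}}$ over the cobar complex \eqref{eq: 2nd cobar complex}, and comparing via comonadic Barr--Beck (Proposition \ref{prop: criteria for descent}) with $\Mod_{\rees(T)}(\Gr\Mod_A)$, augmented by $\pi_!$ exactly as in the proof of Proposition \ref{theorem: graded dalg bgm}.
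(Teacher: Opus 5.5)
Your proposal is correct and is essentially the paper's proof. The paper reads Proposition \ref{prop: gr sends correct algebra} as an identification $\rees(T)\mapsto (p^{\mathrm{gr}}_T)_*\mathcal{O}$ of commutative algebras and then runs the template of Remark \ref{remark: how to identify image of algebra object}, which is exactly your Steps 1--3. Your explicit descent argument, that $p^{\mathrm{gr}}_T$ is affine because its base change along the $\mathbf{G}_m$-torsor $\spec(A)\to B\mathbf{G}_m$ is $\spec(\pi_!\rees(T))\to\spec(A)$, supplies the quasi-affineness that the paper uses there without verifying it.
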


\begin{proof}
    The equivalence in Proposition \ref{prop: gr sends correct algebra} is stronger than what is required to implement Remark \ref{remark: how to identify image of algebra object}. Indeed we conclude that $\rees(T)\mapsto (p_T^\gr)_*\cO$ as commutative algebras so we can apply the conclusion in Remark \ref{remark: how to identify image of algebra object}. 
\end{proof}
We are now ready to prove classification theorems for derived algebras in quasi-coherent sheaves on stacks which are quasi-affine over $B\G_m$.

\begin{remark}
Let $f\colon X\to \bgm$ be a quasi-affine morphism in the sense of Definition \ref{def: quasi-affine etc}.
It follows from Lemma \ref{lem: pushforwad preserves dalg} and Theorem \ref{theorem: graded dalg bgm} that $f_*\cO_X$ is naturally a derived algebra over $B\G_m$ and so a graded derived algebra over $R$. It follows from Corollary \ref{cor: quasi-aff preserves dalg} that one can understand $\dalg(X)$ as the $\icat$ $\dalg(B\G_m)_{f_*\cO_X/}.$    
\end{remark}

\begin{corollary}\label{cor: simpson for derived algebras on filtered algebras}
   Let $T\in \fil\dalg_{A}^\cn.$ There is a symmetric monoidal equivalence of $\icats$
   $$\fil\dalg_T\simeq \dalg(\spec(\pi_!\rees (T)/\G_m)$$ so that the following diagram commutes 
   
\[\begin{tikzcd}[cramped]
	{\fil\dalg_{T}} & {\dalg(\spec(\pi_!\rees(T))/\G_m)} \\
	{\fil\Mod_T} & {\Qcoh(\spec(\pi_!\rees(T))/\G_m)}
	\arrow[from=1-1, to=1-2]
	\arrow["U"', from=1-1, to=2-1]
	\arrow["U", from=1-2, to=2-2]
	\arrow[from=2-1, to=2-2]
\end{tikzcd}\] where the bottom equivalence is of Corollary \ref{cor: simpson for filtered algebras} and the top one is the diaplayed one in the statement of this claim.
\end{corollary}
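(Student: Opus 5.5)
The proof will be a chain of equivalences, each coming from a result already established.

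\textbf{Step 1: reduce to an undercategory over $B\G_m$.} Write $X_T:=\spec(\pi_!\rees(T))/\G_m$ and $p_T^\gr\colon X_T\to \bgm$. I first check that $p_T^\gr$ is quasi-affine in the sense of Definition \ref{def: quasi-affine etc}. Its base change along the cover $q\colon \spec(A)\to\bgm$ is $\spec(\pi_!\rees(T))\to\spec(A)$, which is affine, by the pullback square used in the proof of Proposition \ref{prop: gr sends correct algebra}. Affineness, and hence quasi-affineness, can be checked after base change to any $\spec(R)\to\bgm$, since such a map fpqc locally factors through $q$. Proposition \ref{cor: quasi-aff preserves dalg} then gives
$$\dalg(X_T)\simeq \dalg(\bgm)_{(p_T^\gr)_*\cO/}.$$

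\textbf{Step 2: transport to graded derived algebras.} Proposition \ref{theorem: graded dalg bgm} gives a symmetric monoidal equivalence $\Gr\dalg_A\simeq\dalg(\bgm)$ over the module equivalence, and it induces equivalences of undercategories. By Proposition \ref{prop: gr sends correct algebra}, $\rees(T)\mapsto (p_T^\gr)_*\cO$. Hence
$$\dalg(\bgm)_{(p_T^\gr)_*\cO/}\simeq (\Gr\dalg_A)_{\rees(T)/}.$$

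\textbf{Step 3: undo the Rees construction.} By Corollary \ref{cor: res gives derived algebras}, $\rees$ is an equivalence of filtered and graded derived algebra categories. I base change it to $A$ using Remark \ref{cons: induced dalg on animated for rees}, passing to undercategories of $A$ viewed as a filtered ring, which is sent to $A[t]$. This yields $(\fil\dalg_A)_{T/}\simeq(\Gr\dalg_{A[t]})_{\rees(T)/}$. Since $\Gr\dalg_{A[t]}=(\Gr\dalg_A)_{A[t]/}$ and $\rees(T)$ lies under $A[t]$, the right side is $(\Gr\dalg_A)_{\rees(T)/}$. The left side is $\fil\dalg_T$ by definition. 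Composing Steps 1–3 gives the displayed equivalence.

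\textbf{Step 4: symmetric monoidality and the square.} All categories involved carry cocartesian monoidal structures: relative tensor products of derived algebras are pushouts. Every functor in the chain is an equivalence preserving colimits, so symmetric monoidality is automatic. For the square, I check commutativity step by step.
\begin{itemize}
\item Step 1: the monadic description of Proposition \ref{cor: quasi-aff preserves dalg} is compatible with the identification $\Qcoh(X_T)\simeq\Mod_{(p_T^\gr)_*\cO}(\Qcoh(\bgm))$ of Proposition \ref{prop: GR facts we need}.
\item Step 2: this is the commuting square of Proposition \ref{theorem: graded dalg bgm}.
\item Step 3: this is the square of Construction \ref{cons: commuting for rees}, which is canonical by Remark \ref{remark: commutation is canonical.}.
\end{itemize}
In each case the forgetful functor from derived algebras under an algebra $E$ lands in $E$-modules. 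The three module-level squares paste to the equivalence of Corollary \ref{cor: simpson for filtered algebras}, which was built by exactly the recipe of Remark \ref{remark: how to identify image of algebra object}.

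\textbf{Main obstacle.} The hard part is the coherence in Step 4, not the existence of the equivalence. The forgetful functors from the undercategories $\dalg(\mathcal C)_{E/}$ must factor through $\Mod_E(\mathcal C)$ compatibly with the operadic equivalence of Lemma \ref{lem: equivalence of operads}. I would handle this by noting that all maps factor through the $\Calg$-level undercategories, via the functoriality in Remark \ref{remark: commutation is canonical.}. This reduces the compatibility to the module-level statement, where Lemma \ref{lem: equivalence of modules} already applies.
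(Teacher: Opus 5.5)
Your proposal is correct and follows the same chain as the paper. It uses quasi-affineness of $p_T^\gr$ together with Proposition \ref{cor: quasi-aff preserves dalg}, then Propositions \ref{theorem: graded dalg bgm} and \ref{prop: gr sends correct algebra} to pass to $(\Gr\dalg_A)_{\rees(T)/}$, and finally the Rees equivalence of Remark \ref{cons: induced dalg on animated for rees}. You are in fact more explicit than the paper: it neither checks that $p_T^\gr$ is quasi-affine nor spells out why the square commutes, saying only that the latter is a formal consequence of the proof.
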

\begin{proof}
The commutativity of the diagram is a formal consequence of the proof.

        Note that if $f\colon X\to \bgm$ is a  quasi-affine morphism, then by Theorem \ref{cor: quasi-aff preserves dalg} it follows that $\dalg(X)\simeq \dalg(\bgm)_{f_*\cO_X/}.$ We also know by Proposition \ref{theorem: graded dalg bgm} that $\Gr\dalg_A\simeq \dalg(\bgm) $. Now if $T\in \fil\dalg_A^\cn$ then the equivalence in Proposition \ref{prop: gr sends correct algebra} shows that $$(\Gr\dalg_A)_{\rees(T)/}\simeq \dalg(\bgm)_{(p_T^\gr)_*\cO/}$$ is an equivalence. From the equivalence in Remark \ref{cons: induced dalg on animated for rees} we learn that $$\fil\dalg_T\simeq \dalg(\bgm)_{(p_T^\gr)_*\cO/}\simeq \dalg(\spec(\pi_!\rees (T)/\G_m)).$$

\end{proof}

\begin{example}
    Let $A$ be an animated ring. Then Corollary \ref{cor: simpson for derived algebras on filtered algebras} implies an equivalence of $\icats$
    $$\fil\dalg_A\simeq \dalg(\agm).$$
\end{example}

\begin{example}\label{eg: knygalg}
    Let $k$ be a perfect field of characteristic $p$. Let $W(k)$ be the ring of Witt vectors of $k$ given the $p$-adic filtration. Then we can treat $p^\bullet(W(k))\in \fil\dalg_{W(k)}^\cn.$ Then Corollary \ref{cor: simpson for filtered algebras} implies 
    $$\fil\dalg_{p^\bullet W(k)}=\dalg(W(k))_{p^\bullet W(k)/}\simeq \dalg(\spec(W(k)[u,t]/(ut-p)/\G_m).$$
\end{example}

We now explain the continuous analogues of these statement. We are in the setting of Notation \ref{notation: jcomp} so that $A$ is an animated ring and $J\subset \pi_0(A)$ is a finitely generated ideal.

\begin{notation}\label{notation: complete fildalg cnnective} We introduce some notation to streamline the sequel
\begin{enumerate}
    \item      Let $(\dalg_A^\jcomp)^\cn:=\dalg^\jcomp_A\times_{\dalg_A}\dalg_A^\cn$ be the $\icat$ of connective $J$-complete derived $A$-algebras. Similarly we define $(\Gr\dalg_A^\jcomp)^\cn$ and  $(\fil\dalg_A^\jcomp)^\cn$.
    \item The Rees construction preserves $\rees\colon(\fil\dalg_A^\jcomp)^\cn\to (\Gr\dalg_A^\jcomp)^\cn$. Indeed this follows from the $t$-exactness of Remark \ref{remark: rees connective sent to connective} and Lemma \ref{lem: rees presrves complete object}. 
\item The functor $\pij$ of \S~\ref{ssec: continuous comonad} is right $t$-exact since $\pi_!$ is $t$-exact and completion $(-)\cj$ is right $t$-exact (see \cite[Remark 4.2.17]{Lur11}) and the fact that completion can be computed at the level of modules by Lemma \ref{lem: adjoint is completion}). In particular $\pij\colon (\Gr\dalg_A^\jcomp)^\cn\to (\dalg_A^\jcomp)^\cn$ is well defined.

\end{enumerate}
\end{notation}

\begin{construction}[Manufacturing $J$-formal filtered stacks from $J$-complete filtered animated algebras.]\label{cons: manufacturing completed graded stacks.}
Let $T\in (\fil\dalg_A^\jcomp)^\cn$ as in point $(1)$ of Notation \ref{notation: complete fildalg cnnective}. By point $(2)$ and $(3)$ of Notation \ref{notation: complete fildalg cnnective} we obtain a connective $J$-complete derived algebra $\pij\rees(T)\in (\dalg_{A[t]}^\jcomp)^\cn$ which has a continuous coaction of $A[t][\ZZ].$ By running the same argument in Construction \ref{cons: coaction of Az} we  get an analogue of the cobar complex of \ref{eq: 2nd cobar complex}, withnessing the continuous action of $A[\ZZ]$ on the continuous monad $\pij\circ \pi^*(-)\simeq \ctensor_A A[\ZZ]$ given as 

\begin{equation}
    \cosimp{\pij\rees(T)}{\pij\rees(T)\ctensor_A A[\ZZ]}{\pij\rees(T)\ctensor_A A[\ZZ]\ctensor_A A[\ZZ]}
\end{equation}

Once again we may look at the $\infty$-topos $\Shv((\aring_A)_{\fpqc})$ and we get a cobar complex given as follows 

$$\spf(\pij\rees(T))/\G_m:=|\simpII{\spf(\pij\rees(T))}{\spf(A[\ZZ])\times \spf(\pij\rees(T))}|$$

and this lives over the diagram 

$$\bgm\times \spf(A):=|\simpII{\spf(A)}{\spf(A)\times \spf(A[\ZZ])}|$$ and so we obtain a structure morphism $p_T^\gr\colon \spf(\pij\rees(T))/\G_m\to \bgm\times \spf(A)$. We have engaged in minor abuse of notation here.

\end{construction}
\begin{remark}\label{lem: recognising cts algebra}
We explain an analogue of Remark \ref{remark: how to identify image of algebra object}.
    Let $f\colon X\to B\G_m\times \spf(A)$ be a quasi-affine morphism. Then $X$ is automatically formal for the $J$-adic topology (e.g. by Remark \ref{remark: definition of formal stack}). We know the symmetric monoidal equivalence $$\Gr\Mod_A^\jcomp\simeq \Qcoh(\bgm\times \spf(A))$$ explained in Remark \ref{remark: omit verification for qcoh}. Let $E\in \Calg(\Gr\Mod_A^\jcomp)$ and $\cE\in \Calg(\Qcoh(\bgm\times \spf(A))$ be its image under the aforementioned equivalence. Then again for operadic reasons of Lemma \ref{lem: equivalence of operads} we have an equivalence of $\icats$

    $$\Mod_{E}(\Gr\Mod_A^\jcomp)\simeq \Mod_{\cE}(\Qcoh(\bgm\times \spf(A)).$$ Again if $\cE=f_*\cO_X$ for some quasi-affine $f\colon X\to \bgm\times \spf(A)$ we obtain an equivalence by Proposition \ref{prop: GR facts we need}
    $$\Mod_{E}(\Gr\Mod_A^\jcomp)\simeq \Qcoh(X),$$ and if $E=\rees(T)$ for some $T\in \fil\Mod_{T}^\jcomp$ we obtain an equivalence $$\fil\Mod_T^\jcomp\simeq \Qcoh(X).$$
    
\end{remark}

\begin{remark}[Stacks factoring over $\agm\times \spf(A)$ are filtered formal stacks]
    An analog of the reasoning in Remark \ref{graded also gives filtered}, shows that for all the stacks $p_T^\gr \spf(\pij(\rees(T)))\to \bgm\times \spf(A)$ admit a factorization $$p_T^\fil\colon \spf(\pij(\rees(T)))\to \agm\times \spf(A)$$ giving them the structure of \emph{filtered formal stacks.} 
\end{remark}

\begin{proposition}\label{prop: cts algebra sended correctly}
    Let $T\in (\fil\dalg^\jcomp_A)^\cn$. Then the equivalence $\Gr\dalg_A^\jcomp\simeq \dalg(\bgm\times \spf(A))$ of Proposition \ref{prop: spf graded dalg bgm} sends 
    $$\rees(T)\mapsto (p_T^\gr)_*\cO.$$
\end{proposition}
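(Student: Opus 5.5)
The plan is to mirror the proof of Proposition \ref{prop: gr sends correct algebra}, replacing every ingredient with its $J$-complete counterpart. First, I would form the tautological pullback square coming from the trivial $\G_m$-torsor:
\[\begin{tikzcd}[cramped]
	{\spf(\pij\rees(T))} & {\spf(A)} \\
	{\spf(\pij\rees(T))/\G_m} & {\bgm\times\spf(A)}
	\arrow["{p_T}", from=1-1, to=1-2]
	\arrow["{q_T}"', from=1-1, to=2-1]
	\arrow["q", from=1-2, to=2-2]
	\arrow["{p^\gr_T}"', from=2-1, to=2-2]
\end{tikzcd}\]
This square is a pullback because Construction \ref{cons: manufacturing completed graded stacks.} presents the left column as the geometric realisation of a simplicial diagram lying over the \v{C}ech nerve of $q$. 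Moreover, $p^\gr_T$ is quasi-affine, since its base change along $q$ is the affine formal morphism $\spf(\pij\rees(T))\to\spf(A)$ (cf.\ Example \ref{ex: pushforward on spf}). Hence the formal version of derived base change, Proposition \ref{prop: derived base change for algebras} together with Remark \ref{remark: completed base change holds for formal stacks}, gives an equivalence $q^*(p^\gr_T)_*\cO\simeq (p_T)_*(q_T)^*\cO$ in $\dalg(\spf(A))$.

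Next, I would identify both sides. By Corollary \ref{cor: dalg on spf} and Remark \ref{remark: functoriality of spf}, we have $\dalg(\spf(\pij\rees(T)))\simeq\dalg^\jcomp_{\pij\rees(T)}$ and $\dalg(\spf A)\simeq\dalg^\jcomp_A$. Under these identifications $(p_T)_*$ is the forgetful functor, which preserves completeness by Variant \ref{variant: completed adjointability}. So the right-hand side is $\pij\rees(T)$ itself. By Remark \ref{remark: underlying completed algebra functor}, the pullback $q^*$ is implemented by $\pij\colon \Gr\dalg_A^\jcomp\to\dalg_A^\jcomp$. Therefore the underlying complete derived algebra of the graded algebra $S$ corresponding to $(p^\gr_T)_*\cO$ satisfies $\pij S\simeq \pij\rees(T)$.

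It then remains to match the comodule structures. By Corollary \ref{cor: continuous comonad dalg}, an object of $\Gr\dalg^\jcomp_A$ is the same as a comodule for the continuous comonad $(-)\ctensor_A A[\ZZ]$ on $\dalg^\jcomp_A$. So it suffices to check that the two continuous coactions of $A[\ZZ]^\wedge_J$ on $\pij\rees(T)$ agree. The first is the coaction of $(p^\gr_T)_*\cO$ obtained by descent along $q$, via the totalisation \eqref{totalisation of spf bgm} used in the proof of Proposition \ref{prop: spf graded dalg bgm}. The second is the cobar complex of Construction \ref{cons: manufacturing completed graded stacks.}. Both are computed by evaluating $\dalg$ on the same \v{C}ech nerve $\spf(A[\ZZ])^{\times_{\spf A} n}\times\spf(\pij\rees(T))$, with faces given by action and projection. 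Since the augmentation in Proposition \ref{prop: spf graded dalg bgm} was built precisely from the continuous cobar complex of $\pij\pi^*$, the two cosimplicial objects agree, and comonadicity then yields $S\simeq\rees(T)$.

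The main obstacle is this last coherence step. Identifying the terms levelwise is easy, but one has to make sure the equivalence of cosimplicial objects is coherent. I would handle this by observing that both cosimplicial objects are obtained by applying $(q_\bullet)^*$ along the \v{C}ech nerve to the single object $(p^\gr_T)_*\cO$, and by using base change (Proposition \ref{prop: derived base change for algebras}) levelwise. The completed tensor products appearing in these terms are controlled by Remark \ref{remark: functoriality of dalgj}.
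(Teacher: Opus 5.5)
Your proposal is correct and follows essentially the same route as the paper. The common steps are the trivial-torsor pullback square, then base change for derived algebras together with Remark \ref{remark: underlying completed algebra functor} to identify the underlying complete algebra of $(p_T^{\mathrm{gr}})_*\mathcal{O}$ with $(\pi_!)^\wedge_J\mathrm{Rees}(T)$, and finally matching the two continuous $A[\mathbf{Z}]^\wedge_J$-coactions via the cobar complexes. You are more careful than the paper in checking that $p_T^{\mathrm{gr}}$ is quasi-affine, which the base change needs, and in addressing the coherence of the comparison of cosimplicial objects; the paper adds a remark, not needed for the statement as posed, that this coaction agrees with the one from the \v{C}ech nerve of $\mathbf{A}^1\times\mathrm{Spf}(A)\to \mathbf{A}^1/\mathbf{G}_m\times\mathrm{Spf}(A)$ by base change from Moulinos's uncompleted comparison.
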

\begin{proof}
    This is proved in the same way as Proposition \ref{prop: gr sends correct algebra}. However, for completeness, we explain some of the details as we will omit the same in the proofs of Corollaries \ref{cor: cts main theorem} and \ref{cor: cts main theorem for dalg}. The trivial torsor gives a diagram 
\[\begin{tikzcd}[cramped]
	{\spf(\pij\rees(T))} & {\spf(A)} \\
	{\spf(\pij\rees(T))/\G_m} & \bgm\times \spf(A)
	\arrow["{p_T}", from=1-1, to=1-2]
	\arrow["{q_T}"', from=1-1, to=2-1]
	\arrow["q", from=1-2, to=2-2]
	\arrow["{p^\gr_T}"', from=2-1, to=2-2]
\end{tikzcd}\]

The adjointability gives us a diagram

\[\begin{tikzcd}[cramped]
	{\dalg(\spf(\pij\rees(T)))} & {\dalg(\spf(\pij\rees(T))/\G_m)} \\
	{\dalg(\spf(A))} & {\dalg(\bgm\times \spf(A))}
	\arrow["{(p_T)_*}"', from=1-1, to=2-1]
	\arrow["{(q_T)^*}"', from=1-2, to=1-1]
	\arrow["{(p_T^\gr)_*}", from=1-2, to=2-2]
	\arrow["{q^*}", from=2-2, to=2-1]
\end{tikzcd}\]    

Following the reasoning of Proposition \ref{prop: gr sends correct algebra} and using Remark \ref{remark: underlying completed algebra functor} we learn that the underlying algebra of $p_T^\gr\cO$ is indeed $\pij(\rees(T)).$ Note here that the base changes are automatically completed for the same reason as explained in Example \ref{ex: pushforward on spf}.

The action on both can again be checked to be equivalent, using the cobar complexes of Proposition \ref{prop: spf graded dalg bgm} and Construction \ref{cons: manufacturing completed graded stacks.} respectively.

One may worry that the formal stack $\agm\times \spf(A)$ might have a different action coming from the Cech nerve of $\mathbf{A}^1\times \spf(A)\to \agm\times \spf(A)$ than the one in Construction \ref{cons: manufacturing completed graded stacks.}, but both these actions are base changed  along $\spf(A)\to \spec(A)$ from the decompleted (i.e. usual) action wherein this is checked by Moulinos as explained in Remark \ref{remark: coaction is same}.
\end{proof}

\begin{corollary}\label{cor: cts main theorem}
Let $T\in (\fil\dalg_A^\jcomp)^\cn$. Then there's a symmetric monoidal equivalence of $\icats$ 
$$\fil\Mod_T^\jcomp\simeq \Qcoh(\spf(\pij\rees(T)/\G_m)$$
\end{corollary}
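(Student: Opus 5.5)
The plan is to follow the template of Remark \ref{lem: recognising cts algebra}, exactly as Corollary \ref{cor: simpson for filtered algebras} was deduced from Proposition \ref{prop: gr sends correct algebra} in the decompleted setting.

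\textbf{Step 1: set up the comparison.} Write $X=\spf(\pij\rees(T))/\G_m$ and $p_T^\gr\colon X\to \bgm\times\spf(A)$ for the structure morphism of Construction \ref{cons: manufacturing completed graded stacks.}. First I will check that $p_T^\gr$ is quasi-affine in the sense of Definition \ref{def: quasi-affine etc}. Its base change along the cover $q\colon \spf(A)\to\bgm\times\spf(A)$ is $\spf(\pij\rees(T))\to\spf(A)$, which is affine. For an arbitrary test map $\spec(R)\to \bgm\times \spf(A)$, I will work fpqc-locally on $R$ (where the $\G_m$-torsor trivialises) and argue as in Example \ref{ex: pushforward on spf} that the base change is affine. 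This means Proposition \ref{prop: GR facts we need} applies; by Remark \ref{remark: pushforward also work for formal stacks} it holds in the formal setting as well.

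\textbf{Step 2: identify the algebra and pass to modules.} Proposition \ref{prop: cts algebra sended correctly} says that the equivalence $\Gr\dalg_A^\jcomp\simeq\dalg(\bgm\times\spf(A))$ of Proposition \ref{prop: spf graded dalg bgm} sends $\rees(T)$ to $(p_T^\gr)_*\cO$. The square in Proposition \ref{prop: spf graded dalg bgm} commutes and lies over the equivalence $\Gr\Mod_A^\jcomp\simeq\Qcoh(\bgm\times\spf(A))$ of Remark \ref{remark: omit verification for qcoh}. Forgetting along the symmetric monoidal, conservative functors to $\Calg$ (cf. Proposition \ref{prop: map to calgj is symm monoidal}), I therefore get that $\rees(T)\in\Calg(\Gr\Mod_A^\jcomp)$ maps to $(p_T^\gr)_*\cO_X\in\Calg(\Qcoh(\bgm\times\spf(A)))$.

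Lemma \ref{lem: equivalence of operads} then gives a symmetric monoidal equivalence
\[\Mod_{\rees(T)}(\Gr\Mod_A^\jcomp)\simeq\Mod_{(p_T^\gr)_*\cO_X}(\Qcoh(\bgm\times\spf(A))).\]
By items (1)--(3) of Proposition \ref{prop: GR facts we need}, the right-hand side is $\Qcoh(X)$. The projection formula makes this identification symmetric monoidal, since $f_*$ intertwines the relative tensor product over $f_*\cO_X$ with $\otimes_{\cO_X}$.

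\textbf{Step 3: the completed Rees equivalence (main obstacle).} It remains to identify $\fil\Mod_T^\jcomp$ with $\Mod_{\rees(T)}(\Gr\Mod_A^\jcomp)$. This completed analogue of Lemma \ref{lem: equivalence of modules} is where I expect the work to be.

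I will take the symmetric monoidal equivalence $\Mod_T(\fil\Mod_A)\simeq\Mod_{\rees(T)}(\Gr\Mod_A)$ of Lemma \ref{lem: equivalence of modules}. By Lemma \ref{lem: rees presrves complete object}, its graded pieces agree with the filtered pieces, so it restricts to an equivalence between the full subcategories of objects whose pieces are $J$-complete, namely the levelwise condition of Recollections \ref{rec: filteredjcomp} and \ref{rec: def of gradedjcomp}.

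The subtle point is the monoidal structure. On both sides it is the completed one, obtained by applying levelwise $(-)\cj$ to the uncompleted tensor product. Completion is computed levelwise and commutes with the Rees functor, because Rees only reindexes pieces. So the restricted equivalence intertwines the two localisations, and the argument of Proposition \ref{prop: dalgj is presentable}(3) (a $J$-local fiber stays $J$-local after tensoring) shows the completed structures match.

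Finally, composing the equivalences from Steps 2 and 3 gives the claimed symmetric monoidal equivalence.
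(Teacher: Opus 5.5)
Your proposal is correct and is essentially the paper's argument. The paper proves this corollary by a one-line reduction to Proposition~\ref{prop: cts algebra sended correctly} via the template of Remark~\ref{lem: recognising cts algebra} (Lemma~\ref{lem: equivalence of operads}, Proposition~\ref{prop: GR facts we need}, and the completed Rees equivalence), exactly as in the decompleted Corollary~\ref{cor: simpson for filtered algebras}; you additionally make explicit two points the paper leaves implicit, namely the quasi-affineness of $p_T^\gr$ and the restriction of Lemma~\ref{lem: equivalence of modules} to levelwise $J$-complete objects with their completed monoidal structures.
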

\begin{proof}
    This follows \emph{mutatis mutandis} from Proposition \ref{prop: cts algebra sended correctly} as Corollary \ref{cor: simpson for filtered algebras} followed from Proposition \ref{prop: gr sends correct algebra}.
\end{proof}

\begin{corollary}\label{cor: cts main theorem for dalg}
Let $T\in (\fil\dalg_A^\jcomp)^\cn$. Then there's a symmetric monoidal equivalence of $\icats$ 
$$\fil\dalg_T^\jcomp\simeq \dalg(\spf(\pij\rees(T)/\G_m)$$ and the following diagram commutes  \[\begin{tikzcd}[cramped]
	{\fil\dalg^\jcomp_{T}} & {\dalg(\spf(\pi_!\rees(T))/\G_m)} \\
	{\fil\Mod^\jcomp_T} & {\Qcoh(\spf(\pi_!\rees(T))/\G_m)}
	\arrow[from=1-1, to=1-2]
	\arrow["U"', from=1-1, to=2-1]
	\arrow["U", from=1-2, to=2-2]
	\arrow[from=2-1, to=2-2]
\end{tikzcd}\] 
\end{corollary}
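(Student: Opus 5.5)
The plan is to copy the proof of Corollary \ref{cor: simpson for derived algebras on filtered algebras}, replacing each non-complete ingredient by its $J$-complete counterpart. Write $X:=\spf(\pij\rees(T))/\G_m$ and $p_T^\gr\colon X\to \bgm\times\spf(A)$ for the morphism of Construction \ref{cons: manufacturing completed graded stacks.}.

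\emph{Step 1: $p_T^\gr$ is quasi-affine.} Base change along the cover $q\colon\spf(A)\to\bgm\times\spf(A)$ gives $\spf(\pij\rees(T))\to\spf(A)$. By Example \ref{ex: pushforward on spf} this is quasi-affine, since it is base changed from a map of affine schemes along $\spf(A)\to\spec(A)$. Quasi-affineness of a schematic map can be tested after base change to affines, and any affine $\spec(R)\to\bgm\times\spf(A)$ factors, after an fpqc cover, through $\spf(A)$. So $p_T^\gr$ is quasi-affine in the sense of Definition \ref{def: quasi-affine etc}, using the prestack-level validity of Remark \ref{remark: pushforward also work for formal stacks}.

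\emph{Step 2: Proposition \ref{cor: quasi-aff preserves dalg}.} This gives
\[\dalg(X)\simeq \dalg(\bgm\times\spf(A))_{(p_T^\gr)_*\cO/}.\]

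\emph{Step 3: identify the undercategory.} By Proposition \ref{prop: spf graded dalg bgm} and Proposition \ref{prop: cts algebra sended correctly}, the right-hand side is equivalent to $(\Gr\dalg_A^\jcomp)_{\rees(T)/}$. It remains to identify this with $\fil\dalg^\jcomp_T$. Remark \ref{cons: induced dalg on animated for rees} gives $(\fil\dalg_A)_{T/}\simeq(\Gr\dalg_{A[t]})_{\rees(T)/}$, and the latter equals $(\Gr\dalg_A)_{\rees(T)/}$ because $\rees(T)$ is an $A[t]$-algebra. I then restrict to $J$-complete objects on both sides. Lemma \ref{lem: rees presrves complete object} shows $\rees$ preserves completeness, and $\rees$ is an equivalence whose graded pieces are the filtered pieces, so it also reflects completeness. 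Hence the equivalence restricts to the full subcategories $\fil\dalg^\jcomp_T$ and $(\Gr\dalg^\jcomp_A)_{\rees(T)/}$.

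\emph{Step 4: commutativity and monoidality.} For the square, each equivalence used commutes with the forgetful functors: the square of Proposition \ref{prop: spf graded dalg bgm}, Construction \ref{cons: lem the rees construction commutes for derived algebra}, and the module-level analogue of the monadic identification in Proposition \ref{cor: quasi-aff preserves dalg} via Proposition \ref{prop: GR facts we need}. Pasting these squares yields the square with the bottom equivalence of Corollary \ref{cor: cts main theorem}. Symmetric monoidality follows because all functors preserve colimits and both sides carry the cocartesian (completed) monoidal structure, as in Proposition \ref{prop: map to calgj is symm monoidal}.

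\emph{Main obstacle.} I expect the difficulty to be in Step 3 together with the compatibility in Step 4. Proposition \ref{cor: quasi-aff preserves dalg} produces the undercategory of $(p_T^\gr)_*\cO$ as an algebra in $\dalg$. So I must check that the identification $\rees(T)\mapsto(p_T^\gr)_*\cO$ of Proposition \ref{prop: cts algebra sended correctly} holds in $\dalg(\bgm\times\spf(A))$ and not merely in $\Calg$. This should be checked through the continuous cobar complexes, since the forgetful functor to $\Calg$ is conservative and colimit-preserving.
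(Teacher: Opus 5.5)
Your proposal is correct and is the paper's argument: the paper proves this by running the proof of Corollary \ref{cor: simpson for derived algebras on filtered algebras} \emph{mutatis mutandis}. That is, it applies Proposition \ref{cor: quasi-aff preserves dalg} to $p_T^\gr$, uses Propositions \ref{prop: spf graded dalg bgm} and \ref{prop: cts algebra sended correctly} to identify the undercategory with $(\Gr\dalg_A^\jcomp)_{\rees(T)/}$, and then applies the Rees equivalence on undercategories; you additionally spell out the quasi-affineness of $p_T^\gr$ and the restriction to $J$-complete objects, which the paper leaves implicit. The obstacle you flag is already handled: Proposition \ref{prop: cts algebra sended correctly} is stated as an identification in $\dalg(\bgm\times\spf(A))$, with the comparison of coactions via the continuous cobar complexes carried out in its proof, so you can simply cite it.
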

\begin{proof}
This follows \emph{mutatis mutandis} like Corollary \ref{cor: simpson for derived algebras on filtered algebras} from the continuous analogues of the statements occuring in the proof there.
\end{proof}

\begin{example}\label{example main thm.}
    Let $(A,I)$ be an orientable prism and let $d\in I$ be a global trivialisation for the Cartier divisor $I\in \Pic(A)$. Then by Corollary \ref{cor: cts main theorem for dalg}, we have an equivalence of $\icats$ 
    $$\fil\dalg_{\fil_{d}A}^{(p,d)\text{-comp}}\simeq \dalg(\spf(A[u,t]/(ut-d)/\G_m),$$ where we used the computation that 
    $$\rees(\fil_dA)\simeq A[u,t]/(ut-d)$$ as graded algebras.

\end{example}

\subsection{$t$-structures for derived algebras on stacks and miscellaneous compatibilities.}

In this section, which is perhaps the least satisfactory of all of the sections, we explain some facts about $t$-structures in the formal setting.

\begin{recollection}\label{rec: t-structure on geometric stacks}
    Recall that a stack $X\colon (\aring)_{\fpqc}\to \calS$ is geometric if there is a faithfully flat\footnote{We warn the reader that faithfully flat here is \emph{stronger} than an effective epimorphism for the flat topology. A representable schematic morphism $X\to Y$ of prestacks is faithfully flat if and only if for any affine scheme $\spec(R)\to Y$ the base change $X\times_Y\spec(R)\to \spec(R)$ is faithfully flat in the sense of derived geometry.} affine $p\colon \spec(R)\to X$ for some animated ring $R$. Lurie shows in \cite[Proposition 3.4.17]{Lurie2011DAGVIII} that this implies that the canonical $t$-structure on $\Qcoh(X)$ has the property that $\cF\in \Qcoh(X)$ is  (co)connective if and only if $p^*\cF$ is (co)connective in $\Mod_R$. Note that, strictly speaking, Lurie's proof is given in the context of spectral algebraic geometry while our work is in derived algebraic geometry. However, an examination of the proof shows that \emph{literally} the same proof works in our context. 
\end{recollection}

\begin{remark}\label{remark: stacks of filtered are all geometric}
    The stacks of Construction \ref{cons: coaction of Az} are all geometric stacks in this sense.
\end{remark}

\begin{definition}\label{def: connectivity for derived algebras}
    Let $X\colon \aring\to \calS$ be any prestack. We define  as the fiber product $$\dalg^\cn(X):=\dalg(X)\times_{\Qcoh(X)}(\Qcoh(X))_{\geq 0}$$ and similarly we define 
    $$\dalg^\ccn(X):=\dalg(X)\times_{\Qcoh(X)}(\Qcoh(X))_{\leq 0}.$$
\end{definition}

\begin{proposition}\label{prop: texactness of derived algebras}
    Let $X$ be a geometric stack. Let $p\colon \spec(R)\to X$ be a faithfully flat surjective representable morphism from an affine derived scheme exhibiting the geometricity of $X.$ Then a derived algebra $\cA\in \dalg(X)$ is in $\dalg^\cn(X)$ (resp. $\dalg^\ccn(X)$) if and only if $p^*\cA\in \dalg_R^\cn$ (resp. $p^*\cA\in \dalg_R^\ccn).$ 
\end{proposition}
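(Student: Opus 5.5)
This is essentially a formal unwinding of the definitions combined with Recollection \ref{rec: t-structure on geometric stacks}. By Definition \ref{def: connectivity for derived algebras}, $\cA\in\dalg(X)$ lies in $\dalg^\cn(X)$ exactly when $U_X\cA\in\Qcoh(X)_{\geq 0}$. Likewise, $p^*\cA\in\dalg_R$ lies in $\dalg_R^\cn$ exactly when $U_R(p^*\cA)\in(\Mod_R)_{\geq 0}$. Here we use that for an affine $\spec(R)$ the right Kan extension gives $\dalg(\spec R)=\dalg_R$ and $\Qcoh(\spec R)=\Mod_R$ with their usual $t$-structures. The same holds with $\ccn$ and $\leq 0$ in place of $\cn$ and $\geq 0$. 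So the claim reduces to two ingredients.

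\textbf{First ingredient: pullback commutes with forgetting.} We need a natural equivalence $U_R\circ p^{*,\dalg}\simeq p^*\circ U_X$. I would obtain this directly from Construction \ref{cons: right kan extension of dalg}. Both $\dalg(-)$ and $\Qcoh(-)$ are defined as limits over $\spec(S)\to X$, and the forgetful functor $U_X$ is the limit of the functors $U_S\colon\dalg_S\to\Mod_S$. These are compatible with base change, since for $S\to S'$ one has $U_{S'}(B\otimes_S S')\simeq U_S(B)\otimes_S S'$ by \cite[Remark 4.2.29]{Rak20}. Hence pullback along any $\spec(R)\to X$, which on both sides is simply evaluation at that vertex of the limit, commutes with the forgetful functors. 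Alternatively, one can use the square established in the proof of Lemma \ref{lem: pushforwad preserves dalg}, where $f^{*,\dalg}$ is built from $f^*$ via naturality of $\L\Sym^n$, so the square commutes by construction.

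\textbf{Second ingredient: detecting (co)connectivity.} By Recollection \ref{rec: t-structure on geometric stacks}, since $p$ is faithfully flat and affine, an object $\cF\in\Qcoh(X)$ is (co)connective if and only if $p^*\cF$ is. Applying this with $\cF=U_X\cA$ and combining it with the first ingredient gives both the connective and coconnective statements at once.

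\textbf{Main obstacle.} The only real issue is whether Lurie's proof of \cite[Proposition 3.4.17]{Lurie2011DAGVIII} transfers to the derived setting, in particular for the coconnective half. There the coconnective part of $\Qcoh(X)$ is defined by orthogonality, so one needs $p^*$ to be $t$-exact and conservative on hearts. This is where faithful flatness enters, through flat descent for $\Qcoh$ along the Čech nerve of $p$ (compare Theorem \ref{flat descent} for $\Mod$), together with $p_*$ being $t$-exact because $p$ is affine.

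I would take this as given, as the paper does in Recollection \ref{rec: t-structure on geometric stacks}. If pressed, the plan is as follows. Write $\Qcoh(X)=\Tot(\Mod_{R^{\bullet}})$. Since each coface map is flat, the levelwise $t$-structures are compatible. Define the $t$-structure levelwise and check that it agrees with the one generated by connective objects; this uses that $p_*p^*$ is $t$-exact, so that $\mathrm{Hom}$ out of connective objects into $\cF$ can be computed through the cobar resolution.
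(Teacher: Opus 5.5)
Your proposal is correct and is essentially the paper's argument. The paper says the statement follows immediately from Definition \ref{def: connectivity for derived algebras} and Recollection \ref{rec: t-structure on geometric stacks}; you make explicit the compatibility $U_R\circ p^{*}\simeq p^*\circ U_X$, which the paper leaves implicit, and like the paper you take Lurie's detection of (co)connectivity along a faithfully flat affine cover as given, so your optional final sketch is not needed.
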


\begin{proof}
    This follows immediately from Definition \ref{def: connectivity for derived algebras} and Recollection \ref{rec: t-structure on geometric stacks}.
\end{proof}

\begin{remark}\label{remark: moulinos's generic and special fiber maps}
    In \cite[\S~6]{Moulinos2021GeometryOfFiltrations}, the author proves that there are unique maps $\bgm\to \agm$ (the inclusion of the special fiber) and $\G_m/\G_m\to \agm$ (the inclusion of the generic fiber) which induce maps corresponding to taking the graded of a filtered object and the underlying object of the filtered object respectively. His proof uses Tannaka duality in the sense of \cite{Lurie2011DAGVIII} and \cite{BhattHalpernLeistner2017TannakaRevisited}. 
    There is no hope of replicating his results naively in the derived setting as the next example shows.
\end{remark}

\begin{example}\label{example: no derived tannaka}
Let $R$ and $S$ be two $\einf$-rings. Then by \cite[Proposition 7.1.2.7]{HA} there is an equivalence of mapping spaces 
$$\Maps_{\Calg(\Sp)}(R,S)\simeq \Maps_{\Calg(\Pr^L_{\st})}(\Mod^\otimes_R,\Mod_S^{\otimes}).$$
In particular the symmetric monoidal structure on $\Mod_R$ is too little information to remember maps between animated rings. While rectifying this is work in progress \cite{MundingerSahai_inprep}, we turn to other means to get analogs of Remark \ref{remark: moulinos's generic and special fiber maps}. This was already hinted at in Remark \ref{remark: tensor with bgm}.
\end{example}

\begin{lemma}\label{lemma: canonical maps even in derived geometry}
Let $A$ be an animated ring. Then there are canonical maps\footnote{We explicitly do not claim any uniqueness upto contractible ambiguity of the maps of stacks.}
\begin{enumerate}
    \item $j\colon \spec(A)\simeq \G_m/\G_m\to \agm\times \spec(A) $ which after the identification of the corresponding quasi-coherent categories with $\Mod_A$ and $\fil\Mod_A$ correspond to sending $\fil^\bullet X\in \fil\Mod_A$ to $X(-\infty):=\colim_i \fil^i X$.
     \item $i\colon B\G_m\times \spec(A)\to \agm\times \spec(A) $ which after the identification of the corresponding quasi-coherent categories with $\Gr\Mod_A$ and $\fil\Mod_A$ correspond to sending $\fil^\bullet X\in \fil\Mod_A$ to $\bigoplus_{i\in \ZZ} \fil^i X$.
\end{enumerate}
\end{lemma}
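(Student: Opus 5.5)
The plan is to build both maps as maps of quotient stacks, using the explicit presentation of $\agm$ from Construction \ref{cons: coaction of Az} and Remark \ref{remark: coaction is same}: $\agm\times\spec(A)=|\spec(A[t])\times\spec(A[\ZZ])^{\times\bullet}|$, the geometric realisation of the cobar complex for the $A[\ZZ]$-coaction on $\pi_!\rees(\ins^0A)=A[t]$. Tannaka duality is unavailable here (Example \ref{example: no derived tannaka}), so instead of characterising the maps by their effect on $\Qcoh$, I will write down morphisms of graded animated $A$-algebras out of $A[t]$. Each such map is $\G_m$-equivariant after applying $\pi_!$, since $\pi_!$ is comonadic by Proposition \ref{prop: monadicity over animated}, so it induces a map of cobar complexes and hence a map of geometric realisations. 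For (1) I use the graded map $A[t]\to A[t,t^{-1}]=A[\ZZ]$. For (2) I use $A[t]\to A$, setting $t=0$, with $A$ placed in degree $0$ via $\ins^0$ of Remark \ref{remark: ins0 animated rings}. On source stacks these give $\spec(A[\ZZ])/\G_m\simeq\spec(A)$, since the action is free and transitive and the cobar complex is the Čech nerve of $\spec(A)\to\spec(A)$, and $\spec(A)/\G_m=\bgm\times\spec(A)$.

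Next I identify the pullback functors. Both source stacks are of the form $\spec(\pi_!E)/\G_m$, where $E$ is the graded algebra $A[t,t^{-1}]$ or $A$ over $A[t]$. By the argument of Remark \ref{remark: how to identify image of algebra object} and Proposition \ref{prop: gr sends correct algebra}, which applies verbatim to any connective graded $A[t]$-algebra, $\Qcoh$ of the source is $\Mod_E(\Gr\Mod_A)$. Under Corollary \ref{cor: simpson for filtered algebras} the pullback along the map becomes base change $\Mod_{A[t]}(\Gr\Mod_A)\to\Mod_E(\Gr\Mod_A)$. To see this, I compare the two functors after pulling back along the faithfully flat cover $\spec(A)\to\bgm$. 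There both are the ordinary base change $-\otimes_{A[t]}E$ on underlying modules, and the $\G_m$-actions agree because they come from the same cobar complexes. Remark \ref{remark: base change on rees} then computes the composites. For (1), $\rees(X)\otimes_{A[t]}A[t,t^{-1}]$ corresponds to $\colim_i\fil^iX$, using $\Mod_{A[\ZZ]}(\Gr\Mod_A)\simeq\Mod_A$ via degree $0$ or equivalently via $\spec(A[\ZZ])/\G_m\simeq\spec(A)$. For (2), $\rees(X)\otimes_{A[t]}A$ is the associated graded. I expect (2) is intended with $\bigoplus\gr^iX$, or, if literally $\fil^i$, with $\rees$ viewed through $\Gr\Mod_A$; I would state the answer as the base change, note that it matches, and fix the exact normalisation accordingly.

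The main obstacle is coherence. The maps of cobar complexes have to be genuinely simplicial, and the resulting functors on $\Qcoh$ have to agree with the base-change functors as functors, not merely on objects. I would handle this by building everything in $\Gr\dalg^{\cn}_{A}$ and transporting it through the comonadic equivalence of Proposition \ref{prop: monadicity over animated}. That equivalence is functorial, so a map of graded algebras yields a map of comodules and hence a natural map of cobar complexes. The final compatibility on $\Qcoh$ then reduces to the functoriality of Proposition \ref{prop: moulinos for bgm} in the algebra, via Lemma \ref{lem: equivalence of operads}. Since we only claim canonical maps and no uniqueness, this construction is enough.
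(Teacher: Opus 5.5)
Your proposal is correct, but it takes a different route from the paper. The paper never constructs the maps over $A$ directly. It works over $\ZZ$, where $\A^1/\G_m$ and $B\G_m$ are classical, and takes $j$ and $i$ from Moulinos's Tannakian argument, together with the commuting square identifying $j^*$ with $\fil^\bullet X\mapsto\operatorname{colim}_i\fil^iX$. It then tensors that square with $\mathrm{Mod}_A$, using the Gaitsgory--Rozenblyum K\"unneth formula $\Qcoh(\A^1/\G_m\times\spec(A))\simeq\Qcoh(\A^1/\G_m)\otimes_{\mathrm{Mod}_\ZZ}\mathrm{Mod}_A$ to transport the description of the pullback.

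You instead produce the maps over $A$ as quotients of $\spec$ of the graded maps $A[t]\to A[t^{\pm1}]$ and $A[t]\to A$. You then read off $j^*$ and $i^*$ as base change of modules over stacks that are affine over $B\G_m$.

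The paper's argument is shorter given the literature. However, it leans on classical Tannaka duality, and it implicitly identifies the tensored-up equivalence $\fil\mathrm{Mod}_\ZZ\otimes_{\mathrm{Mod}_\ZZ}\mathrm{Mod}_A\simeq\fil\mathrm{Mod}_A$ with the one of Corollary~\ref{cor: simpson for filtered algebras}. Your route stays inside the paper's own framework; Construction~\ref{cons: coaction of Az} is how $\A^1/\G_m$ is defined over $A$. It also makes the maps explicit, and it upgrades directly to derived algebras and to the $J$-complete setting via Proposition~\ref{cor: quasi-aff preserves dalg}, which is what Corollary~\ref{cor: can map even for derived algebras} is after.

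The coherence issue you flag is best settled without comparing on the cover $\spec(A)\to B\G_m$. Take $g\colon X'\to X$ over $B\G_m$ with both quasi-affine over $B\G_m$. Then $g^*$ is a colimit-preserving, symmetric monoidal, $\Qcoh(B\G_m)$-linear functor $\mathrm{Mod}_{B}(\Qcoh(B\G_m))\to\mathrm{Mod}_{B'}(\Qcoh(B\G_m))$. Hence it is base change along the algebra map $B\to B'$ obtained by pushing forward $\cO_X\to g_*\cO_{X'}$. So all you need is that Proposition~\ref{prop: gr sends correct algebra} is natural in the graded algebra, which its proof gives.

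Two smaller points:
\begin{enumerate}
\item The cobar complex of $A[\ZZ]$ coacting on itself is the \v{C}ech nerve of $\G_m\to\spec(A)$, not of $\spec(A)\to\spec(A)$. The identification $\G_m/\G_m\simeq\spec(A)$ holds because $\G_m\to\spec(A)$ is faithfully flat.
\item Your reading of (2) is the right one: $i^*$ is the associated graded $\fil^\bullet X\mapsto(\gr^iX)_{i\in\ZZ}$. The literal formula $\fil^\bullet X\mapsto(\fil^iX)_{i\in\ZZ}$ is $p_*$ for $p\colon\A^1/\G_m\to B\G_m$. It sends the unit to $A[t]$, so it is not symmetric monoidal and cannot be a pullback. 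The paper's proof of (2) (``similar'') does not address this.
\end{enumerate}
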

\begin{proof}
We will only prove $(1)$ and the proof of $(2)$ is similar.
    We may first work over $\ZZ$. Then this is clear from \cite[\S~6]{Moulinos2021GeometryOfFiltrations} since for classical stacks Tannaka duality indeed applies. 

    We thus have a map $j\colon \spec(\ZZ)\to \agm$ so that $$j^*\colon \Qcoh(\agm)\to \spec(\ZZ)$$ has the intended effect. In particular the following diagram commutes $$
    % https://tikzcd.yichuanshen.de/#N4Igdg9gJgpgziAXAbVABwnAlgFyxMJZABgBpiBdUkANwEMAbAVxiRAB12BFAYwgAsAFJzoBzALYBKEAF9S6TLnyEUARnJVajFm068Bw9nDQwehgFrnJ0uQux4CRMqs31mrRB3YAzLA04AstAA+pyWsvIgGPbKROou1G46noEhYeaymjBQovBEoN4AThDiSGQgOBBI6lrubABWAHoAVBEFxaWI5ZVIAEzUDHQARjAMAAqKDioghVii-DggidoeXtjiMACObSBFJX3UPYgAzMt1KT5+jZxDTAwMMDgAGpzidGhwlQAET4IAtJwsGBvDgAJ42SJ7To1I6nWrJNZYDbbGQUGRAA
\begin{tikzcd}
\Qcoh(\agm) \arrow[r, "j^*"] \arrow[d, "\simeq"']         & \Qcoh(\spec(\ZZ)) \arrow[d, "\simeq"] \\
\fil\Mod_\ZZ \arrow[r, "\fil^\bullet X\mapsto X(-\infty)"] & \Mod_\ZZ                             \end{tikzcd}
.$$

We may tensor this with $\Mod_A$ for any animated ring $A$ and use \cite[Proposition 3.5.3]{GaitsgoryRozenblyum2017SAGI}.
\end{proof}

\begin{corollary}\label{cor: can map even for derived algebras}
Let $A$ be an animated ring. Then there are canonical maps
\begin{enumerate}
    \item $j\colon \spec(A)\simeq \G_m/\G_m\to \agm\times \spec(A) $ which after the identification of the corresponding derived algebra categories with $\dalg_A$ and $\fil\dalg_A$ correspond to sending $\fil^\bullet S\in \fil\Mod_A$ to $S(-\infty):=\colim_i \fil^i S$.
     \item $i\colon B\G_m\times \spec(A)\to \agm\times \spec(A) $ which after the identification of the corresponding derived algebra categories with $\Gr\dalg_A$ and $\dalg_A$ correspond to sending $\fil^\bullet S\in \fil\Mod_A$ to $\bigoplus_{i\in \ZZ} \fil^i S$.
\end{enumerate}
\end{corollary}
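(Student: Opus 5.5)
The plan is to bootstrap from Lemma \ref{lemma: canonical maps even in derived geometry}, which provides the maps of stacks $j$ and $i$ together with commuting squares identifying $j^*$ and $i^*$ on $\Qcoh$ with $\fil^\bullet X\mapsto X(-\infty)$ and $\fil^\bullet X\mapsto \bigoplus_i \fil^i X$ (the latter under the convention that $i^*$ is followed by the forgetful pullback to $\spec(A)$, i.e. $\pi_!$ of the associated graded, as in Remark \ref{remark: underlying algebra functor}). The maps of stacks themselves are the same as in the lemma, so only the identification of the induced functors on $\dalg$ needs an argument. By Lemma \ref{lem: pushforwad preserves dalg} (more precisely its proof), any map of stacks $g$ induces a colimit preserving $g^{*,\dalg}$ with $U\circ g^{*,\dalg}\simeq g^*\circ U$. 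So for $j$ we get a functor $j^{*,\dalg}\colon \dalg(\agm\times\spec(A))\to \dalg_A$ lying over $j^*$.

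Next I would transport along the equivalences already established. Using the Example following Corollary \ref{cor: simpson for derived algebras on filtered algebras}, $\fil\dalg_A\simeq \dalg(\agm\times\spec(A))$, compatibly with forgetful functors to $\fil\Mod_A\simeq \Qcoh(\agm\times\spec(A))$. On the other side, the functor $\fil\dalg_A\to\dalg_A$, $S\mapsto \colim_i\fil^i S$, exists as a functor of derived algebras: it is the base change along $A[t]\to A[t,t^{-1}]$ of Remark \ref{remark: base change on rees} followed by evaluation in degree $0$ (or by Remark \ref{remark: all base changes preserve tensors}), and its underlying module is $X(-\infty)$. Thus we have two functors $\fil\dalg_A\to \dalg_A$ both lying over the same module functor. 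To identify them I would use that both are colimit preserving and commute with the $\L\Sym$ monads. Since $\fil\dalg_A$ is generated under sifted colimits by free objects $\L\Sym(M)$, it suffices to produce a natural equivalence on free algebras, where both send $\L\Sym_{\fil}(M)$ to $\L\Sym_A(M(-\infty))$ because $j^*$ and $\colim_i$ are symmetric monoidal and commute with the filtered $\L\Sym$ (via the naturality of $\L\Sym^n$ used in Lemma \ref{lem: pushforwad preserves dalg}). For $i$ the same argument works with $\Gr\dalg_A$ and the equivalence of Proposition \ref{theorem: graded dalg bgm}, using $\gr$ as a morphism of derived algebraic contexts and $\pi_!$ from Corollary \ref{pi! on dalg}.

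The main obstacle is coherence. Agreement on underlying modules plus agreement on free objects does not by itself give an equivalence of functors of $\dalg$ unless it is compatible with the monad structures, and Example \ref{example: no derived tannaka} shows that we cannot argue via Tannaka duality. I would instead argue universally over $\ZZ$: there all four categories are derived algebraic contexts and the relevant functors are morphisms of contexts. By Recollection \ref{rec: morphisms in contexts}, a morphism of contexts determines its $\dalg$ lift canonically, so the $j^*$ square of contexts lifts automatically. Then I would pass to general $A$ by taking undercategories at $A$, exactly as in Construction \ref{cons: lifting pi! to animated}, using that $j^*(A)=A$ and that the derived base change of Proposition \ref{prop: derived base change for algebras} identifies $\dalg(\agm\times\spec(A))$ with the undercategory of $A$ pulled back from $\agm$.
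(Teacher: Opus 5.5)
Your first two steps are the paper's proof. The maps come from Lemma \ref{lemma: canonical maps even in derived geometry}, Lemma \ref{lem: pushforwad preserves dalg} gives $j^{*,\dalg}$ over $j^*$, and one transports along $\fil\dalg_A\simeq\dalg(\agm\times\spec(A))$ from Corollary \ref{cor: simpson for derived algebras on filtered algebras}. The paper stops there: because the vertical composites are the forgetful functors, it concludes the transported functor ``must be'' $S\mapsto\colim_i\fil^iS$. You are right that lying over the same module functor, even with agreement on free objects, does not by itself identify two functors of derived algebras. The gap is in the argument you give to supply that coherence.

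Raksit's functoriality (Recollection \ref{rec: morphisms in contexts}) lifts $\colim\colon\fil\Mod_\ZZ\to\Mod_\ZZ$, or $j^*$ viewed as a morphism of contexts $\Qcoh(\agm)\to\Mod_\ZZ$, to Raksit's $\dalg$ of these contexts. The functor you must identify is a different one. It is $j^{*,\dalg}$ on the right Kan extended $\dalg(\agm)$ of Construction \ref{cons: right kan extension of dalg}. It is then transported along an equivalence with $\fil\dalg$ that the paper builds from the Rees construction, $\bgm$-descent and Proposition \ref{cor: quasi-aff preserves dalg}, not as the $\dalg$-lift of a morphism of contexts.

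To run your argument you would need the monad $\L\Sym_{\agm}$ of Construction \ref{cons: Kubrakli} to agree with Raksit's right-left extended monad on the context $\Qcoh(\agm)\simeq\fil\Mod_\ZZ$, compatibly with $j^*$ as a morphism of monads. That is the same coherence problem relocated, and ``lifts automatically'' does not provide it.

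One way to avoid contexts altogether is to factor $j$ as $\spec(A)\xrightarrow{t=1}\A^1_A\xrightarrow{q}\agm\times\spec(A)$. Two facts then suffice:
\begin{enumerate}
\item By construction of the equivalence in Proposition \ref{theorem: graded dalg bgm} (Remark \ref{remark: underlying algebra functor}), together with Proposition \ref{prop: derived base change for algebras} applied to the square with $\A^1_A\to\agm\times\spec(A)$ over $\spec(A)\to\bgm$, the functor $q^{*,\dalg}$ corresponds to $S\mapsto\pi_!\rees(S)\in\dalg_{A[t]}$.
\item The functor $(t=1)^*$ is ordinary base change of derived rings.
\end{enumerate}
So $j^{*,\dalg}$ is identified, as a functor of derived algebras, with $S\mapsto\pi_!\rees(S)\otimes_{A[t]}A$ (where $t\mapsto 1$). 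This agrees with the functor you describe via $A[t]\to A[t^{\pm1}]$ and evaluation in degree $0$, and no passage through $\ZZ$ is needed. If you do pass to undercategories, note that the identification of $\dalg(\agm\times\spec(A))$ with an undercategory is Proposition \ref{cor: quasi-aff preserves dalg}. Proposition \ref{prop: derived base change for algebras} only supplies the Beck--Chevalley compatibility.

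For (2), $\pi_!\gr^\bullet X=\bigoplus_i\gr^iX$, not $\bigoplus_i\fil^iX$; the latter is the pullback along $\A^1\to\agm$. So your parenthetical convention does not reconcile the displayed formula. What $i^*$ actually computes is the associated graded $\fil^\bullet X\mapsto\gr^\bullet X\in\Gr\dalg_A$. That is what your $\gr$-based argument, or the analogous factorization through $t=0$, proves.
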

\begin{proof}
    We will prove $(1)$ as the proof of $(2)$ is similar. 

    By Lemma \ref{lem: pushforwad preserves dalg} and Lemma \ref{lemma: canonical maps even in derived geometry}, we have a commuting diagram $$% https://tikzcd.yichuanshen.de/#N4Igdg9gJgpgziAXAbVABwnAlgFyxMJZABgBpiBdUkANwEMAbAVxiRAB12pGBzACk50eAW055h8AASc4aGAGM+AQQCUKkAF9S6TLnyEUARnJVajFm07cG-GXMWr1WndjwEiZQ6frNWiDuwAivIQABYC7EKi7OJSdgrKapraIBiu+kTGXtQ+Fv6cwWERUWJYEnDS7LIJjskueu4oZABM3uZ+AQBmWAycALLQAPpKdam6bgbIxq057ZbsA1DDmqYwUDzwRKCdAE4QwkhkIDgQSMZmvmwAVgB6AFSju-uH1CdIzbOX-gCqINQMdAARjAGAAFcYZfw7LA8UI4R57A6Ic5vRAAZk+eRAv2cICeSI+x1O6MxHVuD1x+Per2JABZSfNsBIAI5-EAA4FgiGNEDQ2HwymIpD0olIACsDPy7G6DBunEBTAYDBgOEkAA1OMI6Gg4Cd1XwALScLBgTo4ACeThSVJJosQEouWJkZRgrI0FA0QA
\begin{tikzcd}
\dalg(\agm\times \spec(A)) \arrow[r, "j^*"] \arrow[d, "U"']      & \dalg(\spec(A)) \arrow[d, "U"]                 \\
\Qcoh(\agm\times \spec(A)) \arrow[r, "j^*"] \arrow[d, "\simeq"'] & \Qcoh(\agm\times \spec(A)) \arrow[d, "\simeq"] \\
\fil\Mod_A \arrow[r, "\fil^\bullet X\mapsto X(-\infty)"]         & \Mod_A                                        
\end{tikzcd}$$ where we have engaged in abuse of notation of Notation \ref{notation: abuse of notation for pushforward}.

By naturality of both vertical composite arrows i.e. the naturality in Corollary \ref{cor: simpson for derived algebras on filtered algebras}, we see that the vertical composites on both sides are the forgetful functors. It follows that one has a unique (upto contractible ambiguity) induced arrow 
$$\fil\dalg_A\to \dalg_A$$ which must be the underlying algebra functor sending $\fil^\bullet S\mapsto \colim_i \fil^i S.$
\end{proof}

Now we may turn our attention to the formal story. Therefore we now work only in the setting of notation \ref{notation: jcomp} which we recall. Here $A$ is an animated ring and $J\subset \pi_0(A)$ is a finitely generated ideal.

\begin{lemma}\label{lemma: canonical maps completed derived geometry}
Let $A$ be an animated ring and $J\subset \pi_0(A)$ a finitely generated ideal. Then there are canonical maps
\begin{enumerate}
    \item $j\colon \spf(A)\simeq \G_m/\G_m\times \spf(A)\to \agm\times \spf(A) $ which after the identification of the corresponding quasi-coherent categories with $\Mod^\jcomp_A$ and $\fil\Mod^\jcomp_A$ correspond to sending $\fil^\bullet X\in \fil\Mod_A$ to $X(-\infty):=(\colim_i \fil^i X)^\wedge_J$.
     \item $i\colon B\G_m\times \spf(A)\to \agm\times \spf(A) $ which after the identification of the corresponding quasi-coherent categories with $\Gr\Mod^\jcomp_A$ and $\fil\Mod^\jcomp_A$ correspond to sending $\fil^\bullet X\in \fil\Mod_A$ to $\widehat{\bigoplus}_{i\in \ZZ} \fil^i X$.
\end{enumerate}
\end{lemma}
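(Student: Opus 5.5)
The plan is to obtain both maps by base change from the decompleted maps of Lemma \ref{lemma: canonical maps even in derived geometry}, and then to identify the induced pullback functors using Theorem \ref{thm: mod on spf} and Remark \ref{remark: omit verification for qcoh}. We only treat $(1)$, since $(2)$ is identical after replacing $\colim_i$ by $\bigoplus_i$.

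\textbf{Step 1: constructing the map.} We take the map $j\colon \spec(A)\to \agm\times\spec(A)$ of Lemma \ref{lemma: canonical maps even in derived geometry} and apply $(-)\times_{\spec(A)}\spf(A)$, giving
\[
j\times \id\colon \spf(A)\to \agm\times\spf(A).
\]

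\textbf{Step 2: reducing to the tower.} Fix generators $J=(f_1,\ldots,f_r)$ and let $A_n$ be as in Lemma \ref{lem: luries lemma on spf}. By Remark \ref{remark: definition of formal stack}, the pullback along $j\times\id$ is the limit over $n$ of the pullbacks $j_n^*$ along
\[
j_n\colon \spec(A_n)\to \agm\times\spec(A_n).
\]
Lemma \ref{lemma: canonical maps even in derived geometry}, applied to the animated ring $A_n$, identifies each $j_n^*$ with $\fil\Mod_{A_n}\to\Mod_{A_n}$, $\fil^\bullet X\mapsto\colim_i\fil^iX$. These identifications must be compatible in $n$. That compatibility holds because in that lemma they are all obtained by tensoring the single square over $\ZZ$ with $\Mod_{A_n}$, which is functorial in $A_n$.

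\textbf{Step 3: identifying the limit functor.} Arguing as in Remark \ref{remark: omit verification for qcoh} with $\fil\Mod$ in place of $\Gr\Mod$ (write $\fil\Mod_{A_n}$ as a limit of functor categories $\Fun(\ZZ^\op,\Mod_{A_n})$ and commute limits), we get
\[
\lim_n \fil\Mod_{A_n}\simeq \fil\Mod_A^\jcomp,\qquad \lim_n\Mod_{A_n}\simeq\Mod_A^\jcomp,
\]
where both equivalences are $\{M_n\}\mapsto\lim_n M_n$ with inverse $M\mapsto\{M\otimes_A A_n\}$. Under these equivalences the limit functor sends $\fil^\bullet X$ to $\lim_n\colim_i(\fil^iX\otimes_AA_n)$. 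Since $-\otimes_AA_n$ commutes with colimits, this is
\[
\lim_n\bigl((\colim_i\fil^iX)\otimes_AA_n\bigr)=(\colim_i\fil^iX)^\wedge_J,
\]
using Lemma \ref{lem: adjoint is completion}.

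\textbf{Main obstacle.} The real difficulty is Step 3, more precisely ensuring that the identifications are coherent and natural in $n$, so that the limit functor is genuinely $X\mapsto(\colim_i \fil^i X)^\wedge_J$ rather than only agreeing with it objectwise. To handle this, we would build the functor $\fil\Mod_A\to\Mod_A$, $\fil^\bullet X\mapsto \colim_i\fil^iX$, over $A$ once, and check it is $\Mod_A$-linear. Base changing it along each $\Mod_A\to\Mod_{A_n}$ then yields a morphism of towers, which compares directly with the completion square of Remark \ref{remark: functoriality of spf}.
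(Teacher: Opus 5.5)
Your argument is correct. Its overall strategy matches the paper's: produce $j$ by base change from the uncompleted map of Lemma \ref{lemma: canonical maps even in derived geometry}, and identify the categories using Theorem \ref{thm: mod on spf}. The mechanism is different, though.

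The paper base changes $j\colon \spec(\ZZ)\to\agm$ in a single step along $\spf(A)\to\spec(\ZZ)$ and invokes \cite[Proposition 3.5.3]{GaitsgoryRozenblyum2017SAGI}. Since $\Qcoh(\agm)$ is compactly generated, hence dualizable, one gets $\Qcoh(\agm\times\spf(A))\simeq\Qcoh(\agm)\otimes_{\Mod_\ZZ}\Qcoh(\spf(A))\simeq\fil\Mod_\ZZ\otimes_{\Mod_\ZZ}\Mod_A^\jcomp$. Under this identification $j^*$ becomes $\colim\otimes\mathrm{id}$, i.e.\ the colimit formed inside $\Mod_A^\jcomp$, which is the $J$-completed colimit.

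You instead unpack this K\"unneth statement by hand. You decompose $\spf(A)$ along the tower $\{A_n\}$, apply the uncompleted lemma at each finite level, and commute $\Fun(\ZZ^{\mathrm{op}},-)$ with $\lim_n$. That is precisely how a dualizable factor passes through the limit defining $\Qcoh$ of a prestack.

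The trade-offs are as follows. In the paper's formulation the coherence-in-$n$ issue you flag never arises, because the comparison is a tensor product of functors in $\mathrm{Pr}^L$ and the completion comes for free from how colimits in $\Mod_A^\jcomp$ are defined. Your version is more explicit, since the completion is visible as $\lim_n(-\otimes_A A_n)$, and it avoids citing the K\"unneth formula for the non-affine prestack $\spf(A)$. The cost is that you must assemble the levelwise identifications into a morphism of towers. Your proposed remedy does exactly this: build $\fil^\bullet X\mapsto\colim_i\fil^iX$ once as a $\Mod_A$-linear functor and base change it along $\Mod_A\to\Mod_{A_n}$. This is really the same observation as the paper's $\colim\otimes\mathrm{id}$.
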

\begin{proof}
   We will only prove $(1)$ as the proof of $(2)$ is similar. In fact our proof is similar to that of Lemma \ref{lemma: canonical maps even in derived geometry}. Indeed we consider $\agm\to \spec(\ZZ)$ and base change this along $\spf(A)\to \spec(\ZZ)$ noting that $\Qcoh(\agm)$ is compactly generated (hence dualizable), we may again conclude by \cite[Proposition 3.5.3]{GaitsgoryRozenblyum2017SAGI}

\end{proof}

\begin{corollary}\label{cor: can map in the completed derived setting.}
Let $A$ be an animated ring and $J\subset \pi_0(A)$ a finitely generated ideal. Then there are canonical maps
\begin{enumerate}
    \item $j\colon \spf(A)\simeq \G_m/\G_m\times \spf(A)\to \agm\times \spf(A) $ which after the identification of the corresponding quasi-coherent categories with $\dalg^\jcomp_A$ and $\fil\dalg^\jcomp_A$ correspond to sending $\fil^\bullet X\in \fil\Mod_A$ to $X(-\infty):=(\colim_i \fil^i X)^\wedge_J$.
     \item $i\colon B\G_m\times \spf(A)\to \agm\times \spf(A) $ which after the identification of the corresponding quasi-coherent categories with $\Gr\dalg^\jcomp_A$ and $\fil\dalg^\jcomp_A$ correspond to sending $\fil^\bullet X\in \fil\Mod_A$ to $\widehat{\bigoplus}_{i\in \ZZ} \fil^i X$.
\end{enumerate}
\end{corollary}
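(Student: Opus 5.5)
I will follow the proof of Corollary \ref{cor: can map even for derived algebras}, replacing each ingredient by its $J$-complete counterpart. For (1), take the map $j\colon \spf(A)\to \agm\times\spf(A)$ produced by Lemma \ref{lemma: canonical maps completed derived geometry}; it is the base change of the map $j\colon \spec(\ZZ)\to \agm$ along $\spf(A)\to\spec(\ZZ)$. By Lemma \ref{lem: pushforwad preserves dalg}, $j^*$ lifts to $j^{*,\dalg}\colon \dalg(\agm\times\spf(A))\to \dalg(\spf(A))$, and it commutes with the forgetful functors $U$. This holds in the formal setting by Remark \ref{remark: completed base change holds for formal stacks}.

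The first step is a three-row commutative diagram. The top row is $j^{*,\dalg}$ and the middle row is $j^*$ on $\Qcoh$. The bottom row is the functor $\fil^\bullet X\mapsto (\colim_i\fil^iX)^\wedge_J$ from $\fil\Mod_A^\jcomp$ to $\Mod_A^\jcomp$, supplied by Lemma \ref{lemma: canonical maps completed derived geometry}.

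The second step identifies the vertical composites. On the left, apply Corollary \ref{cor: cts main theorem for dalg} with $T=A$ regarded as a $J$-complete filtered animated ring via $\ins^0$. Then $\rees(T)=A[t]$ and the stack is $\agm\times\spf(A)$, so we obtain $\fil\dalg^\jcomp_A\simeq \dalg(\agm\times\spf(A))$ compatibly with forgetful functors. On the right, use Corollary \ref{cor: dalg on spf} together with Theorem \ref{thm: mod on spf}. The composite left column is then the forgetful functor $\fil\dalg^\jcomp_A\to\fil\Mod^\jcomp_A$, and the right column is $U^J\colon \dalg^\jcomp_A\to\Mod^\jcomp_A$.

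The third step produces the induced functor $\fil\dalg_A^\jcomp\to\dalg_A^\jcomp$ and checks that on underlying modules it is $\fil^\bullet S\mapsto(\colim_i\fil^iS)^\wedge_J$. To identify it with a specific algebra-level functor, I will compare it with the composite of two functors. The first is the underlying-algebra functor $\fil\dalg_A\to\dalg_A$ of Corollary \ref{cor: can map even for derived algebras}. The second is completion $(-)^\wedge_J$, which is computed on modules by Corollary \ref{cor: dalgj is right adjointable}. Both functors preserve colimits and commute with the forgetful functors. Because $U^J$ is conservative (Proposition \ref{prop: dalgj is monadic.}), a natural map between the two functors is an equivalence if it is one on underlying modules. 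That natural map comes from base changing the decompleted square along $\spec(A)\leftarrow\spf(A)$, using the pullback diagram of Construction \ref{cons: can map from spf to spec}.

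Part (2) is identical, with $i$ in place of $j$. There one uses Proposition \ref{prop: spf graded dalg bgm} for $\dalg(\bgm\times\spf(A))\simeq\Gr\dalg^\jcomp_A$, and the functor on underlying modules is the completed sum $\widehat{\bigoplus}$.

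The main obstacle is coherence in the third step: making sure the natural transformation really exists at the level of derived algebras, not merely on underlying modules. I would obtain it by checking that all identifications used in Corollary \ref{cor: cts main theorem for dalg} are base changes of the uncompleted ones in Corollary \ref{cor: simpson for derived algebras on filtered algebras}, so that completion can be applied to the whole diagram of Corollary \ref{cor: can map even for derived algebras} at once.
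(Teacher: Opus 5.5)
Your proposal is correct. Steps 1 and 2 are exactly the paper's proof. The paper reruns the argument of Corollary~\ref{cor: can map even for derived algebras}, with Lemma~\ref{lemma: canonical maps completed derived geometry} and the naturality of Corollary~\ref{cor: cts main theorem for dalg} in place of their uncompleted versions. It stops there and simply asserts that the induced arrow $\fil\dalg_A^\jcomp\to\dalg_A^\jcomp$ is the completed underlying-algebra functor.

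Your step 3 goes beyond the paper. You base change the uncompleted square along $\spf(A)\to\spec(A)$ and use conservativity of $U^J$. This identifies the induced functor with $(-)^\wedge_J$ composed with the underlying-algebra functor of Corollary~\ref{cor: can map even for derived algebras}. The identification holds at the level of derived algebras, whereas the paper's ``must be'' only controls the effect on underlying modules. The cost is the compatibility you flag yourself: under the chosen identifications, the pullback $\dalg(\agm\times\spec(A))\to\dalg(\agm\times\spf(A))$ must be completion $\fil\dalg_A\to\fil\dalg_A^\jcomp$. The paper does not verify this either.

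One minor correction: $\ins^0 A$ lies in $\fil\dalg_A^\jcomp$ only when $A$ is $J$-complete. In general, apply Corollary~\ref{cor: cts main theorem for dalg} to $\ins^0(A^\wedge_J)$ instead. This is harmless, since everything depends only on $A^\wedge_J$.
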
   
\begin{proof}
We sketch the proof of $(1)$ as $(2)$ is proved similarly.
    This is proved using Lemma \ref{lemma: canonical maps completed derived geometry} and the naturality of Corollary \ref{cor: cts main theorem for dalg} as Corollary \ref{cor: can map even for derived algebras} is obtained from Lemma \ref{lemma: canonical maps even in derived geometry}.
\end{proof}

We end this section with a discussion of some $t$-structure issues in the formal setting. 

\begin{remark}
    Recall from Remark \ref{remark: definition of formal stack} that we defined a $J$-adic formal stack as a stack $X\colon \aring_\fpqc\to \calS$ whose structure morphism $X\to \spf(A)$. For the moment, we may define a geometric $J$-adic formal stack as a $J$-adic formal stack $X$, so that there is an $A$-algebra $B$ and a faithfully flat representable affine morphism $p\colon \spf(B)\to X.$

    Unfortunately, this doesn't imply that a complex $\cF\in \Qcoh(X)$ is coconnective if and only if $p^*\cF\in \Qcoh(\spf(B))=\Mod_B^{\jcomp}$ is coconnective for the canonical $t$-structure induced from $\Mod_B$. Indeed the obstruction can be obtained by an examination of the proof of \cite[Proposition 3.4.17]{Lurie2011DAGVIII}. The proof relies on a certain comonad obtained by tensoring with the algebra $C:=B\widehat{\otimes}_X B$ being $t$-exact (i.e. the comonadic version of \cite[Proposition 6.20]{Lur11b}). In our completed setting the comonad is only right $t$-exact and not left $t$-exact as would be required. Therefore an analog of Proposition \ref{prop: texactness of derived algebras} does not hold.

\end{remark}

\section{Forthcoming work.}\label{sec: forthcoming work}

We will now explain some forthcoming work of the author wherein the theory developed in this note will be utilised. 

We start with \cite{Sah25Nyg}. In that paper, given a bounded prism so that $A/I$ satisfies some regularity conditions, like either being regular (example in the case of the Breuil-Kisin prism), or being perfectoid, then there's a good theory of the \emph{relative} Nygaard filtered prismatization. 
Set $\oA:=A/I.$
This theory will send a smooth $p$-adic formal scheme $X/\overline{A}\mapsto (X/\oA)^\nyg$ a $p$-adic formal stack living over $\spf (A[u,t]/(ut-d))/\G_m$ and has the following features (among others)

\begin{enumerate}
\item We have an equivalence in $f_*^\nyg\cO_{(X/\oA)^\nyg}\simeq \rees(\fil_\nyg\Prism_{X/A})$ as derived algebras where the right hand side before the Rees construction is the relative Nygaard filtered prismatic cohomology of \cite{APC},

\item When $X/\oA$ is an affine $\oA$ scheme then there's an equivalence of stacks $(X/\oA)^\nyg\simeq \underline{\spf}(\rees(\fil_\nyg\Prism_{X/A}))$ where the formal spectrum of a derived algebra is defined as a relative analogue of \cite[Construction 1.2]{MM24},
\item When one extends the construction above to derived $\oA$ schemes, then in the \emph{quasi-lci} case \cite{APCII} these stacks remain classical.
\end{enumerate}

The results in \cite{Sah25Nyg} will then be used to study the syntomification of Frobenius liftable schemes in positive characteristic in \cite{Sah25Syn}.

Recasting the theory in terms of delta schemes, we will show the following.

\begin{theorem}
    Let $X/W$  be a derived $\delta$-scheme. Let $(X/W)^{\dR.+}\to \agm$ be the relative de Rham stack of $(X/W)$. Let $X_{p=0}$ denote the mod $p$-reduction of $X/W$ and let $(X_{p=0})^\nyg\to k^\nyg$ be the Nygaard filtered prismatization of $X_{p=0}.$ Then there's an equivalence of $k^\nyg$ stacks 

    $$(X_{p=0})^\nyg\simeq (X/W)^{\dR,+}\times_{\agm} k^\nyg.$$
\end{theorem}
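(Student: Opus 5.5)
This theorem concerns the forthcoming work rather than the machinery of this note, so the plan is to reduce it to a comparison of derived algebras over $k^\nyg$. I would use the quasi-affine classification (Proposition \ref{cor: quasi-aff preserves dalg}) together with the continuous Rees/Simpson classification (Corollary \ref{cor: cts main theorem for dalg}, Example \ref{example main thm.}).

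The first step is to make both sides quasi-affine over $k^\nyg$. Since $k$ is perfect, $(W,(p))$ is a perfect prism with orientation $p$, so $k^\nyg\simeq \spf(W[u,t]/(ut-p))/\G_m$. By Example \ref{example main thm.}, derived algebras on $k^\nyg$ are $p$-complete filtered derived algebras over $p^\bullet W$.

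The second step is to compute the target. The relative de Rham stack $(X/W)^{\dR,+}\to \agm$ is the stack attached via Construction \ref{cons: coaction of Az} to the Rees algebra of the (Hodge-)filtered de Rham complex, at least when $X$ is affine; one globalizes by Zariski descent. The fiber product with $k^\nyg\to\agm$ is then computed by base change (Proposition \ref{prop: derived base change for algebras}, in its completed form, Example \ref{ex: pushforward on spf}). Its pushforward to $k^\nyg$ is therefore the $p$-completed tensor product of $\rees(\fil_{H}\dR_{X/W})$ with $W[u,t]/(ut-p)$ over $W[t]$. As a filtered algebra this is the convolution of the Hodge filtration with the $p$-adic filtration.

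The third step is to compute the source. Using the relative Nygaard prismatization from \cite{Sah25Nyg}, the pushforward of the structure sheaf of $(X_{p=0})^\nyg$ to $k^\nyg$ is $\rees(\fil_\nyg\Prism_{X_{p=0}/W})$. The $\delta$-structure on $X$ supplies a Frobenius lift. This lift should give the classical comparison $\Prism_{X_{p=0}/W}\simeq \phi^*\dR_{X/W}$, under which the Nygaard filtration becomes the $p$-adic-times-Hodge filtration $\fil^i=\sum_j p^{i-j}\fil^j_H$. This is a Mazur/Ogus–Vologodsky type statement, and it must hold as filtered derived algebras, not merely as filtered modules.

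With the two pushforwards identified as derived algebras on $k^\nyg$, Proposition \ref{cor: quasi-aff preserves dalg} gives the equivalence of stacks. Both stacks are quasi-affine (indeed affine) over $k^\nyg$ and are recovered from their pushforward as $\underline{\spf}$ of it, a relative analogue of \cite{MM24}. The main obstacle is the third step: upgrading the Nygaard-vs-Hodge$\star p$-adic comparison to an equivalence of filtered derived algebras that is functorial enough to glue over a non-affine $X$. I would first prove it for polynomial $\delta$-rings, where everything is explicit, and then left Kan extend using the animated-ring formalism.
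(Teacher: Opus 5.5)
The paper gives no proof of this statement; it is only announced in \S~\ref{sec: forthcoming work} as a result of forthcoming work. Your proposal therefore has to stand on its own, and its reduction step does not hold up.

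\emph{The two stacks are not quasi-affine.} You assert that $(X_{p=0})^\nyg\to k^\nyg$ and $(X/W)^{\dR,+}\to\agm$ are quasi-affine (``indeed affine''), and that $(X/W)^{\dR,+}$ is the stack of Construction~\ref{cons: coaction of Az} attached to $\fil_H\dR_{X/W}$. Both claims fail already for $X=\A^1_W$ with $\delta(x)=0$:
\begin{itemize}
\item Quasi-affineness is stable under base change. But the restriction of $(X/W)^{\dR,+}$ to the open substack $\G_m/\G_m\subset\agm$ is the de Rham stack $\A^1/\G_a^\sharp$, whose points have nontrivial stabilisers $\G_a^\sharp$. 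So it is not a derived scheme, and its quasi-coherent sheaves are crystals, not modules over $\dR_{\A^1/W}$.
\item Likewise, the restriction of $(X_{p=0})^\nyg$ to the locus $t\neq 0$ of $k^\nyg$ is the prismatization $(X_{p=0})^\Prism$, which is again not schematic.
\item Construction~\ref{cons: coaction of Az} needs a \emph{connective} filtered animated ring and produces stacks affine over $\bgm$ with $\Qcoh\simeq\fil\Mod_T$ (Corollary~\ref{cor: simpson for filtered algebras}). But $\fil_H\dR_{X/W}$ is not connective, and $\Qcoh((X/W)^{\dR,+})$ consists of filtered crystals.
\end{itemize}

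\emph{Consequently the tools you invoke do not apply.} Proposition~\ref{cor: quasi-aff preserves dalg} and Remark~\ref{remark: how to identify image of algebra object} need quasi-affineness. So does the base change of Proposition~\ref{prop: derived base change for algebras}, whose proof reduces to Proposition~\ref{prop: GR facts we need}(4). Nor is base change automatic here, because $k^\nyg\to\agm$ is not flat: mod $p$ it is $k[t]\to k[u,t]/(ut)$, and $u$ is $t$-torsion in the target. Finally, even for affine morphisms, Proposition~\ref{cor: quasi-aff preserves dalg} is a statement about categories of derived algebras. By itself it does not turn an equivalence of pushforwards into an equivalence of stacks.

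What your argument actually needs is an affine-\emph{stack} reconstruction in the sense of \cite{MM24}. Specifically, you would need:
\begin{itemize}
\item $(X_{p=0})^\nyg\simeq\underline{\spf}_{k^\nyg}\bigl(\rees(\fil_\nyg\Prism_{X_{p=0}})\bigr)$;
\item the filtered de Rham analogue $(X/W)^{\dR,+}\simeq\underline{\spf}_{\agm}\bigl(\rees(\fil_H\dR_{X/W})\bigr)$;
\item compatibility of $\underline{\spf}$ with the fibre product along $k^\nyg\to\agm$ and with $p$-completion.
\end{itemize}
Only then would a $\rees(p^\bullet W)$-linear equivalence of filtered derived algebras give an equivalence of stacks, which could then be glued over affine opens.

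You mention $\underline{\spf}$ in passing, but conflated with quasi-affineness, and you assume it rather than prove it. It is a substantial theorem: the paper lists the Nygaard case only as a forthcoming result, and only for smooth affine $X$, whereas the statement concerns arbitrary derived $\delta$-schemes. In addition, your third step is only asserted. That step is the Frobenius-lift comparison of $\fil_\nyg$ with the convolution of the $p$-adic and Hodge filtrations, as filtered \emph{derived} algebras, natural in $\delta$-maps and with the Frobenius twist of the crystalline comparison tracked. As it stands, the proposal is an outline of what would have to be proved rather than a proof.
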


As a consequence we will describe the stack $(X_{p=0})^\mathrm{Syn}$ as the colimit of the following diagram of stacks

\begin{theorem}
    $(X_{p=0})^\mathrm{Syn}$ can be described as the pushout
   
\[\begin{tikzcd}[cramped]
	{(X/W)^{\dR}\coprod X^{p\text{-dR}}} & {(X/W)^{\dR,+}\times k^\nyg} \\
	{(X/W)^{\dR}} & {(X_{p=0})^\mathrm{Syn}}
	\arrow["{j^{\mathrm{HT}},j^{\dR}}", from=1-1, to=1-2]
	\arrow["{(id,F)}"', from=1-1, to=2-1]
	\arrow[from=1-2, to=2-2]
	\arrow[from=2-1, to=2-2]
\end{tikzcd}\].
\end{theorem}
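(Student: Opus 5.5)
The plan is to start from the definition of the syntomification as a gluing of the Nygaard filtered prismatization along its two open loci, and then substitute the description of $(X_{p=0})^\nyg$ from the preceding theorem. Recall that for a $p$-adic formal (derived) scheme $Z$ one has the two open immersions $j^{\dR},j^{\mathrm{HT}}\colon Z^{\Prism}\rightrightarrows Z^\nyg$. Here $j^{\dR}$ is the locus $t\neq 0$ and $j^{\mathrm{HT}}$ is the locus $u\neq 0$, the latter twisted by the Frobenius. The stack $Z^{\mathrm{Syn}}$ is their coequalizer, or equivalently the pushout of $Z^\nyg\leftarrow Z^\Prism\coprod Z^\Prism\rightarrow Z^\Prism$ along the fold map. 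So the proof reduces to identifying each corner of this pushout for $Z=X_{p=0}$ over $k$, with $W=W(k)$.

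\textbf{Step 1 (the top right corner).} By the previous theorem, $(X_{p=0})^\nyg\simeq (X/W)^{\dR,+}\times_{\agm} k^\nyg$, and I would check that this equivalence lives over $k^\nyg$. Since $k$ is perfect, $k^\nyg\simeq \spf(W[u,t]/(ut-p))/\G_m$. This is the stack attached in Example \ref{eg: knygalg} and Example \ref{example main thm.} to the $p$-adic filtration on $W$. This presentation makes the two open loci explicit.

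\textbf{Step 2 (the left column).} Base change along $j^{\dR}\colon \spf(W)\simeq\{t\neq 0\}\to k^\nyg$. The locus $t\neq0$ maps to $\G_m/\G_m\subset\agm$, so by Lemma \ref{lemma: canonical maps completed derived geometry} the fiber product becomes $(X/W)^{\dR,+}\times_{\agm}\G_m/\G_m=(X/W)^{\dR}$. This recovers the crystalline (prismatic) stack $(X_{p=0})^\Prism\simeq (X/W)^{\dR}$. On the locus $u\neq 0$, the map $k^\nyg\to \agm$ factors through the Frobenius-twisted structure. I would identify the fiber with $X^{p\text{-}\dR}$, meaning the de Rham stack of the Frobenius twist. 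For this I would use the Hodge--Tate description of $j^{\mathrm{HT}}$ over a perfect field together with the fact that the $t$-coordinate there is $p/u$.

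\textbf{Step 3 (the gluing maps).} Identify the maps in the coequalizer under Steps 1--2. On the $j^{\dR}$ component the gluing map to $(X_{p=0})^\Prism=(X/W)^{\dR}$ is the identity. On the $j^{\mathrm{HT}}$ component the canonical gluing map is the prismatic Frobenius. Transported through the equivalences, it should become the map $F\colon X^{p\text{-}\dR}\to (X/W)^{\dR}$ induced by the $\delta$-structure, i.e.\ the Frobenius lift $\varphi_X$. Granting these identifications, the pushout square in the statement is the defining pushout of $(X_{p=0})^{\mathrm{Syn}}$ rewritten term by term. Colimits of stacks are preserved by equivalences of diagrams, so this finishes the proof.

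\textbf{Main obstacle.} I expect Step 3, together with the Hodge--Tate half of Step 2, to be the hard part. One must show that the equivalence of the previous theorem, restricted to $u\neq0$, intertwines the prismatic Frobenius with $\varphi_X$. This has to hold coherently, as an equivalence of diagrams, not merely on points. My first attempt would be to reduce to affine $X=\spec(R)$ by descent, which is legitimate since all constructions are sheaves. Then I would use the description of all the stacks via Rees algebras (Corollary \ref{cor: cts main theorem for dalg}). This turns the question into a comparison of filtered derived algebras: the Nygaard filtration on crystalline cohomology of $R/p$, and the $p$-adic filtration on the de Rham complex of $R$ twisted by $\varphi$. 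There the compatibility is the classical fact that $\varphi$ divides the Hodge filtration by powers of $p$, applied as Mazur's theorem in its derived form.
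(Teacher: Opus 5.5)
The paper does not prove this statement. It is only announced in \S\ref{sec: forthcoming work}, as a consequence of the preceding theorem (also unproved here), with details deferred to \cite{Sah25Syn}. So your proposal has to stand on its own. Your skeleton is the natural one: rewrite the Bhatt--Lurie coequalizer $j^{\dR},j^{\mathrm{HT}}\colon (X_{p=0})^\Prism\rightrightarrows (X_{p=0})^\nyg$ via $(X_{p=0})^\nyg\simeq (X/W)^{\dR,+}\times_{\agm}k^\nyg$. The first gap is that the Hodge--Tate half of Step~2 misidentifies $X^{p\text{-dR}}$. On $\{u\neq 0\}\subset k^\nyg$ one has $t=p/u$, so the map $\{u\neq 0\}\simeq\spf(W)\to\agm$ is just the point $t=p$, and no Frobenius enters. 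The preimage of this locus is therefore, by base change alone, the fibre of $(X/W)^{\dR,+}$ at $t=p$. That fibre is the $p$-de Rham stack of the statement: for smooth $X$ its cohomology is Ogus's $p$-de Rham complex $(\Omega^\bullet_{X/W},p\,d)$, not the de Rham stack of a Frobenius twist. That this stack is isomorphic to $(X_{p=0})^\Prism$, and hence to a twisted de Rham stack, is a Mazur/Ogus-type comparison that uses $\varphi_X$. It is exactly the information carried by $F$. Building it into the definition of $X^{p\text{-dR}}$ either turns the theorem into a near-tautology or presupposes Step~3.

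The second gap is that Step~3 carries all the content, and you grant it. In the Bhatt--Lurie coequalizer both maps to $(X_{p=0})^\Prism$ are the identity; the Frobenius is built into $j^{\mathrm{HT}}$, which covers the Frobenius of $k^\Prism$. So you must prove two things:
(a) $j^{\mathrm{HT}}$ maps $(X_{p=0})^\Prism$ isomorphically onto the preimage of $\{u\neq 0\}$, which is a genuine theorem about the Nygaard-filtered prismatization, not a formality;
(b) transported through the preceding theorem, the composite $X^{p\text{-dR}}\simeq (X_{p=0})^\Prism\simeq (X/W)^{\dR}$ (inverse of $j^{\mathrm{HT}}$, then $j^{\dR}$) is the map $F$ defined from the $\delta$-structure, naturally in $X$.

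Your proposed tool does not reach (b). Corollary~\ref{cor: cts main theorem for dalg}, with $T=p^\bullet W$, identifies derived algebra \emph{objects} on $k^\nyg$ with filtered derived algebras. It can therefore describe pushforwards of structure sheaves, but it cannot recover maps between the stacks $(X/W)^{\dR}$, $X^{p\text{-dR}}$ and $(X/W)^{\dR,+}\times_{\agm}k^\nyg$. These are not of the form $\spf(\pij\rees(T))/\G_m$ with $T$ connective (their functions are coconnective), and they are far from quasi-affine over $k^\nyg$. So neither that corollary nor Proposition~\ref{cor: quasi-aff preserves dalg} applies. To convert a comparison of maps of stacks into one of filtered derived algebras you would need an affineness theorem such as $(X/\overline{A})^\nyg\simeq\underline{\spf}(\rees(\fil_\nyg\Prism_{X/A}))$, which the paper only announces. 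Even granting that, Mazur's theorem only shows that the divided Frobenius exists and is a quasi-isomorphism. It does not give a homotopy, in the space of maps of derived algebras, between that map and the one induced by $j^{\mathrm{HT}}$. For that you must actually compute $j^{\mathrm{HT}}$ and match it with the map built from $\varphi_X$. This can be done either on the ring stacks defining both sides, or after descent to quasiregular semiperfectoid covers, where $j^{\mathrm{HT}}$ is the divided Frobenius on Nygaard pieces.
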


Here  $X^{p\text{-dR}}$ is the $p$-de Rham stack of $X/W$ and is related to recent work of Ogus \cite{Ogus2024CrystallinePrisms}. We will use this observation to describe $\Qcoh((X_{p=0})^\mathrm{Syn})$ in terms of Fontaine-Laffaille theory of \cite{faltings}, mirroring results in \cite{TerentiukVologodskyXu2025PrismaticFGauges} and forthcoming work of Madapusi-Mondal \cite{MadapusiMondal2026GeneralizedFontaineLaffaille}.

\appendix
\section{$\L\Sym$ on prestacks.}\label{appendix: lsym on prestacks}

The ideas presented in this appendix were essentially communicated to the author by Dmitry Kubrak; all mistakes are solely due to the author.

Recall that for a prestack $X\colon \aring\to \mathcal{S}$ we have defined $\dalg(X)$ by right Kan extension in Construction \ref{cons: right kan extension of dalg}. 

For our set theory conventions, see \ref{set theory}.

\begin{lemma}\label{lem: qcoh to dalg is monadic}
    The canonical functor $$U_X\colon \dalg(X)\to \Qcoh(X)$$ is monadic.
\end{lemma}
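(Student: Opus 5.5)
We apply the coarse Barr--Beck--Lurie theorem \cite[Theorem 4.7.0.3]{HA} to $U_X$. The right Kan extension formula of Construction \ref{cons: right kan extension of dalg} and \ref{right kan qcoh.} shows that $U_X$ is the limit, over $\spec(R)\to X$, of the forgetful functors $U_R\colon \dalg_R\to \Mod_R$. These form a natural transformation of diagrams $\dalg_{(-)}\to \Mod_{(-)}$ because base change commutes with forgetting, by \cite[Remark 4.2.29]{Rak20}. Throughout, the indexing category is large, and we work with $\widehat{\calS}$-valued prestacks as in \ref{set theory}.

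The steps are as follows.
\begin{enumerate}
\item $U_X$ admits a left adjoint. Each $U_R$ has a left adjoint $\L\Sym_R$, and the commuting square for $R\to S$ is left adjointable: $\L\Sym_S(M\otimes_R S)\simeq \L\Sym_R(M)\otimes_R S$, which is the base-change compatibility of $\L\Sym$. The left adjoints therefore assemble into a map of limit diagrams, and by \cite[Corollary 4.7.4.18]{HA} (or its dual, in the form of adjointable limits of adjunctions) the limit functor $U_X$ acquires a left adjoint $L_X$, computed pointwise.
\item $U_X$ is conservative. A morphism in $\dalg(X)$ is an equivalence if and only if each restriction to $\dalg_R$ is, and each $U_R$ is conservative.
\item $\dalg(X)$ admits geometric realizations of $U_X$-split simplicial objects, and $U_X$ preserves them. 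We will show more: each transition functor $-\otimes_R S\colon \dalg_R\to \dalg_S$ preserves sifted colimits, since these are computed on underlying modules and base change on modules preserves colimits. Hence sifted colimits in the limit category exist and are computed pointwise (\cite[Corollary 5.1.2.3]{HTT} style argument for limits of categories with colimit-preserving transitions). Since each $U_R$ preserves sifted colimits, $U_X$ does as well.
\end{enumerate}

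The main obstacle is step (1), which requires verifying that the Beck--Chevalley maps $\L\Sym_S(M\otimes_R S)\to \L\Sym_R(M)\otimes_R S$ are equivalences, coherently in the large diagram. The first approach is to deduce this from the fact that $\L\Sym$ is an endotransformation of the coCartesian fibration $\Mod\to\aring$ preserving coCartesian edges (Remark \ref{remark: two cons of dalg are equivalent}). With that in hand, the relative adjoint functor theorem \cite[Proposition 7.3.2.6]{HA} yields the left adjoint at the level of fibrations, and its sections give $L_X$.

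An alternative is to avoid step (1) altogether. Both categories are presentable, being limits in $\Pr^R$ of presentable categories along accessible right adjoints, and $U_X$ preserves limits and is accessible. The adjoint functor theorem then applies directly, modulo the set-theoretic size of the index category.
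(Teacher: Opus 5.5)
Your proposal is correct and follows essentially the same route as the paper. Both apply Barr--Beck--Lurie, assemble the left adjoint $L_X$ from the pointwise $\L\Sym_R$ using their compatibility with base change \cite[Remark 4.2.29]{Rak20}, check conservativity pointwise, and compute sifted colimits pointwise because the transition functors are left adjoints and each $U_R$ preserves sifted colimits. The coherence issue you flag in step (1) is not a real obstacle: left adjointability is a condition on each transition square separately, and it amounts to $(-\otimes_R S)\circ\L\Sym_R\simeq\L\Sym_S\circ(-\otimes_R S)$, obtained by passing to left adjoints in $U_R\circ\mathrm{res}\simeq\mathrm{res}\circ U_S$, so no additional coherent data is needed.
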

\begin{proof}
   The functor $U_X\colon \dalg(X)\to \Qcoh(X)$ is induced along a limit which we write out explicitly. 

   The morphisms $\dalg(X)=\lim_{\spec(R)\to X}\dalg_R$ has transition functors for any morphism $R\to S$ given by $-\otimes_R R$ by \cite[Remark 4.2.29]{Rak20} so that the following diagrams commute 
   
   \begin{equation}\label{eq: transition functors for dalg}
      \begin{tikzcd}
\dalg_S \arrow[r, "-\otimes_R R"] \arrow[d, "U_S"'] & \dalg_R \arrow[d, "U_R"] \\
\Mod_S \arrow[r, "-\otimes_R R"]                   & \Mod_R                
\end{tikzcd}\text{  and     }  \begin{tikzcd}
\dalg_S \arrow[r, "-\otimes_R R"]                      & \dalg_R                       \\
\Mod_S \arrow[r, "-\otimes_R R"] \arrow[u, "\L\Sym_R"] & \Mod_R \arrow[u, "\L\Sym_S"']
\end{tikzcd}.
\end{equation}

It follows that $U_X\colon \dalg(X)\to \Qcoh(X)$ is a right adjoint with left adjoint $L_X$ defined as the limits of the left adjoints $\L\Sym_R$ as $R$ ranges over maps $\spec(R)\to X.$

We first check that $U_X$ is conservative. This is easy as the functor is pointwise conservative. We now want to check that $U_X$ preserves sifted colimits to check Barr-Beck-Lurie. Note that pullbacks for both categories coming from the diagrams in \ref{eq: transition functors for dalg} commute with colimits (as they are left adjoints) the commutation of $U_X$ with sifted colimits can be checked pointwise, where it follows from \cite[Notation 4.2.28 (c)]{Rak20}.

\end{proof}

\begin{remark}
    Lemma \ref{lem: qcoh to dalg is monadic} shows that there is a left adjoint $L_X\colon \Qcoh(X)\to \dalg(X)$ with the property that when $X$ is affine, then $U_X\circ L_X=\L\Sym_X$. 
    We want to understand this adjoint $L_X$ more globally as a natural monad $\L\Sym_X$ acting on $\Qcoh(X)$ and for this we will use a refined result of Kubrak-Li \cite{KubrakLiKGn_inprep} 
\end{remark}

\begin{construction}[Kubrak-Li \cite{KubrakLiKGn_inprep}]\label{cons: Kubrakli}

As explained in Remark \ref{remark: two cons of dalg are equivalent}, we know that the assignment $R\mapsto \Mod_R$ has an endo-transformation given by the composite $U\circ \L\Sym$ in the sense that for any morphism $S\to R$ we have 
$$
    U_S(\L\Sym_S(M))\otimes_RR\simeq U_R(\L\Sym_R(M\otimes_R R)).
$$

Thus by unstraightening \cite[Theorem 3.2.0.1]{HTT} one gets an endo-transformation, denoted abusively by $\L\Sym$, of the coCartesian fibration $\Mod\to \aring.$ 

Here by an endo-transformation of $\Mod\to \aring$ we mean all endo-transformations which preserve all coCartesian arrows. We denote this $\icat$ by $\End_{\aring}(\Mod)$. The result of Kubrak-Li in their work \cite{KubrakLiKGn_inprep} is that $\L\Sym$ naturally refines to an $\mathbf{E}_1$-algebra object in $\End_{\aring}(\Mod)$. 

Now let $F\colon J\to \aring$ be any diagram of animated rings and let $\cC$ be the $\icat$ given by $\lim_{j\in J}\Mod_{F(j)}$. 

Then one has an induced monoidal functor\footnote{One may also compute this as the pullback of the coCartesian fibration $\Mod\to \aring$ along the diagram $F\colon J\to \aring$. Indeed there is a natural action of $\L\Sym$ on the $\icat$ of coCartesian sections of $\Mod\times_{\aring}J\to J$.}
$$\Theta_{\cC}\colon  \End_{\aring}(\Mod)\to \End(\cC).$$

Explicitly for any $T\in \End_{\aring}(\Mod)$, the functor $\Theta_{\cC}(T)$ acts on $\cC=\lim_{j\in J}\Mod_{F(j)}$ by acting compatibly by $T_{F(j)}$ on $\Mod_{F(j)}.$ This functor is monoidal because it clearly respects composition.

We denote the image $\Theta_{\cC}(\L\Sym)\in \Alg(\End(\cC))$ of $\L\Sym\in \Alg(\End{\Mod})$ by $\L\Sym_{\cC}$.
It follows now that we may contemplate the $\icat$ $\Mod_{\L\Sym_\cC}(\cC).$

Note that for each $j\in J$, there is a pullback functor $$\eta_{j}^*\colon \Mod_{\L\Sym_\cC}(\cC)\to \Mod_{\L\Sym_{F(j)}}(\Mod_{F(j)})=\dalg_{F(j)}.$$

Set $\dalg(\cC)=\lim_{j\in J} \dalg_{F(j)}$. Then one gets a natural morphism
\begin{equation}\label{eq: pullback monads}
    \eta^*_{\cC}\colon \colon \Mod_{\L\Sym_\cC}(\cC)\to \dalg(\cC).
\end{equation}

When $F:J\to \aring$ is the inclusion of the slice category $\daff_X$ i.e. the category of derived affine schemes with a map to $X$, for some prestack $X$, then $\Mod_{\L\Sym_\cC}(\cC)$ is denoted $\Mod_{\L\Sym_X}(\Qcoh(X)).$ Thus the functor of \ref{eq: pullback monads} is now denoted by 

\begin{equation}\label{eq: pullback monads on stacks}
\eta^*_X\colon \Mod_{\L\Sym_X}(\Qcoh(X))\to \dalg(X).
\end{equation}
    
\end{construction}

\begin{proposition}\label{prop: kubrak prop}
   Keeping the notation of Construction \ref{cons: Kubrakli}, let $F\colon J\to \aring$ be a diagram of animated rings. Then the morphism $$\eta^*_{\cC}\colon \Mod_{\L\Sym_{\cC}}(\cC)\to \dalg(\cC)$$ of \ref{eq: pullback monads} is an equivalence of $\icats.$
   In particular for any prestack $X$, the functor $$\eta_X^*\colon  \Mod_{\L\Sym_X}(\Qcoh(X))\to \dalg(X)$$ of \ref{eq: pullback monads on stacks} is an isomorphism.
\end{proposition}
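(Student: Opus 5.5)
The plan is to show that $\eta^*_\cC$ is a functor between two $\icats$ that are both monadic over the same base $\cC$, that it commutes with the forgetful functors, and that it identifies the two monads. The equivalence then follows from the uniqueness part of Barr-Beck-Lurie \cite[Corollary 4.7.3.16]{HA}.

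On the target side, I run the argument of Lemma \ref{lem: qcoh to dalg is monadic} verbatim for an arbitrary diagram $F\colon J\to \aring$ in place of $\daff_X$. The transition functors of $\lim_j \dalg_{F(j)}$ are base changes, and these commute with the forgetful functors and with the left adjoints $\L\Sym$ as in \eqref{eq: transition functors for dalg}. Hence $U_\cC\colon \dalg(\cC)\to \cC$ is a right adjoint, and its left adjoint $L_\cC$ is computed pointwise by $\L\Sym_{F(j)}$. Two adjointability facts are needed here:
\begin{enumerate}
\item the limit of the adjunctions $\L\Sym_{F(j)}\dashv U_{F(j)}$ is again an adjunction, because the Beck-Chevalley maps are equivalences by \eqref{eq: transition functors for dalg};
\item $U_\cC$ is conservative and preserves sifted colimits, both checked pointwise, using \cite[Notation 4.2.28(c)]{Rak20} and the fact that base change preserves colimits.
\end{enumerate}
Given these, $U_\cC$ is monadic. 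Its monad $U_\cC L_\cC$ acts on an object $(M_j)_j$ of $\cC$ by $(U_{F(j)}\L\Sym_{F(j)}M_j)_j$. On the source side, $\Mod_{\L\Sym_\cC}(\cC)\to\cC$ is tautologically monadic with monad $\L\Sym_\cC=\Theta_\cC(\L\Sym)$.

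Next I check that $\eta^*_\cC$ lies over $\cC$. Each $\eta_j^*$ is restriction along the evaluation $\ev_j\colon\cC\to\Mod_{F(j)}$. This evaluation intertwines the monad $\L\Sym_\cC$ with $\L\Sym_{F(j)}$: by construction of $\Theta_\cC$, the monad acts through $T_{F(j)}$ at the vertex $j$, so $\ev_j\circ\L\Sym_\cC\simeq \L\Sym_{F(j)}\circ\ev_j$ as a map of monads. Hence $U_\cC\circ\eta^*_\cC$ is the forgetful functor. By \cite[Corollary 4.7.3.16]{HA}, $\eta^*_\cC$ is an equivalence as soon as the induced map of monads $U_\cC L_\cC\to \L\Sym_\cC$ is an equivalence. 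By Remark \ref{remark: how to detect equivalences of monads}, this can be checked on underlying endofunctors and then on objects, and further after applying each $\ev_j$; there both sides are $U_{F(j)}\L\Sym_{F(j)}(M_j)$.

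The main obstacle is the coherence in this last identification. One must produce the natural transformation $U_\cC L_\cC\to\L\Sym_\cC$ as a map that is canonically compatible with the $\mathbf E_1$-structures, not merely an objectwise equivalence. I would obtain it from the universal property: $\eta^*_\cC$ commutes with the forgetful functors, so it induces a canonical map of monads from the left adjoint (as in \cite[\S~4.7.3]{HA}), which removes the need to build the transformation by hand. The remaining work is then the objectwise check above. This is where Kubrak-Li's $\mathbf E_1$-refinement of $\L\Sym$ in $\End_{\aring}(\Mod)$ enters: it guarantees that $\L\Sym_\cC$ is a monad whose components at each $j$ are the standard $\L\Sym_{F(j)}$ as monads. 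The statement for a prestack $X$ is the special case $J=\daff_X$.
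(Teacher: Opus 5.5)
Your proposal is correct and is essentially the paper's argument: both show that $U^{\dalg}_\cC\colon \dalg(\cC)\to\cC$ is monadic with pointwise monad $T_\cC$, and both reduce to checking, componentwise at each $j\in J$, that $\eta^*_\cC$ carries free $\L\Sym_\cC$-algebras to free $T_\cC$-algebras. The only difference is packaging: you invoke \cite[Corollary 4.7.3.16]{HA} directly, whereas the paper first shows that $\eta^*_\cC$ is itself monadic (producing its left adjoint via presentability) and then shows its monad is trivial by a mapping-space computation; your version avoids constructing that adjoint and makes explicit that the comparison of free objects goes through the canonical map.
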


\begin{proof}
    First observe that the natural forgetful functor $U^{\dalg}_{\cC}\colon \dalg(\cC)\to \cC$ is monadic for the same reason as the proof of Lemma \ref{lem: qcoh to dalg is monadic}. 
    
    Let $T_{\cC}$ denote the induced monad on $\cC.$ Explicitly, it sends $(X(j))_{j\in J}\in \cC$ to the object $(\L\Sym_{F(j)}(X(j))_{j\in J}\in \dalg(\cC)$ along with its coordinate wise induced monadic structure.

   Further note that the map $\eta^*_{\cC}\colon \Mod_{\L\Sym_{\cC}}(\cC)\to \dalg(\cC)$ is also monadic. Indeed, it is clearly conservative and preserves geometric realisations since geometric realisations in both source and target can be computed after forgetting to $\cC.$ We do need to check that it is a right adjoint. For this we check that it preserves limits. But this can be checked by forgetting to $\cC$ and using presentability of both $\Mod_{\L\Sym_{\cC}}(\cC)$ and $\dalg(\cC)$ (in a larger universe) shows that $\eta^*_{\cC}$ admits a left adjoint.

    Now the forgetful functor $U_\cC\colon \Mod_{\L\Sym_{\cC}}(\cC)\to \cC$ factors as $$\Mod_{\L\Sym_{\cC}}(\cC)\xrightarrow{\eta_{\cC}^*}\dalg(\cC)\xrightarrow{U_{\cC}^\dalg}\cC$$ with both functors monadic. 
    Thus it suffices to show that the monad induced by $\eta_{\cC}^*$ is the trivial monad. For this it will suffice to show that if $X\in \cC$ then  $\eta_{\cC}^*(\L\Sym_{\cC}(X))$ agrees with the free object for the monad $T_\cC$ on $\cC$. 

    For this first note that $$\eta_{\cC}^*(\L\Sym_{\cC}(X))=(\L\Sym_{F(j)}X(j))_{j\in J}\in \dalg(\cC).$$

    Thus we compute for any $Y:=(Y(j))_{j\in J}\in \dalg(\cC)$ we have

    \begin{align*}
        \Maps_{\dalg(\cC)}(\eta_{\cC}^*(\L\Sym_{\cC}(X)),Y) &=\Maps_{\dalg(\cC)}(\L\Sym_{F(j)}X(j))_{j\in J}, (Y(j))_{j\in J})\\
        &=\lim_{j\in J}\Maps_{\dalg_{F(j)}}(\L\Sym_{F(j)}X(j), Y(j))\\
        &=\lim_{j\in J}\Maps_{\Mod_{F(j)}}(X(j), Y(j))\\
        &=\Maps_{\cC}(X,Y).
    \end{align*}

\end{proof}

\bibliographystyle{alpha}
\bibliography{references}

\end{document}